          \newtheorem{theorem}{Theorem}[section]
      \newtheorem{definition}[theorem]{Definition}
      \newtheorem{proposition}[theorem]{Proposition}
      \newtheorem{corollary}[theorem]{Corollary}
      \newtheorem{lemma}[theorem]{Lemma}
      \newtheorem{remark}[theorem]{Remark}
      \newcommand{\CC}{{\mathbb C}}
      \newcommand{\NN}{{\mathbb N}}
      \newcommand{\ZZ}{{\mathbb Z}}
      \newcommand{\FF}{{\mathbb F}}
      \newcommand{\TT}{{\mathbb T}}
      \newcommand{\cA}{{\mathcal A}}
      \newcommand{\cD}{{\mathcal D}}
      \newcommand{\cE}{{\mathcal E}}
      \newcommand{\cG}{{\mathcal G}}
      \newcommand{\cH}{{\mathcal H}}
      \newcommand{\cJ}{{\mathcal J}}
      \newcommand{\cK}{{\mathcal K}}
      \newcommand{\cL}{{\mathcal L}}
      \newcommand{\cM}{{\mathcal M}}
       \newcommand{\cO}{{\mathcal O}}
      \newcommand{\cN}{{\mathcal N}}
      \newcommand{\cR}{{\mathcal R}}
      \newcommand{\cS}{{\mathcal S}}
      \newcommand{\cV}{{\mathcal V}}
      \newcommand{\cW}{{\mathcal W}}
      \newcommand{\rank}{\hbox{\rm{rank}}\,}
      \newdimen\expt
      \def\boxit#1{\setbox0\hbox{$\displaystyle{#1}$}
            \hbox{\lower.4\expt
       \hbox{\lower3\expt\hbox{\lower\dp0
            \hbox{\vbox{\hrule height.4\expt
       \hbox{\vrule width.4\expt\hskip3\expt
            \vbox{\vskip3\expt\box0\vskip2\expt}%
       \hskip3\expt\vrule width.4\expt}\hrule height.4\expt}}}}}}
\begin{document}
       \pagestyle{myheadings}
      \markboth{ Gelu Popescu}{ Doubly $\Lambda$-commuting row isometries, universal models, and classification }

      \title [  Doubly $\Lambda$-commuting row isometries, universal models, and classification ]
      { Doubly $\Lambda$-commuting row isometries, universal models, and classification
      }
        \author{Gelu Popescu}
\date{October 14, 2019}
     \thanks{Research supported in part by  NSF grant DMS 1500922}
       \subjclass[2000]{Primary: 47A45;  46L06,    Secondary: 47A20; 47A15; 46L65.
   }
      \keywords{Multivariable operator theory,    Wold decomposition, $\Lambda$-commuting row isometries, Cuntz  algebra, twisted tensor algebra, von Neumann inequality, invariant subspace, dilation theory}
      \address{Department of Mathematics, The University of Texas
      at San Antonio \\ San Antonio, TX 78249, USA}
      \email{\tt gelu.popescu@utsa.edu}

\begin{abstract}
The goal of the paper is to  study the structure of  the $k$-tuples of doubly $\Lambda$-commuting row isometries and the $C^*$-algebras they generate from the point of view of   noncommutative multivariable operator theory.  We obtain  Wold decompositions, in this setting, and use them to  classify  the $k$-tuples of doubly $\Lambda$-commuting row isometries  up to a unitary equivalence. We prove that
there is a one-to-one correspondence  between the unitary equivalence classes of $k$-tuples of doubly $\Lambda$-commuting row isometries and the enumerations of  $2^k$ unitary equivalence classes of unital representations of the  twisted $\Lambda$-tensor algebras $\otimes_{i\in A^c}^{\Lambda}\cO_{n_i}$,  as $A$ is any subset of $\{1,\ldots, k\}$, where $\cO_{n_i}$ is the Cuntz algebra with $n_i$ generators. In addition, we obtain  a description and parametrization of the irreducible $k$-tuples of doubly $\Lambda$-commuting row isometries.

We introduce  the  standard $k$-tuple  $S:=(S_1,\ldots, S_k)$ of  doubly
$\Lambda$-commuting   pure row isometries  $S_i:=[S_{i,1}\cdots  S_{i,n_i}]$ acting on the Hilbert space $\ell^2(\FF_{n_1}^+\times \cdots \times \FF_{n_k}^+)$,  where $\FF_n^+$ is the unital free semigroup with $n$ generators,  and
 prove that  the universal  $C^*$-algebra generated by   a $k$-tuple of  doubly
$\Lambda$-commuting  row isometries is $*$-isomorphic to the $C^*$-algebra $C^*(\{S_{i,s}\})$.
We introduce the regular $\Lambda$-polyball $B_\Lambda(\cH)$ and show that
a $k$-tuple  $T:=(T_1,\ldots, T_k)$ of row operators $T_i:=[T_{i,1}\ldots  T_{i,n_i}]$, acting on   $\cH$, admits $S$ as universal model, i.e.
 there is a Hilbert space $\cD$  such that  $\cH$ is jointly co-invariant for  ${S}_{i,s}\otimes I_\cD$ and
 $$
 T_{i,s}^*=(S^*_{i,s}\otimes I_\cD)|_\cH,\quad i\in \{1,\ldots, k\} \ \text{ and } \  s\in \{1,\ldots, n_i\},
 $$
   if and only if  $T$ is a pure element of
$B_\Lambda(\cH)$. This leads to  von type Neumann inequalities  and  the introduction of the  noncommutative Berezin transform associated with the elements of the regular $\Lambda$-polyball, which plays an important role.
We use the Berezin kernel to  obtain a characterization of the Beurling type  jointly invariant subspaces of   the  standard $k$-tuple  $S=(S_1,\ldots, S_k)$ and  provide a  classification result for  the pure elements    in the regular $\Lambda$-polyball.

We show that any $k$-tuple  $T$ in the regular $\Lambda$-polyball admits  a minimal  dilation which is a $k$-tuple of doubly $\Lambda$-commuting row isometries and is  uniquely determined up to an isomorphism. Using the Wold decompositions obtained, we show that $T$ is a pure element in ${\bf B}_\Lambda(\cH)$ if and only if    its minimal dilation  is a pure element in
 ${\bf B}_\Lambda(\cK)$.
 In  the particular case when $n_1=\cdots =n_k=1$, we obtain  an  extension of  Brehmer's  result, showing that any $k$-tuple in the regular $\Lambda$-polyball  admits a unique minimal
  doubly $\Lambda$-commuting unitary
    dilation.

\end{abstract}

      \maketitle

\bigskip

\section*{Contents}
{\it

\quad Introduction

\begin{enumerate}
   \item[1.]   Doubly $\Lambda$-commuting row isometries and Wold decompositions
   \item[2.]  Standard  $k$-tuples of doubly $\Lambda$-commuting row isometries
 \item[3.]   Classification of doubly $\Lambda$-commuting row isometries
\item[4.]   Regular $\Lambda$-polyballs, noncommutative Berezin transforms, and von Neumann inequalities
\item[5.]   Invariant subspaces and classification of the pure elements in the regular  $\Lambda$-polyball
\item[6.]   Dilation theory on regular $\Lambda$-polyballs
   \end{enumerate}

\quad References

}

\bigskip

\section*{Introduction}

For each $i,j\in \{1,\ldots, k\}$ with $i\neq j$, let $\Lambda_{ij}:=[\lambda_{i,j}(s,t)]_{n_i\times n_j}$  be an $n_i\times n_j$-matrix  with  entries in the torus $\TT:=\{z\in \CC: \  |z|=1\}$ and assume that $\Lambda_{j,i}=\Lambda_{i,j}^*$.     Given row isometries  $V_i:=[V_{i,1}\cdots V_{i,n_i}]$,      i.e. $V_{i,s} V_{i,t}^*=\delta_{st}I$, we say that  $V:=(V_1,\ldots, V_k)$  is a  $k$-tuple of {\it doubly $\Lambda$-commuting row isometries}  if
\begin{equation*}
V_{i,s}^* V_{j,t}=\overline{\lambda_{i,j}(s,t)}V_{j,t}V_{i,s}^*
\end{equation*}
for any $i,j\in \{1,\ldots, k\}$ with $i\neq j$ and any $s\in \{1,\ldots, n_i\}$, $t\in \{1,\ldots, n_j\}$.
We note   that  in the particular case   when   $n_1=\cdots=n_k=1$ and $\Lambda_{i,j}=1$, the $k$-tuple  $(V_1,\ldots, V_k)$  consists of  doubly commuting isometries in the sense of Brehmer \cite{Br}.

If $C\subset \{1,\ldots, k\}$, we denote by $\otimes_{i\in C}^\Lambda\cO_{n_i}$ the universal $C^*$-algebra generated  by   isometries $V_{i,s}$, where $i\in C$, $s\in \{1,\ldots, n_i\}$, satisfying the Cuntz relations
$$
V_{i,1} V_{i,1}^*+\cdots +V_{i,n_i} V_{i,n_i}^*=I,\qquad i\in C,
$$
and  satisfying the $\Lambda$-commutation relation  above for any $i,j\in C$ with $i\neq j$ and any $s\in \{1,\ldots, n_i\}$, $t\in \{1,\ldots, n_j\}$. Note that if  $\lambda_{i,j}(s,t)=1$, then $\otimes_{i\in C}^\Lambda\cO_{n_i}$ coincides with the usual  tensor product of Cuntz algebras  $\otimes_{i\in C}\cO_{n_i}$ (see \cite{Cu}). In general, the algebra  $\otimes_{i\in C}^\Lambda\cO_{n_i}$ can be seen  as a twisted tensor product of Cuntz  algebras. We remark that, when $n_1=\cdots =n_k=1$, the corresponding algebras  are   higher-dimensional
noncommutative tori  which are  studied in  noncommutative differential geometry (see  \cite{R}, \cite{Con}, \cite{Dav},  and the  appropriate references there in). We should mention that  $C^*$-algebras generated by isometries with twisted commutation relations have been
studied in the literature  in various particular cases (see \cite{JPS},  \cite{Pr},      \cite{K}, and \cite{W}).

Our approach  to study the  $k$-tuples of  doubly $\Lambda$-commuting row isometries and the $C^*$-algebras (resp. non-self-adjoint algebras)  they generate  is from the noncommutative multivariable   operator theory point of view. Many of the techniques developed in \cite{Po-isometric}, \cite{Po-multi}, \cite{Po-von}, \cite{Po-poisson}, \cite{Po-Berezin1} are refined and used in the present paper.

Inspired by  the work of De Jeu and Pinto \cite{DJP} and J.~Sarkar \cite{S}, who considered the particular case when $n_1=\cdots=n_k=1$, we  obtain, in Section 1,  Wold decompositions for $k$-tuples of doubly $\Lambda$-commuting row isometries (see Theorem \ref{Wold2} and Theorem \ref{Wold3})  and use them to provide, in Section 3,   classification  and parametrization results     for  the $k$-tuples of doubly $\Lambda$-commuting row isometries.   More precisely,  we prove that
there is a one-to-one correspondence  between the unitary equivalence classes of $k$-tuples of doubly $\Lambda$-commuting row isometries and the enumerations of  $2^k$ unitary equivalence classes of unital representations of the  twisted $\Lambda$-tensor algebras $\otimes_{i\in A^c}^{\Lambda}\cO_{n_i}$,  as $A$ is any subset of $\{1,\ldots, k\}$, where $\cO_{n_i}$ is the Cuntz algebra with $n_i$ generators and $A^c$ is the complement of $A$ in $\{1,\ldots,k\}$. In addition, we obtain  a description and parametrization of the irreducible $k$-tuples of doubly $\Lambda$-commuting row isometries.

An important role in our investigation is played by the standard  $k$-tuples of doubly $\Lambda$-commuting row isometries which are discussed in Section 2. In particular,  we introduce the standard $k$-tuple $S:=(S_1,\ldots, S_k)$  of pure row isometries $S_i=[S_{i,1}\cdots S_{i.n_i}]$ acting on  the Hilbert space $\ell^2(\FF_{n_1}^+\times \cdots \times \FF_{n_k}^+)$,
  which plays a very  special  role.  Here $\FF_n^+$ denotes the unital free semigroup with $n$ generators. We prove that  the $C^*$-algebra $C^*(\{S_{i,s}\})$ is  $*$-isomorphic with the universal  $C^*$-algebra generated by   a $k$-tuple of  doubly
$\Lambda$-commuting  row isometries.  If   $V=(V_1,\ldots, V_k)$ is  a $k$-tuple of doubly $\Lambda$-commuting row isometries
and
$$\bigcap_{{i\in \{1,\ldots,k\}}\atop {s\in \{1,\ldots, n_i\}}} \ker V_{i,s}^*\neq \{0\},
$$
we show that  the $C^*$-algebra $C^*(\{V_{i,s}\})$ is isomorphic to $C^*(\{S_{i,s}\})$. This extends the  result obtained by  Coburn \cite{Co} in the single variable case. Moreover,
if $\cJ_\Lambda$ is the closed
two-sided ideal generated by the projections $I -\sum_{s=1}^{n_1} S_{1,s}S_{1,s}^*$, ..., $I-\sum_{s=1}^{n_k} S_{k,s} S_{k,s}^*$  in the $C^*$-algebra $C^*(\{S_{i,s}\})$, then we show that the sequence
of $C^*$-algebras
$$
0\to \cJ_\Lambda\to C^*(\{S_{i,s}\})\to \otimes_{i\in \{1,\ldots,k\}}^{\Lambda}\cO_{n_i}\to 0
$$
is exact.

In Section 2, we prove that the pure $k$-tuples of doubly  $\Lambda$-commuting  row isometries are  unitarily equivalent to  the standard $k$-tuples  ${S\otimes I_\cD}:=({ S_1\otimes I_\cD},\ldots, {S_k\otimes I_\cD})$ acting on $\ell^2(\FF_{n_1}^+\times\cdots \times  \FF_{n_k}^+)\otimes \cD$, where $\cD$ is a Hilbert space. A natural question that arises is the following. What are the $k$-tuples  $T=(T_1,\ldots, T_k)$ of row operators $T_i=[T_{i,1}\cdots  T_{i,n_i}]$, acting on a Hilbert space $\cH$, which admit ${S}$ as universal model, i.e.
 there is a Hilbert space $\cD$  such that  $\cH$ is jointly co-invariant for  ${S}_{i,s}\otimes I_\cD$ and
 $$
 T_{i,s}^*=(S^*_{i,s}\otimes I_\cD)|_\cH,
 $$
 for any $i\in \{1,\ldots, k\}$ and $s\in \{1,\ldots, n_i\}$.    We answer this question in Section 4.   Employing noncommutative Berezin transforms, we show that the relation above holds if and only if $T=(T_1,\ldots, T_k)$  is a pure   $k$-tuple   in  the  {\it regular $\Lambda$-polyball}  ${\bf B}_\Lambda(\cH)$, which is introduced as the set of all $k$-tuples of
   row contractions  $T_i=[T_{i,1}\ldots  T_{i,n_i}]$, i.e. $T_{i,1}T_{i,1}^*+\cdots +T_{i,n_i}T_{i,n_i}^*\leq I$,  such that
 \begin{equation*}
 T_{i,s} T_{j,t}=\lambda_{ij}(s,t)T_{j,t}T_{i,s}
\end{equation*}
for any $i,j\in \{1,\ldots, k\}$ with $i\neq j$ and any $s\in \{1,\ldots, n_i\}$, $t\in \{1,\ldots, n_j\}$,
and such that
$$
\Delta_{rT}(I):=
(id-\Phi_{rT_1})\circ\cdots \circ (id-\Phi_{rT_k})(I)\geq 0,\qquad r\in [0,1),
$$
where $\Phi_{rT_i}:B(\cH)\to B(\cH)$  is the completely positive linear map defined by $\Phi_{rT_i}(X):=\sum_{s=1}^{n_i} r^2T_{i,s}XT_{i,s}^*$.

 We prove  a noncommutative von Neumann inequality \cite{vN} in this setting. More precisely,
 if  $T=(T_1,\ldots, T_k)$ is a  $k$-tuple in the regular  $\Lambda$-polyball  ${\bf B}_\Lambda(\cH)$, then
$$
\|p(\{T_{i,s}\}, \{T_{i,s}^*\})\|\leq  \|p(\{S_{i,s}\}, \{S_{i,s}^*\})\|
$$
for any polynomial  $p(\{S_{i,s}\}, \{S_{i,s}^*\})$ of the form
\begin{equation*}
p(\{S_{i,s}\}, \{S_{i,s}^*\})= \sum a_{(\alpha_1,\ldots, \alpha_k,\beta_1,\ldots, \beta_k)}S_{1,\alpha_1}\cdots S_{k,\alpha_k}S_{1,\beta_1}^*\cdots S_{k,\beta_k}^*.
\end{equation*}
 For a more general result regarding the noncommutative Berezin transform associated with the elements of  ${\bf B}_\Lambda(\cH)$,     we refer the reader to  Theorem \ref{Berezin-transf} of Section 4.

 One of the goals of Section  5 is to classify the Beurling type \cite{Be} jointly invariant subspaces of   the universal  standard $k$-tuple  $S=(S_1,\ldots, S_k)$.
We prove, in this section,  that there is an isometric multi-analytic operator
$\Psi: \ell^2(\FF_{n_1}^+\times\cdots \times \FF_{n_k}^+)\otimes \cL\to
\ell^2(\FF_{n_1}^+\times\cdots \times \FF_{n_k}^+)\otimes \cK$, i.e.
$
\Psi(S_{i,s}\otimes I_\cL)=(S_{i,s}\otimes I_\cK)\Psi
$
for any $i\in\{1,\ldots, k\}$ and  $s\in \{1,\ldots, n_i\}$,
such that
$$
\cM=\Psi \left(\ell^2(\FF_{n_1}^+\times\cdots \times \FF_{n_k}^+)\otimes \cL\right)
$$
if and only if
$$(id-\Phi_{S_1\otimes I_\cK})\circ\cdots \circ (id-\Phi_{S_k\otimes I_\cK})(P_\cM)\geq 0$$
if and only if
the  $k$-tuple $\left((S_1\otimes I_\cK)|_\cM,\ldots, (S_k\otimes I_\cK) |_\cM\right)$  of row isometries is doubly $\Lambda$-commuting.

 The second goal of Section 5 is to provide a  classification result for  the pure elements  $T=(T_1,\ldots, T_k)$ in the regular $\Lambda$-polyball (see Theorem \ref{classif1}).
 In particular,
 we obtain the following  classification  of the pure elements  in  the regular $\Lambda$-polybal  with  defect of rank one, which extends a result by Douglas and Foias \cite{DF} (which corresponds to the particular case when $n_1=\cdots=n_k=1$ and $\lambda_{ij}=1$) regarding the uniqueness of multi-variate canonical models.

  Let $T=(T_1,\ldots, T_k)\in B(\cH)^{n_1}\times \cdots \times B(\cH)^{n_k}$.   Then $T$ is a pure element in the regular $\Lambda$-polyball such that $\rank \Delta_T(I)=1$  if and only if  there is  a jointly co-invariant subspace
$\cM\subset \ell^2(\FF_{n_1}^+\times\cdots \times \FF_{n_k}^+)$   under  the isometries $S_{i,s}$,  where $i\in\{1,\ldots, k\}$ and  $s\in \{1,\ldots, n_i\}$, such that $T$ is  jointly unitarily equivalent to  the compression
$P_\cM S|_\cM:=(P_\cM S_{1}|_\cM,\ldots, P_\cM S_{k}|_\cM)$, where
$$
P_\cM S_{i}|_\cM:=[P_\cM S_{i,1}|_\cM \cdots P_\cM S_{i,n_i}|_\cM]
$$
and   $S=(S_1,\ldots, S_k)$ is the universal  standard $k$-tuple.
If $\cM'$ is another jointly co-invariant subspace under $S_{i,s}$, then
$
P_\cM S|_\cM$
  and
$P_{\cM'} S|_{\cM'} $
are unitarily equivalent if and only if $\cM=\cM'$.

 In the last section of the paper,  we show that any $k$-tuple $T$ in the regular $\Lambda$-polyball admits  a minimal    isometric dilation which is a $k$-tuple    of doubly $\Lambda$-commuting row isometries, uniquely determined up to an isomorphism (see Theorem \ref{iso-dil}). Using our Wold decompositions, we show that $T$ is a pure element in ${\bf B}_\Lambda(\cH)$ if and only if    its minimal isometric dilation  is a pure element in ${\bf B}_\Lambda(\cK)$.

In addition, we  prove that  the sequence
of $C^*$-algebras
$$
0\to \boldsymbol\cK\to C^*(\{S_{i,s}\})\to C_\Delta^*(\{V_{i,s}\})\to 0
$$
is exact, where  $\boldsymbol\cK$ is  the ideal of all  compact operators in $B(\ell^2(\FF_{n_1}^+\times\cdots \times \FF_{n_k}^+))$ and
 $C_\Delta^*(\{V_{i,s}\})$ is  the universal algebra  generated by a $k$-tuple $V=(V_1,\ldots, V_k)$ of doubly $\Lambda$-commuting row isometries such that
$\Delta_V(I)=0$.

Finally, in  the particular case when $n_1=\cdots =n_k=1$ and $\Lambda_{ij}=\lambda_{ij}\in \TT$ with
 $\lambda_{ji}=\bar \lambda_{ij}$ we obtain the following  extension of  Brehmer's  result \cite{Br} (which corresponds to  case when $\lambda_{ij}=1$).
  If $T=(T_1,\ldots, T_k)$ is a $k$-tuple in the $\Lambda$-polyball ${\bf B}_\Lambda(\cH)$, then there is a
Hilbert space $\widetilde\cK\supset \cH$ and a $k$-tuple  $U=(U_1,\ldots, U_k)$  of  doubly
 $\Lambda$-commuting  unitary operators on $\widetilde\cK$  such that \begin{equation*}
P_\cH\left(  U_{1}^{m_1^-}\cdots U_{k}^{m_k^-} {U_{1}^*}^{m_1^+}\cdots {U_{k}^*}^{m_k^+}\right)|_\cH\\
=  T_{1}^{m_1^-}\cdots T_{k}^{m_k^-} {T_{1}^*}^{m_1^+}\cdots {T_{k}^*}^{m_k^+}, \qquad (m_1,\ldots, m_k)\in \ZZ^k,
\end{equation*}
and the dilation $U=(U_1,\ldots, U_k)$ is minimal. Moreover,
 the  minimal  $\Lambda$-commuting  unitary dilation   is unique up to an isomorphism.

In a forthcoming  paper \cite{Po-twisted}, we use the results of the present paper to develop a multivariable functional calculus  for $k$-tuples of $\Lambda$-commuting row contractions   on noncommutative Hardy spaces associated with regular $\Lambda$-polyballs.  We  also study  the   characteristic functions   and  the associated multi-analytic models  for the elements of $B_\Lambda(\cH)$.

\bigskip

\section{Doubly $\Lambda$-commuting row isometries and Wold decompositions}

In this section, we obtain Wold decompositions for $k$-tuples of doubly $\Lambda$-commuting row isometries.
For each $i,j\in \{1,\ldots, k\}$ with $i\neq j$, let $\Lambda_{ij}:=[\lambda_{i,j}(s,t)]$, where $s\in \{1,\ldots, n_i\}$ and $t\in \{1,\ldots, n_j\}$, be an $n_i\times n_j$-matrix  with the entries in the torus $\TT:=\{z\in \CC: \  |z|=1\}$, and assume that $\Lambda_{j,i}=\Lambda_{i,j}^*$. Given   row isometries $V_i:=[V_{i,1}\cdots V_{i,n_i}]$, $i\in \{1,\ldots, k\}$, we say that  $V:=(V_1,\ldots, V_k)$  is a the $k$-tuple of {\it doubly $\Lambda$-commuting row isometries}  if
\begin{equation}\label{lac*} V_{i,s}^* V_{j,t}=\overline{\lambda_{i,j}(s,t)}V_{j,t}V_{i,s}^*
\end{equation}
for any $i,j\in \{1,\ldots, k\}$ with $i\neq j$ and any $s\in \{1,\ldots, n_i\}$, $t\in \{1,\ldots, n_j\}$.
We note  that the relation above also  implies  relation
\begin{equation}\label{lac} V_{i,s} V_{j,t}=\lambda_{i,j}(s,t)V_{j,t}V_{i,s}.
\end{equation}
 Indeed, as in \cite{JPS}, one can easily see that,  since $V_{i,s}$ are isometries,  $\lambda_{i,j}(s,t)\in \TT$,  and  $\lambda_{j,i}(t,s)=\overline{\lambda_{i,j}(s,t)}$,
 we have
  $$
  \left(V_{i,s} V_{j,t}-\lambda_{i,j}(s,t)V_{j,t}V_{i,s}\right)^*(V_{i,s} V_{j,t}-\lambda_{i,j}(s,t)V_{j,t}V_{i,s})=0.
  $$

We remark that, in general,  relation \eqref{lac} does not imply relation \eqref{lac*}. However, if relation \eqref{lac} holds and
$$
V_{i,1}V_{i,1}^*+\cdots +V_{i,n_i} V_{i, n_i}^*=I,\qquad i\in \{1,\ldots, k\},
$$
then $V=(V_1,\ldots, V_k)$  is a the $k$-tuple of {\it doubly $\Lambda$-commuting row isometries}. Indeed, note  that, if $i\neq j$, then
$$
V_{j,t}V_{i,1}V_{i,1}^*+\cdots +V_{j,t}V_{i,n_i} V_{i, n_i}^*=V_{j,t}
$$
which, due to relation  \eqref{lac},  implies
$$
\lambda_{j,i}(t,1)V_{i,1} V_{j,t}V_{i,1}^*+\cdots +\lambda_{j,i}(t,n_i) V_{i, n_i}V_{j,t}V_{i,n_i}^*=V_{j,t}.
$$
Multiplying this equation by $V_{i,s}^*$ to the left and taking into account that $V_{i,s}^*V_{i,p}=\delta_{sp}I$, we obtain
$\lambda_{j,i}(t,s)V_{j,t}V_{i,s}^*=V_{i,s}^*  V_{j,t}$. Hence, using the fact that $\lambda_{i,j}(s,t)\in \TT$,  and  $\lambda_{j,i}(t,s)=\overline{\lambda_{i,j}(s,t)}$, we deduce that  relation \eqref{lac*} holds.

For each $i\in \{1,\ldots, k\}$, let $\FF_{n_i}^+$ be the unital free semigroup with generators $g_1^i,\ldots, g_{n_i}^i$ and neutral element $g_0^i$. The length of  $\alpha\in \FF_{n_i}^+$ is defined by $|\alpha|=0$ if $\alpha=g_0^i$ and $|\alpha|=m$ if $\alpha=g_{p_1}^i\cdots g_{p_m}^i\in \FF_{n_i}^+$, where $p_1,\ldots, p_m\in \{1,\ldots, n_i\}$.
If $T_i:=[T_{i,1}\cdots T_{i,n_i}]$,  we  use the notation
$T_{i,\alpha}:=T_{i,p_1}\cdots T_{i,p_m}$ and  $T_{i,g_0^i}:=I$.
Let  $V_i:=[V_{i,1}\cdots V_{i,n_i}] $ be  a row isometry with $V_{i,s}\in B(\cK)$. We say that $V_i$ is a {\it Cuntz row isometry} if
$$
V_{i,1}V_{i,1}^*+\cdots +V_{i,n_i} V_{i,n_i}^*=I_\cK.
$$
We call the row isometry $V_i$ {\it pure}  if there is a wandering subspace $\cL\subset \cK$ , i.e. $\cL\perp V_{i,\alpha} \cL$ for any $\alpha\in \FF_{n_i}, |\alpha|\geq 1$, such that
$\cK=\oplus_{\alpha\in \FF_{n_i}^+} V_{i,\alpha}\cL$.
Let us recall the Wold type decomposition for row isometries obtained in \cite{Po-isometric}.

\begin{theorem} \label{Wold1} Let  $V_i:=[V_{i,1}\cdots V_{i,n_i}] $ be  a row isometry with $V_{i,m}\in B(\cK)$. Then the Hilbert space $\cK$ admits an orthogonal decomposition $\cK=\cK_i^{(s)}\oplus \cK_i^{(c)}$ with the following properties:
\begin{enumerate}
\item[(i)]  $\cK_i^{(s)}$ and $\cK_i^{(c)}$ are reducing subspaces for each isometry $V_{i,m}$, $m\in \{1,\ldots, n_i\}$;

\item[(ii)] $[V_{i,1}|_{\cK_i^{(s)}}\cdots V_{i,n_i}|_{\cK_i^{(s)}}] $ is a pure row isometry on $\cK_i^{(s)}$;

\item[(iii)] $[V_{i,1}|_{\cK_i^{(c)}}\cdots V_{i,n_i}|_{\cK_i^{(c)}}] $ is a Cuntz row isometry on $\cK_i^{(c)}$.
\end{enumerate}
Moreover, the decomposition is uniquely determined, namely
$$
\cK_i^{(s)}=\bigoplus_{\alpha\in \FF_{n_i}^+} V_{i,\alpha}\cL, \qquad  \text{ where } \ \cL=\cK \ominus \left(\bigoplus_{m=1}^{n_i}V_{i,m}\cK\right),
$$
and
$$
\cK_i^{(c)}=\bigcap_{p=0}^\infty \left(\bigoplus_{\alpha\in \FF_{n_i}^+, |\alpha|=p} V_{i,\alpha}\cK\right).
$$
\end{theorem}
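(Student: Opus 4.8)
The plan is to run the noncommutative Wold decomposition exactly as for a single row isometry, since the index $i$ is fixed throughout and the $\Lambda$-commutation relations play no role here. Set $\cL:=\cK\ominus\bigl(\bigoplus_{m=1}^{n_i}V_{i,m}\cK\bigr)$ and, for $p\geq 0$, $M_p:=\bigoplus_{|\alpha|=p}V_{i,\alpha}\cK$, so $M_0=\cK$. The first step is to record that, because $V_{i,s}^*V_{i,t}=\delta_{st}I$, the spaces $V_{i,\alpha}\cK$ with $|\alpha|=p$ are mutually orthogonal and each $V_{i,\alpha}$ is an isometry; applying $V_{i,\alpha}$ to the splitting $\cK=\cL\oplus\bigoplus_m V_{i,m}\cK$ gives $V_{i,\alpha}\cK=V_{i,\alpha}\cL\oplus\bigoplus_m V_{i,\alpha m}\cK$, whence $M_p=\bigl(\bigoplus_{|\alpha|=p}V_{i,\alpha}\cL\bigr)\oplus M_{p+1}$. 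Thus $\cK=M_0\supseteq M_1\supseteq\cdots$ is a decreasing chain, and telescoping yields the orthogonal decomposition $\cK=\bigl(\bigoplus_{\alpha\in\FF_{n_i}^+}V_{i,\alpha}\cL\bigr)\oplus\bigcap_p M_p$. I would then define $\cK_i^{(s)}$ and $\cK_i^{(c)}$ to be these two summands, which already matches the asserted formulas.

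Second, I would verify that $\cL$ is wandering and check the three properties. Since $\cL\perp V_{i,m}\cK$ and each $V_{i,\gamma}$ is isometric, $V_{i,\alpha}\cL\perp V_{i,\beta}\cL$ whenever $\alpha\neq\beta$, so the sum defining $\cK_i^{(s)}$ is orthogonal, $\cL$ is wandering, and the restriction of $V_i$ to $\cK_i^{(s)}$ is pure by construction, giving (ii). For the reducing property (i), invariance $V_{i,m}\cK_i^{(s)}\subseteq\cK_i^{(s)}$ is immediate from $V_{i,m}V_{i,\alpha}\cL=V_{i,m\alpha}\cL$, while for co-invariance I compute $V_{i,m}^*V_{i,\alpha}\cL$ using $V_{i,m}^*V_{i,p}=\delta_{mp}I$: it equals $V_{i,\beta}\cL$ if $\alpha=m\beta$, equals $0$ if $\alpha$ begins with some $p\neq m$, and equals $V_{i,m}^*\cL=0$ if $\alpha$ is the empty word. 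Hence $\cK_i^{(s)}$ reduces each $V_{i,m}$, and so does its orthogonal complement $\cK_i^{(c)}$. For (iii), I would observe that $\sum_m V_{i,m}V_{i,m}^*$ is precisely the orthogonal projection onto $M_1$; since $\cK_i^{(c)}=\bigcap_p M_p\subseteq M_1$ reduces the $V_{i,m}$, this projection fixes every vector of $\cK_i^{(c)}$, which is exactly the Cuntz relation on $\cK_i^{(c)}$.

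Third, for uniqueness, suppose $\cK=\cK'\oplus\cK''$ is any reducing decomposition with $V_i|_{\cK'}$ pure and $V_i|_{\cK''}$ Cuntz. Because both summands reduce $V_i$, one has $M_p=(M_p\cap\cK')\oplus(M_p\cap\cK'')$ for every $p$. On the Cuntz part the relation $\sum_m V_{i,m}V_{i,m}^*=I$ forces $\bigoplus_m V_{i,m}\cK''=\cK''$, hence $M_p\cap\cK''=\cK''$ for all $p$; on the pure part, writing $\cK'=\bigoplus_\alpha V_{i,\alpha}\cL'$ gives $M_p\cap\cK'=\bigoplus_{|\alpha|\geq p}V_{i,\alpha}\cL'$, whose intersection over $p$ is $\{0\}$. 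Intersecting over $p$ then yields $\bigcap_p M_p=\{0\}\oplus\cK''=\cK''$, so $\cK''=\cK_i^{(c)}$ and hence $\cK'=\cK_i^{(s)}$; the same bookkeeping identifies the defect space as $\cL'=\cK\ominus\bigoplus_m V_{i,m}\cK=\cL$, confirming the explicit formulas.

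The only genuinely delicate point I anticipate is the combinatorial bookkeeping over $\FF_{n_i}^+$: checking that the infinite direct sum $\bigoplus_\alpha V_{i,\alpha}\cL$ is truly orthogonal and that $\bigcap_p M_p$ splits compatibly with an arbitrary reducing decomposition. Once these are in place, the operator-theoretic content—identifying $\sum_m V_{i,m}V_{i,m}^*$ with $P_{M_1}$ and reading off the Cuntz relation—is routine.
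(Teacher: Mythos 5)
Your argument is correct, and it is exactly the standard wandering-subspace/telescoping proof of the noncommutative Wold decomposition; the paper itself gives no proof here, merely recalling the result from \cite{Po-isometric}, where the argument is the same one you present (splitting $M_p=\bigl(\bigoplus_{|\alpha|=p}V_{i,\alpha}\cL\bigr)\oplus M_{p+1}$ and telescoping, then identifying $\sum_m V_{i,m}V_{i,m}^*$ with $P_{M_1}$ for the Cuntz part and using the compatibility of the chain $\{M_p\}$ with any reducing decomposition for uniqueness). All the delicate points you flag (mutual orthogonality of the $V_{i,\alpha}\cL$, the splitting $\bigcap_p(A_p\oplus B_p)=(\bigcap_p A_p)\oplus(\bigcap_p B_p)$ along fixed orthogonal reducing subspaces) check out.
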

An alternative description of the subspaces $\cK_i^{(s)}$ and $\cK_i^{(c)}$ is the following:
\begin{equation} \label{alt}
\begin{split}
\cK_i^{(s)}&=\left\{ h\in \cK: \  \lim_{q\to\infty} \sum_{\alpha\in \FF_{n_i}^+, |\alpha|=q} \|V_{i,\alpha}^*h\|^2=0\right\},  \\
\cK_i^{(c)}&=\left\{ h\in \cK: \    \sum_{\alpha\in \FF_{n_i}^+, |\alpha|=q} \|V_{i,\alpha}^*h\|^2=\|h\|^2\ \text{ for every } \  q\in \NN\right\}.
\end{split}
\end{equation}

Under the notations of Theorem \ref{Wold1}, let $P_i^{(s)}$ and  $P_i^{(c)}$  be the orthogonal projections of $\cK$ onto  the subspaces $\cK_i^{(s)}$ and $\cK_i^{(c)}$, respectively. According to  \cite{Po-multi} , Theorem \ref{Wold1}  implies the following relations:
\begin{enumerate}
\item[(a)]
$P_i^{(s)}=\text{\rm SOT-}\lim_{p\to\infty}\sum_{q=0}^p \sum_{\alpha\in \FF_{n_i}^+, |\alpha|=q} V_{i,\alpha}(I-\sum_{m=1}^{n_i} V_{i,m} V_{i,m}^*)V_{i,\alpha}^*$;
\item[(b)]
$P_i^{(c)}=\text{\rm SOT-}\lim_{q\to\infty}  \sum_{\alpha\in \FF_{n_i}^+, |\alpha|=q} V_{i,\alpha} V_{i,\alpha}^*
$;
\item[(c)] $I_\cK=P_i^{(s)} +P_i^{(c)}$ and  $P_i^{(s)} P_i^{(c)}=0$;

\item[(d)]  $P_i^{(s)}$ and  $P_i^{(c)}$ commute with $V_{i,m}$ for any $m\in \{1,\ldots, k\}$.

\end{enumerate}

A simple consequence   of Theorem \ref{Wold1} and relation \eqref{alt} is the following.

\begin{proposition} \label{charact}
 Let $\cM\subset \cK$ be  a reducing  subspace under each isometry  $V_{i,m}$, $m\in \{1,\ldots, n_i\}$ and let $P_\cM$ be the orthogonal projection of $\cK$ onto $\cM$. Then the following statements  hold.
\begin{enumerate}
\item[(i)]  $[V_{i,1}|_\cM \cdots V_{i,n_i}|_\cM]$ is a Cuntz row isometry on $\cM$ if and only if
 $P_\cM P_i^{(c)}=P_\cM$.

 \item[(ii)]   $[V_{i,1}|_\cM \cdots V_{i,n_i}|_\cM]$ is a pure row isometry on $\cM$ if and only if
 $P_\cM P_i^{(s)}=P_\cM$.
\end{enumerate}

\end{proposition}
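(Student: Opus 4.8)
The plan is to reduce both statements to the intrinsic Wold decomposition of the restricted row isometry together with the alternative description \eqref{alt}. First I would use the reducing hypothesis in an essential way: since $\cM$ reduces each $V_{i,m}$, it is invariant under both $V_{i,m}$ and $V_{i,m}^*$, so $W_{i,m}:=V_{i,m}|_\cM$ is itself a row isometry on $\cM$ and, crucially, its adjoint is the restriction of the adjoint, $W_{i,m}^*=V_{i,m}^*|_\cM$. Iterating, $W_{i,\alpha}^*h=V_{i,\alpha}^*h$ for every $\alpha\in\FF_{n_i}^+$ and every $h\in\cM$. This identity is the technical heart of the argument, because it lets me evaluate the sums $\sum_{|\alpha|=q}\|W_{i,\alpha}^*h\|^2$ occurring in \eqref{alt} using the ambient operators $V_{i,\alpha}^*$; note that it genuinely requires $\cM$ to be \emph{reducing} and not merely invariant.

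Next I would apply Theorem \ref{Wold1} to the row isometry $W:=[W_{i,1}\cdots W_{i,n_i}]$ acting on $\cM$, obtaining its own orthogonal decomposition $\cM=\cM^{(s)}\oplus\cM^{(c)}$ into a pure part and a Cuntz part. By the defining properties (ii)--(iii) and the uniqueness in Theorem \ref{Wold1}, $W$ is a Cuntz row isometry precisely when its pure part is trivial, i.e.\ $\cM=\cM^{(c)}$, and $W$ is a pure row isometry precisely when $\cM=\cM^{(s)}$.

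Then I would combine the alternative description \eqref{alt}, applied \emph{on $\cM$} to the operators $W_{i,m}$, with the identity $W_{i,\alpha}^*h=V_{i,\alpha}^*h$ for $h\in\cM$. Comparing the resulting formulas with those defining $\cK_i^{(s)}$ and $\cK_i^{(c)}$ in \eqref{alt} gives at once
$$
\cM^{(c)}=\cM\cap\cK_i^{(c)},\qquad \cM^{(s)}=\cM\cap\cK_i^{(s)}.
$$
Hence $W$ is Cuntz iff $\cM=\cM\cap\cK_i^{(c)}$, that is iff $\cM\subseteq\cK_i^{(c)}$; and $W$ is pure iff $\cM\subseteq\cK_i^{(s)}$.

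Finally I would translate the subspace inclusions into the stated projection identities. Since $P_\cM$ and $P_i^{(c)}$ are self-adjoint projections, $P_\cM P_i^{(c)}=P_\cM$ is equivalent, after taking adjoints, to $P_i^{(c)}P_\cM=P_\cM$, which holds iff $\ran P_\cM\subseteq\ran P_i^{(c)}$, i.e.\ $\cM\subseteq\cK_i^{(c)}$; the same reasoning applies with $P_i^{(s)}$. Assembling these equivalences yields (i) and (ii). I do not expect any serious obstacle here: the whole proposition is routine once the reducing hypothesis has been exploited to obtain $W_{i,\alpha}^*=V_{i,\alpha}^*|_\cM$, which is the only point that must be handled with care.
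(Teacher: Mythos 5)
Your argument is correct and follows exactly the route the paper intends: the paper gives no explicit proof, stating only that the proposition is ``a simple consequence of Theorem \ref{Wold1} and relation \eqref{alt}'', and your write-up is precisely the elaboration of that remark (restrict to $\cM$ using the reducing hypothesis so that $W_{i,\alpha}^*=V_{i,\alpha}^*|_\cM$, apply the Wold decomposition and \eqref{alt} on $\cM$, and convert the inclusions $\cM\subseteq\cK_i^{(c)}$, $\cM\subseteq\cK_i^{(s)}$ into the projection identities). No gaps.
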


Now, using Theorem \ref{Wold1} and Proposition \ref{charact}, one can easily deduce the following  characterization of pure row isometries and Cuntz  row isometries, respectively.
\begin{corollary}
Let  $[V_{i,1}\cdots V_{i,n_i}] $ be  a row isometry with $V_{i,m}\in B(\cK)$. Then  the following statements hold.
\begin{enumerate}
\item[(i)]
$[V_{i,1}\cdots V_{i,n_i}] $ is a pure row isometry if and only if  the only reducing subspace $\cM\subset \cK$ for all  $V_{i,m}$, $m\in \{1,\ldots, n_i\}$, such that $[V_{i,1}|_\cM \cdots V_{i,n_i}|_\cM]$ is a Cuntz row isometry on $\cM$ is $\cM=\{0\}$.
\item[(ii)]
$[V_{i,1}\cdots V_{i,n_i}] $ is a Cuntz row isometry if and only if  the only reducing subspace $\cM\subset \cK$ for all  $V_{i,m}$, $m\in \{1,\ldots, n_i\}$, such that $[V_{i,1}|_\cM \cdots V_{i,n_i}|_\cM]$ is a pure row isometry on $\cM$ is $\cM=\{0\}$.
\end{enumerate}
\end{corollary}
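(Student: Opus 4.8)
The plan is to deduce both equivalences directly from the Wold-type decomposition of Theorem \ref{Wold1} together with the subspace characterization in Proposition \ref{charact}, so that essentially no new computation is required. The guiding observation is that a row isometry $[V_{i,1}\cdots V_{i,n_i}]$ is pure precisely when its canonical Cuntz part is trivial, i.e. $\cK_i^{(c)}=\{0\}$, and is Cuntz precisely when its canonical pure part is trivial, i.e. $\cK_i^{(s)}=\{0\}$; equivalently, in the language of relations (a)--(d), purity means $P_i^{(c)}=0$ (so $P_i^{(s)}=I_\cK$) and the Cuntz property means $P_i^{(s)}=0$ (so $P_i^{(c)}=I_\cK$).

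For the forward implication in (i), I would assume $V_i$ is pure, so that $P_i^{(c)}=0$. If $\cM$ is any reducing subspace for which $[V_{i,1}|_\cM\cdots V_{i,n_i}|_\cM]$ is Cuntz, then Proposition \ref{charact}(i) gives $P_\cM=P_\cM P_i^{(c)}=0$, forcing $\cM=\{0\}$. For the converse, the Wold decomposition supplies the reducing subspace $\cK_i^{(c)}$ on which, by Theorem \ref{Wold1}(iii), $V_i$ restricts to a Cuntz row isometry; the hypothesis that every such subspace is trivial then yields $\cK_i^{(c)}=\{0\}$, which by Theorem \ref{Wold1}(ii) means $\cK=\cK_i^{(s)}$ and hence $V_i$ is pure.

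Part (ii) is handled by the mirror-image argument, with the Cuntz property replacing purity and $P_i^{(s)}$ replacing $P_i^{(c)}$. Assuming $V_i$ Cuntz gives $P_i^{(s)}=0$, and for any reducing $\cM$ with pure restriction Proposition \ref{charact}(ii) gives $P_\cM=P_\cM P_i^{(s)}=0$, so $\cM=\{0\}$; conversely, the canonical pure part $\cK_i^{(s)}$ is a reducing subspace on which $V_i$ is pure by Theorem \ref{Wold1}(ii), so its triviality forces $\cK=\cK_i^{(c)}$ and $V_i$ is Cuntz.

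Since the statement is an immediate consequence of already-established structural results, there is no genuine obstacle here; the only point requiring care is to invoke Proposition \ref{charact} in the correct direction in each half of each equivalence—using it to eliminate nontrivial invariant subspaces in the ``only if'' directions, and using the explicit canonical subspaces $\cK_i^{(c)}$ and $\cK_i^{(s)}$ furnished by Theorem \ref{Wold1} to witness the ``if'' directions.
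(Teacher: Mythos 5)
Your proof is correct and follows exactly the route the paper intends: the corollary is stated there as an immediate consequence of Theorem \ref{Wold1} and Proposition \ref{charact} with no written proof, and your argument — identifying purity with $\cK_i^{(c)}=\{0\}$, the Cuntz property with $\cK_i^{(s)}=\{0\}$, using Proposition \ref{charact} to kill nontrivial reducing subspaces in one direction and the canonical Wold summands as witnesses in the other — is precisely the deduction being left to the reader.
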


Another consequence of Theorem \ref{Wold1} is the following.
\begin{proposition} Let  $V_i:=[V_{i,1}\cdots V_{i,n_i}] $ be  a row isometry with $V_{i,m}\in B(\cK)$. Then the  following statements hold.
\begin{enumerate}
\item[(i)] If $\cM, \cN$ are reducing  subspaces under each isometry  $V_{i,m}$, $m\in \{1,\ldots, n_i\}$,  such that
$[V_{i,1}|_\cM \cdots V_{i,n_i}|_\cM]$ is a pure row isometry on $\cM$ and $[V_{i,1}|_\cN \cdots V_{i,n_i}|_\cN]$ is a Cuntz row isometry on $\cN$,
then  $\cM\perp \cN$.

\item[(ii)] If $\cK=\cM\oplus \cN$ (algebraically) and $\cM, \cN$ are as in part (i), then
$$
\cM=\cK_i^{(s)}\quad \text{ and }\quad \cN=\cK_i^{(c)}.
$$

\end{enumerate}

\end{proposition}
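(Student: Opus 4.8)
The plan is to deduce everything from Proposition~\ref{charact}, which converts the hypotheses ``pure on $\cM$'' and ``Cuntz on $\cN$'' into subspace containments inside the canonical Wold summands. Concretely, since $\cM$ reduces each $V_{i,m}$ and $[V_{i,1}|_\cM\cdots V_{i,n_i}|_\cM]$ is a pure row isometry, Proposition~\ref{charact}(ii) gives $P_\cM P_i^{(s)}=P_\cM$; taking adjoints this reads $P_i^{(s)}P_\cM=P_\cM$, so $P_\cM h\in\ran P_i^{(s)}=\cK_i^{(s)}$ for every $h\in\cK$, i.e. $\cM\subseteq\cK_i^{(s)}$. Likewise, applying Proposition~\ref{charact}(i) to the Cuntz row isometry on $\cN$ yields $\cN\subseteq\cK_i^{(c)}$.

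For part (i) I would then simply invoke the relation $P_i^{(s)}P_i^{(c)}=0$ recorded in property~(c) above, which says that the two Wold summands are orthogonal, $\cK_i^{(s)}\perp\cK_i^{(c)}$. Since $\cM\subseteq\cK_i^{(s)}$ and $\cN\subseteq\cK_i^{(c)}$, this orthogonality forces $\cM\perp\cN$, as claimed.

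For part (ii), note first that part (i) upgrades the algebraic splitting $\cK=\cM\oplus\cN$ to an orthogonal one, so that $\cM$ and $\cN$ are mutually orthogonal closed subspaces whose sum is all of $\cK$. Now take any $h\in\cK_i^{(s)}$ and write $h=m+n$ with $m\in\cM\subseteq\cK_i^{(s)}$ and $n\in\cN\subseteq\cK_i^{(c)}$. Then $n=h-m\in\cK_i^{(s)}$, while also $n\in\cK_i^{(c)}$; since $\cK_i^{(s)}\cap\cK_i^{(c)}=\{0\}$ by the orthogonal decomposition of Theorem~\ref{Wold1}, we get $n=0$ and hence $h=m\in\cM$. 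This proves $\cK_i^{(s)}\subseteq\cM$, and together with the reverse containment from the first paragraph, $\cM=\cK_i^{(s)}$; the identical argument starting from $h\in\cK_i^{(c)}$ gives $\cN=\cK_i^{(c)}$.

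The only point that requires genuine care—rather than being a formal consequence of the containments—is the fullness hypothesis $\cK=\cM+\cN$ in part (ii): without it the containments $\cM\subseteq\cK_i^{(s)}$ and $\cN\subseteq\cK_i^{(c)}$ could well be strict, and one would obtain only a pair of reducing subspaces sitting inside the canonical summands rather than equal to them. Thus the essential work is already carried out by Proposition~\ref{charact}, and the remainder is the elementary observation that two orthogonal direct sum decompositions of $\cK$, one refining the other summand by summand and both exhausting $\cK$, must coincide.
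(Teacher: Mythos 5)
Your argument is correct and follows the paper's own route exactly: Proposition \ref{charact} gives the containments $\cM\subseteq\cK_i^{(s)}$ and $\cN\subseteq\cK_i^{(c)}$, orthogonality of the Wold summands yields (i), and the exhaustion $\cK=\cM+\cN$ together with Theorem \ref{Wold1} upgrades the containments to equalities in (ii). You simply spell out the elementary final step in (ii) that the paper leaves implicit.
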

\begin{proof} Under the hypothesis of item (i) and applying Proposition \ref{charact}, we deduce that
$ \cM\subset \cK_i^{(s)}$ and $ \cN\subset \cK_i^{(c)}$. Since $\cK_i^{(s)}\perp \cK_i^{(c)}$, the result follows. Note that part (ii), is due to part (i) and Theorem \ref{Wold1}. The proof is complete.
\end{proof}

\begin{proposition} \label{rel} Let $V_i:=[V_{i,1}\cdots V_{i,n_i}]$, $i\in \{1,\ldots, k\}$,  be    doubly $\Lambda$-commuting row isometries  and let $i,j\in \{1,\ldots, k\}$ with $i\neq j$. Then the following statements hold.
\begin{enumerate}
\item[(i)] $V_{i,s}$ commutes with $V_{j,\alpha}V_{j,\alpha}^*$ for any $\alpha\in \FF_{n_j}^+$ and $s\in \{1,\ldots, n_j\}$.

\item[(ii)] $V_{i,s}$  and $V_{i,s}^*$ commute with
$$
V_{j,\alpha}\left(I-\sum_{t=1}^{n_j} V_{j,t}V_{j,t}^*\right)V_{j,\alpha}^*,\qquad \alpha\in \FF_{n_j}^+.
$$
 \end{enumerate}
\end{proposition}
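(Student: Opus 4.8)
The plan is to reduce everything to a single phase-commutation identity for the generators and then let the unimodularity of the $\lambda$'s do the work. First I would record the two ``commute up to a scalar'' relations for $i\neq j$: relation \eqref{lac} gives $V_{i,s}V_{j,t}=\lambda_{i,j}(s,t)V_{j,t}V_{i,s}$, while taking the adjoint of \eqref{lac*} and using $\overline{\overline{\lambda}}=\lambda$ together with $|\lambda_{i,j}(s,t)|=1$ yields the companion identity $V_{i,s}V_{j,t}^*=\overline{\lambda_{i,j}(s,t)}\,V_{j,t}^*V_{i,s}$. In words, moving $V_{i,s}$ past a creation factor $V_{j,t}$ costs a phase $\lambda_{i,j}(s,t)$, and moving it past $V_{j,t}^*$ costs the conjugate phase.

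For part (i), I would first dispatch the single-generator case by using the two identities in succession: $V_{i,s}V_{j,t}V_{j,t}^*=\lambda_{i,j}(s,t)\,\overline{\lambda_{i,j}(s,t)}\,V_{j,t}V_{j,t}^*V_{i,s}=V_{j,t}V_{j,t}^*V_{i,s}$. For a general word $\alpha=g_{t_1}^j\cdots g_{t_m}^j$, iterating the generator identities gives $V_{i,s}V_{j,\alpha}=\big(\prod_{r=1}^m\lambda_{i,j}(s,t_r)\big)V_{j,\alpha}V_{i,s}$ and $V_{i,s}V_{j,\alpha}^*=\big(\prod_{r=1}^m\overline{\lambda_{i,j}(s,t_r)}\big)V_{j,\alpha}^*V_{i,s}$; multiplying these, the accumulated phase telescopes to $\prod_{r}\lambda_{i,j}(s,t_r)\overline{\lambda_{i,j}(s,t_r)}=1$, so $V_{i,s}$ genuinely commutes with $V_{j,\alpha}V_{j,\alpha}^*$. (Equivalently, one can induct on $|\alpha|$, peeling off the outermost generator so that the two crossing phases again cancel.)

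For part (ii), I would expand $V_{j,\alpha}\big(I-\sum_{t=1}^{n_j}V_{j,t}V_{j,t}^*\big)V_{j,\alpha}^*=V_{j,\alpha}V_{j,\alpha}^*-\sum_{t=1}^{n_j}V_{j,\beta_t}V_{j,\beta_t}^*$, where $\beta_t:=\alpha g_t^j\in\FF_{n_j}^+$. Thus this operator is a finite linear combination of range projections of the form $V_{j,\gamma}V_{j,\gamma}^*$, and part (i) immediately shows it commutes with $V_{i,s}$. Since $I-\sum_t V_{j,t}V_{j,t}^*$ is an orthogonal projection and the map $X\mapsto V_{j,\alpha}XV_{j,\alpha}^*$ preserves self-adjointness, the whole operator is self-adjoint; taking adjoints in the identity $V_{i,s}X=XV_{i,s}$ then gives $V_{i,s}^*X=XV_{i,s}^*$, which settles the $V_{i,s}^*$ assertion at once.

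There is no serious obstacle here: the only thing requiring care is the bookkeeping of phases and the direction of conjugation when crossing $V_{j,t}$ versus $V_{j,t}^*$. The essential point—that the phase accrued crossing $V_{j,\alpha}$ is exactly undone crossing $V_{j,\alpha}^*$ because the $\lambda$'s are unimodular—is what upgrades the ``commutation up to a scalar'' of the generators to honest commutation of $V_{i,s}$ with the ranges $V_{j,\alpha}V_{j,\alpha}^*$ and with the wandering projections in part (ii).
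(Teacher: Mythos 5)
Your proof is correct and follows essentially the same route as the paper: the same phase-cancellation computation, moving $V_{i,s}$ past $V_{j,\alpha}$ and then $V_{j,\alpha}^*$ so that the unimodular factors cancel, with the $V_{i,s}^*$ statement obtained by taking adjoints of a commutation relation with a self-adjoint operator. Your expansion of the wandering projection into a linear combination of range projections $V_{j,\gamma}V_{j,\gamma}^*$ just makes explicit the step the paper dismisses with ``item (ii) is clear.''
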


\begin{proof} Let $\alpha=g_{p_1}^j\cdots g_{p_m}^j\in \FF_{n_j}^+$, where $p_1,\ldots, p_m\in \{1,\ldots, n_j\}$. Using the relations  \eqref{lac*}, \eqref{lac},  and the fact that  $\lambda_{j,i}(t,s)=\overline{\lambda_{i,j}(s,t)}$ and $\lambda_{i,j}(s,t)\in \TT$, we have
\begin{equation*}
\begin{split}
V_{i,s}\left(V_{j,\alpha}V_{j,\alpha}^*\right)&= V_{i,s}V_{j,p_1}\cdots V_{j,p_m}V_{j,p_m}^*\cdots V_{j,p_1}^*\\
&=\lambda_{i,j}(s,p_1)\cdots \lambda_{i,j}(s,p_m)V_{j,p_1}\cdots V_{j,p_m} V_{i,s}
V_{j,p_m}^*\cdots V_{j,p_1}^*\\
&=
\lambda_{i,j}(s,p_1)\cdots \lambda_{i,j}(s,p_m) \lambda_{j,i}(p_m,s)\cdots \lambda_{j,i}(p_1,s)V_{j,p_1}\cdots V_{j,p_m}
V_{j,p_m}^*\cdots V_{j,p_1}^*V_{i,s}\\
&=|\lambda_{i,j}(s,p_1)|^2\cdots |\lambda_{i,j}(s,p_m)|^2V_{j,p_1}\cdots V_{j,p_m}
V_{j,p_m}^*\cdots V_{j,p_1}^*V_{i,s}\\
&=V_{j,p_1}\cdots V_{j,p_m}
V_{j,p_m}^*\cdots V_{j,p_1}^*V_{i,s},
\end{split}
\end{equation*}
which proves item (i). Consequently, since $V_{i,s}$ commutes with $V_{j,\alpha}V_{j,\alpha}^*$ for any $\alpha\in \FF_{n_j}^+$ and $s\in \{1,\ldots, n_j\}$, so is $V_{i,s}^*$. Now,  item (ii) is clear.
\end{proof}

\begin{proposition}  \label{commutativity}
 Let $V_i:=[V_{i,1}\cdots V_{i,n_i}]$, $i\in \{1,\ldots, k\}$,  be    doubly $\Lambda$-commuting row isometries with $V_{i,m}\in B(\cK)$. If $\cK=\cK_i^{(s)}\oplus \cK_i^{(c)}$ is  the Wold  decomposition corresponding to the row isometry $V_i$, and   $P_i^{(s)}$, $P_i^{(c)}$  are  the orthogonal projections of $\cK$ onto  the subspaces $\cK_i^{(s)}$ and $\cK_i^{(c)}$, respectively, then the orthogonal projections
$$
P_1^{(s)},\ldots, P_k^{(s)}, P_1^{(c)},\ldots, P_k^{(c)}
$$
are pairwise commuting  and  they are also commuting with all the isometries $V_{i,m}$, where $i\in \{1,\ldots, k\}$ and $m\in \{1,\ldots, n_i\}$.

\end{proposition}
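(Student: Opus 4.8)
The plan is to reduce everything to two facts: that for $i\neq j$ each $V_{i,s}$ commutes with the Cuntz projection $P_j^{(c)}$, and that passing to a strong-operator-topology limit preserves commutation with a fixed bounded operator. Recall from relation (c) that $P_i^{(s)}=I-P_i^{(c)}$, so throughout it suffices to work with the projections $P_1^{(c)},\ldots,P_k^{(c)}$; once these are shown to commute with all the isometries and with one another, the corresponding statements for the $P_i^{(s)}$ follow automatically.

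First I would establish that the isometries commute with the projections. For $i=j$ this is exactly relation (d). For $i\neq j$ I would use the SOT-limit formula (b), namely $P_j^{(c)}=\mathrm{SOT}\text{-}\lim_{q}\sum_{|\alpha|=q}V_{j,\alpha}V_{j,\alpha}^*$. By Proposition \ref{rel}(i) every $V_{i,s}$ commutes with each summand $V_{j,\alpha}V_{j,\alpha}^*$, hence with every finite partial sum. Since left and right multiplication by the fixed operator $V_{i,s}$ are SOT-continuous, the commutation survives the limit and gives $V_{i,s}P_j^{(c)}=P_j^{(c)}V_{i,s}$. Taking adjoints and using that $P_j^{(c)}$ is self-adjoint shows $V_{i,s}^*$ commutes with $P_j^{(c)}$ as well. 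Combining this with relation (d) and with $P_j^{(s)}=I-P_j^{(c)}$ proves that every $P_i^{(s)},P_i^{(c)}$ commutes with every $V_{i,m}$, which is the second assertion of the proposition.

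With the isometries handled, the pairwise commutation of the projections is then immediate. Fix $i\neq j$. From the previous step both $V_{i,s}$ and $V_{i,s}^*$ commute with $P_j^{(c)}$, so each word $V_{i,\alpha}$ and its adjoint $V_{i,\alpha}^*$ commutes with $P_j^{(c)}$, and therefore so does each product $V_{i,\alpha}V_{i,\alpha}^*$ and each finite sum $\sum_{|\alpha|=q}V_{i,\alpha}V_{i,\alpha}^*$. Applying formula (b) for $P_i^{(c)}$ together with SOT-continuity of multiplication by the fixed operator $P_j^{(c)}$ yields $P_i^{(c)}P_j^{(c)}=P_j^{(c)}P_i^{(c)}$. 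Since $P_i^{(s)}=I-P_i^{(c)}$ and likewise for the index $j$, all of $P_1^{(s)},\ldots,P_k^{(s)},P_1^{(c)},\ldots,P_k^{(c)}$ lie in the commutative algebra generated by $\{P_1^{(c)},\ldots,P_k^{(c)},I\}$ and hence pairwise commute.

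The only delicate point is the repeated appeal to SOT-continuity: each commutation relation is an identity between SOT-limits, and one must be careful that the limit is taken after multiplication by a single fixed operator rather than between two varying nets. For this reason I organize the argument around the single fixed factor being multiplied in at each stage, first $V_{i,s}$ and then $P_j^{(c)}$, so that only the elementary fact that $X\mapsto AX$ and $X\mapsto XA$ are SOT-continuous for fixed bounded $A$ is required. No estimate beyond Proposition \ref{rel} enters, and I expect this bookkeeping, rather than any analytic difficulty, to be the main thing to get right.
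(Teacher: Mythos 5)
Your proof is correct, and it rests on the same two ingredients as the paper's: Proposition \ref{rel} for the commutation of the finite approximants, and the SOT formulas from Theorem \ref{Wold1} for passing to the limit. The organization, however, is different enough to be worth noting. The paper works with both families of approximating projections, $E_i(p_i)$ for $P_i^{(s)}$ and $F_i(p_i)$ for $P_i^{(c)}$, computes explicit formulas for the products $E_i(p_i)E_j(p_j)$, $F_i(p_i)F_j(p_j)$, $E_i(p_i)F_j(p_j)$, and then passes to a double limit in $p_i$ and $p_j$, justifying this by monotonicity of the resulting nets of projections. Your reduction $P_i^{(s)}=I-P_i^{(c)}$ eliminates the $E_i(p_i)$ entirely, and your two-stage argument (first commute $V_{i,s}$ past $P_j^{(c)}$, then commute $P_i^{(c)}$ past $P_j^{(c)}$) only ever requires multiplying a convergent sequence by a single fixed bounded operator, for which separate SOT-continuity of $X\mapsto AX$ and $X\mapsto XA$ suffices. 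This sidesteps the more delicate issue that SOT multiplication is not jointly continuous, which the paper has to address via monotonicity of the product nets; the price is nothing, since the statements for the $P_i^{(s)}$ follow for free from relation (c). Your final remark about which continuity is actually being used is exactly the right point to be careful about, and you handle it correctly.
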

\begin{proof} Let  $i,j\in \{1,\ldots, k\}$ with $i\neq j$ and let $p_i, p_j\in \NN$.  Consider the projections
$$
E_i(p_i):=\sum_{\alpha\in \FF_{n_i}^+, |\alpha|\leq p_i} V_{i,\alpha}\left(I-\sum_{m=1}^{n_i} V_{i,m}V_{i,m}^*\right)V_{i,\alpha}^*\quad \text{ and }\quad  F_i(p_i):=\sum_{\alpha\in \FF_{n_i}^+, |\alpha|=p_i} V_{i,\alpha}V_{i,\alpha}^*.
$$
According to  Proposition \ref{rel}, we have
\begin{equation}\label{EF}
\begin{split}
E_i(p_i)E_j(p_j)&=E_j(p_j)E_i(p_i)\\
F_i(p_i)F_j(p_j)&=F_j(p_j)F_i(p_i)\\
 E_i(p_i)F_j(p_j)&=F_j(p_j)E_i(p_i)
\end{split}
\end{equation}
for any $p_i, p_j\in \NN$.
 Let $\alpha=g_{i_1}^i\cdots g_{i_p}^i\in \FF_{n_i}^+$ and $\beta=g_{j_1}^j\cdots g_{j_q}^j\in \FF_{n_j}^+$. Using relation \eqref{lac},  we obtain
\begin{equation*}
V_{i,\alpha} V_{j,\beta}=\boldsymbol{\lambda}_{i,j}(\alpha,\beta)V_{j,\beta}
V_{i,\alpha},\qquad i\neq j,
\end{equation*}
where $$\boldsymbol{\lambda}_{i,j}(\alpha, \beta):=\prod_{a=1}^p\prod_{b=1}^q\lambda_{i,j}(i_a,j_b)\in \TT.$$
On the other hand,  Proposition \ref{rel},  implies
\begin{equation*}
\begin{split}
&V_{i,\alpha}\left(I-\sum_{m=1}^{n_i} V_{i,m}V_{i,m}^*\right)V_{i,\alpha}^* V_{j,\beta}\left(I-\sum_{t=1}^{n_j} V_{j,t}V_{j,t}^*\right)V_{j,\beta}^*\\
&\qquad \qquad=
V_{j,\beta} V_{i,\alpha}\left(I-\sum_{m=1}^{n_i} V_{i,m}V_{i,m}^*\right)V_{i,\alpha}^* \left(I-\sum_{t=1}^{n_j} V_{j,t}V_{j,t}^*\right)V_{j,\beta}^*\\
&\qquad \qquad=
V_{j,\beta} V_{i,\alpha}\left(I-\sum_{m=1}^{n_i} V_{i,m}V_{i,m}^*\right) \left(I-\sum_{t=1}^{n_j} V_{j,t}V_{j,t}^*\right)V_{i,\alpha}^*V_{j,\beta}^*\\
&\qquad\qquad =
\overline{\boldsymbol{\lambda}_{i,j}(\alpha,\beta)}\boldsymbol{\lambda}_{i,j}(\alpha,\beta)V_{i,\alpha} V_{j,\beta}
\left(I-\sum_{m=1}^{n_i} V_{i,m}V_{i,m}^*\right) \left(I-\sum_{t=1}^{n_j} V_{j,t}V_{j,t}^*\right)V_{j,\beta}^*V_{i,\alpha}^*\\
&\qquad\qquad =
 V_{i,\alpha} V_{j,\beta}
\left(I-\sum_{m=1}^{n_i} V_{i,m}V_{i,m}^*\right) \left(I-\sum_{t=1}^{n_j} V_{j,t}V_{j,t}^*\right)V_{j,\beta}^*V_{i,\alpha}^*.
\end{split}
\end{equation*}
Now, taking the appropriate sums, we deduce that
\begin{equation*}
\begin{split}
E_i(p_i)E_j(p_j)=
\sum_{\alpha\in \FF_{n_i}^+, |\alpha|\leq p_i}\sum_{\beta\in \FF_{n_j}^+, |\beta|\leq p_j}V_{i,\alpha}V_{j,\beta}
\left(I-\sum_{m=1}^{n_i} V_{i,m}V_{i,m}^*\right) \left(I-\sum_{t=1}^{n_j} V_{j,t}V_{j,t}^*\right)V_{j,\beta}^*V_{i,\alpha}^*.
\end{split}
\end{equation*}
Using again Proposition \ref{rel}, part (i), we have
$$V_{i,\alpha}V_{i,\alpha}^*V_{j,\beta}V_{j,\beta}^*
=V_{i,\alpha}V_{j,\beta}V_{j,\beta}^*V_{i,\alpha}^*,\qquad \alpha\in \FF_{n_i}^+, \beta\in \FF_{n_j}^+,
 $$
which implies relation

\begin{equation*}
F_i(p_i)F_j(p_j)
=\sum_{\alpha\in \FF_{n_i}^+, |\alpha|=p_i}\sum_{\beta\in \FF_{n_j}^+, |\beta|=p_j}V_{i,\alpha}V_{j,\beta}V_{j,\beta}^*V_{i,\alpha}^*.
\end{equation*}
Similarly, one can see that
$$
V_{i,\alpha}\left(I-\sum_{m=1}^{n_i} V_{i,m}V_{i,m}^*\right)V_{i,\alpha}^* V_{j,\beta} V_{j,\beta}^*=
V_{j,\beta}V_{i,\alpha}\left(I-\sum_{m=1}^{n_i} V_{i,m}V_{i,m}^*\right)V_{i,\alpha}^*  V_{j,\beta}^*
$$
for any $\alpha\in \FF_{n_i}^+, \beta\in \FF_{n_j}^+$,
which implies
$$
E_i(p_i)F_j(p_j)=\sum_{\beta\in \FF_{n_j}^+, |\beta|= p_j}\sum_{\alpha\in \FF_{n_i}^+, |\alpha|\leq p_i} V_{j,\beta}V_{i,\alpha}\left(I-\sum_{m=1}^{n_i}V_{i,m} V_{i,m}^*\right)
V_{i,\alpha}^* V_{j,\beta}^*.
$$
Recall that
$$P_i^{(s)}=\text{\rm SOT-}\lim_{p_i\to\infty} E_i(p_i),\quad P_i^{(c)}=\text{\rm SOT-}\lim_{p_i\to\infty} F_i(p_i),$$
 and $P_i^{(s)} P_i^{(c)}=0$ for any $i\in \{1,\ldots, k\}$. Since
 $\{E_i(p_i)E_j(p_j)\} $,
$\{F_i(p_i)F_j(p_j)\}$, and $\{ E_i(p_i)F_j(p_j)\}$ are increasing sequences of projections as $p_i\to \infty$ and $p_j\to \infty$, we can pass to the limit in relation  \eqref{EF} and obtain
$P_i^{(s)}P_j^{(s)}=P_j^{(s)}P_i^{(s)}$, $P_i^{(c)}P_j^{(c)}=P_j^{(c)}P_i^{(c)}$, and  $P_i^{(s)}P_j^{(c)}=P_j^{(c)}P_i^{(s)}$.

On the other hand,   according to  Proposition \ref{rel}, for  any $i,j\in \{1,\ldots, k\}$, $i\neq j$ the projections $E_i(p_i)$ and $F_i(p_i)$ are commuting with all the isometries $V_{j, t}$, where  $t\in \{1,\ldots, n_j\}$.  Taking the SOT-limits  as $p_i\to \infty$, we deduce that
$P_i^{(s)}$ and $P_i^{(c)}$ are commuting with all the isometries $V_{j, t}$.  Moreover, due to Theorem \ref{Wold1},  the projections $P_i^{(s)}$ and $P_i^{(c)}$ are commuting with all the isometries $V_{i, m}$, where $m\in \{1,\ldots, m\}$.
 The proof is complete.
\end{proof}

Now, we can present our first version of Wold decomposition for $k$-tuples of   doubly $\Lambda$-commuting row isometries.

\begin{theorem} \label{Wold2}    Let $V_i:=[V_{i,1}\cdots V_{i,n_i}]$, $i\in \{1,\ldots, k\}$,  be    doubly $\Lambda$-commuting row isometries with $V_{i,m}\in B(\cK)$.  Then $\cK$ admits a  unique orthogonal decomposition
$$
\cK=\bigoplus_{A\subset \{1,\ldots, k\}} \cK_A
$$
with the following properties:
\begin{enumerate}
\item[(i)]  for each  subset $A\subset \{1,\ldots, k\}$,  the subspace $\cK_A$ is  reducing  for all the isometries
$V_{i,m}$, where $i\in \{1,\ldots, k\}$ and $m\in \{1,\ldots, n_i\}$;

\item[(ii)]  if $i\in A$, then $V_i|_{\cK_A}:=[V_{i,1}|_{\cK_A}\cdots V_{i,n_i}|_{\cK_A}]$ is a pure row isometry;

\item[(iii)] if $i\in A^c$, then $V_i|_{\cK_A}:=[V_{i,1}|_{\cK_A}\cdots V_{i,n_i}|_{\cK_A}]$  is a Cuntz row isometry.

\end{enumerate}

Moreover, we have
$$
\cK_A=\left(\bigcap_{i\in A}\cK_i^{(s)}\right)\cap \left(\bigcap_{i\in A^c}\cK_i^{(c)}\right),
$$
where
$$
\cK_i^{(s)}:=\bigoplus_{\alpha\in \FF_{n_i}^+}V_{i,\alpha}\left(\cap_{m=1}^{n_i}\ker V_{i,m}^*\right)\quad \text{ and }\quad \cK_i^{(c)}:=\bigcap_{p=0}^\infty\left(\bigoplus_{\alpha\in \FF_{n_i}^+, |\alpha|=p}V_{i,\alpha} \cK\right).
$$
\end{theorem}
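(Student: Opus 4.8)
The plan is to build the decomposition from the single-variable Wold projections together with the crucial commutation relations already established in Proposition~\ref{commutativity}. For each $i$ we have the orthogonal projections $P_i^{(s)}$, $P_i^{(c)}$ onto $\cK_i^{(s)}$, $\cK_i^{(c)}$, satisfying $P_i^{(s)}+P_i^{(c)}=I_\cK$ and $P_i^{(s)}P_i^{(c)}=0$, and by Proposition~\ref{commutativity} the whole family $\{P_i^{(s)}, P_i^{(c)}\}_{i=1}^k$ is pairwise commuting and commutes with every $V_{i,m}$. For each subset $A\subset\{1,\ldots,k\}$ I would set
$$
P_A:=\Bigl(\prod_{i\in A}P_i^{(s)}\Bigr)\Bigl(\prod_{i\in A^c}P_i^{(c)}\Bigr),
$$
which is a well-defined orthogonal projection since all factors commute, and put $\cK_A:=\ran P_A$. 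A short computation then gives the decomposition: if $A\neq B$ pick $i\in A\triangle B$, so one of $P_A,P_B$ carries the factor $P_i^{(s)}$ and the other $P_i^{(c)}$, whence $P_AP_B=0$; and expanding the product $\prod_{i=1}^k(P_i^{(s)}+P_i^{(c)})=I_\cK$ term by term (legitimate because everything commutes) yields $\sum_A P_A=I_\cK$. Thus $\cK=\bigoplus_{A}\cK_A$ orthogonally.

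For the stated properties, since $P_A$ is a product of projections each commuting with all $V_{i,m}$, it commutes with every $V_{i,m}$, so each $\cK_A$ is reducing, giving (i). The explicit formula is immediate from the elementary fact that a product of commuting orthogonal projections is the projection onto the intersection of their ranges, so $\cK_A=\bigcap_{i\in A}\cK_i^{(s)}\cap\bigcap_{i\in A^c}\cK_i^{(c)}$. In particular $\cK_A\subset\cK_i^{(s)}$ when $i\in A$ and $\cK_A\subset\cK_i^{(c)}$ when $i\in A^c$; feeding these containments into Proposition~\ref{charact} (which characterizes purity by $P_\cM P_i^{(s)}=P_\cM$ and the Cuntz property by $P_\cM P_i^{(c)}=P_\cM$, for reducing $\cM$) yields (ii) and (iii) respectively.

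The only step needing genuine care is uniqueness. Suppose $\cK=\bigoplus_A\cM_A$ is any decomposition satisfying (i)--(iii), with projections $Q_A$; by (i) each $\cM_A$ is reducing, so Proposition~\ref{charact} applies. Properties (ii), (iii) then force $\cM_A\subset\cK_i^{(s)}$ whenever $i\in A$ and $\cM_A\subset\cK_i^{(c)}$ whenever $i\notin A$. Fixing $i$, the orthogonal sums $\bigoplus_{A:\,i\in A}\cM_A$ and $\bigoplus_{A:\,i\notin A}\cM_A$ are contained in $\cK_i^{(s)}$ and $\cK_i^{(c)}$ respectively, add up to all of $\cK$, and sit inside the orthogonal pair $\cK_i^{(s)}\oplus\cK_i^{(c)}=\cK$; an elementary argument then shows these inclusions are equalities, so $P_i^{(s)}=\sum_{A:\,i\in A}Q_A$ and $P_i^{(c)}=\sum_{A:\,i\notin A}Q_A$. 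Writing any $h=\sum_B h_B$ with $h_B\in\cM_B$, the condition $h\in\ran P_A$, i.e. $P_i^{(s)}h=h$ for $i\in A$ and $P_i^{(c)}h=h$ for $i\in A^c$, translates into $h_B=0$ unless $A\subseteq B$ and $B\subseteq A$, that is $B=A$; hence $\cK_A=\ran P_A=\cM_A$. This combinatorial reconstruction of each $Q_A$ from the collection $\{P_i^{(s)},P_i^{(c)}\}$ is the heart of the matter, but once Proposition~\ref{commutativity} is in hand it is pure projection bookkeeping rather than a real difficulty; the substantive input, namely the simultaneous commutativity of all the Wold projections, has already been secured.
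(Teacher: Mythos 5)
Your proposal is correct and follows essentially the same route as the paper: you build $P_A:=\bigl(\prod_{i\in A}P_i^{(s)}\bigr)\bigl(\prod_{i\in A^c}P_i^{(c)}\bigr)$ from the single-variable Wold projections, use Proposition~\ref{commutativity} to expand $\prod_i(P_i^{(s)}+P_i^{(c)})=I$ and get mutual orthogonality, and invoke Proposition~\ref{charact} for properties (ii) and (iii), exactly as the paper does. Your uniqueness argument is organized slightly differently (you first reconstruct $P_i^{(s)}=\sum_{A\ni i}Q_A$ and then do the subset bookkeeping, whereas the paper directly shows $P_{\cM_A}P_A=P_{\cM_A}$ and compares the two direct-sum decompositions), but both rest on the same use of Proposition~\ref{charact} and are equally valid.
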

\begin{proof}
Let  $\cK=\cK_i^{(s)}\oplus \cK_i^{(c)}$ be   the Wold  decomposition for  the row isometry $V_i$, and   let $P_i^{(s)}$, $P_i^{(c)}$  be   the orthogonal projections of $\cK$ onto  the subspaces
 $\cK_i^{(s)}$ and $\cK_i^{(c)}$, respectively. Since $I_\cK=P_i^{(s)}+P_i^{(c)}$ for any $i\in \{1,\ldots, k\}$.    Proposition \ref{commutativity} implies
 $$
 I_\cK=\prod_{i=1}^k \left(P_i^{(s)}+P_i^{(c)} \right)=\sum_{A\subset\{1,\ldots, k\}}
 \left(\prod_{i\in A}P_i^{(s)}\right)\left(\prod_{i\in A^c}P_i^{(c)}\right).
 $$
Consider the orthogonal projection  $P_A:=\left(\prod_{i\in A}P_i^{(s)}\right)\left(\prod_{i\in A^c}P_i^{(c)}\right)$ and set $\cK_A:=P_A \cK$.   Now, one can easily see
that $\cK_A=\left(\bigcap_{i\in A}\cK_i^{(s)}\right)\cap \left(\bigcap_{i\in A^c}\cK_i^{(c)}\right)$, where
$\cK_i^{(s)}$  and $\cK_i^{(c)}$ are  given by
 Theorem \ref{Wold1}.

Note that  if $A,B$ are  distinct subsets of $\{1,\ldots, k\}$  the $P_AP_B$ contains a factor $P_a^{(s)}P_a^{(c)}$ for some $a\in \{1,\ldots, k\}$.  Since $P_a^{(s)}P_a^{(c)}=0$, we also have
$P_AP_B=0$, which is equivalent to $\cK_A\perp \cK_B$.  Now, it is clear that
$\cK=\bigoplus_{A\subset \{1,\ldots, k\}} \cK_A$.

On the other, due to Proposition \ref{commutativity},  the projection $P_A$  commutes with all the isometries $V_{i,m}$, where $i\in \{1,\ldots, k\}$ and $m\in \{1,\ldots, n_i\}$. Therefore, $\cK_A$ is reducing for all these isometries, which proves part (i) of the theorem.
Now, note that if $i\in A$, then $P_AP_i^{(s)}=P_A$, while $P_AP_i^{(c)}=P_A$ if $i\in A^c$. Applying Proposition \ref{charact}, we deduce items (ii) and (iii).

To prove uniqueness of the Wold decomposition, assume that
$\cK=\bigvee_{A\subset \{1,\ldots, k\}} \cK_A'$, where the subspaces $\cK_A'$  are such that
$\cK_A' \cap\cK_B'=\{0\}$ if $A$ and $B$ are distinct  subsets of $\{1,\ldots, k\}$, and such that the conditions (i), (ii), and (iii) hold when $\cK_A$ is replaced by $\cK_A'$.
We will prove that, under these conditions, $\cK_A'=\cK_A$ for any $A\subset \{1,\ldots, k\}$.
Due to the first part of the theorem, it is enough to show that $\cK_A'\subset\cK_A$.
Actually we show a little bit more, namely, that
 $V_i|_{\cK_A'}:=[V_{i,1}|_{\cK_A'}\cdots V_{i,n_i}|_{\cK_A'}]$ is a pure row isometry if  $i\in A$  and Cuntz row isometry if $i\in A^c$ if and only if $\cK_A'\subset\cK_A$.
 To prove the direct implication, we can use  Proposition \ref{charact} to deduce that
 $P_{\cK_A'} P_i^{(s)}=P_{\cK_A'}$ if   $i\in A$ and $P_{\cK_A'} P_i^{(c)}=P_{\cK_A'}$ if  $i\in A^c$.
Hence,  we have
$$
P_{\cK_A'}\prod_{i\in A} P_i^{(s)}=P_{\cK_A'} \quad \text{  and } \quad  P_{\cK_A'} \prod_{i\in A^c}P_i^{(c)}=P_{\cK_A'},
$$
which implies $P_{\cK_A'}P_A=P_{\cK_A'}$. Therefore, $\cK_A'\subset \cK_A$.

Conversely, assume that $\cK_A'\subset \cK_A$ and let $i\in A$. Then
$P_{\cK_A'}P_A=P_{\cK_A'}$ and $P_A P_i^{(s)}=P_A$.
Due to the commutativity, we have
$$
P_{\cK_A'}P_i^{(s)}=P_{\cK_A'}P_A P_i^{(s)}=P_{\cK_A'}P_A=P_{\cK_A'}.
$$
Similarly, if $i\in A^c$, we deduce that $P_{\cK_A'}P_i^{(c)}=P_{\cK_A'}$.
Using again Proposition \ref{charact}, we conclude that $V_i|_{\cK_A'}:=[V_{i,1}|_{\cK_A'}\cdots V_{i,n_i}|_{\cK_A'}]$ is a pure row isometry if  $i\in A$,  and Cuntz row isometry if $i\in A^c$.
The proof is complete.
\end{proof}

We record the following result that was proved in the proof of Theorem \ref{Wold2}.

\begin{proposition} Let $V_i:=[V_{i,1}\cdots V_{i,n_i}]$, $i\in \{1,\ldots, k\}$,  be    doubly $\Lambda$-commuting row isometries with $V_{i,m}\in B(\cK)$. If $\cM\subset\cK$  is a reducing subspace for all the isometries
$V_{i,m}$, where $i\in \{1,\ldots, k\}$ and $m\in \{1,\ldots, n_i\}$, and  $A\subset \{1,\ldots, k\}$, then the following statements are equivalent.
\begin{enumerate}
\item[(i)]  $V_i|_{\cM}:=[V_{i,1}|_{\cM}\cdots V_{i,n_i}|_{\cM}]$ is a pure row isometry if $i\in A$ and it is a Cuntz row isometry if $i\in A^c$.

\item[(ii)] $P_\cM P_A=P_\cM$, where $P_A:=\left(\prod_{i\in A}P_i^{(s)}\right)\left( \prod_{i\in A^c} P_i^{(c)}\right)$
\end{enumerate}
\end{proposition}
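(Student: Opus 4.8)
The plan is to deduce the stated equivalence directly from Proposition \ref{charact}, which treats a single row isometry, together with the pairwise commutativity of the Wold projections $P_1^{(s)},\ldots,P_k^{(s)},P_1^{(c)},\ldots,P_k^{(c)}$ recorded in Proposition \ref{commutativity}. I do not anticipate a genuine obstacle: the argument is a short bookkeeping exercise in absorbing idempotent factors, and the only point that must be handled with care is that, by Proposition \ref{commutativity}, the product $P_A=\left(\prod_{i\in A}P_i^{(s)}\right)\left(\prod_{i\in A^c}P_i^{(c)}\right)$ is a projection whose factors all commute, so that they may be freely reordered.

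For the implication (i) $\Rightarrow$ (ii), I would first apply Proposition \ref{charact} index by index. For $i\in A$ the row isometry $V_i|_\cM$ is pure, so part (ii) of that proposition yields $P_\cM P_i^{(s)}=P_\cM$; for $i\in A^c$ it is Cuntz, so part (i) yields $P_\cM P_i^{(c)}=P_\cM$. Then I would compute $P_\cM P_A$ by moving the factors of $P_A$ (this is legitimate by Proposition \ref{commutativity}) so that each $P_i^{(s)}$ with $i\in A$ and each $P_i^{(c)}$ with $i\in A^c$ is in turn the leftmost factor acting on $P_\cM$, and absorb it using $P_\cM P_i^{(s)}=P_\cM$, respectively $P_\cM P_i^{(c)}=P_\cM$. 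After all factors are absorbed this gives $P_\cM P_A=P_\cM$, which is (ii).

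For the converse (ii) $\Rightarrow$ (i), I would start from $P_\cM P_A=P_\cM$ and recover the one-variable conditions. Fix $i\in A$. Since $P_i^{(s)}$ occurs as one of the commuting idempotent factors of $P_A$, Proposition \ref{commutativity} gives $P_A P_i^{(s)}=P_A$, and hence $P_\cM P_i^{(s)}=(P_\cM P_A)P_i^{(s)}=P_\cM(P_A P_i^{(s)})=P_\cM P_A=P_\cM$; the case $i\in A^c$ is identical with $P_i^{(c)}$ replacing $P_i^{(s)}$. Feeding these equalities back into Proposition \ref{charact} shows that $V_i|_\cM$ is pure for $i\in A$ and Cuntz for $i\in A^c$, which is exactly (i). I would close by noting that, since $P_A$ is the orthogonal projection onto $\cK_A$ and $P_\cM,P_A$ are self-adjoint, taking adjoints shows condition (ii) is equivalent to $\cM\subset\cK_A$; thus the proposition is precisely the equivalence ``$\cK_A'\subset\cK_A$'' extracted from the uniqueness argument in the proof of Theorem \ref{Wold2}.
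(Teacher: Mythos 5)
Your proof is correct and follows essentially the same route as the paper, which records this proposition as a byproduct of the uniqueness argument in the proof of Theorem \ref{Wold2}: there, too, (i)~$\Rightarrow$~(ii) is obtained by applying Proposition \ref{charact} index by index and absorbing the commuting factors of $P_A$, and the converse uses $P_AP_i^{(s)}=P_A$ (resp.\ $P_AP_i^{(c)}=P_A$) together with commutativity before feeding the result back into Proposition \ref{charact}. Your closing remark correctly identifies condition (ii) with the inclusion $\cM\subset\cK_A$ exactly as in the paper.
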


In what follows, we present a more precise description  of the subspaces defined in Theorem \ref{Wold2}.

\begin{theorem}\label{Wold3} Let $V_i:=[V_{i,1}\cdots V_{i,n_i}]$, $i\in \{1,\ldots, k\}$,  be    doubly $\Lambda$-commuting row isometries with $V_{i,m}\in B(\cK)$ and let $\cK_A$ be the subspace defined in Theorem \ref{Wold2}, where  $A=\{i_1,\ldots,i_p\}\subset \{1,\ldots, k\}$.  If  $A^c=\{j_1,\ldots, j_{k-p}\}$, then there is a unique  subspace $\cL_A\subset \cK_A$ that is invariant under the isometries $V_{j_1,t_1},\ldots, V_{j_{k-p},t_{k-p}}$, where $t_1\in \{1,\ldots, n_{j_1}\},\ldots, t_{k-p}\in \{1,\ldots, n_{j_{k-p}}\}$, and such that
$$
\cK_A=\bigoplus_{\alpha_{i_1}\in \FF_{n_{i_1}}^+,\ldots, \alpha_{i_p}\in \FF_{n_{i_p}}^+} V_{i_1,\alpha_{i_1}}\cdots V_{i_p,\alpha_{i_p}}\left(\cL_A\right).
$$
Moreover, we have
$$
\cL_A=\bigcap_{m_{j_1},\ldots, m_{j_{k-p}}=0}^\infty
\left(\bigoplus_{{\alpha_{j_1}\in \FF^+_{n_{j_1}}}\atop{ |\alpha_{j_1}|=m_{j_1}}}\cdots
\bigoplus_{{\alpha_{j_{k-p}}\in \FF^+_{n_{j_{k-p}}}}\atop { |\alpha_{j_{k-p}}|=m_{j_{k-p}}}}
V_{j_1,\alpha_{j_1}}\cdots V_{j_{k-p}, \alpha_{j_{k-p}}}\left(
\bigcap_{i\in A, s\in \{1,\ldots, n_i\}}\ker V_{i,s}^*\right)\right).
$$
In this case, the following statements hold.
\begin{enumerate}
\item[(i)] If $i\in A^c$, then $\cL_A$ is reducing for the isometries $V_{i,m}$, where $m\in \{1,\ldots, n_i\}$, and
$[V_{i,1}|_{\cL_A}\cdots V_{i,n_i}|_{\cL_A}]$  is a Cuntz row isometry.

\item[(ii)] If $i,j\in A^c$, $i\neq j$, then
$$ \left(V_{i,s}|_{\cL_A}\right)^* \left(V_{j,t}|_{\cL_A}\right)=\overline{\lambda_{ij}(s,t)}\left(V_{j,t}|_{\cL_A}\right)\left(V_{i,s}|_{\cL_A}\right)^*
$$
for any   $s\in \{1,\ldots, n_i\}$, $t\in \{1,\ldots, n_j\}$.
\item[(iii)]
If $i\in A$, then
$$\cL_A\subset \bigcap_{s\in \{1,\ldots, n_i\}}\ker V_{i,s}^*.
$$
\item[(iv)]
If $r\in \{1,\ldots, p\}$, $\alpha_{i_1}\in \FF_{n_{i_1}}^+,\ldots, \alpha_{i_p}\in \FF_{n_{i_p}}^+$ and $\beta\in \FF_{n_{i_r}}^+$, then
$$
V_{i_r, \beta}\left( V_{i_1,\alpha_{i_1}}\cdots V_{i_r,\alpha_{i_r}}\cdots V_{i_p,\alpha_{i_p}}(\cL_A)\right)
=
V_{i_1,\alpha_{i_1}}\cdots V_{i_r,\beta\alpha_{i_r}}\cdots V_{i_p,\alpha_{i_p}}(\cL_A).
$$
\end{enumerate}
\end{theorem}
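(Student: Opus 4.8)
The plan is to build $\cL_A$ by iterating the single--variable Wold decomposition (Theorem \ref{Wold1}) in the directions indexed by $A$. Write $A=\{i_1,\dots,i_p\}$, put
$$\cE:=\bigcap_{i\in A,\ s\in\{1,\dots,n_i\}}\ker V_{i,s}^*,\qquad \cL_A:=\cK_A\cap\cE,$$
and for a multi-index $\vec\alpha=(\alpha_{i_1},\dots,\alpha_{i_p})\in\FF_{n_{i_1}}^+\times\cdots\times\FF_{n_{i_p}}^+$ write $V_{\vec\alpha}:=V_{i_1,\alpha_{i_1}}\cdots V_{i_p,\alpha_{i_p}}$. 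Since $i_r\in A$ forces $\cK_A\subset\cK_{i_r}^{(s)}$, each $V_{i_r}$ restricts to a pure row isometry on $\cK_A$. I would apply Theorem \ref{Wold1} to $V_{i_1}|_{\cK_A}$ to get $\cK_A=\bigoplus_{\alpha_{i_1}}V_{i_1,\alpha_{i_1}}\cW_1$ with wandering subspace $\cW_1=\cK_A\cap\bigcap_s\ker V_{i_1,s}^*$, then peel off $V_{i_2}$ on $\cW_1$, and continue. Because each $V_{i_r,\alpha}$ is an isometry carrying an orthogonal decomposition to an orthogonal family whose ranges stay orthogonal across distinct multi-indices, the successive direct sums nest into a single orthogonal sum; after $p$ steps the core is $\cW_p=\cL_A$, yielding $\cK_A=\bigoplus_{\vec\alpha}V_{\vec\alpha}\cL_A$.

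What makes the iteration legitimate is checking at each stage that $\cW_{r-1}$ is reducing for $V_{i_r,m}$ and that $V_{i_r}|_{\cW_{r-1}}$ remains pure. Reducing follows from relations \eqref{lac} and \eqref{lac*}: for $i\neq i_r$, the identity $V_{i,s}^*V_{i_r,t}=\overline{\lambda_{i,i_r}(s,t)}V_{i_r,t}V_{i,s}^*$ together with $V_{i,s}^*V_{i_r,t}^*=\overline{\lambda_{i_r,i}(t,s)}V_{i_r,t}^*V_{i,s}^*$ (obtained by adjointing \eqref{lac}) show that $\bigcap_s\ker V_{i,s}^*$ is invariant under both $V_{i_r,t}$ and $V_{i_r,t}^*$; intersecting with the reducing subspace $\cK_A$ keeps $\cW_{r-1}$ reducing. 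Purity is then Proposition \ref{charact}(ii), since $\cW_{r-1}\subset\cK_A\subset\cK_{i_r}^{(s)}$. For uniqueness, if $\cL_A'$ also produces the decomposition, then separating the summand with all empty words shows $\cK_A\ominus\cL_A'=\sum_{i\in A,\,s}V_{i,s}\cK_A$ --- the nontrivial inclusion uses the reshuffling of item (iv) to put every $V_{i,s}V_{\vec\alpha}\cL_A'$ into a summand whose $i$-component is nonempty --- whence $\cL_A'=\cK_A\cap\bigcap_{i\in A,\,s}\ker V_{i,s}^*=\cL_A$.

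To recover the displayed formula I would first observe that each wandering subspace lies in its pure part, so $\cE\subset\bigcap_{i\in A}\cK_i^{(s)}$ and hence $\cL_A=\cK_A\cap\cE=\cE\cap\bigcap_{j\in A^c}\cK_j^{(c)}$. The same commutation computation as above shows $\cE$ is reducing for every $V_{j,t}$ with $j\in A^c$, so by the characterization \eqref{alt} applied inside $\cE$, for each such $j$ the space $\cE\cap\cK_j^{(c)}$ is precisely the Cuntz part of $V_j|_{\cE}$, namely $\bigcap_m\bigoplus_{|\alpha_j|=m}V_{j,\alpha_j}\cE$. Intersecting over $j\in A^c$ and using that the projections $\sum_{|\alpha_j|=m_j}V_{j,\alpha_j}V_{j,\alpha_j}^*$ pairwise commute (Proposition \ref{commutativity}), so that their product is the projection onto $\bigoplus_{|\alpha_{j_1}|=m_{j_1}}\cdots\bigoplus_{|\alpha_{j_{k-p}}|=m_{j_{k-p}}}V_{j_1,\alpha_{j_1}}\cdots V_{j_{k-p},\alpha_{j_{k-p}}}\cE$, I would let the $m_{j_l}\to\infty$ and pass to the SOT-limit to land on the asserted intersection. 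Reconciling this joint Cuntz core computed inside $\cE$ with the intersection of the individual Cuntz parts $\cK_j^{(c)}$ is the main obstacle of the proof, and it is exactly here that the mutual commutativity of Proposition \ref{commutativity} is indispensable.

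The four numbered statements are then short. Items (i) and (iii) are read off from $\cL_A=\cE\cap\bigcap_{j\in A^c}\cK_j^{(c)}$: the inclusion $\cL_A\subset\cE$ gives (iii), while $\cL_A\subset\cK_i^{(c)}$ for $i\in A^c$ combined with Proposition \ref{charact}(i) (and the already established reducing property) gives (i). Item (ii) is the restriction of \eqref{lac*} to the reducing subspace $\cL_A$. Finally (iv) is immediate from \eqref{lac}: commuting $V_{i_r,\beta}$ to the left past the factors with index different from $i_r$ only introduces a unimodular scalar, which does not alter the image subspace, after which $V_{i_r,\beta}V_{i_r,\alpha_{i_r}}=V_{i_r,\beta\alpha_{i_r}}$ gives the claimed identity.
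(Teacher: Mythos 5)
Your proof is correct, and it rests on the same two pillars as the paper's --- the commutation facts of Propositions \ref{rel} and \ref{commutativity}, and the identification of $P_{\cL_A}$ with $\bigl(\prod_{j\in A^c}P_j^{(c)}\bigr)\Delta_A$ --- but you organize both main steps differently. For existence, the paper expands the product $P_A=\bigl(\prod_{i\in A}P_i^{(s)}\bigr)\bigl(\prod_{j\in A^c}P_j^{(c)}\bigr)$ directly into $\sum V_{i_1,\alpha_{i_1}}\cdots V_{i_p,\alpha_{i_p}}P_{\cL_A}V_{i_p,\alpha_{i_p}}^*\cdots V_{i_1,\alpha_{i_1}}^*$ using the series formulas \eqref{Ps} and \eqref{Pc}, whereas you peel off one single-row Wold decomposition at a time inside $\cK_A$; the induction works because you correctly check that each intermediate core $\cW_{r}=\cK_A\cap\bigcap_{q\le r,\,s}\ker V_{i_q,s}^*$ is reducing for the next row isometry (via \eqref{lac} and \eqref{lac*}) and that purity persists by Proposition \ref{charact}. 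This trades the bookkeeping of SOT-convergent sums of projections for an induction on $|A|$; the identification of $\cK_A\cap\cE$ with the displayed iterated intersection is then the same decreasing-net-of-commuting-projections computation as the paper's \eqref{seq}. Where you genuinely diverge is uniqueness: the paper shows $\cM\perp V_{i_q,m}\cK$ by splitting $\cK$ into the orthogonal pieces $\cK_B$, then uses the Cuntz property to get $\bigoplus_m V_{j_r,m}\cM=\cM$ and feeds $\cM$ into the defining intersection of $\cL_A$; you instead note that $\cK_A\ominus\cL_A'$, being the orthogonal sum of the summands with at least one nonempty word, coincides with $\overline{\operatorname{span}}\{V_{i,s}\cK_A: i\in A,\ s\in\{1,\ldots,n_i\}\}$, a space independent of $\cL_A'$, which forces $\cL_A'=\cK_A\cap\bigcap_{i\in A,s}\ker V_{i,s}^*$. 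This is shorter and does not even use the hypothesis that $\cL_A'$ is invariant under the $V_{j,t}$ with $j\in A^c$. If you write it up, make explicit that in passing from ``$x\in\cK_A$ is orthogonal to $V_{i,s}\cK_A$'' to ``$V_{i,s}^*x=0$'' you are using that $\cK_A$ reduces $V_{i,s}$, so that $V_{i,s}^*x$ already lies in $\cK_A$.
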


\begin{proof}
Assume that $1\leq p\leq k-1$. The first step of the proof is to  show that
\begin{equation}
\label{PA}
\begin{split}
P_A&:=\left(\prod_{i\in A}P_i^{(s)}\right)\left( \prod_{j\in A^c} P_j^{(c)}\right)\\
&=
\sum_{\alpha_{i_1}\in \FF_{n_{i_1}}^+}\cdots \sum_{\alpha_{i_p}\in \FF_{n_{i_p}}^+}
\left\{ V_{i_1,\alpha_{i_1}}\cdots V_{i_p,\alpha_{i_p}}\left[\left( \prod_{j\in A^c} P_j^{(c)}\right)\Delta_A\right]V_{i_p,\alpha_{i_p}}^*\cdots V_{i_1,\alpha_{i_1}}^*\right\},
\end{split}
\end{equation}
where  $\Delta_A:=\prod_{i\in A}\left(I_\cK-\sum_{m=1}^{n_i} V_{i,m} V_{i,m}^*\right)$ and the convergence of the series is in the strong operator topology.
Recall that Theorem \ref{Wold1} implies
$$P_i^{(s)}=\text{\rm SOT-}\lim_{p\to\infty}\sum_{q=0}^p \sum_{\alpha\in \FF_{n_i}^+, |\alpha|=q} V_{i,\alpha}(I-\sum_{m=1}^{n_i} V_{i,m} V_{i,m}^*)V_{i,\alpha}^*,\qquad i\in A,
$$
and
$$
P_j^{(c)}=\text{\rm SOT-}\lim_{q\to\infty}  \sum_{\alpha\in \FF_{n_j}^+, |\alpha|=q} V_{j,\alpha} V_{j,\alpha}^*, \qquad j\in A^c.
$$
Consequently, as in the proof of Proposition \ref{commutativity}, we can prove inductively that
\begin{equation}\label{Ps}
\prod_{i\in A}P_i^{(s)}=
\sum_{\alpha_{i_1}\in \FF_{n_{i_1}}^+}\cdots \sum_{\alpha_{i_p}\in \FF_{n_{i_p}}^+}
 V_{i_1,\alpha_{i_1}}\cdots V_{i_p,\alpha_{i_p}} \Delta_A V_{i_p,\alpha_{i_p}}^*\cdots V_{i_1,\alpha_{i_1}}^*
\end{equation}
and
\begin{equation}\label{Pc}
\begin{split}
\prod_{j\in A^c} P_j^{(c)}
&=\text{\rm SOT-}\lim_{{m_{j_1}\to\infty}\atop{{\ldots}\atop{ m_{j_{k-p}\to \infty}}}}
\prod_{j\in A^c}\left(\sum_{\alpha\in\FF_{n_j}^+, |\alpha_j|=m_j} V_{j,\alpha_j} V_{j,\alpha_j}^*\right)\\
&=
\text{\rm SOT-}\lim_{{m_{j_1}\to\infty}\atop{{\ldots}\atop{ m_{j_{k-p}\to \infty}}}}
\sum_{{\alpha_{j_1}\in \FF^+_{n_{j_1}}}\atop{ |\alpha_{j_1}|=m_{j_1}}}\cdots \sum_{{\alpha_{j_1}\in \FF^+_{n_{j_1}}}\atop{ |\alpha_{j_1}|=m_{j_1}}}
V_{j_1,\alpha_{j_1}}\cdots V_{j_{k-p}, \alpha_{j_{k-p}}}V_{j_{k-p}, \alpha_{j_{k-p}}}^*\cdots V_{j_1,\alpha_{j_1}}^*.
\end{split}
\end{equation}
Using Proposition \ref{rel}, we deduce that  the projection $\prod_{j\in A^c} P_j^{(c)}$ commutes with  $V_{i,\alpha_i}$ for any $i\in A$ and $\alpha_i\in \{1,\ldots, n_i\}$, and the projection
$\Delta_A$  commutes with  $V_{j,\alpha_j}$ for any $j\in A^c$ and $\alpha_j\in \{1,\ldots, n_j\}$.
Consequently, relation \eqref{Ps} implies relation \eqref{PA}, while relation \eqref{Pc} implies
\begin{equation*}
\left(\prod_{j\in A^c} P_j^{(c)}\right)\Delta_A
=
\text{\rm SOT-}\lim_{{m_{j_1}\to\infty}\atop{{\ldots}\atop{ m_{j_{k-p}\to \infty}}}}
\sum_{{\alpha_{j_1}\in \FF^+_{n_{j_1}}}\atop{ |\alpha_{j_1}|=m_{j_1}}}\cdots \sum_{{\alpha_{j_1}\in \FF^+_{n_{j_1}}}\atop{ |\alpha_{j_1}|=m_{j_1}}}
V_{j_1,\alpha_{j_1}}\cdots V_{j_{k-p}, \alpha_{j_{k-p}}}\Delta_A V_{j_{k-p}, \alpha_{j_{k-p}}}^*\cdots V_{j_1,\alpha_{j_1}}^*.
\end{equation*}
Now, note that $\Delta_A$ is the orthogonal projection of $\cK$  onto the subspace
 $$\bigcap_{{i\in A}\atop{m\in \{1,\ldots,n_i\}}} \ker V_{i,m}^*
 $$
and
\begin{equation} \label{seq}
\sum_{{\alpha_{j_1}\in \FF^+_{n_{j_1}}}\atop{ |\alpha_{j_1}|=m_{j_1}}}\cdots \sum_{{\alpha_{j_1}\in \FF^+_{n_{j_1}}}\atop{ |\alpha_{j_1}|=m_{j_1}}}
V_{j_1,\alpha_{j_1}}\cdots V_{j_{k-p}, \alpha_{j_{k-p}}}\Delta_A V_{j_{k-p}, \alpha_{j_{k-p}}}^*\cdots V_{j_1,\alpha_{j_1}}^*
\end{equation}
is the orthogonal projection onto the subspace
$$
\bigoplus_{{\alpha_{j_1}\in \FF^+_{n_{j_1}}}\atop{ |\alpha_{j_1}|=m_{j_1}}}\cdots
\bigoplus_{{\alpha_{j_{k-p}}\in \FF^+_{n_{j_{k-p}}}}\atop { |\alpha_{j_{k-p}}|=m_{j_{k-p}}}}
V_{j_1,\alpha_{j_1}}\cdots V_{j_{k-p}, \alpha_{j_{k-p}}}\left(
\bigcap_{{i\in A}\atop{m\in \{1,\ldots,n_i\}}} \ker V_{i,m}^*\right).
$$
Since  \eqref{seq} is  a decreasing net of projections, it is clear that
$\left(\prod_{j\in A^c} P_j^{(c)}\right)\Delta_A$ is the orthogonal projection onto $\cL_A$.
Consequently,  using relation \eqref{PA} and the fact that  the operator
$$
V_{i_1,\alpha_{i_1}}\cdots V_{i_p,\alpha_{i_p}}\left[\left( \prod_{j\in A^c} P_j^{(c)}\right)\Delta_A\right]V_{i_p,\alpha_{i_p}}^*\cdots V_{i_1,\alpha_{i_1}}^*
$$
is the orthogonal projection onto the subspace  $V_{i_1,\alpha_{i_1}}\cdots V_{i_p,\alpha_{i_p}}(\cL_A)$, we deduce that  $P_A$ is the orthogonal projection onto
$$
\cK_A:=\bigoplus_{\alpha_{i_1}\in \FF_{n_{i_1}}^+,\ldots, \alpha_{i_p}\in \FF_{n_{i_p}}^+} V_{i_1,\alpha_{i_1}}\cdots V_{i_p,\alpha_{i_p}}\left(\cL_A\right).
$$

Now, we prove the second part of the theorem.  As we saw above,
$P_{\cL_A}=\left(\prod_{j\in A^c} P_j^{(c)}\right)\Delta_A$. Due to Proposition \ref{commutativity},
$\prod_{j\in A^c} P_j^{(c)}$ commutes with  all the isometries $V_{i,m}$, where $i\in \{1,\ldots, k\}$ and $m\in \{1,\ldots, n_i\}$. Moreover, Proposition \ref{rel} implies commutativity of $\Delta_A$ with all the isometries $V_{j,m}$, where $j\in A^c$ and $m\in \{1,\ldots, n_j\}$. Consequently,
$P_{\cL_A}$ commutes  with  $V_{j,m}$. This shows that $\cL_A$ is a reducing subspace for all the isometries $V_{j,m}$, where $j\in A^c$ and $m\in \{1,\ldots, n_j\}$.
 Since $\cL_A\subset \cK_A$ and, due to Theorem \ref{Wold2},
 $\cK_A\subset \cap_{j\in A^c} \cK_j^{(c)}$, we can apply Proposition \ref{charact} to deduce that
$[V_{j,1}|_{\cL_A}\cdots V_{j,n_j}|_{\cL_A}]$  is a Cuntz row isometry for any $j\in A^c$.

Since $V_i:=[V_{i,1}\cdots V_{i,n_i}]$, $i\in \{1,\ldots, k\}$,  are     doubly $\Lambda$-commuting row isometries and   $\cL_A$ is a reducing subspace for all the isometries $V_{j,m}$, where $j\in A^c$ and $m\in \{1,\ldots, n_j\}$, part (ii) is clear.

To prove part (iii), note that if  $i_0\in A$, then due to  the fact that $V_i:=[V_{i,1}\cdots V_{i,n_i}]$, $i\in \{1,\ldots, k\}$,  are     doubly $\Lambda$-commuting row isometries, we deduce that
\begin{equation*}
\begin{split}
&V_{i_0,m}^*V_{j_1,\alpha_{j_1}}\cdots V_{j_{k-p}, \alpha_{j_{k-p}}}\left(
\bigcap_{i\in A, s\in \{1,\ldots, n_i\}}\ker V_{i,s}^*\right)\\
&\qquad \qquad=V_{j_1,\alpha_{j_1}}\cdots V_{j_{k-p}, \alpha_{j_{k-p}}}V_{i_0,m}^*\left(
\bigcap_{i\in A, s\in \{1,\ldots, n_i\}}\ker V_{i,s}^*\right)
=\{0\}.
\end{split}
\end{equation*}
Now, the relation defining the subspace $\cL_A$ shows that  $V_{i_0,m}^*|_{\cL_A}=0$,
which implies
 part (iii). Finally, part (iv) is due to the fact that
 $V_{i_r, \beta}$ commutes with $V_{i_1,\alpha_{i_1}},\ldots, V_{i_r,\alpha_{i_r}},\ldots, V_{i_p,\alpha_{i_p}}$ up o some constants.

It remains to prove the uniqueness of the wandering subspace $\cL_A$. To this end, assume that
$\cM\subset \cK_A$ is  a subspace   invariant under the isometries $V_{j_1,t_1},\ldots, V_{j_{k-p},t_{k-p}}$, where $t_1\in \{1,\ldots, n_{j_1}\},\ldots, t_{k-p}\in \{1,\ldots, n_{j_{k-p}}\}$, and such that
\begin{equation}
\label{M}
\cK_A=\bigoplus_{\alpha_{i_1}\in \FF_{n_{i_1}}^+,\ldots, \alpha_{i_p}\in \FF_{n_{i_p}}^+} V_{i_1,\alpha_{i_1}}\cdots V_{i_p,\alpha_{i_p}}\left(\cM\right).
\end{equation}
 Let $q\in \{1,\ldots, p\}$ and $m\in \{1,\ldots, n_{i_q}\}$. Since $V_{i_q,m}$ commutes up to some constants with any $V_{i,\alpha_i}$ if $i\neq i_q$, it is clear that  $V_{i_q,m}\cK_A\subset \cK_A$
 and $\cM\perp V_{i_q,m}\cK_A$. On the other hand, if $B\subset \{1,\ldots, k\}$ with $B\neq A$, then  $\cM\subset\cK_A\perp \cK_B\supset V_{i_q,m}\cK_B$.
 Consequently, using that
 $
\cK=\bigoplus_{C\subset \{1,\ldots, k\}} \cK_C,
$
 we deduce that $\cM\perp V_{i_q,m}\cK$ for any $q\in \{1,\ldots, p\}$ and $m\in \{1,\ldots, n_{i_q}\}$.
 Therefore,
 \begin{equation}
 \label{Mcap}
 \cM\subset \bigcap_{i\in A, m\in \{1,\ldots, n_i\}} \ker V_{i,m}^*.
 \end{equation}
 If $A=\{1,\ldots, k\}$, the latter relation implies $\cM\subset \cL_A$. Comparing relation \eqref{M} with
 \begin{equation}
 \label{KA}
\cK_A=\bigoplus_{\alpha_{i_1}\in \FF_{n_{i_1}}^+,\ldots, \alpha_{i_p}\in \FF_{n_{i_p}}^+} V_{i_1,\alpha_{i_1}}\cdots V_{i_p,\alpha_{i_p}}\left(\cL_A\right),
\end{equation}
 we deduce that  $\cM=\cL_A$. Now, consider the case when $A\neq \{1,\ldots, k\}$. Let $r\in \{1,\ldots, k-p\}$ and note that, due to Theorem \ref{Wold2}, we have
 $\oplus_{m=1}^{n_{j_r}} V_{j_r,m}\cK_A=\cK_A$. Hence, and using relation \eqref{M}, we obtain
 \begin{equation*}
 \begin{split}
\cK_A &=\bigoplus_{\alpha_{i_1}\in \FF_{n_{i_1}}^+,\ldots, \alpha_{i_p}\in \FF_{n_{i_p}}^+} V_{i_1,\alpha_{i_1}}\cdots V_{i_p,\alpha_{i_p}}\left(\cM\right)\\
 &=\bigoplus_{\alpha_{i_1}\in \FF_{n_{i_1}}^+,\ldots, \alpha_{i_p}\in \FF_{n_{i_p}}^+} V_{i_1,\alpha_{i_1}}\cdots V_{i_p,\alpha_{i_p}}\left(\oplus_{m=1}^{n_{j_r}} V_{j_r,m}\cM\right).
 \end{split}
 \end{equation*}
 Since $\oplus_{m=1}^{n_{j_r}} V_{j_r,m}\cM\subset \cM$, we deduce that
 $\oplus_{m=1}^{n_{j_r}} V_{j_r,m}\cM= \cM$ for any $r\in \{1,\ldots, k-p\}$. Consequently, we obtain
 $$
 \bigcap_{m_{j_1},\ldots, m_{j_{k-p}}=0}^\infty
\bigoplus_{{\alpha_{j_1}\in \FF^+_{n_{j_1}}}\atop{ |\alpha_{j_1}|=m_{j_1}}}\cdots
\bigoplus_{{\alpha_{j_{k-p}}\in \FF^+_{n_{j_{k-p}}}}\atop { |\alpha_{j_{k-p}}|=m_{j_{k-p}}}}
V_{j_1,\alpha_{j_1}}\cdots V_{j_{k-p}, \alpha_{j_{k-p}}}\left(
\cM\right)=\cM.
$$
 Now, due to relation \eqref{Mcap}, we have  $\cM\subset \bigcap_{i\in A, m\in \{1,\ldots, n_i\}} \ker V_{i,m}^*$, which, using the definition of $\cL_A$  implies $\cM\subset \cL_A$. Comparing relations \eqref{M}  and \eqref{KA}, we conclude that $\cM=\cL_A$.
 The proof is complete.
\end{proof}

 \begin{remark} \label{partic}
 Theorem \ref{Wold3} holds true in the
  particular cases  when   $A=\emptyset$ or $A=\{1,\ldots,k\}$.
  \begin{enumerate}
  \item[(i)]  If  $A=\emptyset$, then we have
$$
\cL_\emptyset=\bigcap_{m_{1},\ldots, m_{{k}}=0}^\infty
\left(\bigoplus_{{\alpha_{1}\in \FF^+_{n_{1}}}\atop{ |\alpha_{1}|=m_{1}}}\cdots
\bigoplus_{{\alpha_{{k}}\in \FF^+_{n_{{k}}}}\atop { |\alpha_{{k}}|=m_{{k}}}}
V_{1,\alpha_{1}}\cdots V_{{k}, \alpha_{{k}}}
\cK \right)\quad  \text{ and  }\quad \cK_\emptyset=\cL_\emptyset.
$$

 \item[(ii)]  If
 $A=\{1,\ldots,k\}$, then
  $$
 \cL_{\{1,\ldots,k\}}=\bigcap_{i\in \{1,\ldots,k\},  m\in \{1,\ldots, n_i\}}\ker V_{i,m}^*
 $$
 and
 $$
 \cK_{\{1,\ldots,k\}}=\bigoplus_{\alpha_{1}\in \FF_{n_{1}}^+,\ldots, \alpha_{k}\in \FF_{n_{k}}^+} V_{1,\alpha_{1}}\cdots V_{k,\alpha_{k}}\left(\cL_{\{1,\ldots,k\}}\right).
 $$
 \end{enumerate}
 \end{remark}
  Since the proof is similar and easier,  we leave it to the reader.

\begin{definition}
Let $A=\{i_1,\ldots, i_p\}\subset \{1,\ldots, k\}$ and $A^c=\{j_1,\ldots, j_{k-p}\}$ with $i_1<\cdots <i_p$ and $j_1<\cdots <j_{k-p}$.
The subspace $\cL_A$ in Theorem \ref{Wold2} is called the {\it $A$-wandering subspace} of the $k$-tuple  $(V_1,\ldots, V_k)$ of  doubly $\Lambda$-commuting row isometries. We also call the pair
$\cW_A:=\left( \cL_A, \left(V_{j_1}|_{\cL_A},\ldots, V_{j_{k-p}}|_{\cL_A}\right)\right)$ the {\it $A$-wandering data} of
 $(V_1,\ldots, V_k)$.
  \end{definition}
  In light of Remark \ref{partic}, we can consider a similar  definition  when $A=\emptyset$ or  $A=\{1,\ldots, k\}$.
The $\emptyset$-wandering data of
$(V_1,\ldots, V_k)$ is
$\cW_\emptyset:=\left( \cK_\emptyset, \left(V_{1}|_{\cK_\emptyset},\ldots, V_{k}|_{\cK_\emptyset}\right)\right)$ and the  $\{1,\ldots, k\}$-wandering data of $(V_1,\ldots, V_k)$ is
$ \cW_{\{1,\ldots, k\}}:=(\cL_{\{1,\ldots, k\}})$.

We remark that, according to Theorem \ref{Wold3},  $\left(V_{j_1}|_{\cL_A},\ldots, V_{j_{k-p}}|_{\cL_A}\right)$ is a $(k-p)$-tuple  of doubly $\Lambda$-commuting  Cuntz  row isometries.

\bigskip

\section{Standard  $k$-tuples of doubly $\Lambda$-commuting row isometries}

In this section, we  introduce the standard  $k$-tuples of doubly $\Lambda$-commuting row isometries which will play the  role of models among the $k$-tuples of doubly $\Lambda$-commuting row isometries. They will play an important role  in our investigation.

 Let $1\leq p\leq k-1$ and consider the Hilbert space $\ell^2(\FF_{n_1}^+\times\cdots \times \FF_{n_p}^+)$ with the standard basis  $\{\chi_{(\alpha_1,\ldots, \alpha_p)}\}$, where
 $\alpha_1\in \FF_{n_1}^+,\ldots, \alpha_p\in \FF_{n_p}^+$. Let $\cL$ be a Hilbert space and let $\{\widetilde W_j\}_{j\in \{p+1,\ldots,k\}}$ be  Cuntz row isometries $\widetilde W_j:=[\widetilde W_{j,1}\cdots \widetilde W_{j,n_j}]$ on $\cL$  satisfying the relations
\begin{equation}\label{doubly}
\widetilde W_{i,s}^* \widetilde W_{j,t}=\overline{\lambda_{ij}(s,t)}\widetilde W_{j,t}\widetilde W_{i,s}^*
\end{equation}
for any $i,j\in \{p+1,\ldots, k\}$ with $i\neq j$ and any $s\in \{1,\ldots, n_i\}$, $t\in \{1,\ldots, n_j\}$.
We associate with the given data $\left\{\cL, (\widetilde W_{p+1},\ldots, \widetilde W_k)\right\}$ a standard
$k$-tuple $({\bf S}_1,\ldots, {\bf S}_p, W_{p+1},\ldots W_k)$  of doubly $\Lambda$-commuting row isometries on the Hilbert tensor product $\ell^2(\FF_{n_1}^+\times\cdots \times \FF_{n_p}^+)\otimes \cL$, as follows.

For each  $i\in \{1,\ldots, p\}$ and $s\in \{1,\ldots, n_i\}$, we define the row operator  ${\bf S}_i:=[{\bf S}_{i,1}\cdots {\bf S}_{i,n_i}]$ by setting
\begin{equation}
\label{shift}
\begin{split}
&{\bf S}_{i,s}\left( \chi_{(\alpha_1,\ldots, \alpha_p)}\otimes h\right)\\
&\qquad \qquad :=\begin{cases} \chi_{(g_s^i\alpha_1,\alpha_2,\ldots, \alpha_p)}\otimes h, &\quad \text{ if } i=1\\
\boldsymbol{\lambda}_{i,1}(s,\alpha_1)\cdots \boldsymbol{\lambda}_{i,i-1}(s,\alpha_{i-1})
\chi_{\alpha_1,\ldots, \alpha_{i-1}, g_s^i \alpha_i, \alpha_{i+1},\ldots, \alpha_p)}\otimes h,&\quad \text{ if } i\in \{2,\ldots,p\}
\end{cases}
\end{split}
\end{equation}
for any $h\in \cL$, $\alpha_1\in \FF_{n_1}^+,\ldots, \alpha_{p}\in \FF_{n_{p}}^+$,
where, for any   $j\in \{1,\ldots, p\}$,
$$
\boldsymbol{\lambda}_{i,j}(s, \beta)
:= \begin{cases}\prod_{b=1}^q\lambda_{i,j}(s,j_b)&\quad  \text{ if }
 \beta=g_{j_1}^j\cdots g_{j_q}^j\in \FF_{n_j}^+  \\
 1& \quad  \text{ if } \beta=g_0^j.
 \end{cases}
$$

For each  $i\in \{p+1,\ldots, k\}$ and $s\in \{1,\ldots, n_i\}$, we define the row operator $W_i:=[W_{i,1}\cdots W_{i,n_i}]$ by setting
\begin{equation}\label{cuntz}
W_{i,s}\left( \chi_{(\alpha_1,\ldots, \alpha_p)}\otimes h\right):=\boldsymbol{\lambda}_{i,1}(s,\alpha_1)\cdots \boldsymbol{\lambda}_{i,p}(s,\alpha_p)
\chi_{\alpha_1,\ldots,   \alpha_p)}\otimes \widetilde W_{i,s}h,\qquad h\in \cL.
\end{equation}

Now, we consider the cases when $p=k$ or  $p=0$.
If $p=k$, then the standard $k$-tuple $({\bf S}_1,\ldots, {\bf S}_k)$ of doubly $\Lambda$-commuting row isometries acting on  $\ell^2(\FF_{n_1}^+\times\cdots \times \FF_{n_k}^+)\otimes \cL$  is defined by relation  \eqref{shift}, where we take $p=k$. When $p=0$ the standard $k$-tuple $(W_1,\ldots, W_k)$  of  doubly $\Lambda$-commuting Cuntz  row isometries $W_i:=[W_{i,1}\cdots W_{i,n_i}]$   is defined on  $  \cL$  by setting
$W_{i,s}:=  \widetilde W_{i,s}$.

From now on,  we shall assume that $0\leq p\leq k$.

\begin{theorem}\label{standard}  Given   data $\left\{\cL, (\widetilde W_{p+1},\ldots, \widetilde W_k)\right\}$ with the property that relation \eqref{doubly} holds,
the standard
$k$-tuple $$({\bf S}_1,\ldots, {\bf S}_p, W_{p+1},\ldots, W_k)$$  associated with it and  defined by relations \eqref{shift} and \eqref{cuntz} is a
 doubly $\Lambda$-commuting  sequence of row isometries on the Hilbert space  $\ell^2(\FF_{n_1}^+\times\cdots \times \FF_{n_p}^+)\otimes \cL$. Moreover, the following statements hold.
 \begin{enumerate}
 \item[(i)] For each $i\in \{1,\ldots, p\}$,  ${\bf S}_i:=[{\bf S}_{i,1}\cdots {\bf S}_{i,n_i}]$  is a pure row isometry.

 \item[(ii)] For each $i\in \{p+1,\ldots, k\}$,  $W_i:=[W_{i,1}\cdots W_{i,n_i}]$  is a Cuntz  row isometry.

 \item[(iii)] The associated wandering subspace $\cL_{\{1,\ldots,p\}}$ of $({\bf S}_1,\ldots, {\bf S}_p, W_{p+1},\ldots, W_k)$ is canonically isomorphic to $\cL$ and the $\{1,\ldots, p\}$-wandering data is
 $\left\{\cL, (\widetilde W_{p+1},\ldots, \widetilde W_k)\right\}$. All the other wandering data  of $({\bf S}_1,\ldots, {\bf S}_p, W_{p+1},\ldots, W_k)$  are the zero tuples.
 \end{enumerate}

\end{theorem}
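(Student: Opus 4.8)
The plan is to proceed in three stages: first confirm that each ${\bf S}_i$ and each $W_i$ is genuinely a row isometry, then establish the doubly $\Lambda$-commuting relations \eqref{lac*}, and finally read off the structural statements (i)--(iii). Throughout, the essential computational device is that every weight $\boldsymbol{\lambda}_{i,j}(s,\beta)$ lies in $\TT$ and is multiplicative in its word argument, so that $\boldsymbol{\lambda}_{i,j}(s,g_r^j\beta)=\lambda_{i,j}(s,r)\,\boldsymbol{\lambda}_{i,j}(s,\beta)$; this single recursion is what converts a prepended generator into a scalar factor $\lambda_{i,j}$. I would verify the row-isometry property by acting on the standard basis: for $i\le p$ the operator ${\bf S}_{i,s}^*{\bf S}_{i,t}$ prepends $g_t^i$ and then tries to strip $g_s^i$, which survives only when $s=t$, whereupon the two unimodular phases cancel and it returns $\delta_{st}I$; for $i>p$ the operator $W_{i,s}^*W_{i,t}$ reduces on the $\cL$-factor to $\widetilde W_{i,s}^*\widetilde W_{i,t}=\delta_{st}I$ after the unimodular phases cancel.

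For the commutation relations I would fix $i<j$ and treat three cases, recovering \eqref{lac*} for $j<i$ by taking adjoints and using $\lambda_{j,i}(t,s)=\overline{\lambda_{i,j}(s,t)}$. When both indices are $>p$, the phases depend only on the $\FF$-coordinates, which neither operator moves, so \eqref{lac*} descends verbatim from the hypothesis \eqref{doubly} on $(\widetilde W_{p+1},\ldots,\widetilde W_k)$. In the shift--shift case and the mixed case the point is that the two orders of composition differ in exactly one place: one operator weights a coordinate that the other either prepends a generator to or strips one from. Computing ${\bf S}_{i,s}^*{\bf S}_{j,t}$ against ${\bf S}_{j,t}{\bf S}_{i,s}^*$, respectively ${\bf S}_{i,s}^*W_{j,t}$ against $W_{j,t}{\bf S}_{i,s}^*$, on a basis vector, the discrepancy is precisely the factor $\boldsymbol{\lambda}_{j,i}(t,g_s^i\gamma_i)/\boldsymbol{\lambda}_{j,i}(t,\gamma_i)=\lambda_{j,i}(t,s)=\overline{\lambda_{i,j}(s,t)}$, which is exactly \eqref{lac*}. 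This bookkeeping of where each phase is evaluated, as one operator modifies the coordinate another operator weights on, is the main technical obstacle, but it is entirely mechanical once the recursion above is isolated.

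Statements (i) and (ii) then follow quickly. For (i), I would compute $\bigcap_{s}\ker {\bf S}_{i,s}^*$ and find it is the closed span of those basis vectors whose $i$-th coordinate is the empty word $g_0^i$; applying the alternative description \eqref{alt}, I would check that $\sum_{|\alpha|=q}\|{\bf S}_{i,\alpha}^*(\chi_\beta\otimes h)\|^2=\|h\|^2\,\mathbf{1}_{\{|\beta_i|\ge q\}}\to 0$ for every basis vector, so $\cK_i^{(s)}$ is the whole space and ${\bf S}_i$ is pure. For (ii), a direct computation on the basis gives $\sum_{s=1}^{n_i}W_{i,s}W_{i,s}^*(\chi_\alpha\otimes h)=\chi_\alpha\otimes\big(\sum_s\widetilde W_{i,s}\widetilde W_{i,s}^*\big)h=\chi_\alpha\otimes h$, using that $\widetilde W_i$ is Cuntz, so $W_i$ is a Cuntz row isometry.

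Finally, for (iii) I would invoke Theorem \ref{Wold3} and Remark \ref{partic} with $A=\{1,\ldots,p\}$. By part (i) the intersection $\bigcap_{i\in A,\,s}\ker {\bf S}_{i,s}^*$ is $\chi_{(g_0^1,\ldots,g_0^p)}\otimes\cL$, and since each $W_j$ with $j>p$ fixes the $\FF$-coordinates and is Cuntz, every direct sum $\bigoplus_{|\alpha_j|=m_j}W_{j,\alpha_j}$ maps this subspace onto itself; hence the descending intersection defining $\cL_A$ collapses to $\chi_0\otimes\cL$, canonically isomorphic to $\cL$, and under this identification $W_j|_{\cL_A}$ becomes $\widetilde W_j$, giving exactly the $\{1,\ldots,p\}$-wandering data $\{\cL,(\widetilde W_{p+1},\ldots,\widetilde W_k)\}$. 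That all other wandering data vanish I would deduce from the decomposition in Theorem \ref{Wold2}: parts (i) and (ii) give $\cK_i^{(s)}=\cK$ and $\cK_i^{(c)}=\{0\}$ for $i\le p$ and the reverse for $i>p$, so the summand $\cK_B=\big(\bigcap_{i\in B}\cK_i^{(s)}\big)\cap\big(\bigcap_{i\in B^c}\cK_i^{(c)}\big)$ is nonzero only when $B=\{1,\ldots,p\}$, forcing all remaining $\cL_B=\{0\}$.
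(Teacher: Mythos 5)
Your proposal is correct and follows essentially the same route as the paper: direct basis computations for the row-isometry, purity, and Cuntz properties, a case-by-case phase-tracking verification of \eqref{lac*} with the remaining cases recovered by adjoints and the $W$--$W$ case inherited from \eqref{doubly}, and an appeal to Theorems \ref{Wold2} and \ref{Wold3} for part (iii). The only cosmetic difference is that you verify ${\bf S}_{i,s}^*{\bf S}_{i,t}=\delta_{st}I$ directly, whereas the paper computes the range projections $\sum_s{\bf S}_{i,s}{\bf S}_{i,s}^*$; both establish the same facts.
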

\begin{proof} Let $i\in \{1,\ldots, p\}$ and $s\in \{1,\ldots, n_i\}$ and note that relation \eqref{shift} implies
\begin{equation}
\label{shift*}
\begin{split}
&{\bf S}_{i,s}^*\left( \chi_{(\alpha_1,\ldots, \alpha_p)}\otimes h\right)\\
&\qquad \qquad =\begin{cases}
\overline{\boldsymbol{\lambda}_{i,1}(s,\alpha_1)}\cdots \overline{\boldsymbol{\lambda}_{i,i-1}(s,\alpha_{i-1})}
\chi_{\alpha_1,\ldots, \alpha_{i-1}, \beta_i, \alpha_{i+1},\ldots, \alpha_p)}\otimes h,&\quad \text{ if } \alpha_i=g_s^i \beta_i\\
0, &\quad \text{ otherwise }
\end{cases}
\end{split}
\end{equation}
for any $h\in \cL$, $\alpha_1\in \FF_{n_1}^+,\ldots, \alpha_{p}\in \FF_{n_{p}}^+$. Hence, we deduce that
\begin{equation*}
\begin{split}
&\sum_{s=1}^{n_i} {\bf S}_{i,s}{\bf S}_{i,s}^*\left( \chi_{(\alpha_1,\ldots, \alpha_p)}\otimes h\right)\\
&\qquad \qquad =\begin{cases}
|\boldsymbol{\lambda}_{i,1}(s,\alpha_1)|^2\cdots |\boldsymbol{\lambda}_{i,i-1}(s,\alpha_{i-1})|^2
\chi_{(\alpha_1,\ldots, \alpha_{i-1}, \alpha_i, \alpha_{i+1},\ldots, \alpha_p)}\otimes h,&\quad \text{ if } |\alpha_i|\geq 1\\
0, &\quad \text{ otherwise }
\end{cases}\\
&\qquad \qquad =\begin{cases}
\chi_{(\alpha_1,\ldots, \alpha_p)}\otimes h,&\quad \text{ if } |\alpha_i|\geq 1\\
0, &\quad \text{ otherwise,}
\end{cases}
\end{split}
\end{equation*}
which shows that $[{\bf S}_{i,1}\cdots {\bf S}_{i, n_i}]$ is a  row isometry for any $i\in \{1,\ldots, p\}$.
Inductively, one can prove that
\begin{equation*}
\begin{split}
\left(\sum_{\alpha\in \FF_{n_i}^+, |\alpha|=q} {\bf S}_{i,\alpha} {\bf S}_{i,\alpha}^*\right)\left( \chi_{(\alpha_1,\ldots, \alpha_p)}\otimes h\right)& =\begin{cases}
\chi_{(\alpha_1,\ldots, \alpha_p)}\otimes h,&\quad \text{ if } |\alpha_i|\geq q\\
0, &\quad \text{ otherwise }
\end{cases}
\end{split}
\end{equation*}
for any $q\in \NN$.
Consequently, we have
$$
\lim_{q\to\infty}  \sum_{\alpha\in \FF_{n_i}^+, |\alpha|=q}
\left\| {\bf S}_{i,\alpha}^*\left( \chi_{(\alpha_1,\ldots, \alpha_p)}\otimes h\right)\right\|^2=0
$$
for any $h\in \cL$, $\alpha_1\in \FF_{n_1}^+,\ldots, \alpha_{p}\in \FF_{n_{p}}^+$.
Since $\sum_{\alpha\in \FF_{n_i}^+, |\alpha|=q} {\bf S}_{i,\alpha} {\bf S}_{i,\alpha}^*\leq I$, we conclude that
$[{\bf S}_{i,1}\cdots {\bf S}_{i, n_i}]$ is a pure row isometry for any $i\in \{1,\ldots, p\}$.

Now, note that,   for $i\in \{p+1,\ldots, k\}$ and $s\in \{1,\ldots, n_i\}$, relation \eqref{cuntz} shows that $W_{i,s}=D\otimes \widetilde{W}_{i,s}$, where $D\in B(\ell^2(\FF^+_{n_1}\times\cdots \times  \FF_{n_p}^+))$ is a unitary diagonal operator  and $\widetilde W_i:=[\widetilde W_{i,1}\cdots \widetilde W_{i,n_i}]$  is a Cuntz row isometry satisfying relation \eqref{doubly}. Consequently,
for each $i\in \{p+1,\ldots, k\}$,  $W_i:=[W_{i,1}\cdots W_{i,n_i}]$  is a Cuntz  row isometry satisfying
relation
\begin{equation*}
 W_{i,s}^*  W_{j,t}=\overline{\lambda_{ij}(s,t)} W_{j,t} W_{i,s}^*
\end{equation*}
for any $i,j\in \{p+1,\ldots, k\}$ with $i\neq j$ and any $s\in \{1,\ldots, n_i\}$, $t\in \{1,\ldots, n_j\}$.

Now, we prove that  the $k$-tuple $$({\bf S}_1,\ldots, {\bf S}_p, W_{p+1},\ldots, W_k)$$   is a
 doubly $\Lambda$-commuting  sequence of row isometries on the Hilbert space  $\ell^2(\FF_{n_1}^+\times\cdots \times \FF_{n_p}^+)\otimes \cL$.
  First, we show that,  if $i,j\in \{1,\ldots, p\}$ with $i\neq j$ and any $s\in \{1,\ldots, n_i\}$, $t\in \{1,\ldots, n_j\}$, then
\begin{equation}\label{lac-S} {\bf S}_{i,s}^* {\bf S}_{j,t}=\overline{\lambda_{i,j}(s,t)}{\bf S}_{j,t}{\bf S}_{i,s}^*.
\end{equation}
Assume that  $i<j$. If $\alpha_i\in \FF_{n_i}^+$ and $\alpha_i\neq g_s^i\beta_i$, then
$$
{\bf S}_{i,s}^* {\bf S}_{j,t}\left( \chi_{(\alpha_1,\ldots, \alpha_p)}\otimes h\right)=
{\bf S}_{j,t}{\bf S}_{i,s}^*\left( \chi_{(\alpha_1,\ldots, \alpha_p)}\otimes h\right)=0.
$$
On the other hand, if $\alpha_i= g_s^i\beta_i$, then we have
\begin{equation*}
\begin{split}
&{\bf S}_{i,s}^* {\bf S}_{j,t}\left( \chi_{(\alpha_1,\ldots, \alpha_p)}\otimes h\right)\\
&\quad =
\boldsymbol{\lambda}_{j,1}(t,\alpha_1)\cdots \boldsymbol{\lambda}_{j,j-1}(t,\alpha_{j-1})
{\bf S}_{i,s}^*\left( \chi_{(\alpha_1,\ldots,\alpha_{j-1}, g_t^j\alpha_j, \alpha_{j+1},\ldots, \alpha_p)}\otimes h\right)\\
&\quad=
\boldsymbol{\lambda}_{j,1}(t,\alpha_1)\cdots \boldsymbol{\lambda}_{j,j-1}(t,\alpha_{j-1})
\overline{\boldsymbol{\lambda}_{i,1}(s,\alpha_1)}\cdots \overline{\boldsymbol{\lambda}_{i,i-1}(s,\alpha_{i-1})}
\left( \chi_{(\alpha_1,\ldots, \alpha_{i-1},\beta_i,\alpha_{i+1}, \ldots, \alpha_{j-1}, g_t^j\alpha_j, \alpha_{j+1},\ldots, \alpha_p)}\otimes h\right).
\end{split}
\end{equation*}
This relation is used o deduce that
\begin{equation*}
\begin{split}
&{\bf S}_{j,t} {\bf S}_{i,s}^*\left( \chi_{(\alpha_1,\ldots, \alpha_p)}\otimes h\right)\\
&\quad =\overline{\boldsymbol{\lambda}_{i,1}(s,\alpha_1)}\cdots \overline{\boldsymbol{\lambda}_{i,i-1}(s,\alpha_{i-1})} {\bf S}_{j,t}
\left( \chi_{(\alpha_1,\ldots, \alpha_{i-1},\beta_i, \alpha_{i+1}, \ldots,  \alpha_p)}\otimes h\right)\\
&\quad=
\overline{\boldsymbol{\lambda}_{i,1}(s,\alpha_1)}\cdots \overline{\boldsymbol{\lambda}_{i,i-1}(s,\alpha_{i-1})}
\lambda_{j,1}(t,\alpha_1)\cdots \lambda_{j,i-1}(t,\alpha_{i-1})
\lambda_{j,i}(t,\beta_i) \lambda_{j,i+1}(t,\alpha_{i+1})\cdots \lambda_{j,j-1}(t, \alpha_{j-1})\\
&\qquad\qquad \times
\left( \chi_{(\alpha_1,\ldots, \alpha_{i-1},\beta_i,\alpha_{i+1}, \ldots, \alpha_{j-1}, g_t^j\alpha_j, \alpha_{j+1},\ldots, \alpha_p)}\otimes h\right)\\
&\quad=\overline{\lambda_{j,i}(t,s)}{\bf S}_{i,s}^* {\bf S}_{j,t}\left( \chi_{(\alpha_1,\ldots, \alpha_p)}\otimes h\right)=
\lambda_{i,j}(s,t) {\bf S}_{i,s}^* {\bf S}_{j,t}\left( \chi_{(\alpha_1,\ldots, \alpha_p)}\otimes h\right).
\end{split}
\end{equation*}
Consequently, relation \eqref{lac-S} holds. The case when $i>j$ follows taking adjoints and using the fact that $\overline{\lambda_{j,i}(t,s)}=\lambda_{i,j}(s,t)$.

It remains to prove that, if $i\in \{1,\ldots, p\}$,  $j\in \{p+1,\ldots, k\}$ and $s\in\{1,\ldots, n_i\}$,
$t\in \{1,\ldots, n_j\}$, then
\begin{equation}
\label{S*W}
{\bf S}_{i,s}^* W_{j,t}=\overline{\lambda_{i,j}(s,t)}W_{j,t}{\bf S}_{i,s}^*.
\end{equation}
First, note that if $\alpha_i\in \FF_{n_i}^+$ and $\alpha_i\neq g_s^i\beta_i$, then
$$
{\bf S}_{i,s}^* W_{j,t}\left( \chi_{(\alpha_1,\ldots, \alpha_p)}\otimes h\right)=
W_{j,t}{\bf S}_{i,s}^*\left( \chi_{(\alpha_1,\ldots, \alpha_p)}\otimes h\right)=0.
$$
 If $\alpha_i= g_s^i\beta_i$, then we have
\begin{equation*}
\begin{split}
&{\bf S}_{i,s}^* W_{j,t}\left( \chi_{(\alpha_1,\ldots, \alpha_p)}\otimes h\right)\\
&\quad=
\boldsymbol{\lambda}_{j,1}(t,\alpha_1)\cdots \boldsymbol{\lambda}_{j,p}(t,\alpha_{p})
{\bf S}_{i,s}^*\left( \chi_{(\alpha_1,\ldots, \alpha_p)}\otimes \widetilde W_{j,t}h\right)\\
&\quad=\boldsymbol{\lambda}_{j,1}(t,\alpha_1)\cdots \boldsymbol{\lambda}_{j,p}(t,\alpha_{p})\overline{\boldsymbol{\lambda}_{i,1}(s,\alpha_1)}\cdots \overline{\boldsymbol{\lambda}_{i,i-1}(s,\alpha_{i-1})}\left( \chi_{(\alpha_1,\ldots, \alpha_{i-1},\beta_i, \alpha_{i+1}, \ldots,  \alpha_p)}\otimes \widetilde W_{j,t}h\right),
\end{split}
\end{equation*}
which can be used to deduce that
\begin{equation*}
\begin{split}
&W_{j,t} {\bf S}_{i,s}^*\left( \chi_{(\alpha_1,\ldots, \alpha_p)}\otimes h\right)\\
&\quad =\overline{\boldsymbol{\lambda}_{i,1}(s,\alpha_1)}\cdots \overline{\boldsymbol{\lambda}_{i,i-1}(s,\alpha_{i-1})} W_{j,t}
\left( \chi_{(\alpha_1,\ldots, \alpha_{i-1},\beta_i, \alpha_{i+1}, \ldots,  \alpha_p)}\otimes h\right)\\
&\quad=
\overline{\boldsymbol{\lambda}_{i,1}(s,\alpha_1)}\cdots \overline{\boldsymbol{\lambda}_{i,i-1}(s,\alpha_{i-1})}
\lambda_{j,1}(t,\alpha_1)\cdots \lambda_{j,i-1}(t,\alpha_{i-1})
\lambda_{j,i}(t,\beta_i) \lambda_{j,i+1}(t,\alpha_{i+1})\cdots \lambda_{j,p}(t, \alpha_{p})\\
&\qquad\qquad \times
\left( \chi_{(\alpha_1,\ldots, \alpha_{i-1},\beta_i,\alpha_{i+1}, \ldots,   \alpha_p)}\otimes \widetilde W_{j,t}h\right)\\
&\quad=\overline{\lambda_{j,i}(t,s)}{\bf S}_{i,s}^* W_{j,t}\left( \chi_{(\alpha_1,\ldots, \alpha_p)}\otimes h\right)=
\lambda_{i,j}(s,t) {\bf S}_{i,s}^* W_{j,t}\left( \chi_{(\alpha_1,\ldots, \alpha_p)}\otimes h\right).
\end{split}
\end{equation*}
Therefore, relation \eqref{S*W} holds.
In what follows, we prove  part (iii) of the theorem.
Due to calculations above, we deduce that
$\prod_{i=1}^p \left(I-\sum_{s=1}^{n_i} {\bf S}_{i,s}{\bf S}_{i,s}^*\right)$ is the orthogonal projection of
 $\ell^2(\FF_{n_1}^+\times\cdots \times \FF_{n_p}^+)\otimes \cL$ onto
 $\CC \chi_{(g_0^1,\ldots, g_0^p)}\otimes \cL$. According to Theorem \ref{Wold3}, $\prod_{i=1}^p \left(I-\sum_{s=1}^{n_i} {\bf S}_{i,s}{\bf S}_{i,s}^*\right)$ is the orthogonal projection of
 $\ell^2(\FF_{n_1}^+\times\cdots \times \FF_{n_p}^+)\otimes \cL$ onto the $\{1,\ldots, p\}$-wandering subspace $\cL_{\{1,\ldots, p\}}$. Therefore,  $\cL_{\{1,\ldots, p\}}$ is canonically isomorphic to $\cL$.  Note also that under this identification, if $i\in\{p+1,\ldots, k\}$ and $s\in \{1,\ldots, n_i\}$, we have $W_{i,s}|_{\cL_{\{1,\ldots, p\}}}=\widetilde W_{i,s}$.
Due to Theorem \ref{Wold2} , all the other wandering data  of $({\bf S}_1,\ldots, {\bf S}_p, W_{p+1},\ldots, W_k)$  are the zero tuples.
The proof is complete.
\end{proof}

Given a subset $C\subset \{1,\ldots, k\}$, we introduce the twisted $\Lambda$-tensor algebra
$\otimes_{i\in C}^{\Lambda}\cO_{n_i}$ as the universal $C^*$-algebra generated by   isometries
$W_{i,s}$,  where $i\in C$, $s\in \{1,\ldots, n_i\}$,  with the property that
\begin{equation*} W_{i,s}^* W_{j,t}=\overline{\lambda_{i,j}(s,t)}W_{j,t}W_{i,s}^*
\end{equation*}
for any $i,j\in C$ with $i\neq j$ and any $s\in \{1,\ldots, n_i\}$, $t\in \{1,\ldots, n_j\}$,
and  such that
each row isometry  $W_i=[W_{i,1}\cdots W_{i,n_i}]$ satisfies the  Cuntz condition
$$W_{i,1}W_{i,1}^*+\cdots +W_{i, n_i}W_{i, n_i}^*=1,\qquad i\in C.
$$
Note that if $n_i=1$ for $i\in C$, then the corresponding twisted $\Lambda$-tensor   algebra
$\otimes_{i\in C}^{\Lambda}\cO_{n_i}$ coincides with a higher dimensional   noncommutative torus, which has been extensively studied in the literature.

We remark that versions of Theorem \ref{standard} hold true when $p=0$ or $p=k$.
Indeed, in the particular case $p=0$, we have $A=\emptyset$ and, given the data
$\left\{\cK, (\widetilde W_{1},\ldots, \widetilde W_k)\right\}$ with the property that relation \eqref{doubly} holds,
  the standard $k$-tuple
 $(W_1,\ldots, W_k)$ is  defined by  $W_{i}:=\widetilde W_{i}$. In this case, the
 $\emptyset$-wandering data of $(W_1,\ldots, W_k)$ is $\cW_\emptyset:=(\cK, (W_1,\ldots, W_n))$.
Note that all the other wandering data of $(W_1,\ldots, W_k)$ are the zero tuples.
Moreover, $(W_1,\ldots, W_k)$ provides a representation of the universal algebra
$\otimes_{i\in \{1,\ldots, k\}}^{\Lambda}\cO_{n_i}$.

In the particular case when $p=k$, we have $A=\{1,\ldots, k\}$ and the {\it standard $k$-tuple}
 $({\bf S}_1,\ldots, {\bf S}_k)$  consists of pure row isometries   on the Hilbert space  $\ell^2(\FF_{n_1}^+\times\cdots \times \FF_{n_k}^+)\otimes \cL$, which are doubly  $\Lambda$-commuting.
 The $\{1,\ldots, k\}$-wandering subspace of $({\bf S}_1,\ldots, {\bf S}_k)$ is
 $\CC \chi_{(g_0^1,\ldots, g_0^k)}\otimes \cL$ which is identified with $\cL$. With this identification the $\{1,\ldots, k\}$-wandering data of $({\bf S}_1,\ldots, {\bf S}_k)$ reduces to  $\{\cL\}$.
All the other wandering data of $({\bf S}_1,\ldots, {\bf S}_k)$ are the zero tuples.
We call $({\bf S}_1,\ldots, {\bf S}_k)$ the standard $k$-tuple with $\{1,\ldots, k\}$-wandering data $(\cL)$. When $\cL=\CC$, we use the notation $(S_1,\ldots, S_k)$  for the standard $k$-tuple.

Let  $(V_1,\ldots, V_k)$ and $(V_1',\ldots, V_k')$ be $k$-tuple of doubly $\Lambda$-commuting row isometries acting on the Hilbert spaces $\cK$ and $\cK'$, respectively. We say that they are unitarily equivalent if there is a unitary operator $U:\cK\to \cK'$ such that $UV_{i,s}=V_{i,s}'U$ for any $i\in \{1,\ldots, k\}$ and $s\in \{1,\ldots, n_i\}$.

\begin{theorem} \label{standard2}    Assume that $0\leq p\leq k\geq 1$. Let $(V_1,\ldots, V_k)$ be a $k$-tuple of doubly $\Lambda$-commuting row isometries on a Hilbert space $\cK$ such that $V_1,\ldots, V_p$ (if $p\geq 1$)  are pure row isometries and $V_{p+1},\ldots, V_k$  (if $p\leq k-1$) are Cuntz row isometries. Let $\cL$ be the $\{1,\ldots, p\}$-wandering subspace of $(V_1,\ldots, V_k)$ and let
$\widetilde W_{p+1}:=V_{p+1}|_\cL,\ldots, \widetilde W_{k}:=V_{k}|_\cL$. Then the $k$-tuple $(V_1,\ldots, V_k)$
is unitarily equivalent to
\begin{enumerate}
\item[(i)] the standard $k$-tuple  $({\bf S}_1,\ldots, {\bf S}_p, W_{p+1},\ldots, W_k)$   associated with the wandering  $\{1,\ldots, p\}$-data
 $\left\{\cL, (\widetilde W_{p+1},\ldots, \widetilde W_k)\right\}$,  if $1\leq p\leq k-1$;
 \item[(ii)] the standard $k$-tuple  $(V_1,\ldots, V_k)$    associated with the wandering
 $\emptyset$-data
 $\left\{\cK, (V_1,\ldots, V_k)\right\}$, if $p=0$;
 \item[(iii)] the standard $k$-tuple  $({\bf S}_1,\ldots, {\bf S}_k)$     associated with the wandering
 $\{1,\ldots,k\}$-data $\{\cL\}$, if $p=k$ .
 \end{enumerate}
\end{theorem}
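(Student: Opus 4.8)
The plan is to construct an explicit unitary
$$
U\colon \cK\to \ell^2(\FF_{n_1}^+\times\cdots\times\FF_{n_p}^+)\otimes\cL
$$
intertwining $(V_1,\ldots,V_k)$ with the standard $k$-tuple, and then to verify the intertwining relations by direct computation, the phase bookkeeping being dictated entirely by the $\Lambda$-commutation relations. I carry out the main case $1\le p\le k-1$; the cases $p=0$ and $p=k$ are the degenerate versions described in Remark \ref{partic} and are handled the same way but with fewer factors.

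\emph{First step: extract the decomposition.} Since $V_1,\ldots,V_p$ are pure and $V_{p+1},\ldots,V_k$ are Cuntz, Theorem \ref{Wold1} gives $\cK=\cK_i^{(s)}$ for $i\le p$ and $\cK=\cK_j^{(c)}$ for $j>p$; hence $\cK=\cK_A$ with $A=\{1,\ldots,p\}$ in the notation of Theorem \ref{Wold2}. Applying Theorem \ref{Wold3} with this $A$, I obtain the orthogonal decomposition
$$
\cK=\bigoplus_{\alpha_1\in\FF_{n_1}^+,\ldots,\alpha_p\in\FF_{n_p}^+} V_{1,\alpha_1}\cdots V_{p,\alpha_p}(\cL),
$$
where $\cL=\cL_A$ is the $\{1,\ldots,p\}$-wandering subspace, together with the three facts I will need: $\cL$ is reducing for each $V_j$, $j>p$, with $\widetilde W_j:=V_j|_\cL$ a Cuntz row isometry (part (i)); the restrictions $\widetilde W_{p+1},\ldots,\widetilde W_k$ are doubly $\Lambda$-commuting, so the data $\{\cL,(\widetilde W_{p+1},\ldots,\widetilde W_k)\}$ satisfies \eqref{doubly} (part (ii)); and $\cL\subset\bigcap_{i\le p,\,s}\ker V_{i,s}^*$ (part (iii)).

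\emph{Second step: define $U$.} Each $V_{1,\alpha_1}\cdots V_{p,\alpha_p}|_\cL$ is an isometry of $\cL$ onto the corresponding summand, so the assignment $V_{1,\alpha_1}\cdots V_{p,\alpha_p}\,h\mapsto \chi_{(\alpha_1,\ldots,\alpha_p)}\otimes h$ is a unitary between summands; summing over the orthogonal decomposition defines a unitary $U$, onto because the vectors $\chi_{(\alpha_1,\ldots,\alpha_p)}\otimes h$ span the target space.

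\emph{Third step: verify the intertwining, which is the crux.} For $i\in\{1,\ldots,p\}$ and $s\in\{1,\ldots,n_i\}$, I move $V_{i,s}$ through $V_{1,\alpha_1}\cdots V_{i-1,\alpha_{i-1}}$ using \eqref{lac} and absorb $V_{i,s}V_{i,\alpha_i}=V_{i,g_s^i\alpha_i}$; the commutation produces exactly the phase $\boldsymbol{\lambda}_{i,1}(s,\alpha_1)\cdots\boldsymbol{\lambda}_{i,i-1}(s,\alpha_{i-1})$, so comparison with \eqref{shift} yields $U V_{i,s}={\bf S}_{i,s}U$. For $j\in\{p+1,\ldots,k\}$, I move $V_{j,t}$ through all of $V_{1,\alpha_1}\cdots V_{p,\alpha_p}$, producing $\boldsymbol{\lambda}_{j,1}(t,\alpha_1)\cdots\boldsymbol{\lambda}_{j,p}(t,\alpha_p)$, and use $V_{j,t}h=\widetilde W_{j,t}h\in\cL$; comparison with \eqref{cuntz} yields $U V_{j,t}=W_{j,t}U$. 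This gives the unitary equivalence in case (i). The entire substance here is that the phases picked up when commuting the $V$'s past one another are \emph{precisely} the $\boldsymbol{\lambda}$-weights built into the standard tuple — which is no accident, since both tuples obey the same $\Lambda$-commutation relations — so no genuine obstacle remains beyond careful phase tracking. The only real care needed is in the first step, namely invoking Theorem \ref{Wold3} with the correct $A$ and reading off properties (i)--(iii) of $\cL$; for the degenerate cases one reads from Remark \ref{partic} that when $p=0$ the whole space is $\cL_\emptyset=\cK$ with $U$ the identity (case (ii)), and when $p=k$ one has $\cL=\bigcap_{i,s}\ker V_{i,s}^*$ with no Cuntz factors present (case (iii)).
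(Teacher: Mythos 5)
Your proposal is correct and follows essentially the same route as the paper: both extract the decomposition $\cK=\bigoplus_{\alpha_1,\ldots,\alpha_p} V_{1,\alpha_1}\cdots V_{p,\alpha_p}(\cL)$ from Theorems \ref{Wold2} and \ref{Wold3}, define the unitary $U$ by $V_{1,\alpha_1}\cdots V_{p,\alpha_p}h\mapsto \chi_{(\alpha_1,\ldots,\alpha_p)}\otimes h$, and verify $UV_{i,s}U^{-1}={\bf S}_{i,s}$ (resp.\ $W_{i,s}$) by commuting $V_{i,s}$ past the preceding factors and matching the resulting phases $\boldsymbol{\lambda}_{i,1}(s,\alpha_1)\cdots$ against \eqref{shift} and \eqref{cuntz}. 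The treatment of the degenerate cases $p=0$ and $p=k$ also matches the paper's.
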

\begin{proof}  Assume that $1\leq p\leq k-1$.
According to Theorem \ref{Wold2} and Theorem \ref{Wold3}, we have
$$
\cK=\cK_{\{1,\ldots, p\}}=\bigoplus_{\alpha_{1}\in \FF_{n_{1}}^+,\ldots, \alpha_{p}\in \FF_{n_{p}}^+} V_{1,\alpha_{1}}\cdots V_{p,\alpha_{p}}\left(\cL\right),
$$
where $\cL:=\cL_{\{1,\ldots, p\}}$ is invariant under the isometries $V_{i,s}$, where $i\in \{p+1,\ldots, k\}$ and $s\in \{1,\ldots, n_i\}$, and $[V_{i,1}|_\cL\cdots V_{i,n_i}|_\cL$ is a Cuntz isometry for all
$i\in \{p+1,\ldots, k\}$. Moreover, $\cL$ is reducing for the isometries above and, denoting
$\widetilde W_{i,s}:=V_{i,s}|_\cL$, we have
\begin{equation}\label{tilda}
\left(\widetilde W_{i,s}|_{\cL}\right)^* \left(\widetilde W_{j,t}|_{\cL}\right)=\overline{\lambda_{ij}(s,t)}\left(\widetilde W_{j,t}|_{\cL}\right)\left(\widetilde W_{i,s}|_{\cL}\right)^*
\end{equation}
for any  $i, j\in \{p+1,\ldots, k\}$ with $i\neq j$, and  $s\in \{1,\ldots, n_i\}$, $t\in \{1,\ldots, n_j\}$.
Let    $({\bf S}_1,\ldots, {\bf S}_p, W_{p+1},\ldots, W_k)$  be the standard $k$-tuple  associated with the wandering  $\{1,\ldots, p\}$-data
 $\left\{\cL, (\widetilde W_{p+1},\ldots, \widetilde W_k)\right\}$.

Now, we define the operator
$U:\cK\to \ell^2(\FF_{n_1}^+\times\cdots \times \FF_{n_p}^+)\otimes \cL$ by setting
$$
U\left(  \sum_{\alpha_{1}\in \FF_{n_{1}}^+,\ldots, \alpha_{p}\in \FF_{n_{p}}^+} V_{1,\alpha_{1}}\cdots V_{p,\alpha_{p}}  h_{(\alpha_1,\ldots, \alpha_p)}\right):=
 \sum_{\alpha_{1}\in \FF_{n_{1}}^+,\ldots, \alpha_{p}\in \FF_{n_{p}}^+}
 \chi_{(\alpha_1,\ldots, \alpha_p)}\otimes h_{(\alpha_1,\ldots, \alpha_p)},
$$
where  $h_{(\alpha_1,\ldots, \alpha_p)}\in \cL$  and $\{\chi_{(\alpha_1,\ldots, \alpha_p)}\}$ is the standard orthonormal basis
of $\ell^2(\FF_{n_1}^+\times\cdots \times \FF_{n_p}^+)$.
Since
$$
\left\|V_{1,\alpha_{1}}\cdots V_{p,\alpha_{p}}  h_{(\alpha_1,\ldots, \alpha_p)}\right\|=
\left\|\chi_{(\alpha_1,\ldots, \alpha_p)}\otimes h_{(\alpha_1,\ldots, \alpha_p)}\right\|,
$$
it is clear that $U$ is a unitary operator.
Let $i\in \{1,\ldots, p\}$, $s\in \{1,\ldots, n_i\}$, and note that
\begin{equation*}
\begin{split}
&V_{i,s}\left(V_{1,\alpha_{1}}\cdots V_{p,\alpha_{p}}  h_{(\alpha_1,\ldots, \alpha_p)}\right)\\
&\qquad=
\boldsymbol{\lambda}_{i,1}(s,\alpha_1)\cdots \boldsymbol{\lambda}_{i,i-1}(s,\alpha_{i-1})
V_{1,\alpha_{1}}\cdots V_{i-1, \alpha_{i-1}} V_{i,g_s^i\alpha_i} V_{i+1,\alpha_{i+1}}\cdots V_{p,\alpha_{p}}  h_{(\alpha_1,\ldots, \alpha_p)}.
\end{split}
\end{equation*}
Hence, and using the definition of $U$, we obtain
\begin{equation*}
\begin{split}
&UV_{i,s}U^{-1}\left(  \chi_{(\alpha_1,\ldots, \alpha_p)}\otimes h_{(\alpha_1,\ldots, \alpha_p)}\right)\\
&\qquad=\boldsymbol{\lambda}_{i,1}(s,\alpha_1)\cdots \boldsymbol{\lambda}_{i,i-1}(s,\alpha_{i-1})
\left( \chi_{(\alpha_1,\ldots,\alpha_{i-1}, g_s^i\alpha_i,\alpha_{i+1},\ldots, \alpha_p)}\otimes h_{(\alpha_1,\ldots, \alpha_p)}\right)
\end{split}
\end{equation*}
and, consequently $UV_{i,s}U^{-1}={\bf S}_{i,s}$ for any $i\in \{1,\ldots, p\}$ and  $s\in \{1,\ldots, n_i\}$.

On the other hand, due to the doubly $\Lambda$-commuting property, if $i\in \{p+1,\ldots, k\}$ and $s\in \{1,\ldots, n_i\}$, we have
$$
V_{i,s}\left(V_{1,\alpha_{1}}\cdots V_{p,\alpha_{p}}  h_{(\alpha_1,\ldots, \alpha_p)}\right)
=
\boldsymbol{\lambda}_{i,1}(s,\alpha_1)\cdots \boldsymbol{\lambda}_{i,p}(s,\alpha_{p})
V_{1,\alpha_{1}}\cdots V_{p,\alpha_p}\widetilde W_{i,s}h_{(\alpha_1,\ldots, \alpha_p)}.
$$
Consequently,
$$
UV_{i,s}U^{-1}\left(  \chi_{(\alpha_1,\ldots, \alpha_p)}\otimes h_{(\alpha_1,\ldots, \alpha_p)}\right)
=\boldsymbol{\lambda}_{i,1}(s,\alpha_1)\cdots \boldsymbol{\lambda}_{i,p}(s,\alpha_{p})
\chi_{(\alpha_1,\ldots, \alpha_p)}\otimes \widetilde W_{i,s} h_{(\alpha_1,\ldots, \alpha_p)},
$$
where  the isometries $\widetilde W_{i,s}$ satisfy  relation \eqref{tilda}.
Hence, we deduce that $UV_{i,s}U^{-1}=W_{i,s}$  for any  $i\in \{p+1,\ldots, k\}$ and $s\in \{1,\ldots, n_i\}$. This proves item (i). Note that item (ii) is obvious and the proof of (iii) is similar to that of (i), setting $p=k$.
The proof is complete.
\end{proof}

Consider now the general case when
$A=\{i_1,\ldots, i_p\}\subset \{1,\ldots, k\}$ and $A^c=\{j_1,\ldots, j_{k-p}\}$ with $i_1<\cdots <i_p$ and $j_1<\cdots <j_{k-p}$. As in the particular case when $A=\{1,\ldots, p\}$, given a $A$-wandering data   $\cW_A:=\left\{\cL, (\widetilde W_{j_1},\ldots, \widetilde W_{j_{k-p}})\right\}$, one can construct  a   $k$-tuple $V^{\cW_A}:=(V^{\cW_A}_1,\ldots, V^{\cW_A}_k)$, where
$V^{\cW_A}_i:=[V^{\cW_A}_{i,1}\cdots, V^{\cW_A}_{i,n_i}]$,  of doubly $\Lambda$-commuting row isometries  acting on the Hilbert space
$\ell^2(\FF_{n_{i_1}}^+\times \cdots \times\FF_{n_{i_p}}^+)\otimes \cL$,
that has the prescribed $A$-wandering data $\cW_A$  with the following properties:
\begin{enumerate}

\item[(i)]
if $i\in A$, then $V_i^{\cW_A}$ is a pure row isometry;
\item[(ii)]
if $i\in A^c$, then $V_i^{\cW_A}$ is a Cuntz row isometry;
\item[(iii)]
all the other wandering data are the zero tuples.
\end{enumerate}
We call $V^{\cW_A}$  the {\it standard $k$-tuple} of  doubly $\Lambda$-commuting row isometries with  prescribed $A$-wandering data  $\cW_A$.
In this setting, the analogue of Theorem \ref{standard2} is the following result. Since the proof is similar we shall omit it.

\begin{theorem} \label{standard3}  Let $1\leq p\leq k-1$ and let $A=\{i_1,\ldots, i_p\}\subset \{1,\ldots, k\}$ and $A^c=\{j_1,\ldots j_{k-p}\}$ with $j_1<\cdots <j_{k-p}$.
Suppose that $(V_1,\ldots, V_k)$ is  a $k$-tuple of doubly $\Lambda$-commuting row isometries on a Hilbert space $\cK$ such that $V_i$ is a pure row isometry  if $i\in A$ and  $V_i$  is a  Cuntz row isometry if $i\in A^c$, and let $\cW_A$ be its $A$-wandering data of $(V_1,\ldots, V_k)$. Then the $k$-tuple $(V_1,\ldots, V_k)$
is unitarily equivalent to the standard $k$-tuple     $V^{\cW_A}$ associated with the wandering data $\cW_A$.
\end{theorem}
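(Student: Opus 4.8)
The plan is to follow verbatim the strategy of the proof of Theorem \ref{standard2}, the only new feature being that the pure indices $A=\{i_1,\ldots,i_p\}$ and the Cuntz indices $A^c=\{j_1,\ldots,j_{k-p}\}$ are now interspersed rather than forming an initial block; accordingly, the role played in the phase factors by the ``preceding'' coordinates must be taken over by those indices of $A$ that precede a given index in the natural order. First I would exploit the global hypothesis. Since $V_i$ is a pure row isometry for each $i\in A$ and a Cuntz row isometry for each $i\in A^c$, the uniqueness in Theorem \ref{Wold1} gives $\cK_i^{(s)}=\cK$ for $i\in A$ and $\cK_i^{(c)}=\cK$ for $i\in A^c$. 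Substituting this into the formula $\cK_B=\left(\bigcap_{i\in B}\cK_i^{(s)}\right)\cap\left(\bigcap_{i\in B^c}\cK_i^{(c)}\right)$ of Theorem \ref{Wold2} shows $\cK_B=\{0\}$ whenever $B\neq A$ and $\cK_A=\cK$. Theorem \ref{Wold3} then supplies the orthogonal decomposition
$$
\cK=\bigoplus_{\alpha_{i_1}\in\FF_{n_{i_1}}^+,\ldots,\alpha_{i_p}\in\FF_{n_{i_p}}^+} V_{i_1,\alpha_{i_1}}\cdots V_{i_p,\alpha_{i_p}}(\cL_A),
$$
together with the facts that $\cL_A$ is reducing for the isometries $V_{j,m}$ with $j\in A^c$, that $\left(V_{j_1}|_{\cL_A},\ldots,V_{j_{k-p}}|_{\cL_A}\right)$ is a doubly $\Lambda$-commuting tuple of Cuntz row isometries, and that $\cL_A\subset\bigcap_{i\in A,\,s}\ker V_{i,s}^*$.

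Next I would define the candidate intertwiner $U:\cK\to\ell^2(\FF_{n_{i_1}}^+\times\cdots\times\FF_{n_{i_p}}^+)\otimes\cL_A$ by
$$
U\left(V_{i_1,\alpha_{i_1}}\cdots V_{i_p,\alpha_{i_p}}h\right):=\chi_{(\alpha_{i_1},\ldots,\alpha_{i_p})}\otimes h,\qquad h\in\cL_A,
$$
extended by linearity. Because the summands in the displayed decomposition are mutually orthogonal and each operator $V_{i_1,\alpha_{i_1}}\cdots V_{i_p,\alpha_{i_p}}$ is an isometry, $U$ preserves the norm of every element of $\cK$ and maps onto a total set; hence $U$ extends to a unitary operator, exactly as in Theorem \ref{standard2}.

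The remaining work is the two intertwining computations. For $i=i_a\in A$ I would commute $V_{i_a,s}$ to the right past the factors $V_{i_1,\alpha_{i_1}},\ldots,V_{i_{a-1},\alpha_{i_{a-1}}}$ using relation \eqref{lac}; each passage contributes the scalar $\boldsymbol{\lambda}_{i_a,i_b}(s,\alpha_{i_b})$, so that
$$
V_{i_a,s}\left(V_{i_1,\alpha_{i_1}}\cdots V_{i_p,\alpha_{i_p}}h\right)=\left(\prod_{b=1}^{a-1}\boldsymbol{\lambda}_{i_a,i_b}(s,\alpha_{i_b})\right)V_{i_1,\alpha_{i_1}}\cdots V_{i_a,g_s^{i_a}\alpha_{i_a}}\cdots V_{i_p,\alpha_{i_p}}h,
$$
which under $U$ becomes precisely the action of the standard pure shift $V^{\cW_A}_{i_a,s}$. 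For $i=j_c\in A^c$ I would instead commute $V_{j_c,s}$ past \emph{all} the pure factors, picking up $\prod_{b=1}^{p}\boldsymbol{\lambda}_{j_c,i_b}(s,\alpha_{i_b})$, and then use that $\cL_A$ is reducing to replace $V_{j_c,s}h$ by $\widetilde W_{j_c,s}h\in\cL_A$; this matches the defining formula of the standard Cuntz operator $V^{\cW_A}_{j_c,s}$. Thus $UV_{i,s}U^{-1}=V^{\cW_A}_{i,s}$ for all admissible $i,s$, which gives the asserted unitary equivalence. The only delicate point, and the place where care is required, is the combinatorics of the phase factors: one must check that the product of scalars generated by the $\Lambda$-commutation relations inside $\cK$ agrees coordinatewise with the phases built into the definition of $V^{\cW_A}$, a verification that is purely a matter of bookkeeping once the ordering $i_1<\cdots<i_p$ on $A$ is fixed.
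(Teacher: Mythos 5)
Your proposal is correct and follows exactly the route the paper intends: the paper omits the proof of Theorem \ref{standard3} precisely because it is the argument of Theorem \ref{standard2} adapted to a general subset $A$, which is what you carry out — reduce to $\cK=\cK_A$ via the Wold decomposition, invoke Theorem \ref{Wold3} for the orthogonal decomposition over $\cL_A$, define the same unitary $U$, and verify the intertwining relations with the phase bookkeeping dictated by the ordering of $A$. No gaps.
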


Combining  now Theorem \ref{Wold2} and Theorem \ref{standard3}, we deduce the following Wold decomposition.

\begin{theorem}   \label{Wold4} Let $V:=(V_1,\ldots, V_k)$ be a $k$-tuple of doubly $\Lambda$-commuting row isometries. Then
$V$ is unitarily equivalent to   $k$-tuple
$$
\bigoplus_{A\subset \{1,\ldots, k\}} V^{\cW_A}:=\left(\bigoplus_{A\subset \{1,\ldots, k\}} V_1^{\cW_A},\ldots, \bigoplus_{A\subset \{1,\ldots, k\}} V_k^{\cW_A}   \right),
$$
where, for each $i\in \{1,\ldots, k \}$,
$$
\bigoplus_{A\subset \{1,\ldots, k\}} V_i^{\cW_A}:=\left[\bigoplus_{A\subset \{1,\ldots, k\}} V_{i,1}^{\cW_A}\cdots \bigoplus_{A\subset \{1,\ldots, k\}} V_{i,n_i}^{\cW_A}   \right],
$$
and
 $V^{\cW_A}$ is the  standard $k$-tuple of of doubly $\Lambda$-commuting row isometries with  prescribed $A$-wandering data  $\cW_A $, which coincides with the $A$-wandering data of $(V_1,\ldots, V_k)$.

\end{theorem}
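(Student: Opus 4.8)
The plan is to combine the orthogonal Wold decomposition of Theorem \ref{Wold2} with the model identification of Theorem \ref{standard3}, which together reduce the statement to a piece-by-piece verification. The decomposition $\cK=\bigoplus_{A\subset\{1,\ldots,k\}}\cK_A$ of Theorem \ref{Wold2} already splits the underlying Hilbert space into summands that are reducing for every $V_{i,m}$, so $V$ is automatically the orthogonal direct sum of its restrictions $V|_{\cK_A}:=(V_1|_{\cK_A},\ldots,V_k|_{\cK_A})$. The remaining work is simply to identify each restriction with the appropriate standard $k$-tuple.

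First I would fix a subset $A\subset\{1,\ldots,k\}$ and record that, because $\cK_A$ reduces all the isometries, the restriction $V|_{\cK_A}$ is again a $k$-tuple of doubly $\Lambda$-commuting row isometries, since the relation \eqref{lac*} passes to reducing subspaces. By parts (ii) and (iii) of Theorem \ref{Wold2}, on $\cK_A$ the row isometry $V_i|_{\cK_A}$ is pure when $i\in A$ and Cuntz when $i\in A^c$, so the restricted tuple meets exactly the hypotheses of Theorem \ref{standard3}. Applying that theorem yields a unitary identification of $V|_{\cK_A}$ with the standard $k$-tuple $V^{\cW_A}$ built from the $A$-wandering data of $V|_{\cK_A}$.

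The one point requiring genuine attention is that the $A$-wandering data of the restriction $V|_{\cK_A}$ coincides with the $A$-wandering data $\cW_A$ of the full tuple $V$. This is transparent from Theorem \ref{Wold3}: the $A$-wandering subspace $\cL_A$ is constructed entirely inside $\cK_A$ as an intersection of ranges of the words $V_{j,\alpha_j}$ acting on $\bigcap_{i\in A,\,s}\ker V_{i,s}^{*}$, and the Cuntz isometries $V_{j}|_{\cL_A}$ with $j\in A^c$ that define $\cW_A$ depend only on the action of $V$ on $\cK_A$. Hence the $A$-wandering data computed intrinsically for $V|_{\cK_A}$ is literally $\cW_A$, and the matching is forced by the definitions rather than being a real obstruction.

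Finally I would assemble the pieces: summing the unitary equivalences $V|_{\cK_A}\cong V^{\cW_A}$ over all $A$ and using that the $\cK_A$ are mutually orthogonal and reducing yields $V\cong\bigoplus_{A}V^{\cW_A}$, with each $\cW_A$ equal to the corresponding $A$-wandering data of $V$, as claimed. The main care needed is the treatment of the two endpoints $A=\emptyset$ and $A=\{1,\ldots,k\}$, which fall outside the range $1\le p\le k-1$ covered by Theorem \ref{standard3}; for these I would instead invoke items (ii) and (iii) of Theorem \ref{standard2} together with Remark \ref{partic}, which supply the same conclusion when all the $V_i|_{\cK_A}$ are Cuntz, respectively all pure. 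Apart from this separate handling of the extreme cases, the argument is a clean synthesis of the two cited theorems.
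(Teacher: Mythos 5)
Your proposal is correct and follows essentially the same route as the paper: decompose $\cK=\bigoplus_{A}\cK_A$ via Theorem \ref{Wold2}, identify each restriction $V|_{\cK_A}$ with the standard model $V^{\cW_A}$ via Theorems \ref{standard2} and \ref{standard3}, and sum the resulting unitaries. Your explicit attention to the coincidence of the wandering data of the restriction with that of the full tuple, and to the endpoint cases $A=\emptyset$ and $A=\{1,\ldots,k\}$, matches what the paper does implicitly.
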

\begin{proof}

According to Theorem \ref{Wold2}, the Hilbert space  $\cK$ admits a  unique orthogonal decomposition
$$
\cK=\bigoplus_{A\subset \{1,\ldots, k\}} \cK_A
$$
with the following properties:
\begin{enumerate}
\item[(i)]  for each  subset $A\subset \{1,\ldots, k\}$,  the subspace $\cK_A$ is  reducing  for all the isometries
$V_{i,m}$, where $i\in \{1,\ldots, k\}$ and $m\in \{1,\ldots, n_i\}$;

\item[(ii)]  if $i\in A$, then $V_i|_{\cK_A}:=[V_{i,1}|_{\cK_A}\cdots V_{i,n_i}|_{\cK_A}]$ is a pure row isometry;

\item[(iii)] if $i\in A^c$, then $V_i|_{\cK_A}:=[V_{i,1}|_{\cK_A}\cdots V_{i,n_i}|_{\cK_A}]$  is a Cuntz row isometry.
\end{enumerate}
If $A=\{i_1,\ldots, i_p\}$ and $A^c=\{j_1,\ldots j_{k-p}\}$ with $j_1<\cdots <j_{k-p}$,
 let
$\cW_A=\left( \cL_A, \left(V_{j_1}|_{\cL_A},\ldots, V_{j_{k-p}}|_{\cL_A}\right)\right)$ be  the {\it $A$-wandering data} of
 $V:=(V_1,\ldots, V_k)$, as provided by Theorem \ref{Wold2}. Let $V^{\cW_A}$  be the standard $k$-tuple  of  doubly $\Lambda$-commuting row isometries with  prescribed $A$-wandering data  $\cW_A$. According to Theorem \ref{standard2} and Theorem \ref{standard3}, there is a unitary operator $U_A:\cK_A\to \ell^2(\times_{i\in A}\FF_{n_i}^+)\otimes \cL_A$ such that
 $$U_A\left(V_{i,s}|_{\cK_A}\right)=V_{i,s}^{\cW_A} U_A
 $$
for any $i\in\{1,\ldots, k\}$ and $s\in \{1,\ldots, n_i\}$. Consequently, we deduce that

$$
UV_{i,s}=\left(\bigoplus_{A\subset \{1,\ldots, k\}}V_{i,s}^{\cW_A}\right) U,
 $$
where
 $$
 U=\bigoplus_{A\subset \{1,\ldots, k\}}U_A:\bigoplus_{A\subset \{1,\ldots, k\}}\cK_A\to \bigoplus_{A\subset \{1,\ldots, k\}}\left(\ell^2(\times_{i\in A}\FF_{n_i}^+)\otimes \cL_A\right).
 $$
 The proof is complete.
 \end{proof}

\bigskip

\section{Classification of doubly $\Lambda$-commuting row isometries}

In this section, we  present classification  and parametrization results   for  the $k$-tuples of doubly $\Lambda$-commuting row isometries. In addition, we obtain  a description and parametrization of the irreducible $k$-tuples of doubly $\Lambda$-commuting row isometries.

Let  $V:=(V_1,\ldots, V_k)$ be $k$-tuple of doubly $\Lambda$-commuting row isometries acting on the Hilbert space $\cK$ and let and $V':=(V_1',\ldots, V_k')$ be $k$-tuple of doubly $\Lambda'$-commuting row isometries acting on the Hilbert space $\cK'$.   Note that if $V$ and $V'$ are unitarily equivalent, then $\Lambda_{ij} =\Lambda'_{ij}$ for $i\neq j$. Indeed, assume that
  there is a unitary operator $U:\cK\to \cK'$ such that $UV_{i,s}=V_{i,s}'U$ for any $i\in \{1,\ldots, k$ and $s\in \{1,\ldots, n_i\}$.  Then, using the $\Lambda$-commutation relation \eqref{lac}, we obtain
  $$
  UV_{i,s}V_{j,t}U^{-1}=V_{i,s}'V_{j,t}'=\lambda_{i,j}'(s,t) V_{j,t}'V_{i,s}'=
  \lambda_{i,j}'(s,t)UV_{j,t}V_{i,s}U^{-1}
  $$
which implies $V_{i,s}V_{j,t}=\lambda_{i,j}'(s,t)V_{j,t}V_{i,s}$ and together with relation \eqref{lac} and the fact that $V_{i,s}$ are isometries  show that $\lambda_{i,j}=\lambda_{i,j}'$.

 Let $A\subset \{1,\ldots, k\}$ and $A^c=\{j_1,\ldots j_{k-p}\}$ with  $j_1<\cdots <j_{k-p}$,  and  consider two   $A$-wandering data
  $(\cL_1, (W_{j_1},\ldots, W_{j_{k-p}}))$ and $(\cL_2, (U_{j_1},\ldots, U_{j_{k-p}}))$. We say that the $A$-wandering data are equivalent if there is a unitary operator  $\varphi:\cL_1\to \cL_2$ such that
  $\varphi W_{j,t}=U_{j,t}\varphi$ for any $j\in \{j_1,\ldots, j_{k-p}\}$ and $t\in \{1,\ldots, n_j\}$.

\begin{theorem} \label{uni-equi} Let  $V:=(V_1,\ldots, V_k)$ and $V':=(V_1',\ldots, V_k')$ be $k$-tuples of doubly $\Lambda$-commuting row isometries acting on the Hilbert space $\cK$ and   $\cK'$, respectively. Then $V$ is unitarily equivalent to $V'$ if and only if, for any $A\subset \{1,\ldots, k\}$, the $A$-wandering data of $V$ is unitarily equivalent to the $A$-wandering data of $V'$.
\end{theorem}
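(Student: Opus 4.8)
The plan is to reduce both implications to the Wold decomposition of Theorem~\ref{Wold4}, which already expresses any $k$-tuple of doubly $\Lambda$-commuting row isometries, up to unitary equivalence, as $\bigoplus_{A\subset\{1,\ldots,k\}}V^{\cW_A}$, where $V^{\cW_A}$ is the standard $k$-tuple built from the $A$-wandering data $\cW_A$. For the ``if'' direction I would first invoke Theorem~\ref{Wold4} to write $V\cong\bigoplus_A V^{\cW_A}$ and $V'\cong\bigoplus_A V^{\cW'_A}$, where $\cW_A$ and $\cW'_A$ are the $A$-wandering data of $V$ and $V'$. It then suffices to show that an equivalence of $\cW_A$ with $\cW'_A$ yields a unitary equivalence of the two standard tuples $V^{\cW_A}$ and $V^{\cW'_A}$, after which I take the orthogonal direct sum of these intertwiners over all $A$.

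The heart of the ``if'' direction is the naturality of the standard construction in its wandering data. Fix $A=\{i_1,\ldots,i_p\}$ with $A^c=\{j_1,\ldots,j_{k-p}\}$ and let $\varphi:\cL_A\to\cL'_A$ be a unitary with $\varphi\,(V_{j,t}|_{\cL_A})=(V'_{j,t}|_{\cL'_A})\,\varphi$ for all $j\in A^c$, $t\in\{1,\ldots,n_j\}$. Inspecting \eqref{shift} and \eqref{cuntz}, the operators ${\bf S}_{i,s}$ act only through the free-semigroup coordinates and the scalars $\boldsymbol{\lambda}_{i,j}$, while each $W_{j,t}$ has the form $D\otimes\widetilde W_{j,t}$ with $D$ a fixed unitary diagonal operator and $\widetilde W_{j,t}=V_{j,t}|_{\cL_A}$. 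Hence the unitary $I\otimes\varphi$ on $\ell^2(\times_{i\in A}\FF_{n_i}^+)\otimes\cL_A$ intertwines $V^{\cW_A}$ with $V^{\cW'_A}$, as one checks directly on the vectors $\chi_{(\alpha_{i_1},\ldots,\alpha_{i_p})}\otimes h$. The cases $A=\emptyset$ and $A=\{1,\ldots,k\}$ are handled the same way via Remark~\ref{partic}.

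For the ``only if'' direction, suppose $U:\cK\to\cK'$ is unitary with $UV_{i,s}=V'_{i,s}U$. The crucial point is that the whole Wold decomposition is intrinsic to the isometries. Indeed, the projections $P_i^{(s)}$ and $P_i^{(c)}$ are the SOT-limits of the $*$-polynomials in the $V_{i,\alpha}$ recorded in items (a) and (b) following Theorem~\ref{Wold1}, so $UP_i^{(s)}=P_i'^{(s)}U$ and $UP_i^{(c)}=P_i'^{(c)}U$; therefore $UP_A=P'_AU$ and $U$ restricts to a unitary $\cK_A\to\cK'_A$. Moreover, from the proof of Theorem~\ref{Wold3} one has $P_{\cL_A}=\big(\prod_{j\in A^c}P_j^{(c)}\big)\Delta_A$ with $\Delta_A=\prod_{i\in A}\big(I-\sum_{s=1}^{n_i}V_{i,s}V_{i,s}^*\big)$, again a SOT-limit of expressions $U$ intertwines, so $U$ carries $\cL_A$ onto $\cL'_A$ and intertwines $V_{j,t}|_{\cL_A}$ with $V'_{j,t}|_{\cL'_A}$ for $j\in A^c$. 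Thus $\varphi_A:=U|_{\cL_A}$ witnesses the equivalence of the $A$-wandering data.

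The step I expect to demand the most care is this last intrinsic description: verifying not merely that $U$ respects the coarse summands $\cK_A$, but that every operator entering the formula for $P_{\cL_A}$ is a limit of expressions in the isometries, so that $U$ genuinely transports the finer datum $\big(\cL_A,(V_{j_1}|_{\cL_A},\ldots,V_{j_{k-p}}|_{\cL_A})\big)$. Once this is secured both implications become formal, and the only remaining work is the bookkeeping across the $2^k$ subsets $A$, which is routine.
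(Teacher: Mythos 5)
Your proposal is correct and takes essentially the same route as the paper: the ``only if'' direction rests on the wandering data being intrinsic to the isometries (the paper deduces $U\cL_A=\cL_A'$ and the intertwining on $\cL_A$ from the explicit formulas of Theorem \ref{Wold3}, you from the SOT-limit formulas for the projections — the same fact), and the ``if'' direction assembles the global unitary as an orthogonal direct sum over the subsets $A$. The only cosmetic difference is that in the ``if'' direction the paper defines $U_A:\cK_A\to\cK_A'$ directly by $\sum V_{i_1,\alpha_{i_1}}\cdots V_{i_p,\alpha_{i_p}}h\mapsto\sum V'_{i_1,\alpha_{i_1}}\cdots V'_{i_p,\alpha_{i_p}}U_{\cL_A}h$ and verifies the intertwining via the $\Lambda$-commutation relations, whereas you factor through the standard models of Theorem \ref{Wold4} and use $I\otimes\varphi$; up to the identifications of Theorem \ref{standard3} these are the same unitary.
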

\begin{proof}
Assume that  $V$ and $V'$ are unitarily equivalent and let  $U:\cK\to \cK'$ be a unitary operator such that  $UV_{i,s}=V_{i,s}'U$ for any $i\in \{1,\ldots, k\}$ and $s\in \{1,\ldots, n_i\}$. Then, we have
$U(\ker V_{i,s}^*)=\ker V_{i,s}^*$.  If $A=\{i_1,\ldots, i_p\}\subset \{1,\ldots, k\}$ and $A^c=\{j_1,\ldots j_{k-p}\}$ with  $j_1<\cdots <j_{k-p}$, we can use Theorem \ref{Wold3}, to deduce that
$$
U\cL_A=\cL_A',\quad U\cK_A=\cK_A',  \quad (U |_{\cL_A})(V_{i,s}|_{\cL_A})=
(V_{i,s}|_{\cL_A'})(U|_{\cL_A}),
$$
where $\cL_A$ and $\cL_A'$ are the $A$-wandering subspace of $V$ and $V'$, respectively.
This shows that the wandering data $\cW_A:=\{\cL_A, (V_{j_1}|_{\cL_A},\ldots, V_{j_{k-p}}|_{\cL_A})\}$ and   $\cW_A':=\{\cL_A', (V_{j_1}'|_{\cL_A'},\ldots, V_{j_{k-p}}'|_{\cL_A'})\}$  are unitarily equivalent.

Conversely, assume that the wandering data $\cW_A$ and $\cW_A'$ are unitarily equivalent, i.e.
there is a unitary operator $U_{\cL_A}:\cL_A\to \cL_A'$ such that
$$
U _{\cL_A}(V_{i,s}|_{\cL_A})=
(V_{i,s}|_{\cL_A'})U_{\cL_A}
$$
for any $i\in \{1,\ldots, k\}$ and $s\in \{1,\ldots, n_i\}$.
We consider the case when $1\leq p\leq k-1$ (the cases $p=0$ and $p=k$  can be treated similarly). Due to Theorem \ref{Wold3}, we have
$$
\cK_A=\bigoplus_{\alpha_{i_1}\in \FF_{n_{i_1}}^+,\ldots, \alpha_{i_p}\in \FF_{n_{i_p}}^+} V_{i_1,\alpha_{i_1}}\cdots V_{i_p,\alpha_{i_p}}\left(\cL_A\right)
$$
and  a similar relation  holds for $\cK_A'$. Define the operator $U_A:\cK_A\to \cK_A'$ by setting
$$
U_A\left( \sum_{\alpha_{i_1}\in \FF_{n_{i_1}}^+,\ldots, \alpha_{i_p}\in \FF_{n_{i_p}}^+} V_{i_1,\alpha_{i_1}}\cdots V_{i_p,\alpha_{i_p}} h_{(\alpha_{i_1},\ldots, \alpha_{i_p})} \right)
=\sum_{\alpha_{i_1}\in \FF_{n_{i_1}}^+,\ldots, \alpha_{i_p}\in \FF_{n_{i_p}}^+} V_{i_1,\alpha_{i_1}}'\cdots V_{i_p,\alpha_{i_p}}' U_{\cL_A} h_{(\alpha_{i_1},\ldots, \alpha_{i_p})},
$$
where $h_{(\alpha_{i_1},\ldots, \alpha_{i_p})}\in \cL_A$.
It is easy to see that $U_A$ is unitary and
$$
U _{A}(V_{i,s}|_{\cK_A})=
(V_{i,s}|_{\cK_A'})U_{A}
$$
for any $i\in \{1,\ldots, k\}$ and $s\in \{1,\ldots, n_i\}$. The latter relation is due to the definition of $U_A$ and  the $\Lambda$-commutation relations \eqref{lac} for the isometries $\{V_{i,s}\}$ and $\{V_{i,s}'\}$, respectively.
Now, employing Theorem \ref{Wold2}, it is clear that  the unitary operator  $$
 U:=\bigoplus_{A\subset \{1,\ldots, k\}}U_A:\cK=\bigoplus_{A\subset \{1,\ldots, k\}}\cK_A\to \cK'=\bigoplus_{A\subset \{1,\ldots, k\}}\cK_A'
 $$
 satisfies the relation
$
U V_{i,s}=
V_{i,s}U
$
for any $i\in \{1,\ldots, k\}$ and $s\in \{1,\ldots, n_i\}$.
The proof is complete.
\end{proof}

\begin{theorem} There is a one-to-one correspondence  between the unitary equivalence classes of $k$-tuples of doubly $\Lambda$-commuting row isometries and the enumerations of  $2^k$ unitary equivalence classes of unital representations of the   twisted $\Lambda$-tensor algebras $\otimes_{i\in A^c}^{\Lambda}\cO_{n_i}$,  as $A$ is any subset of $\{1,\ldots, k\}$, where $\cO_{n_i}$ is the Cuntz algebra with $n_i$ generators and $A^c$ is the complement of $A$ in $\{1,\ldots,k\}$.
\end{theorem}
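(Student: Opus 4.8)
The plan is to realize the claimed bijection by sending each $k$-tuple to its complete collection of wandering data and then reinterpreting each piece of wandering data as a representation of a twisted tensor algebra. Concretely, given a $k$-tuple $V=(V_1,\ldots,V_k)$ of doubly $\Lambda$-commuting row isometries, Theorem~\ref{Wold4} associates to it, for every $A\subset\{1,\ldots,k\}$ with $A^c=\{j_1,\ldots,j_{k-p}\}$, its $A$-wandering data $\cW_A=\bigl(\cL_A,(V_{j_1}|_{\cL_A},\ldots,V_{j_{k-p}}|_{\cL_A})\bigr)$, and by Theorem~\ref{Wold3} (see the closing remark of Section~1) the tuple $(V_{j_1}|_{\cL_A},\ldots,V_{j_{k-p}}|_{\cL_A})$ consists of doubly $\Lambda$-commuting Cuntz row isometries on $\cL_A$. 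I would send the class $[V]$ to the family $\bigl([\pi_A]\bigr)_{A\subset\{1,\ldots,k\}}$, where $\pi_A$ is the representation of $\otimes_{i\in A^c}^{\Lambda}\cO_{n_i}$ determined by $\cW_A$.

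The first key step is the identification of wandering data with representations. By the universal property defining $\otimes_{i\in A^c}^{\Lambda}\cO_{n_i}$ as the algebra generated by isometries $W_{i,s}$, $i\in A^c$, subject to the Cuntz relations and the $\Lambda$-commutation relations, a unital $*$-representation of this algebra on a Hilbert space $\cH$ is exactly the same datum as a $(k-p)$-tuple of doubly $\Lambda$-commuting Cuntz row isometries acting on $\cH$ (with the degenerate cases $A^c=\emptyset$, giving the algebra $\CC$, and $\cH=\{0\}$, giving the zero representation, both permitted). Moreover a unitary intertwining the generators is precisely a unitary intertwining the representations, so the notion of equivalence of $A$-wandering data introduced before Theorem~\ref{uni-equi} coincides with unitary equivalence of representations. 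This turns each $\cW_A$ into a well-defined unitary-equivalence class $[\pi_A]$ of unital representations of $\otimes_{i\in A^c}^{\Lambda}\cO_{n_i}$.

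With this dictionary in hand, well-definedness and injectivity of the map are immediate from Theorem~\ref{uni-equi}: $V$ and $V'$ are unitarily equivalent if and only if their $A$-wandering data agree up to unitary equivalence for every $A$, which under the dictionary says exactly that $[\pi_A]=[\pi_A']$ for all $A$. For surjectivity I would start from an arbitrary enumeration $\bigl([\pi_A]\bigr)_A$, realize each class by a representation on a Hilbert space $\cL_A$, form the associated $A$-wandering data $\cW_A$, and build the standard $k$-tuple $V^{\cW_A}$ (Theorem~\ref{standard3}, or Theorem~\ref{standard} when $A=\{1,\ldots,p\}$). Setting $V:=\bigoplus_{A}V^{\cW_A}$ produces a $k$-tuple of doubly $\Lambda$-commuting row isometries, and by Theorem~\ref{standard}(iii) together with the construction of $V^{\cW_A}$ each summand has $A$-wandering data $\cW_A$ and all other wandering data the zero tuples. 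Since the Wold decomposition of Theorem~\ref{Wold2} is unique and additive over orthogonal direct sums, the $B$-wandering subspace of $V$ receives a contribution only from the summand $A=B$, so the $B$-wandering data of $V$ is $\cW_B$ for every $B$; hence $[V]$ maps to the prescribed enumeration.

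The step I expect to require the most care is the surjectivity bookkeeping: checking that assembling the standard pieces $V^{\cW_A}$ creates no unwanted mixing, i.e. that the unique Wold decomposition of the direct sum recovers each prescribed $\cW_B$ separately rather than some recombination. This hinges on the fact, recorded in Theorem~\ref{standard}(iii) and its general-$A$ analogue in Theorem~\ref{standard3}, that a standard tuple built from $A$-data is pure on $A$ and Cuntz on $A^c$ and is concentrated entirely in its $\cK_A$-component, so that the orthogonal decomposition $\cK=\bigoplus_B\cK_B$ of Theorem~\ref{Wold2} splits cleanly along the summands; the remaining verifications are routine.
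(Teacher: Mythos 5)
Your proposal is correct and follows essentially the same route as the paper: Wold decomposition (Theorems \ref{Wold2}--\ref{Wold4}) to extract the $A$-wandering data, the dictionary between wandering data and unital representations of $\otimes_{i\in A^c}^{\Lambda}\cO_{n_i}$, injectivity via Theorem \ref{uni-equi}, and surjectivity via the standard tuples $V^{\cW_A}$ of Section~2. The only difference is that you spell out the surjectivity bookkeeping (that the Wold decomposition of $\bigoplus_A V^{\cW_A}$ recovers each prescribed $\cW_B$ without mixing), which the paper leaves implicit with a reference to Section~2.
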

\begin{proof}

Let  $\cV$ denote the set of all unitary equivalences classes of $k$-tuples of doubly $\Lambda$-commuting row isometries, and let
$$\cR:=\left\{ \times_{A\subset \{1,\ldots, k\}} \widehat \pi_{A^c} : \pi_A\in Rep(\otimes_{i\in A^c}^\Lambda\cO_{n_i})\right\},
$$
where $\widehat \pi_{A^c}$ is  the unitary equivalence class of the representations of  $\otimes_{i\in A^c}^\Lambda\cO_{n_i}$  associated with $\pi_{A^c}$.
If $V:=(V_1,\ldots, V_k)$ is a $k$-tuple of doubly $\Lambda$-commuting row isometries, denote by $\widehat V$
the unitary equivalence  class  of $V$. According to  Theorem \ref{Wold4},  to each $k$-tuple $V$ corresponds the set $\{\cW_A\}_{A\subset \{1,\ldots, k\}}$ of   $A$-wandering data $\cW_A=\left( \cL_A, \left(V_{j_1}|_{\cL_A},\ldots, V_{j_{k-p}}|_{\cL_A}\right)\right)$, indexed by all subsets $A$ of $\{1,\ldots, n\}$, where $A^c=\{j_1,\ldots, j_{k-p}\}$. Due to Theorem \ref{Wold3} (part (i)  and (ii)),  the $A$-wandering data $\cW_A$   can be used to  generate a  unital representation
$\pi_{A^c}$ of the universal algebra  $\otimes_{i\in A^c}^\Lambda\cO_{n_i}$. Let $\widehat \pi_{A^c}$ be the unitary equivalence class of the representations of  $\otimes_{i\in A^c}^\Lambda\cO_{n_i}$  which has as representative  $\pi_{A^c}$.
We define the map $\Psi:\cV\to \cR$ by setting
$$\Psi(\widehat V)=\times_{A\subset \{1,\ldots, k\}} \widehat \pi_{A^c}.
$$
Due to Theorem \ref{uni-equi}, the map $\Psi$ is well-defined and injective. On the other hand, according to  the results of Section 2, the map $ \Psi $ is surjective. The proof is complete.
\end{proof}

In what follows we obtain a description and parametrization of the irreducible $k$-tuples of doubly $\Lambda$-commuting row isometries.

\begin{theorem} \label{irreductible} Let  $V:=(V_1,\ldots, V_k)$ be  a $k$-tuple of doubly $\Lambda$-commuting row isometries acting on a Hilbert space $\cK\neq \{0\}$. Then the following statements are equivalent.
\begin{enumerate}
\item[(i)] The $C^*$-algebra $C^*(V_1,\ldots, V_k)$ is irreducible.

\item[(ii)] There is a subset $A\subset \{1,\ldots, k\}$ (possible empty) with  $A^c=\{j_1,\ldots, j_{k-p}\}$ such that $V$ is unitarily equivalent to a standard $k$-tuple $V^{\cW_A}$ of doubly $\Lambda$-commuting row isometries associated with a wandering data $\cW_A:=\{\cL_A,(W_{j_1},\ldots, W_{j_{k-p}})\}$, where the $A$-wandering subspace $\cL_A$ has only trivial subspaces that are invariant under $C^*(W_{j_1},\ldots, W_{j_{k-p}})$.
\end{enumerate}
Moreover, two such irreducible $k$-tuples $V$ and $V'$ are unitarily equivalent if and only if the corresponding sets $A$ and $A'$ are equal and the wandering data  $\cW_A$ and $\cW_{A'}$ are unitarily equivalent.
\end{theorem}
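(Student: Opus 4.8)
The plan is to run everything through the Wold decomposition of Theorem~\ref{Wold4} together with a single commutant computation for one standard summand.

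\emph{Step 1: reduction to one summand.} By Theorem~\ref{Wold4} I may assume $V=\bigoplus_{B\subset\{1,\ldots,k\}}V^{\cW_B}$ acting on $\cK=\bigoplus_B\cK_B$, and by Theorem~\ref{Wold2} each $\cK_B$ reduces every $V_{i,s}$, so each orthogonal projection $P_{\cK_B}$ lies in the commutant $C^*(V_1,\ldots,V_k)'$. If $C^*(V)$ is irreducible then $C^*(V)'=\CC I$, which forces each $P_{\cK_B}\in\{0,I\}$; since the $\cK_B$ are mutually orthogonal with nonzero sum, exactly one of them, say $\cK_A$, equals $\cK$ and the rest vanish. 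Thus $V\cong V^{\cW_A}$, so (i) already produces the shape asserted in (ii); what remains of (i)$\Rightarrow$(ii), and the whole of (ii)$\Rightarrow$(i), is to decide exactly when the single tuple $V^{\cW_A}$ is irreducible.

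\emph{Step 2: the commutant of a standard tuple.} Write $H=\ell^2(\times_{i\in A}\FF_{n_i}^+)\otimes\cL_A$. From \eqref{shift} each $V_{i,s}^{\cW_A}$ with $i\in A$ factors as $c_{i,s}\otimes I_{\cL_A}$ (the creation only twists the $\chi$-component by scalars and leaves $\cL_A$ untouched), while from \eqref{cuntz} each $W_{j,t}=V_{j,t}^{\cW_A}$ with $j\in A^c$ factors as $d_{j,t}\otimes\widetilde W_{j,t}$ with $d_{j,t}$ a diagonal unitary. Since by Theorem~\ref{standard}(ii) the directions $j\in A^c$ are Cuntz, $\prod_{j\in A^c}P_j^{(c)}=I_H$, so the projection onto $\cL_A$ equals $\Delta_A:=\prod_{i\in A}\bigl(I-\sum_s V_{i,s}^{\cW_A}(V_{i,s}^{\cW_A})^*\bigr)\in C^*(V^{\cW_A})$. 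I claim that restriction to $\cL_A$ induces a $*$-isomorphism
\[
C^*(V^{\cW_A})'\ \xrightarrow{\ \cong\ }\ C^*(W_{j_1},\ldots,W_{j_{k-p}})',\qquad T\longmapsto T|_{\cL_A}.
\]
For the forward map, any $T\in C^*(V^{\cW_A})'$ commutes with $\Delta_A=P_{\cL_A}$, hence reduces $\cL_A$; as $\cL_A$ also reduces the Cuntz isometries $V_{j,t}^{\cW_A}$ (Theorem~\ref{Wold3}(i)), the restriction $T|_{\cL_A}$ commutes with every $W_{j,t}$ and $W_{j,t}^*$. For the inverse, send $X\in C^*(\{W_j\})'$ to $\widetilde X:=I\otimes X$; using the two factorizations, $\widetilde X$ commutes with each $c_{i,s}\otimes I_{\cL_A}$ and its adjoint automatically, and with $d_{j,t}\otimes\widetilde W_{j,t}$ precisely because $X$ commutes with $\widetilde W_{j,t}$. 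Finally the two maps are mutually inverse: since $T$ commutes with all $V_{i,\alpha}^{\cW_A}$ ($i\in A$) and carries $\cL_A$ into itself, the decomposition $H=\bigoplus_{\alpha_{i_1},\ldots,\alpha_{i_p}}V_{i_1,\alpha_{i_1}}^{\cW_A}\cdots V_{i_p,\alpha_{i_p}}^{\cW_A}(\cL_A)$ of Theorem~\ref{Wold3} forces $T=I\otimes(T|_{\cL_A})$.

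\emph{Step 3: conclusion and classification.} A self-adjoint algebra has only trivial invariant subspaces exactly when its commutant is $\CC I$, so Step~2 yields: $C^*(V^{\cW_A})$ is irreducible iff $C^*(W_{j_1},\ldots,W_{j_{k-p}})'=\CC I_{\cL_A}$ iff $\cL_A$ has only trivial subspaces invariant under $C^*(W_{j_1},\ldots,W_{j_{k-p}})$, which is condition (ii). This settles both implications; the cases $A=\{1,\ldots,k\}$ and $A=\emptyset$ are covered by the empty-product conventions (when $A^c=\emptyset$ the relevant algebra is $\CC I$ and the condition reads $\dim\cL_A=1$). For the uniqueness assertion, by Theorem~\ref{Wold4} an irreducible $V\cong V^{\cW_A}$ has every wandering datum zero except the nonzero $A$-datum $\cW_A$, and likewise $V'\cong V^{\cW_{A'}}$; Theorem~\ref{uni-equi} says $V\cong V'$ iff the $B$-wandering data match for all $B$, and comparing the unique nonzero slots forces $A=A'$ together with the unitary equivalence of $\cW_A$ and $\cW_{A'}$.

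\emph{Main obstacle.} The essential content is Step~2, and within it the forward inclusion $C^*(V^{\cW_A})'\subseteq I\otimes\cB(\cL_A)$: one must use that $P_{\cL_A}=\Delta_A$ genuinely belongs to $C^*(V^{\cW_A})$ (which rests on the $A^c$-directions being Cuntz) and that the twisted creation operators carry the vacuum copy $\cL_A$ onto a spanning family of subspaces, so that a commuting $T$ is rigidly determined by $T|_{\cL_A}$. Establishing the factorizations $V_{i,s}^{\cW_A}=c_{i,s}\otimes I$ and $W_{j,t}=d_{j,t}\otimes\widetilde W_{j,t}$ from \eqref{shift} and \eqref{cuntz} is then bookkeeping that absorbs the twisting scalars $\boldsymbol\lambda$ into the first tensor leg, after which the reverse inclusion is immediate.
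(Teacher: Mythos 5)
Your argument is correct, and its core differs from the paper's. The paper handles the two implications asymmetrically: for (i)$\Rightarrow$(ii) it invokes Theorem~\ref{Wold2} to force $\cK=\cK_A$ for a single $A$ (exactly your Step~1) and then passes to the standard model, leaving the verification that $\cL_A$ has only trivial $C^*(W_{j_1},\ldots,W_{j_{k-p}})$-invariant subspaces essentially implicit; for (ii)$\Rightarrow$(i) it runs a hands-on vector computation — take a nonzero reducing subspace $\cM$, hit a nonzero Fourier component with ${\bf S}_{1,\beta_1}^*\cdots {\bf S}_{p,\beta_p}^*$ and the projection $\Delta_A=(id-\Phi_{{\bf S}_1})\circ\cdots\circ(id-\Phi_{{\bf S}_k})(I)$ to land in $\cL_A$, then propagate back out with the creation operators. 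You replace both halves by the single structural statement that restriction to $\cL_A$ gives a $*$-isomorphism $C^*(V^{\cW_A})'\cong C^*(\widetilde W_{j_1},\ldots,\widetilde W_{j_{k-p}})'$, resting on the factorizations $V^{\cW_A}_{i,s}=c_{i,s}\otimes I_{\cL_A}$ ($i\in A$) and $W_{j,t}=d_{j,t}\otimes\widetilde W_{j,t}$ ($j\in A^c$), on $P_{\cL_A}=\Delta_A\in C^*(V^{\cW_A})$ (valid because the $A^c$-directions are Cuntz), and on the orthogonal decomposition of Theorem~\ref{Wold3} to recover $T=I\otimes(T|_{\cL_A})$. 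This buys you both implications at once and in particular makes the (i)$\Rightarrow$(ii) direction fully explicit (a nontrivial invariant subspace $\cN\subset\cL_A$ would give the nontrivial commutant element $I\otimes P_\cN$), at the cost of the extra bookkeeping needed to justify the tensor factorizations; the paper's vector argument is more elementary but yields only the equivalence, not the commutant itself. Your treatment of the degenerate cases $A=\emptyset$, $A=\{1,\ldots,k\}$ and of the uniqueness statement via Theorem~\ref{uni-equi} matches the paper.
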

\begin{proof}
Assume that item (i) holds. According to Theorem \ref{Wold2}, there exists exactly one subset $A\subset \{1,\ldots, k\}$ (possible empty) such that $\cK=\cK_A\neq \{0\}$ and the following properties hold:
\begin{enumerate}
\item[(i)]   if $i\in A$, then $V_i:=[V_{i,1}\cdots V_{i,n_i}]$ is a pure row isometry;

\item[(ii)] if $i\in A^c$, then $V_i$  is a Cuntz row isometry.

\end{enumerate}
Using Theorem \ref{standard3}, we deduce that $V$ is unitarily equivalent to the standard $k$-tuple
$V^{\cW_A}$, where $\cW_A:=\{\cL_A,(W_{j_1},\ldots, W_{j_{k-p}})\}$ is the  wandering data of $V$. Now, item (ii) follows.

To prove the implication (ii)$\implies$(i), assume that  item (ii) holds. Since $\cK\neq \{0\}$, we must have $\cL_A\neq \{0\}$. Due two Theorem \ref{standard3}, we can assume that $V$ is the standard $k$-tuple associated with $A$. For  simplicity of exposition, we also assume that
$A=\{1,\ldots, p\}\subset\{1,\ldots, k\}$, $p\geq 1$, and, therefore, $V=({\bf S}_1, \ldots, {\bf S}_p,W_{p+1},\ldots, W_{k-p})$.
Let $\cM\subset \ell^2(\FF_{n_1}^+\times\cdots \times  \FF_{n_p}^+)\otimes \cL_A$ be a non-zero subspace which is invariant under $C^*({\bf S}_1, \ldots, {\bf S}_p,W_{p+1},\ldots, W_{k-p})$.
If
$$g= \sum_{\alpha_{1}\in \FF_{n_{1}}^+,\ldots, \alpha_{p}\in \FF_{n_{p}}^+}
 \chi_{(\alpha_1,\ldots, \alpha_p)}\otimes h_{(\alpha_1,\ldots, \alpha_p)},\qquad
 h_{(\alpha_1,\ldots, \alpha_p)}\in \cL_A,
 $$
is a non-zero element in $\cM$, then there exists $h_{(\beta_1,\ldots, \beta_p)}\neq \{0\}$
Note that
\begin{equation} \label{PL}
P_{\cL_A} {\bf S}_{1,\beta_1}^*\cdots {\bf S}_{p,\beta_p}^* g=c_{(\beta_1,\ldots, \beta_p)} \chi_{(g_0^1,\ldots, g_0^p)}\otimes h_{(\beta_1,\ldots, \beta_p)},
\end{equation}
for some constant $c_{(\beta_1,\ldots, \beta_p)} \in \TT$. On the other hand, using the definition of the standard $k$-tuple $(S_1,\ldots, S_k)$ (see the proof of Theorem \ref{standard}),  we deduce that
$$
(id-\Phi_{{\bf S}_1})\circ\cdots \circ (id-\Phi_{{\bf S}_k})(I)=P_{\cL_A},
$$
which together with relation \eqref{PL} and the fact that $\cM$ is reducing subspace for  all the shifts $\{{\bf S}_{i,s}\}$ imply
$\chi_{(g_0^1,\ldots, g_0^p)}\otimes h_{(\beta_1,\ldots, \beta_p)}\in \cM$.
Since $\cL_A$ has only trivial subspace that is invariant under $C^*(W_{j_1},\ldots, W_{j_{k-p}})$, we deduce that  $\cL_A\subset \cM$. Since $\cM$ is also invariant subspace under the isometries $\{{\bf S}_{i,s}\}$,  we deduce that
$ \ell^2(\FF_{n_1}^+\times\cdots \times  \FF_{n_p}^+)\otimes \cL_A\subset \cM$. Consequently, we have $ \ell^2(\FF_{n_1}^+\times\cdots \times  \FF_{n_p}^+)\otimes \cL_A= \cM$, which proves the implication (ii)$\implies$(i).
The last part of the theorem follows from the results above, Theorem \ref{Wold2}, and Theorem \ref{uni-equi}.
The proof is complete.
\end{proof}

Now, one can easily obtain the following result.
\begin{corollary}
The unitary equivalence classes of the non-zero irreducible representations of the $C^*$-algebra generated by $k$-tuples of doubly $\Lambda$-commuting  row isometries  are parameterised by the unitary equivalence classes of the non-zero irreducible representations of the $2^k$   universal  $\Lambda$-tensor algebras $\otimes_{i\in A^c}^{\Lambda}\cO_{n_i}$.
\end{corollary}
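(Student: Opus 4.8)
The plan is to read the corollary off Theorem \ref{irreductible} after translating between $k$-tuples and representations. First I would recall that, by the universal property established in Section 2, every non-zero representation $\rho$ of the $C^*$-algebra generated by a $k$-tuple of doubly $\Lambda$-commuting row isometries is completely determined by the $k$-tuple $V_\rho:=(\rho(S_1),\ldots,\rho(S_k))$ of doubly $\Lambda$-commuting row isometries it produces, and conversely every such $k$-tuple arises this way; moreover $\rho$ is irreducible precisely when $C^*(V_\rho)$ is irreducible. Hence classifying the non-zero irreducible representations up to unitary equivalence is literally the same problem as classifying the irreducible $k$-tuples up to unitary equivalence, which is what Theorem \ref{irreductible} solves.

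Next I would invoke Theorem \ref{irreductible}: to each irreducible $V$ there corresponds a unique subset $A\subset\{1,\ldots,k\}$, with $A^c=\{j_1,\ldots,j_{k-p}\}$, such that $V$ is unitarily equivalent to the standard $k$-tuple $V^{\cW_A}$ built from a wandering datum $\cW_A=\{\cL_A,(W_{j_1},\ldots,W_{j_{k-p}})\}$ in which $\cL_A$ admits no nontrivial subspace invariant under $C^*(W_{j_1},\ldots,W_{j_{k-p}})$. The crucial observation is that $(W_{j_1},\ldots,W_{j_{k-p}})$ is a $(k-p)$-tuple of doubly $\Lambda$-commuting Cuntz row isometries on $\cL_A$, so by the universal property of the twisted $\Lambda$-tensor algebra it defines a unital representation $\pi_{A^c}$ of $\otimes_{i\in A^c}^{\Lambda}\cO_{n_i}$ on $\cL_A$; this representation is non-zero exactly when $\cL_A\neq\{0\}$, i.e. when $\cK\neq\{0\}$. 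Since $C^*(W_{j_1},\ldots,W_{j_{k-p}})$ is self-adjoint, every invariant subspace of $\cL_A$ is reducing, so the hypothesis that $\cL_A$ has only trivial invariant subspaces is precisely the statement that $\pi_{A^c}$ is irreducible. I expect this identification of the geometric condition of Theorem \ref{irreductible} with irreducibility of the associated representation to be the main (though routine) point.

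Finally I would verify that the assignment $\widehat V\mapsto(A,\widehat{\pi_{A^c}})$ is a well-defined bijection onto the disjoint union, over all $2^k$ subsets $A\subset\{1,\ldots,k\}$, of the unitary-equivalence classes of non-zero irreducible representations of $\otimes_{i\in A^c}^{\Lambda}\cO_{n_i}$. Well-definedness and injectivity follow from the last clause of Theorem \ref{irreductible}, which asserts that two such irreducible $k$-tuples are unitarily equivalent if and only if their sets $A$ coincide and their wandering data $\cW_A$ are unitarily equivalent, equivalently $A=A'$ and $\pi_{A^c}\cong\pi_{A'^c}$. Surjectivity is supplied by the constructions of Section 2: starting from any subset $A$ and any non-zero irreducible representation of $\otimes_{i\in A^c}^{\Lambda}\cO_{n_i}$, one forms the corresponding wandering datum $\cW_A$ and the standard $k$-tuple $V^{\cW_A}$, which is irreducible by the implication (ii)$\Rightarrow$(i) of Theorem \ref{irreductible}. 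All the remaining verifications are bookkeeping consequences of the Wold decomposition of Theorem \ref{Wold4} together with the uniqueness already proved, so no further calculation is needed.
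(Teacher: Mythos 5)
Your proposal is correct and follows exactly the route the paper intends: the paper offers no written proof (it states the corollary as an easy consequence of Theorem \ref{irreductible}), and your argument — identifying representations with $k$-tuples via Theorem \ref{C*}, translating the "only trivial invariant subspaces of $\cL_A$" condition into irreducibility of the induced representation of $\otimes_{i\in A^c}^{\Lambda}\cO_{n_i}$, and reading the bijection off the uniqueness clause of Theorem \ref{irreductible} together with the Section 2 constructions for surjectivity — is precisely the intended bookkeeping.
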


An important particular case of Theorem \ref{irreductible} is the following.

\begin{corollary} \label{irred}
If $S=(S_1,\ldots, S_k)$ is the standard $k$-tuple of pure row isometries acting on the Hilbert space $\ell^2(\FF_{n_1}^+\times \cdots \times \FF_{n_k}^+)$, then the $C^*$-algebra  $C^*(\{S_{i,s}\})$  is irreducible.
\end{corollary}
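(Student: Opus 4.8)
The plan is to read this off directly from Theorem \ref{irreductible} by identifying $S$ as a standard $k$-tuple falling under case (ii) of that theorem. Recall from the discussion preceding the statement that the standard $k$-tuple $(S_1,\ldots, S_k)$ on $\ell^2(\FF_{n_1}^+\times\cdots\times \FF_{n_k}^+)$ is precisely the standard $k$-tuple $({\bf S}_1,\ldots,{\bf S}_k)$ with $\{1,\ldots,k\}$-wandering data $(\cL)$ in the special case $\cL=\CC$. Thus every $S_i$ is a pure row isometry, so the relevant subset is $A=\{1,\ldots,k\}$ (equivalently $p=k$), its complement $A^c=\emptyset$ is empty, and the $A$-wandering data reduces to $\cW_A=\{\cL_A\}=\{\CC\}$; there are no Cuntz factors to record.

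First I would verify that the hypothesis of part (ii) of Theorem \ref{irreductible} is met. The $A$-wandering subspace is the one-dimensional space $\cL_A=\CC \chi_{(g_0^1,\ldots,g_0^k)}$, canonically identified with $\cL=\CC$ (see Theorem \ref{standard} and the paragraph treating the case $p=k$). Since $A^c=\emptyset$, the algebra $C^*(W_{j_1},\ldots,W_{j_{k-p}})$ appearing in the theorem is absent, and in any event $\cL_A\cong\CC$ has only the trivial subspaces $\{0\}$ and $\cL_A$. Hence the requirement that $\cL_A$ have only trivial subspaces invariant under $C^*(W_{j_1},\ldots,W_{j_{k-p}})$ holds vacuously.

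With these identifications in hand, the implication (ii)$\Rightarrow$(i) of Theorem \ref{irreductible} applies and yields that $C^*(S_1,\ldots,S_k)=C^*(\{S_{i,s}\})$ is irreducible. There is essentially no computational content beyond this bookkeeping: the substantive work was already carried out inside the proof of Theorem \ref{irreductible}, where the Berezin-type identity $(id-\Phi_{{\bf S}_1})\circ\cdots\circ(id-\Phi_{{\bf S}_k})(I)=P_{\cL_A}$ forces any nonzero reducing subspace to contain $\cL_A$ and therefore, by invariance under the pure shifts ${\bf S}_{i,s}$, to be the whole space. The only point requiring genuine care is the degenerate role of $A^c=\emptyset$, that is, checking that Theorem \ref{irreductible} remains valid in the boundary case $p=k$ where no Cuntz row isometries are present; this is exactly what is guaranteed by Remark \ref{partic} together with the treatment of the case $p=k$ in Theorem \ref{standard}.
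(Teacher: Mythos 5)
Your proposal is correct and is exactly the argument the paper intends: the corollary is stated as a particular case of Theorem \ref{irreductible}, obtained by taking $A=\{1,\ldots,k\}$ so that the wandering data reduces to the one-dimensional space $\cL_A\cong\CC$, whose only subspaces are trivial, whence the implication (ii)$\Rightarrow$(i) gives irreducibility. Your added care about the degenerate case $A^c=\emptyset$ is appropriate but does not change the route.
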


\bigskip

\section{Regular $\Lambda$-polyballs, noncommutative Berezin transforms, and von Neumann inequalities}

 We saw, in Section 2, that the pure $k$-tuples of doubly  $\Lambda$-commuting  row isometries are  unitarily equivalent to  the standard $k$-tuples  ${\bf S}=({\bf S}_1,\ldots, {\bf S}_k)$ acting on $\ell^2(\FF_{n_1}^+\times\cdots \times  \FF_{n_k}^+)\otimes \cD$, where $\cD$ is a Hilbert space. A natural question that arises is the following. What are the $k$-tuples  $T=(T_1,\ldots, T_k)$ of row operators $T_i=[T_{i,1}\ldots  T_{i,n_i}]$, acting on a Hilbert space $\cH$, which admit ${\bf S}$ as universal model, i.e.
 there is a Hilbert space $\cD$  such that  $\cH$ is jointly co-invariant for  ${\bf S}_{i,s}$ and
 $$
 T_{i,s}^*={\bf S}_{i,s}^*|_\cH,
 $$
 for any $i\in \{1,\ldots, k\}$ and $j\in \{1,\ldots, n_i\}$.  In this section, we answer this question introducing   the regular $\Lambda$-polyball ${\bf B}_\Lambda(\cH)$.  Employing noncommutative Berezin transforms, we provide a noncommutative von Neumann inequality  for the elements of ${\bf B}_\Lambda(\cH)$ and show that the $C^*(\{S_{i,s}\})$   is the universal  $C^*$-algebra generated by  a $k$-tuple of  doubly $\Lambda$-commuting  row isometries.

For each $i,j\in \{1,\ldots, k\}$ with $i\neq j$, let $\Lambda_{ij}:=[\lambda_{i,j}(s,t)]$, where $s\in \{1,\ldots, n_i\}$ and $t\in \{1,\ldots, n_j\}$ be an $n_i\times n_j$-matrix  with the entries in the torus $\TT:=\{z\in \CC: \  |z|=1\}$, and assume that $\Lambda_{j,i}=\Lambda_{i,j}^*$. Given   row contractions $T_i:=[T_{i,1}\cdots T_{i,n_i}]$, $i\in \{1,\ldots, k\}$,  acting on a Hilbert space $\cH$, we say that  $T=(T_1,\ldots, T_k)$  is a $k$-tuple of {\it  $\Lambda$-commuting row contractions}  if
\begin{equation}
\label{commuting}
 T_{i,s} T_{j,t}=\lambda_{ij}(s,t)T_{j,t}T_{i,s}
\end{equation}
for any $i,j\in \{1,\ldots, k\}$ with $i\neq j$ and any $s\in \{1,\ldots, n_i\}$, $t\in \{1,\ldots, n_j\}$.
 If, in addition, the relation
 \begin{equation}
 \label{*-commuting}
  T_{i,s}^* T_{j,t}=\overline{\lambda_{ij}(s,t)}T_{j,t}T_{i,s}^*
\end{equation}
is satisfied, we say that $T$ is  a
  $k$-tuple of {\it doubly $\Lambda$-commuting row contractions}. We denote by   ${\bf B}^d_\Lambda(\cH)$ the set of all  $k$-tuples of doubly $\Lambda$-commuting row contractions.
Finally, we say that $T$ is in the {\it regular $\Lambda$-polyball}, which we denote by ${\bf B}_\Lambda(\cH)$, if $T$ is  a $k$-tuple of  $\Lambda$-commuting row contractions  and
$$
\Delta_{rT}(I):=
(id-\Phi_{rT_1})\circ\cdots \circ (id-\Phi_{rT_k})(I)\geq 0,\qquad r\in [0,1),
$$
where $\Phi_{rT_i}:B(\cH)\to B(\cH)$  is the completely positive linear map defined by $\Phi_{rT_i}(X):=\sum_{s=1}^{n_i} r^2T_{i,s}XT_{i,s}^*$.

\begin{lemma}\label{commutative} If $T=(T_1,\ldots, T_k)$  is a $k$-tuple of  $\Lambda$-commuting row contractions, then
$$\Phi_{T_i} \Phi_{T_j}=\Phi_{T_j}\Phi_{T_i},\qquad i,j\in \{1,\ldots, k\}.
$$
\end{lemma}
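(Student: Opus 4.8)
The plan is to verify the identity by expanding both composites $\Phi_{T_i}\Phi_{T_j}$ and $\Phi_{T_j}\Phi_{T_i}$ directly on an arbitrary operator $X\in B(\cH)$ and matching the two double sums term by term, using the $\Lambda$-commutation relation \eqref{commuting} together with its adjoint. The case $i=j$ is trivial, so I would assume $i\neq j$ throughout.

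First I would write $\Phi_{T_i}\Phi_{T_j}(X)=\sum_{s=1}^{n_i}\sum_{t=1}^{n_j} T_{i,s}T_{j,t}X T_{j,t}^* T_{i,s}^*$. The key computational step is to push $T_{i,s}$ past $T_{j,t}$ on the left while simultaneously pushing $T_{i,s}^*$ past $T_{j,t}^*$ on the right. Relation \eqref{commuting} gives $T_{i,s}T_{j,t}=\lambda_{ij}(s,t)T_{j,t}T_{i,s}$, and taking adjoints yields $T_{j,t}^*T_{i,s}^*=\overline{\lambda_{ij}(s,t)}\,T_{i,s}^* T_{j,t}^*$. Substituting both into the generic summand produces a scalar factor $\lambda_{ij}(s,t)\overline{\lambda_{ij}(s,t)}=|\lambda_{ij}(s,t)|^2$, which equals $1$ because each $\lambda_{ij}(s,t)$ lies on the torus $\TT$.

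After this cancellation the generic summand becomes $T_{j,t}T_{i,s}X T_{i,s}^* T_{j,t}^*$, and reassembling the double sum as $\sum_{t} T_{j,t}\bigl(\sum_{s} T_{i,s}X T_{i,s}^*\bigr)T_{j,t}^*$ recognizes it as exactly $\Phi_{T_j}\Phi_{T_i}(X)$. Since $X$ was arbitrary, the two maps coincide.

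I do not expect any serious obstacle here: the whole argument rests on the single observation that the twisting scalar and its conjugate cancel because $|\lambda_{ij}(s,t)|=1$, and notably it uses only the $\Lambda$-commuting relation \eqref{commuting}, not the stronger doubly $\Lambda$-commuting relation \eqref{*-commuting}. The only point worth care is that the adjoint of \eqref{commuting} must be invoked for the right-hand factors; since there is no interaction between distinct summands, no index bookkeeping beyond a single relabeling of the order of summation is required.
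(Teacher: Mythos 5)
Your proof is correct and follows exactly the paper's argument: expand the double sum, apply relation \eqref{commuting} on the left and its adjoint on the right, and cancel the resulting factor $|\lambda_{ij}(s,t)|^2=1$. Your remark that only the $\Lambda$-commuting relation (not the doubly commuting one) is needed is also consistent with the paper's hypothesis.
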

\begin{proof} Due to relation \eqref{commuting} and  the fact that  $\lambda_{ij}(s,t)\in \TT$, we have
\begin{equation*}
\begin{split}
\Phi_{T_i} \Phi_{T_j}(X)&= \sum_{s=1}^{n_i} \sum_{t=1}^{n_j} T_{i,s} T_{j,t}XT_{j.t}^* T_{i,s}^*\\
&= \sum_{t=1}^{n_j}\sum_{s=1}^{n_i} |\lambda_{i,j}(s,t)|^2  T_{j,t}T_{i,s}XT_{i,s}^*T_{j.t}^*\\
&=\Phi_{T_j}\Phi_{T_i}(X).
\end{split}
\end{equation*}
The proof is complete.
\end{proof}

\begin{proposition} \label{regular-equiv}
Let $T=(T_1,\ldots, T_k)$  be a $k$-tuple of  $\Lambda$-commuting row contractions. Then the following statements are equivalent:
\begin{enumerate}
\item[(i)] $T\in {\bf B}_\Lambda(\cH)$;
\item[(ii)] $
(id-\Phi_{rT_1})\circ\cdots \circ (id-\Phi_{rT_k})(I)\geq 0,\qquad r\in [0,1)$;

\item[(iii)] $(id-\Phi_{T_1})^{p_1}\circ \cdots \circ (id-\Phi_{T_k})^{p_k}(I)\geq 0$ for any $p_i\in \{0,1\}$.
\end{enumerate}
If, in addition, $T$ is a pure $k$-tuple, i.e. $\Phi_{T_i}^p(I)\to 0$ strongly as $p\to \infty$, then
$T\in {\bf B}_\Lambda(\cH)$ if and only if
$$
(id-\Phi_{T_1})\circ\cdots \circ (id-\Phi_{T_k})(I)\geq 0.
$$
\end{proposition}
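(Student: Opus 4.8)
The equivalence (i)$\Leftrightarrow$(ii) is immediate, since condition (ii) is verbatim the positivity requirement in the definition of ${\bf B}_\Lambda(\cH)$ and $T$ is assumed $\Lambda$-commuting. So the real content is (ii)$\Leftrightarrow$(iii) together with the pure addendum. Throughout I would write $\Phi_i:=\Phi_{T_i}$; by Lemma \ref{commutative} these are pairwise commuting completely positive maps, and since each $T_i$ is a row contraction we have $\Phi_i(I)\le I$, hence $\|\Phi_i\|=\|\Phi_i(I)\|\le 1$ and $0\le\Phi_i^m(I)\le I$ for all $m$. Because the $\Phi_i$ commute I may reorder all compositions below, so $\prod_{i\in B}(id-\Phi_i)$ is unambiguous; I abbreviate $Q_B:=\prod_{i\in B}(id-\Phi_i)(I)$, a self-adjoint operator as CP maps preserve self-adjointness.

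\textbf{(iii)$\Rightarrow$(ii).} Put $t=r^2\in[0,1)$ and use the splitting $id-t\Phi_i=(1-t)\,id+t(id-\Phi_i)$. Expanding the commuting product yields the Bernstein-type identity
\[
\prod_{i=1}^k(id-r^2\Phi_i)=\sum_{B\subseteq\{1,\dots,k\}}(1-t)^{\,k-|B|}\,t^{\,|B|}\prod_{i\in B}(id-\Phi_i).
\]
For $t\in[0,1)$ every scalar coefficient $(1-t)^{k-|B|}t^{|B|}$ is nonnegative, so evaluating at $I$ writes $\Delta_{rT}(I)$ as a nonnegative combination of the $Q_B$, each $\ge0$ by (iii); hence $\Delta_{rT}(I)\ge0$.

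\textbf{(ii)$\Rightarrow$(iii).} This is the main obstacle: a single scaling parameter $r$ controls only the ``diagonal'' product, so one cannot read off the individual sub-products directly from (ii). The plan is to peel off indices one at a time by a telescoping order argument. Fix $S\subsetneq\{1,\dots,k\}$ and $j\notin S$, and set $X_r:=\prod_{i\in S}(id-r^2\Phi_i)(I)$. By commutativity $\prod_{i\in S\cup\{j\}}(id-r^2\Phi_i)(I)=(id-r^2\Phi_j)(X_r)\ge0$, i.e.\ $X_r\ge r^2\Phi_j(X_r)$. Applying the order-preserving map $\Phi_j$ repeatedly and telescoping gives $X_r\ge r^{2m}\Phi_j^{\,m}(X_r)$ for every $m$; since $\|\Phi_j^{\,m}\|\le1$ and $r<1$ one has $\|r^{2m}\Phi_j^{\,m}(X_r)\|\le r^{2m}\|X_r\|\to0$, forcing $X_r\ge0$. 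Thus positivity of the full product for all $r$ propagates to every sub-product; by downward induction on $|S|$, starting from (ii), I obtain $\prod_{i\in S}(id-r^2\Phi_i)(I)\ge0$ for all $r\in[0,1)$ and all $S$. Letting $r\to1^-$ (the expression is a finite polynomial in $r^2$ with operator coefficients, hence norm-continuous, and positivity is closed) gives $Q_S\ge0$, which is exactly (iii).

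\textbf{The pure case.} One direction is free: if $T\in{\bf B}_\Lambda(\cH)$ then (iii) with all $p_i=1$ already gives $(id-\Phi_1)\circ\cdots\circ(id-\Phi_k)(I)=Q_{\{1,\dots,k\}}\ge0$, using nothing about purity. For the converse I would run the same peeling argument, but now replace the factor $r^{2m}$ by purity. Suppose $Q_S\ge0$ and $j\in S$; writing $Y=Q_{S\setminus\{j\}}$, commutativity gives $Q_S=(id-\Phi_j)(Y)\ge0$, so $Y\ge\Phi_j(Y)$ and, telescoping, $Y\ge\Phi_j^{\,m}(Y)$ for all $m$. The squeeze $-\|Y\|\,\Phi_j^{\,m}(I)\le\Phi_j^{\,m}(Y)\le\|Y\|\,\Phi_j^{\,m}(I)$ together with $\Phi_j^{\,m}(I)\to0$ strongly forces $\langle\Phi_j^{\,m}(Y)\xi,\xi\rangle\to0$ for every $\xi$, whence $\langle Y\xi,\xi\rangle\ge0$, i.e.\ $Q_{S\setminus\{j\}}\ge0$. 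Starting from $Q_{\{1,\dots,k\}}\ge0$ and removing the indices of $B^c$ successively, downward induction yields $Q_B\ge0$ for every $B$, which is (iii) and hence (i). The only delicate point to verify carefully is the weak convergence $\langle\Phi_j^{\,m}(Y)\xi,\xi\rangle\to0$ for a general self-adjoint $Y$, but this follows at once from the squeeze once one records $\pm\|Y\|I\ge Y$ and applies the positive map $\Phi_j^{\,m}$.
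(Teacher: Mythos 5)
Your proof is correct. The equivalence (i)$\Leftrightarrow$(ii), the telescoping squeeze for (ii)$\Rightarrow$(iii), and the pure-case argument all match the paper's mechanism: the author likewise deduces $Y\geq\Phi_{T_j}^m(Y)$ from $(id-\Phi_{T_j})(Y)\geq 0$ and then squeezes $\Phi_{T_j}^m(Y)$ between $\pm\|Y\|\,\Phi_{T_j}^m(I)$, using either the factor $r^{2m}$ (for $rT$, which is automatically pure) or the purity hypothesis to kill the right-hand side. Where you genuinely diverge is in (iii)$\Rightarrow$(ii): the paper proceeds iteratively, showing $0\leq\Phi_{rT_1}(\Delta_{(T_2,\ldots,T_k)}(I))\leq\Delta_{(T_2,\ldots,T_k)}(I)$ and then replacing each $\Phi_{T_i}$ by $\Phi_{rT_i}$ one index at a time via commutativity, which requires rerunning the monotonicity argument $k$ times and quietly uses the sub-product positivity from (iii) at each stage. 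Your Bernstein-type expansion $id-r^2\Phi_i=(1-r^2)\,id+r^2(id-\Phi_i)$ turns $\Delta_{rT}(I)$ into a single manifestly nonnegative combination $\sum_{B}(1-r^2)^{k-|B|}r^{2|B|}Q_B$ of the operators in (iii); this is shorter, makes the role of all $2^k$ sub-products transparent, and in fact shows the stronger statement that (iii) implies $\prod_{i}(id-r_i^2\Phi_i)(I)\geq 0$ for independent radii $r_i$. The only small point worth recording explicitly is the final limit $r\to 1^-$ in your (ii)$\Rightarrow$(iii) (norm-continuity of a polynomial in $r^2$ plus closedness of the positive cone), which you do state; the paper leaves the analogous passage implicit.
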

\begin{proof} The equivalence of (i) and (ii) is due to the definition. Let us prove that (iii)$\implies$(ii).  Assume that (iii) holds. Then $
(id-\Phi_{T_1})\circ\cdots \circ (id-\Phi_{T_k})(I)\geq 0
$
and, consequently,
$\Phi_{T_1}(\Delta_{(T_2,\ldots, T_k)}(I))\leq \Delta_{(T_2,\ldots, T_k)}(I)$, where
$$\Delta_{(T_2,\ldots, T_k)}(I):=(id-\Phi_{T_2})\circ\cdots \circ (id-\Phi_{T_k})(I)\geq 0.
$$
It is clear now that $0\leq \Phi_{rT_1}(\Delta_{(T_2,\ldots, T_k)}(I))\leq \Delta_{(T_2,\ldots, T_k)}(I)$
for any $r\in [0,1)$, which implies the inequality
$
(id-\Phi_{rT_1})\circ\cdots \circ (id-\Phi_{T_k})(I)\geq 0
$
and, due to the commutativity of $\Phi_{T_1},\ldots, \Phi_{T_k}$ (see Lemma \ref{commutative}), we deduce that
\begin{equation}\label{rT}
(id-\Phi_{T_2})\circ\cdots \circ (id-\Phi_{T_k})(id-\Phi_{rT_1})(I)\geq 0.
\end{equation}
A similar  argument  as above, starting with the inequality
$
(id-\Phi_{T_1})\circ(id-\Phi_{T_3})\circ \cdots \circ (id-\Phi_{T_k})(I)\geq 0
$
leads to the inequality  $(id-\Phi_{T_3})\circ\cdots \circ (id-\Phi_{T_k})(id-\Phi_{rT_1})(I)\geq 0$.
Repeating the argument above but starting with the inequality \eqref{rT}, shows that
$$
(id-\Phi_{T_3})\circ\cdots \circ (id-\Phi_{T_k})(id-\Phi_{rT_1})(id-\Phi_{rT_2})(I)\geq 0.
$$
Iterating this process, we conclude that
$(id-\Phi_{rT_1})\circ\cdots \circ (id-\Phi_{rT_k})(I)\geq 0$ for any $r\in [0,1)$, which completes the proof of the implication
(iii)$\implies$(ii).

Since $(rT_1,\ldots, rT_k)\in {\bf B}_\Lambda(\cH)$ is  a pure $k$-tuple, the implication (ii)$\implies$(iii) will follow immediately if we can prove that for any pure $k$-tuple $A=(A_1,\ldots, A_k)\in {\bf B}_\Lambda(\cH)$, we have
 $(id-\Phi_{A_1})^{p_1}\circ \cdots \circ (id-\Phi_{A_k})^{p_k}(I)\geq 0$ for any $p_i\in \{0,1\}$.
We prove the latter statement.  Since $A=(A_1,\ldots, A_k)\in {\bf B}_\Lambda(\cH)$, it is clear that
$(id-\Phi_{A_1})\circ\cdots \circ (id-\Phi_{A_k})(I)\geq 0$. Hence, we deduce that
$\Phi_{A_1}(\Delta_{(A_2,\ldots, A_k)}(I))\leq \Delta_{(A_2,\ldots, A_k)}(I)$, where
$$
\Delta_{(A_2,\ldots, A_k)}(I):=(id-\Phi_{A_2})\circ\cdots \circ (id-\Phi_{A_k})(I)
$$
is a self-adjoint operator. Now, it is easy to see that
$\Phi_{A_1}^m(\Delta_{(A_2,\ldots, A_k)}(I))\leq \Delta_{(A_2,\ldots, A_k)}(I)$, $m\in \NN$,  and
$$
-\|\Delta_{(A_2,\ldots, A_k)}(I)\| \Phi_{A_1}^m(I)\leq \Phi_{A_1}^m(\Delta_{(A_2,\ldots, A_k)}(I))
\leq \|\Delta_{(A_2,\ldots, A_k)}(I)\| \Phi_{A_1}^m(I)
$$
for any $m\in \NN$. Taking $m\to \infty$ and using the act that $\Phi_{A_1}^m(I)\to 0$ strongly as $m\to\infty$, we conclude that $\Delta_{(A_2,\ldots, A_k)}(I)\geq 0$.
Using  similar arguments and the commutativity of the maps $\Phi_{A_1},\ldots, \Phi_{A_k}$, we can deduce that
$(id-\Phi_{A_1})^{p_1}\circ \cdots \circ (id-\Phi_{A_k})^{p_k}(I)\geq 0$ for any $p_i\in \{0,1\}$, which proves our assertion. Note also that the latter result  and the implication (iii)$\implies$(ii) prove also the last part of the proposition.
\end{proof}

 \begin{lemma}  \label{tech} Let  $T=(T_1,\ldots, T_k)$ be a $k$-tuple of doubly $\Lambda$-commuting row contractions.
 If $i,j\in \{1,\ldots, k\}$ with $i\neq j$, then   $T_{i,s}$ commutes with $T_{j,\alpha} T_{j,\alpha}^*$ for any $s\in \{1,\ldots, n_i\}$ and $\alpha\in \FF_{n_j}^+$.
 Moreover,  for any $i_1,\ldots, i_p$ distinct elements in $\{1,\ldots, k\}$ and $\alpha_1\in \FF_{n_{i_1}}^+$,\ldots, $\alpha_p\in \FF_{n_{i_p}}^+$,
  $$\left(T_{i_1,\alpha_1}T_{i_1,\alpha_1}^*\right)\cdots \left(T_{i_p,\alpha_p}T_{i_p,\alpha_p}^*\right)
  =T_{i_1,\alpha_1}\cdots T_{i_p,\alpha_p} T_{i_p,\alpha_p}^*\cdots T_{i_1,\alpha_1}^*.
  $$
 \end{lemma}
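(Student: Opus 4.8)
The plan is to follow almost verbatim the computation already carried out for row isometries in Proposition \ref{rel}, observing that its proof used only the commutation relations together with the unimodularity of the scalars $\lambda_{i,j}(s,t)\in\TT$, and never the isometry property. Hence it transfers directly to the doubly $\Lambda$-commuting row contractions considered here, with \eqref{commuting} and \eqref{*-commuting} playing the roles of \eqref{lac} and \eqref{lac*}.

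First I would establish the commutation claim. Writing $\alpha=g_{p_1}^j\cdots g_{p_m}^j\in\FF_{n_j}^+$, so that $T_{j,\alpha}T_{j,\alpha}^*=T_{j,p_1}\cdots T_{j,p_m}T_{j,p_m}^*\cdots T_{j,p_1}^*$, I would start from $T_{i,s}T_{j,\alpha}T_{j,\alpha}^*$ and push $T_{i,s}$ to the right through the factors $T_{j,p_a}$ using \eqref{commuting}, which produces the scalar $\prod_a\lambda_{i,j}(s,p_a)$; then push $T_{i,s}$ through the factors $T_{j,p_a}^*$ using the adjoint form of \eqref{*-commuting} (namely $T_{i,s}T_{j,t}^*=\overline{\lambda_{i,j}(s,t)}\,T_{j,t}^*T_{i,s}$, obtained by taking adjoints in \eqref{*-commuting}), which produces the conjugate scalars $\prod_a\overline{\lambda_{i,j}(s,p_a)}$. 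Since each $\lambda_{i,j}(s,p_a)\in\TT$, these two products multiply to $\prod_a|\lambda_{i,j}(s,p_a)|^2=1$, leaving exactly $T_{j,\alpha}T_{j,\alpha}^*T_{i,s}$. This proves the first assertion.

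Next I would upgrade the statement from generators to words. Since $T_{j,\alpha}T_{j,\alpha}^*$ is self-adjoint and commutes with every $T_{i,s}$ for $i\neq j$, it also commutes with every $T_{i,s}^*$, and therefore with any product of operators drawn from $\{T_{i,s},T_{i,s}^*\}$; in particular it commutes with $T_{i,\beta}$ and $T_{i,\beta}^*$ for every $\beta\in\FF_{n_i}^+$. I would then prove the product formula by induction on $p$, setting $P_r:=T_{i_r,\alpha_r}T_{i_r,\alpha_r}^*$. The case $p=1$ is trivial. For the inductive step, since $i_1\neq i_r$ for each $r\ge 2$, the operator $T_{i_1,\alpha_1}^*$ commutes with each $P_r$, hence with the product $P_2\cdots P_p$; consequently
$$
P_1P_2\cdots P_p=T_{i_1,\alpha_1}T_{i_1,\alpha_1}^*(P_2\cdots P_p)=T_{i_1,\alpha_1}(P_2\cdots P_p)T_{i_1,\alpha_1}^*,
$$
and applying the induction hypothesis to $P_2\cdots P_p$ yields the nested expression $T_{i_1,\alpha_1}\cdots T_{i_p,\alpha_p}T_{i_p,\alpha_p}^*\cdots T_{i_1,\alpha_1}^*$.

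I do not expect any serious obstacle: the only points requiring care are the explicit scalar bookkeeping in the first step, where one must verify that the collected factors genuinely cancel by unimodularity, and the organization of the induction, where the distinctness hypothesis on $i_1,\ldots,i_p$ must be invoked precisely when commuting $T_{i_1,\alpha_1}^*$ past $P_2\cdots P_p$. Both are routine once the first assertion is in hand.
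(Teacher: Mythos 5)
Your proposal is correct and follows essentially the same route as the paper: the first assertion is proved by the identical scalar-pushing computation (unimodular factors $\lambda_{i,j}(s,p_a)$ and $\overline{\lambda_{i,j}(s,p_a)}$ cancelling), and the product formula by induction using that commutation. The only difference is cosmetic: you peel $T_{i_1,\alpha_1}T_{i_1,\alpha_1}^*$ off the left and commute $T_{i_1,\alpha_1}^*$ past the remaining factors, whereas the paper peels the last factor off the right and then needs one more application of the $\Lambda$-commutation to reposition $T_{i_{p+1},\alpha_{p+1}}$; your ordering slots the induction hypothesis in directly and is marginally cleaner.
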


 \begin{proof}
 Let $\alpha=g_{p_1}^j\cdots g_{p_m}^j\in \FF_{n_j}^+$, where $p_1,\ldots, p_m\in \{1,\ldots, n_j\}$. Using the relations  \eqref{commuting}, \eqref{*-commuting},  and the fact that  $\lambda_{j,i}(t,s)=\overline{\lambda_{i,j}(s,t)}$ and $\lambda_{i,j}(s,t)\in \TT$, we have
\begin{equation*}
\begin{split}
T_{i,s}\left(T_{j,\alpha}T_{j,\alpha}^*\right)&= T_{i,s}T_{j,p_1}\cdots T_{j,p_m}T_{j,p_m}^*\cdots T_{j,p_1}^*\\
&=\lambda_{i,j}(s,p_1)\cdots \lambda_{i,j}(s,p_m)T_{j,p_1}\cdots T_{j,p_m} T_{i,s}
T_{j,p_m}^*\cdots T_{j,p_1}^*\\
&=
\lambda_{i,j}(s,p_1)\cdots \lambda_{i,j}(s,p_m) \lambda_{j,i}(p_m,s)\cdots \lambda_{j,i}(p_1,s)T_{j,p_1}\cdots T_{j,p_m}
T_{j,p_m}^*\cdots T_{j,p_1}^*T_{i,s}\\
&=|\lambda_{i,j}(s,p_1)|^2\cdots |\lambda_{i,j}(s,p_m)|^2T_{j,p_1}\cdots T_{j,p_m}
T_{j,p_m}^*\cdots T_{j,p_1}^*T_{i,s}\\
&=\left(T_{j,\alpha}T_{j,\alpha}^*\right)T_{i,s},
\end{split}
\end{equation*}
which proves the first part of the proposition. Note also that $T_{i,s}^*$ commutes with $T_{j,\alpha} T_{j,\alpha}^*$ for any $s\in \{1,\ldots, n_i\}$ and $\alpha\in \FF_{n_j}^+$. This property will be  used repeatedly  to prove, by induction,  the last part of the proposition.
Indeed,  we have
$$(T_{i_1,\alpha_1}T_{i_1,\alpha_1}^*)(T_{i_2,\alpha_2}T_{i_2,\alpha_2}^*)
=
T_{i_1,\alpha_1}T_{i_2,\alpha_2} T_{i_2,\alpha_2}^* T_{i_1,\alpha_1}^*.
$$
Assume that
$$(T_{i_1,\alpha_1}T_{i_1,\alpha_1}^*)\cdots (T_{i_p,\alpha_p}T_{i_p,\alpha_p}^*)
=
T_{i_1,\alpha_1}\cdots T_{i_p,\alpha_p} T_{i_p,\alpha_p}^* \cdots T_{i_1,\alpha_1}^*.
$$
Consequently, using the $\Lambda$-commutation relation \eqref{commuting}, we obtain
\begin{equation*}
\begin{split}
&(T_{i_1,\alpha_1}T_{i_1,\alpha_1}^*)\cdots (T_{i_p,\alpha_p}T_{i_p,\alpha_p}^*)(T_{i_{p+1},\alpha_{p+1}}T_{i_{p+1},\alpha_{p+1}}^*)\\
&\qquad\qquad=T_{i_{p+1},\alpha_{p+1}}\left[(T_{i_1,\alpha_1}T_{i_1,\alpha_1}^*)\cdots (T_{i_p,\alpha_p}T_{i_p,\alpha_p}^*)\right]T_{i_{p+1},\alpha_{p+1}}^*\\
&\qquad\qquad=
T_{i_{p+1},\alpha_{p+1}}T_{i_1,\alpha_1}\cdots T_{i_p,\alpha_p} T_{i_p,\alpha_p}^* \cdots T_{i_1,\alpha_1}^*T_{i_{p+1},\alpha_{p+1}}^*\\
&\qquad\qquad =
T_{i_{p+1},\alpha_{p+1}}T_{i_1,\alpha_1}\cdots T_{i_p,\alpha_p} T_{i_{p+1},\alpha_{p+1}}T_{i_{p+1},\alpha_{p+1}}^*T_{i_p,\alpha_p}^* \cdots T_{i_1,\alpha_1}^*T_{i_{p+1},\alpha_{p+1}}^*.
\end{split}
\end{equation*}
 The proof is complete.
 \end{proof}

 \begin{proposition} Let $T=(T_1,\ldots, T_k)$ be  a $k$-tuple of row contractions with $T_i:=[T_{i,1}\cdots T_{i,n_i}]$ and $T_{i,s}\in B(\cH)$. Then the following statements hold.
 \begin{enumerate}
   \item[(i)]
  If $T=(T_1,\ldots, T_k)$ is a $k$-tuple of doubly $\Lambda$-commuting row contractions, then $T\in {\bf B}_\Lambda(\cH)$.
   \item[(ii)]
   If  $T=(T_1,\ldots, T_k)$  is a $k$-tuple of $\Lambda$-commuting row contractions and
   $\sum_{i=1}^k \sum_{s=1}^{n_i} T_{i,s}T_{i,s}^*\leq I_\cH$, then $T\in {\bf B}_\Lambda(\cH)$.
    \item[(iii)]
    If $T$ is $\Lambda$-commuting and $\sum_{s=1}^{n_i} T_{i,s}T_{i,s}^*=I_\cH$ for any $i\in \{1,\ldots, k\}$, then
    $T\in {\bf B}_\Lambda(\cH)$.
 \end{enumerate}

 \end{proposition}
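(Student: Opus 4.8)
The plan is to reduce all three assertions to the positivity criterion of Proposition \ref{regular-equiv}: since each $T$ is $\Lambda$-commuting, it lies in ${\bf B}_\Lambda(\cH)$ as soon as
$$
\prod_{i\in B}(id-\Phi_{T_i})(I)\ge 0\qquad\text{for every } B\subseteq\{1,\ldots,k\},
$$
the composition being unambiguous because the maps $\Phi_{T_1},\ldots,\Phi_{T_k}$ pairwise commute by Lemma \ref{commutative}. Write $P_i:=\Phi_{T_i}(I)=\sum_{s=1}^{n_i}T_{i,s}T_{i,s}^*$. Part (iii) is then immediate: when $P_i=I$ for all $i$, applying $id-\Phi_{rT_i}$ to any scalar multiple $cI$ returns $c(1-r^2)I$, so iterating over the $k$ factors gives $\Delta_{rT}(I)=(1-r^2)^k I\ge0$ for every $r\in[0,1)$, whence $T\in{\bf B}_\Lambda(\cH)$.

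For part (i) I would exploit the doubly $\Lambda$-commuting hypothesis through Lemma \ref{tech}. That lemma shows $T_{i,s}$ and $T_{i,s}^*$ commute with $T_{j,\alpha}T_{j,\alpha}^*$ for $i\ne j$, from which one reads off that the positive contractions $P_1,\ldots,P_k$ pairwise commute and, by the last identity of Lemma \ref{tech}, that $\Phi_{T_{i_1}}\cdots\Phi_{T_{i_m}}(I)=P_{i_1}\cdots P_{i_m}$. Expanding the composition therefore collapses it to a genuine operator product,
$$
\prod_{i\in B}(id-\Phi_{T_i})(I)=\sum_{C\subseteq B}(-1)^{|C|}\prod_{i\in C}P_i=\prod_{i\in B}(I-P_i),
$$
a product of commuting positive operators, hence positive; Proposition \ref{regular-equiv} then yields $T\in{\bf B}_\Lambda(\cH)$.

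The main obstacle is part (ii), where only $\Lambda$-commutativity is assumed: now the $P_i$ need not commute, so the factorization of part (i) is lost. What remains available is that the maps $\Phi_{T_i}$ commute (Lemma \ref{commutative}) and that $\sum_{i=1}^k\Phi_{T_i}(I)=\sum_{i,s}T_{i,s}T_{i,s}^*\le I_\cH$. I would therefore isolate the following elementary lemma and prove it by induction on $N$: \emph{if $Q_1,\ldots,Q_N$ are pairwise commuting positivity-preserving linear maps on $B(\cH)$ and $X\ge0$ satisfies $\sum_{i=1}^N Q_i(X)\le X$, then $\prod_{i=1}^N(id-Q_i)(X)\ge0$.} In the inductive step I would set $X':=X-Q_N(X)\ge0$, check that $\sum_{i=1}^{N-1}Q_i(X')\le X'$, and then use commutativity to recognize $\prod_{i=1}^{N-1}(id-Q_i)(X')=\prod_{i=1}^{N}(id-Q_i)(X)$, so that the inductive hypothesis closes the argument. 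Applying this with $X=I_\cH$, $Q_i=\Phi_{T_i}$, and any subset $B$ (for which $\sum_{i\in B}\Phi_{T_i}(I)\le I_\cH$ persists) gives the required positivity and hence $T\in{\bf B}_\Lambda(\cH)$. The delicate point is precisely the inequality $\sum_{i=1}^{N-1}Q_i(X')\le X'$: it rewrites as $\sum_{i=1}^{N}Q_i(X)\le X+\sum_{i=1}^{N-1}Q_iQ_N(X)$, which holds because the left side is $\le X$ by hypothesis while the extra terms $Q_iQ_N(X)$ on the right are positive. This is the one place where the summability condition $\sum_i Q_i(X)\le X$, rather than mere individual contractivity, is essential.
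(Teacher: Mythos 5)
Your proof is correct. Parts (i) and (iii) are essentially the paper's own arguments: for (i) the paper likewise uses Lemma \ref{tech} to commute the operators $P_i=\sum_{s}T_{i,s}T_{i,s}^*$ and to collapse the composed map into the genuine operator product $\prod_{i}(I-r^2P_i)$ of commuting positive operators, and for (iii) it notes that all the relevant defects vanish and invokes Proposition \ref{regular-equiv}. The only real divergence is in part (ii). There the paper runs a direct telescoping chain: setting $Y_m:=(id-\Phi_{rT_m})\circ\cdots\circ(id-\Phi_{rT_1})(I)$, it shows inductively that $I\geq Y_1\geq Y_2\geq\cdots\geq Y_k\geq I-\Phi_{rT_1}(I)-\cdots-\Phi_{rT_k}(I)\geq 0$, the last inequality coming from $\sum_{i,s}T_{i,s}T_{i,s}^*\leq I$; this checks condition (ii) of Proposition \ref{regular-equiv} for the $r$-dilates directly. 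You instead extract a standalone induction lemma (commuting positivity-preserving maps $Q_i$ with $\sum_i Q_i(X)\leq X$ force $\prod_i(id-Q_i)(X)\geq 0$) and apply it to every subset $B$ to verify criterion (iii) of Proposition \ref{regular-equiv}. The two arguments rest on the same estimate --- your key inequality $\sum_{i<N}Q_i(X')\leq X'$ holds precisely because the discarded cross terms $Q_iQ_N(X)$ are positive, which is the same positivity the paper exploits when it drops $\Phi_{rT_{m+1}}(Y_m)$ against $\Phi_{rT_{m+1}}(I)$ --- but your packaging is more modular and marginally more general, since it applies to an arbitrary positive $X$ dominated in this way rather than only to $X=I$. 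One small remark: commutativity of the $Q_i$ is not actually needed in your lemma if you peel off the innermost factor of the fixed composition; invoking it is harmless here since Lemma \ref{commutative} supplies it.
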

 \begin{proof}
 To prove (i),  note that Lemma \ref{tech}  shows that
 $$
 (T_{i,s}T_{i,s}^*)(T_{j,t}T_{j,t}^*)=(T_{j,t}T_{j,t}^*)(T_{i,s}T_{i,s}^*)
 $$
 for any $i,j\in\{1,\ldots,k\}$ with $i\neq j$. Consequently,
 $$
 \prod_{i=1}^k \left(I-\sum_{s=1}^{n_i} r^2T_{i,s}T_{i,s}^*\right)\geq 0
 $$
 for any $r\in [0,1)$. On the other hand,  using  again Lemma \ref{tech} part (iii), we can prove that
 $$
 \Delta_{rT}(I):=(id-\Phi_{rT_k})\circ\cdots \circ (id-\Phi_{rT_1})(I)=\prod_{i=1}^k \left(I-\sum_{s=1}^{n_i} r^2T_{i,s}T_{i,s}^*\right).
 $$
 Now it is  clear that $\Delta_{rT}(I)\geq 0$ for any $r\in [0,1)$, which proves part (i).

 To part (ii), assume that $\sum_{i=1}^k \sum_{s=1}^{n_i} T_{i,s}T_{i,s}^*\leq I_\cH$.
  Note that $I\geq (id-\Phi_{rT_1}(I)\geq 0$ $r\in [0,1)$,  implies
 \begin{equation*}
 \begin{split}
 I\geq (id-\Phi_{rT_1})(I)&\geq  (id-\Phi_{rT_1})(I)-\Phi_{rT_2}\circ(id-\Phi_{rT_1})(I)\\
 &=(id-\Phi_{rT_2})\circ(id-\Phi_{rT_1})(I)\geq I-\Phi_{rT_1}(I)-\Phi_{rT_2}(I)\geq 0.
 \end{split}
 \end{equation*}
 This can be used to deduce that
 \begin{equation*}
 \begin{split}
 I\geq  (id-\Phi_{rT_2})\circ(id-\Phi_{rT_1})(I)&\geq  (id-\Phi_{rT_2})\circ(id-\Phi_{rT_1})(I)-\Phi_{rT_3}\circ (id-\Phi_{rT_2})\circ(id-\Phi_{rT_1})(I)\\
 &=(id-\Phi_{rT_3})\circ(id-\Phi_{rT_2})\circ(id-\Phi_{rT_1})(I)\\
 &\geq
 I-\Phi_{rT_1}(I)-\Phi_{rT_2}(I)-\Phi_{rT_3}(I)\geq 0.
 \end{split}
 \end{equation*}
 Iterating  this process, we deduce that
  \begin{equation*}
 \begin{split}
I  &\geq (id-\Phi_{rT_k})\circ \cdots \circ (id-\Phi_{rT_1})(I)\\
&\geq I-\Phi_{rT_1}(I)-\cdots -\Phi_{rT_k}(I)\geq 0
\end{split}
 \end{equation*}
 for any $r\in[0,1)$,
 which proves that $T\in {\bf B}_\Lambda(\cH)$.

 If $T$ is $\Lambda$-commuting and $\sum_{s=1}^{n_i} T_{i,s}T_{i,s}^*=I_\cH$ for any $i\in \{1,\ldots, k\}$, then $(id-\Phi_{T_1})^{p_1}\circ \cdots \circ (id-\Phi_{T_k})^{p_k}(I)= 0$ for any $p_i\in \{0,1\}$. Proposition \ref{regular-equiv} shows that $T\in {\bf B}_\Lambda(\cH)$.
 The proof is complete.
\end{proof}

Let $T=(T_1,\ldots, T_k)$ be  a {\it pure} $k$-tuple in the {\it regular $\Lambda$-polyball} ${\bf B}_\Lambda(\cH)$, i.e $\Phi_{T_i}^m(I)\to 0$ strongly as $m\to \infty$. We define the noncommutative Berezin kernel
$$
K_{T}:\cH\to \ell^2(\FF_{n_1}^+\times\cdots \times \FF_{n_k}^+)\otimes \cH,
$$
 by setting
$$
K_{T}h :=\sum_{\beta_1\in \FF_{n_1}^+,\ldots, \beta_k\in \FF_{n_k}^+}
\chi_{(\beta_1,\ldots, \beta_k)}\otimes   \Delta_{T}(I)^{1/2}T_{k,\beta_k}^*\cdots T_{1,\beta_1}^*h,\qquad h\in \cH,
$$
where $ \Delta_{T}(I):=(id-\Phi_{T_k})\circ\cdots \circ (id-\Phi_{T_1})(I)$.

\begin{theorem}  \label{Berezin} Let $T=(T_1,\ldots, T_k)$ be  a pure $k$-tuple in the regular $\Lambda$-polyball.  Then the following statements hold.
\begin{enumerate}
\item[(i)] The noncommutative Berezin kernel $K_T$ is an isometry.

\item[(ii)] For any $i\in \{1,\ldots, k\}$ and $s\in \{1,\ldots, n_i\}$,
$$
K_T T_{i,s}^*=\left(S_{i,s}^*\otimes I\right) K_T.
$$

\end{enumerate}

\end{theorem}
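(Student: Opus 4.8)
The plan is to derive both statements from a single operator identity. For part (i), I would note that $K_T$ is an isometry precisely when $K_T^*K_T = I_\cH$, and since
$$
\|K_Th\|^2=\sum_{\beta_1\in\FF_{n_1}^+,\ldots,\beta_k\in\FF_{n_k}^+}\big\langle \Delta_T(I)\,T_{k,\beta_k}^*\cdots T_{1,\beta_1}^*h,\ T_{k,\beta_k}^*\cdots T_{1,\beta_1}^*h\big\rangle,
$$
it suffices to establish, in the strong operator topology,
$$
\Sigma:=\sum_{\beta_1\in\FF_{n_1}^+,\ldots,\beta_k\in\FF_{n_k}^+} T_{1,\beta_1}\cdots T_{k,\beta_k}\,\Delta_T(I)\,T_{k,\beta_k}^*\cdots T_{1,\beta_1}^*=I_\cH .
$$

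The engine is a one–variable telescoping: for any operator $X$ and any $i$ one has $\sum_{\beta\in\FF_{n_i}^+,|\beta|=q} T_{i,\beta}XT_{i,\beta}^*=\Phi_{T_i}^q(X)$ directly from the definition of $\Phi_{T_i}$, so that $\sum_{|\beta|\le p} T_{i,\beta}(id-\Phi_{T_i})(X)T_{i,\beta}^* = X-\Phi_{T_i}^{p+1}(X)$. I would then peel off the variables one at a time. Using the commutativity of the maps $\Phi_{T_i}$ (Lemma \ref{commutative}), write $\Delta_T(I)=(id-\Phi_{T_1})(Y_1)$ with $Y_1:=(id-\Phi_{T_2})\circ\cdots\circ(id-\Phi_{T_k})(I)$, and move $T_{1,\beta_1}$ next to $\Delta_T(I)$ via the word form of the $\Lambda$-commutation relation \eqref{commuting}; the scalar gained in commuting $T_{1,\beta_1}$ through $T_{2,\beta_2}\cdots T_{k,\beta_k}$ is cancelled by its conjugate on the adjoint side. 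Summing over $\beta_1$ collapses the inner factor to $Y_1$, because $\Phi_{T_1}^{p+1}(Y_1)\to 0$ strongly: indeed $Y_1\ge 0$ by Proposition \ref{regular-equiv}(iii), hence $0\le\Phi_{T_1}^{p+1}(Y_1)\le\|Y_1\|\,\Phi_{T_1}^{p+1}(I)\to 0$ by purity. Iterating through $T_2,\ldots,T_k$ (each step producing $Y_j\ge 0$ again by Proposition \ref{regular-equiv}(iii)) leaves the empty product, so $\Sigma=I_\cH$.

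The main obstacle in part (i) is the bookkeeping of convergence and the interchange of the iterated sums with the strong limits. I would handle this by observing that every finite partial sum $\sum_{|\beta_i|\le p_i}T_{1,\beta_1}\cdots\Delta_T(I)\cdots T_{1,\beta_1}^*$ is positive and, by the telescoping estimate $X-\Phi_{T_i}^{p+1}(X)\le X$ applied repeatedly, is dominated by $I_\cH$; being an increasing net of positive operators bounded above, it converges strongly and the iterated limit is order-independent. This simultaneously shows $K_T$ is well defined with $\|K_T\|\le 1$ and justifies the peeling, yielding $\|K_Th\|^2=\langle\Sigma h,h\rangle=\|h\|^2$.

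For part (ii) I would verify $K_TT_{i,s}^*=(S_{i,s}^*\otimes I)K_T$ by comparing coefficients of the basis vectors $\chi_{(\beta_1,\ldots,\beta_k)}$. Recalling from \eqref{shift}--\eqref{shift*} (with $\cL=\CC$) that $S_{i,s}^*\chi_{(\beta_1,\ldots,\beta_k)}$ equals $\overline{\boldsymbol{\lambda}_{i,1}(s,\beta_1)}\cdots\overline{\boldsymbol{\lambda}_{i,i-1}(s,\beta_{i-1})}\,\chi_{(\beta_1,\ldots,\gamma_i,\ldots,\beta_k)}$ when $\beta_i=g_s^i\gamma_i$ and $0$ otherwise, the coefficient of $\chi_{(\beta_1,\ldots,\gamma_i,\ldots,\beta_k)}$ in $(S_{i,s}^*\otimes I)K_Th$ is, after writing $T_{i,g_s^i\gamma_i}^*=T_{i,\gamma_i}^*T_{i,s}^*$,
$$
\overline{\boldsymbol{\lambda}_{i,1}(s,\beta_1)}\cdots\overline{\boldsymbol{\lambda}_{i,i-1}(s,\beta_{i-1})}\,\Delta_T(I)^{1/2}\,T_{k,\beta_k}^*\cdots T_{i,\gamma_i}^*T_{i,s}^*\,T_{i-1,\beta_{i-1}}^*\cdots T_{1,\beta_1}^*h,
$$
while the coefficient of the same vector in $K_TT_{i,s}^*h$ is $\Delta_T(I)^{1/2}\,T_{k,\beta_k}^*\cdots T_{i,\gamma_i}^*\,T_{i-1,\beta_{i-1}}^*\cdots T_{1,\beta_1}^*T_{i,s}^*h$. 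Cancelling the common left factor ending in $T_{i,\gamma_i}^*$, equality reduces to
$$
T_{i-1,\beta_{i-1}}^*\cdots T_{1,\beta_1}^*T_{i,s}^*=\overline{\boldsymbol{\lambda}_{i,1}(s,\beta_1)}\cdots\overline{\boldsymbol{\lambda}_{i,i-1}(s,\beta_{i-1})}\,T_{i,s}^*T_{i-1,\beta_{i-1}}^*\cdots T_{1,\beta_1}^*,
$$
which is the adjoint of the word form of \eqref{commuting}, obtained by commuting $T_{i,s}^*$ leftward past each $T_{j,\beta_j}^*$ with $j<i$ and collecting the factors $\overline{\boldsymbol{\lambda}_{i,j}(s,\beta_j)}$. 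Thus part (ii) is a purely term-by-term identity requiring no convergence analysis beyond the boundedness of $K_T$ already obtained.
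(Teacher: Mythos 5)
Your proposal is correct and follows essentially the same route as the paper: part (i) rests on the telescoping identity $\sum_{p}\Phi_{T_i}^{p}\circ(id-\Phi_{T_i})=id$ applied iteratively to $\Delta_T(I)$ with purity killing the remainder terms, and part (ii) is the same coefficient-by-coefficient comparison using \eqref{shift*} and the word form of the $\Lambda$-commutation relations. The only (harmless) deviations are that you peel the variables starting from $T_1$ rather than $T_k$, which forces the extra step of commuting $T_{1,\beta_1}$ inward with cancelling unimodular scalars, and you invoke Proposition \ref{regular-equiv}(iii) for positivity of the intermediate defects where the paper instead sandwiches $\Phi_{T_i}^{q}(\Delta)$ between $\pm\|\Delta\|\Phi_{T_i}^{q}(I)$.
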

\begin{proof} First, note that
\begin{equation*}
\begin{split}
\|K_Th \|^2&=\sum_{\beta_1\in \FF_{n_1}^+,\ldots, \beta_k\in \FF_{n_k}^+}
 \left\|   \Delta_{T}(I)^{1/2}T_{k,\beta_k}^*\cdots T_{1,\beta_1}^* h\right\|^2 \\
 &=
 \left<\sum_{\beta_1\in \FF_{n_1}^+,\ldots, \beta_k\in \FF_{n_k}^+}
   T_{1,\beta_1} \cdots T_{k,\beta_k} \Delta_{T}(I)T_{k,\beta_k}^*\cdots T_{1,\beta_1}^* h, h\right>
\end{split}
\end{equation*}
for any $h\in \cH$.
We remark that

\begin{equation*}
\begin{split}
&\sum_{p_k=0}^\infty \Phi_{T_k}^{p_k} [(id-\Phi_{T_k})\circ\cdots \circ (id-\Phi_{T_1})(I)] \\
&\qquad =
\lim_{q_k\to \infty} \sum_{p_k=0}^{q_k}\left\{ \Phi_{T_k}^{p_k}
[(id-\Phi_{T_{k-1}})\circ\cdots \circ (id-\Phi_{T_1})(I)]
-\Phi_{T_k}^{p_k+1}[(id-\Phi_{T_{k-1}})\circ\cdots \circ (id-\Phi_{T_1})(I)]\right\}\\
&\qquad= (id-\Phi_{T_{k-1}})\circ\cdots \circ (id-\Phi_{T_1})(I)
-\lim_{q_k\to \infty} \Phi_{T_k}^{q_k+1}[(id-\Phi_{T_{k-1}})\circ\cdots \circ (id-\Phi_{T_1})(I)]\\
&\qquad =(id-\Phi_{T_{k-1}})\circ\cdots \circ (id-\Phi_{T_1})(I).
\end{split}
\end{equation*}
The latter equality is due to the fact that
$$
-\|\Delta_{(T_{k-1},\ldots, T_1)}(I)\| \Phi_{T_k}^{q_k+1}(I)\leq  \Phi_{T_k}^{q_k+1}(\Delta_{(T_{k-1},\ldots, T_1)}(I))  \leq  \Phi_{T_k}^{q_k+1}(I)    \|\Delta_{(T_{k-1},\ldots, T_1)}(I)\|,
$$
where $\Delta_{(T_{k-1},\ldots, T_1)}(I):=(id-\Phi_{T_{k-1}})\circ\cdots \circ (id-\Phi_{T_1})(I)$, and that $\lim_{q_k\to \infty}\Phi_{T_k}^{q_k+1}(I)=0$.
Continuing this process, we obtain
$$
\sum_{p_1=0}^\infty \Phi_{T_1}^{p_1}\left(\sum_{p_2=0}^\infty \Phi_{T_2}^{p_2}\left(\cdots \sum_{p_k=0}^\infty \Phi_{T_k}^{p_k} [(id-\Phi_{T_k})\circ\cdots \circ (id-\Phi_{T_1})(I)]\cdots \right)\right)=I.
$$
Since   we can rearrange the series  of positive terms,  we obtain
$$
\sum_{p_1,\ldots, p_k=0}^\infty \Phi_{T_1}^{p_1}\circ \cdots \circ \Phi_{T_k}^{p_k}[ \Delta_{T}(I)]=I.
$$
Combining the results above, we obtain that $\|K_Th \|=\|h\|$ for any $h\in \cH$, which proves part (i).

To prove item (ii),
note that, for any $h,h'\in \cH$,
\begin{equation*}
\begin{split}
\left<K_T T_{i,s}^*h,  \chi_{(\alpha_1,\ldots, \alpha_k)}\otimes h'\right>
&=\left< \sum_{\beta_1\in \FF_{n_1}^+,\ldots, \beta_k\in \FF_{n_k}^+}
\chi_{(\beta_1,\ldots, \beta_k)}\otimes   \Delta_{T}(I)^{1/2}T_{k,\beta_k}^*\cdots T_{1,\beta_1}^* T_{i,s}^*h,
 \chi_{(\alpha_1,\ldots, \alpha_k)}\otimes h'
\right>\\
&=\left<\Delta_{T}(I)^{1/2}T_{k,\alpha_k}^*\cdots T_{1,\alpha_1}^*T_{i,s}^* h,h'
\right>\\
&=
\left< h, T_{i,s} T_{1,\alpha_1}\cdots T_{i-1,\alpha_{i-1}}T_{i,\alpha_i}\cdots T_{k,\alpha_k}\Delta_{T}(I)^{1/2}h'\right>\\
&= \overline{\boldsymbol{\lambda}_{i,1}(s,\alpha_1)}\cdots \overline{\boldsymbol{\lambda}_{i,i-1}(s,\alpha_{i-1})}
\left<h,   T_{1,\alpha_1}\cdots T_{i-1,\alpha_{i-1}}T_{i,g_s^i\alpha_i} \cdots T_{k,\alpha_k}\Delta_{T}(I)^{1/2}h'\right>
\end{split}
\end{equation*}
for any $\alpha_1\in \FF_{n_1}^+,\ldots, \alpha_k\in \FF_{n_k}^+$ where, for any   $j\in \{1,\ldots, k\}$,
$$
\boldsymbol{\lambda}_{i,j}(s, \beta)
:= \begin{cases}\prod_{b=1}^q\lambda_{i,j}(s,j_b)&\quad  \text{ if }
 \beta=g_{j_1}^j\cdots g_{j_q}^j\in \FF_{n_j}^+  \\
 1& \quad  \text{ if } \beta=g_0^j.
 \end{cases}
$$
On the other hand, using relation \eqref{shift*} and the definition of $K_T$,  we obtain
\begin{equation*}
\begin{split}
&\left< \left(S_{i,s}^*\otimes I\right) K_T h, \chi_{(\alpha_1,\ldots, \alpha_k)}\otimes h'\right>\\
&\qquad =
\left< S_{i,s}^*(\chi_{(\alpha_1,\ldots, \alpha_{i-1}, g_s^i \alpha_i, \alpha_{i+1},\ldots,  \alpha_k)})
\otimes \Delta_{T}(I)^{1/2} T_{k,\alpha_k}^*\cdots T_{i+1,\alpha_{i+1}} ^*T_{i, g_s^i\alpha_i}^*T_{i-1,\alpha_{i-1}}^*\cdots T_{1,\alpha_1}^*h, \chi_{(\alpha_1,\ldots, \alpha_k)}\otimes h'\right>\\
&\qquad =
\overline{\boldsymbol{\lambda}_{i,1}(s,\alpha_1)}\cdots \overline{\boldsymbol{\lambda}_{i,i-1}(s,\alpha_{i-1})}
\left<h,   T_{1,\alpha_1}\cdots T_{i-1,\alpha_{i-1}}T_{i,g_s^i\alpha_i} \cdots T_{k,\alpha_k}\Delta_{T}(I)^{1/2}h'\right>.
\end{split}
\end{equation*}
Combining the results above, we conclude that  item (ii) holds. The proof is complete.
\end{proof}

\begin{theorem}  \label{model}  Let $T=(T_1,\ldots, T_k)\in B(\cH)^{n_1}\times \cdots \times B(\cH)^{n_k}$. Then $T$  is a pure element  in ${\bf B}_\Lambda(\cH)$ if and only if there is a Hilbert space $\cD$ such that $\cH$ can be identified with  a co-invariant subspace  under the shifts $S_{i,s}\otimes I_\cD$ and
$$
T_{i,s}^*=\left(S_{i,s}^*\otimes I_\cD\right)|_\cH
$$
for any $i\in \{1,\ldots, k\}$ and  $s\in \{1,\ldots, n_i\}$. Moreover,
 $T\in{\bf B}^d_\Lambda(\cH)$ if and only if, in addition,
the following relation holds
$$
P_\cH\ (S_{i,s}^*\otimes I_\cD)|_\cH P_\cH(S_{j,t}\otimes I_\cD)|_\cH=P_\cH\ (S_{i,s}^*\otimes I_\cD)(S_{j,t}\otimes I_\cD)|_\cH
$$
for any $i, j\in  \{1,\ldots, k\}$  with $i\neq j$ and  $s\in \{1,\ldots, n_i\}$, $t\in \{1,\ldots, n_j\}$.
\end{theorem}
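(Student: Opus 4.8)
The plan is to derive the two implications of the first assertion from the noncommutative Berezin kernel of Theorem \ref{Berezin} and from a compression argument, and then to read off the ``Moreover'' part as a direct algebraic translation of the doubly $\Lambda$-commuting relation \eqref{*-commuting}.

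For the forward implication, suppose $T$ is a pure element of ${\bf B}_\Lambda(\cH)$. By Theorem \ref{Berezin} the Berezin kernel $K_T\colon\cH\to\ell^2(\FF_{n_1}^+\times\cdots\times\FF_{n_k}^+)\otimes\cH$ is an isometry satisfying $K_T T_{i,s}^*=(S_{i,s}^*\otimes I_\cH)K_T$. Taking $\cD:=\cH$ and identifying $\cH$ with the range $K_T\cH$ via $K_T$, the intertwining relation shows that $K_T\cH$ is invariant under each $S_{i,s}^*\otimes I_\cD$, i.e. co-invariant under $S_{i,s}\otimes I_\cD$, and that under this identification $T_{i,s}^*=(S_{i,s}^*\otimes I_\cD)|_\cH$. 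This is precisely the required model.

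For the converse, write $V_{i,s}:=S_{i,s}\otimes I_\cD$ and assume $\cH$ is co-invariant with $T_{i,s}^*=V_{i,s}^*|_\cH$, equivalently $T_{i,s}=P_\cH V_{i,s}|_\cH$. First I would establish that $T$ is $\Lambda$-commuting: since $\cH$ is invariant under every $V_{i,s}^*$, products of restrictions are restrictions of products, so for $i\neq j$ one has $T_{i,s}^*T_{j,t}^*=(V_{i,s}^*V_{j,t}^*)|_\cH$; feeding in the relation $S_{i,s}^*S_{j,t}^*=\lambda_{i,j}(s,t)S_{j,t}^*S_{i,s}^*$ (the adjoint of \eqref{lac}) gives $T_{i,s}^*T_{j,t}^*=\lambda_{i,j}(s,t)T_{j,t}^*T_{i,s}^*$, and taking adjoints yields \eqref{commuting}. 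The heart of the converse is the compression identity
$$
\Delta_{rT}(I_\cH)=P_\cH\,\Delta_{rV}(I)|_\cH,\qquad r\in[0,1),
$$
where $V=(S_1\otimes I_\cD,\ldots,S_k\otimes I_\cD)$. Expanding both composed products into the $2^k$ terms indexed by subsets $A\subset\{1,\ldots,k\}$ (legitimate since the maps $\Phi_{T_i}$, and likewise the $\Phi_{V_i}$, commute by Lemma \ref{commutative}), it suffices to check that each term of the $V$-expansion compresses to the corresponding term of the $T$-expansion. The decisive point here, which I expect to be the main obstacle to state cleanly, is that co-invariance lets one insert the middle projection for free: for any word $W$ in the $V_{i,s}$ one has $(I-P_\cH)W^*h=0$ for $h\in\cH$, hence $P_\cH W W^*|_\cH=P_\cH W P_\cH W^*|_\cH=(P_\cH W|_\cH)(W^*|_\cH)$, and the two factors are exactly a product of $T_{i,s}$'s and its adjoint. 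Summing over $A$ with signs gives the identity. Since the $V_{i,s}V_{i,s}^*$ commute across $i$ (Lemma \ref{tech}), we have $\Delta_{rV}(I)=\prod_{i=1}^k\left(I-\sum_{s=1}^{n_i}r^2V_{i,s}V_{i,s}^*\right)\geq0$; compressing, $\Delta_{rT}(I_\cH)\geq0$ and $T\in{\bf B}_\Lambda(\cH)$. The same compression with $A=\{i\}$ iterated gives $\Phi_{T_i}^m(I_\cH)=P_\cH\Phi_{V_i}^m(I)|_\cH$, and purity of $S_i\otimes I_\cD$ forces $\Phi_{T_i}^m(I_\cH)\to0$ strongly, so $T$ is pure.

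Finally, for the ``Moreover'' part I would simply unravel the displayed relation. Using co-invariance, its left-hand side equals $T_{i,s}^*T_{j,t}$, while on the right $(S_{i,s}^*\otimes I_\cD)(S_{j,t}\otimes I_\cD)=(S_{i,s}^*S_{j,t})\otimes I_\cD$ and the doubly $\Lambda$-commuting relation \eqref{lac*} for $S$ turns the right-hand side into $\overline{\lambda_{i,j}(s,t)}\,P_\cH(S_{j,t}\otimes I_\cD)(S_{i,s}^*\otimes I_\cD)|_\cH=\overline{\lambda_{i,j}(s,t)}\,T_{j,t}T_{i,s}^*$, the last equality again using that $(S_{i,s}^*\otimes I_\cD)|_\cH=T_{i,s}^*$ maps $\cH$ into $\cH$. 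Thus the displayed relation is precisely \eqref{*-commuting} for $T$; combined with the already-established \eqref{commuting} and $T\in{\bf B}_\Lambda(\cH)$, it is equivalent to $T\in{\bf B}^d_\Lambda(\cH)$, completing the proof.
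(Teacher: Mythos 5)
Your proof is correct and follows essentially the same route as the paper: the noncommutative Berezin kernel $K_T$ (Theorem \ref{Berezin}) with $\cD=\cH$ for the forward direction, compression of the universal model through the co-invariant subspace for the converse, and the same unravelling of the displayed relation into \eqref{*-commuting} for the ``Moreover'' clause. If anything, you are more explicit than the paper in verifying the positivity $\Delta_{rT}(I)\geq 0$ via the compression identity $\Delta_{rT}(I)=P_\cH\,\Delta_{r(S\otimes I_\cD)}(I)|_\cH$, a step the published proof leaves implicit after establishing the $\Lambda$-commutation and purity.
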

\begin{proof}
The direct implication is due to Theorem \ref{Berezin}  and the identification of $\cH$ with $K_T \cH$.
To prove the converse, assume that $
T_{i,s}^*=\left(S_{i,s}^*\otimes I_\cD\right)|_\cH
$
for any $i\in \{1,\ldots, k\}$ and  $s\in \{1,\ldots, n_i\}$. Then $T_{j,t}^*T_{i,s}^*=(S_{j,t}^*S_{i,s}^*\otimes I_\cD)|_\cH$. Consequenly, using the $\Lambda$-commutation relations for the universal model
$S=(S_1,\ldots, S_k)$, we have
\begin{equation*}
\begin{split}
T_{i,s} T_{j,t}&=P_\cH (S_{i,s}S_{j,t}\otimes I_\cD)|_\cH\\
&=\lambda_{i,j}(s,t)P_\cH(S_{j,t}S_{i,s}\otimes I_\cD)|_\cH\\
&=\lambda_{i,j}(s,t) T_{j,t} T_{i,s}.
\end{split}
\end{equation*}
Therefore, $T\in B_\Lambda(\cH)$.  On the other hand,  we have
$$
\sum_{\alpha\in \FF_{n_i}^+, |\alpha|=q} T_{i,\alpha}T_{i,\alpha}^*=P_\cH \left(\sum_{\alpha\in \FF_{n_i}^+, |\alpha|=q} S_{i,\alpha}S_{i,\alpha}^*\right)|_\cH.
$$
Since $[S_{i,1}\cdots S_{i,n_i}]$ is a pure row isometry, we deduce that $T$ is a pure $k$-tuple in
$B_\Lambda(\cH)$.

Now, we prove the second part of the theorem. If $T\in B_\Lambda^d(\cH)$, then we can apply the first part of the theorem and get
$
T_{i,s}^*=\left(S_{i,s}^*\otimes I_\cD\right)|_\cH
$
for any $i\in \{1,\ldots, k\}$ and  $s\in \{1,\ldots, n_i\}$. Using relation \eqref{*-commuting}, we deduce that
\begin{equation*}
\begin{split}
P_\cH\ (S_{i,s}^*\otimes I_\cD)|_\cH P_\cH(S_{j,t}\otimes I_\cD)|_\cH
&=\overline{\lambda_{i,j}(s,t)}P_\cH\ (S_{j,t}^*\otimes I_\cD)|_\cH P_\cH(S_{i,s}\otimes I_\cD)|_\cH\\
&=\overline{\lambda_{i,j}(s,t)}P_\cH\ (S_{j,t}^*S_{i,s}\otimes I_\cD)|_\cH\\
&=P_\cH(S_{i,s}^* S_{j,t}\otimes I_\cD)|_\cH
\end{split}
\end{equation*}
for any $i,j\in \{1,\ldots, k\}$ with $i\neq j$  and  $s\in \{1,\ldots, n_i\}$, $t\in \{1,\ldots, n_j\}$.
Conversely,  assume that
$$
P_\cH\ (S_{i,s}^*\otimes I_\cD)|_\cH P_\cH(S_{j,t}\otimes I_\cD)|_\cH=P_\cH\ (S_{i,s}^*\otimes I_\cD)(S_{j,t}\otimes I_\cD)|_\cH.
$$
Using relation $T_{i,s}^*=\left(S_{i,s}^*\otimes I_\cD\right)|_\cH$, we deduce that
\begin{equation*}
\begin{split}
T_{i,s}^* T_{i,t} &=P_\cH(S_{i,s}^* S_{j,t}\otimes I_\cD)|_\cH\\
&=\overline{\lambda_{i,j}(s,t)}P_\cH\ (S_{j,t}^*S_{i,s}\otimes I_\cD)|_\cH\\
&=\overline{\lambda_{i,j}(s,t)} T_{i,t}T_{i,s}^*,
\end{split}
\end{equation*}
which proves that $T\in B_\Lambda^d(\cH)$.
The proof is complete.
\end{proof}

We remark that, in Theorem \ref{model}, the relation
$$
P_\cH\ (S_{i,s}^*\otimes I_\cD)|_\cH P_\cH(S_{j,t}\otimes I_\cD)|_\cH=P_\cH\ (S_{i,s}^*\otimes I_\cD)(S_{j,t}\otimes I_\cD)|_\cH
$$
for any $i, j\in  \{1,\ldots, k\}$  with $i\neq j$ and  $s\in \{1,\ldots, n_i\}$, $t\in \{1,\ldots, n_j\}$,
is equivalent with the fact that  $\{P_\cH(S_{i,s}\otimes I_\cD)|_\cH\}_{{i\in \{1,\ldots,k\}}\atop{s\in\{1,\ldots, n_i\}}}$ is doubly $\Lambda$-commuting .

Note that due to the doubly  $\Lambda$-commutativity  relations satisfied by the standard shift $S=(S_1,\ldots, S_k)$  and the fact that $S_{i,s}^* S_{i,t}=\delta_{st}I$ for any $i\in \{1,\ldots, k\}$ and $s,t\in \{1,\ldots, n_i\}$, any polynomial in
$\{S_{i,s}\}$ and $ \{S_{i,s}^*\}$ can be represented uniquely as  a finite sum the form
\begin{equation}
\label{polynomial}
p(\{S_{i,s}\}, \{S_{i,s}^*\})= \sum a_{(\alpha_1,\ldots, \alpha_k,\beta_1,\ldots, \beta_k)}S_{1,\alpha_1}\cdots S_{k,\alpha_k}S_{1,\beta_1}^*\cdots S_{k,\beta_k}^*,
\end{equation}
where $a_{(\alpha_1,\ldots, \alpha_k,\beta_1,\ldots, \beta_k)}\in \CC$ , $\alpha_1\in \FF_{n_{1}}^+,\ldots, \alpha_k\in \FF_{n_{k}}^+$ and $\beta_1\in \FF_{n_1}^+,\ldots, \beta_k\in \FF_{n_k}^+$.
To prove the uniqueness of the representation, we need  some notation.
If $\boldsymbol {\alpha}:=(\alpha_1,\ldots, \alpha_k)\in \FF_{n_1}\times \cdots \times \FF_{n_k}$,
we define its length $|\boldsymbol{\alpha}|:=|\alpha_1|+\cdots +|\alpha_k|$, and use the notation
${\bf S}_{\boldsymbol{\alpha}}:=S_{1,\alpha_1}\cdots S_{k,\alpha_k}$

\begin{proposition}\label{uniqueness}
Let $\Gamma$  be a finite set of pairs $(\boldsymbol {\alpha},\boldsymbol {\beta})$ with
$\boldsymbol {\alpha},\boldsymbol {\beta}\in \FF_{n_1}^+\times \cdots \times \FF_{n_k}^+$, and consider the polynomial
$$
p(\{S_{i,s}\}, \{S_{i,s}^*\})=\sum_{(\boldsymbol {\alpha},\boldsymbol {\beta})\in \Gamma}a_{(\boldsymbol {\alpha},\boldsymbol {\beta})}{\bf S}_{\boldsymbol{\alpha}}{\bf S}_{\boldsymbol{\beta}}^*.
$$
If $p(\{S_{i,s}\}, \{S_{i,s}^*\})=0$, then $a_{(\boldsymbol {\alpha},\boldsymbol {\beta})}=0$ for any
$(\boldsymbol {\alpha},\boldsymbol {\beta})\in \Gamma$.
\end{proposition}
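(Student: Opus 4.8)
The plan is to prove the slightly stronger statement that the family $\{\mathbf{S}_{\boldsymbol\alpha}\mathbf{S}_{\boldsymbol\beta}^*\}$ is linearly independent in $B(\ell^2(\FF_{n_1}^+\times\cdots\times\FF_{n_k}^+))$, by testing the relation $p(\{S_{i,s}\},\{S_{i,s}^*\})=0$ against the standard orthonormal basis $\{\chi_{\boldsymbol\gamma}\}$ and isolating one coefficient at a time through a minimality argument on word lengths. Throughout I take $\cL=\CC$, so that $S=(S_1,\ldots,S_k)$ is the standard $k$-tuple of Corollary \ref{irred} and the vectors $\chi_{\boldsymbol\gamma}$ carry no tensor factor.

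First I would record how the building blocks act on basis vectors. From \eqref{shift} and \eqref{shift*}, each $S_{i,s}$ (respectively $S_{i,s}^*$) sends a basis vector to a unimodular scalar times another basis vector (respectively to such a vector or to $0$), affecting only the $i$-th coordinate. Iterating, $\mathbf{S}_{\boldsymbol\beta}^*\chi_{\boldsymbol\omega}$ is a unimodular multiple of $\chi_{\boldsymbol\delta}$ when $\omega_i=\beta_i\delta_i$ for every $i$ and is $0$ otherwise, while $\mathbf{S}_{\boldsymbol\alpha}\chi_{\boldsymbol\delta}$ is a unimodular multiple of $\chi_{(\alpha_1\delta_1,\ldots,\alpha_k\delta_k)}$. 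Consequently
$$
\langle \mathbf{S}_{\boldsymbol\alpha}\mathbf{S}_{\boldsymbol\beta}^*\chi_{\boldsymbol\omega},\chi_{\boldsymbol\tau}\rangle\neq 0 \iff \text{there exist words } \delta_i \text{ with } \omega_i=\beta_i\delta_i \text{ and } \tau_i=\alpha_i\delta_i \text{ for all } i,
$$
and in that case the value is a nonzero unimodular constant. The crucial consequence is the \emph{prefix dichotomy}: a nonzero contribution forces $\boldsymbol\beta$ to be a coordinatewise prefix of $\boldsymbol\omega$ and $\boldsymbol\alpha$ a coordinatewise prefix of $\boldsymbol\tau$, with matching suffixes $\delta_i$.

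Next comes the isolation step. Suppose, for contradiction, that not all $a_{(\boldsymbol\alpha,\boldsymbol\beta)}$ vanish, and set $\Gamma_0:=\{(\boldsymbol\alpha,\boldsymbol\beta)\in\Gamma: a_{(\boldsymbol\alpha,\boldsymbol\beta)}\neq 0\}\neq\emptyset$. Choose $(\boldsymbol\alpha_0,\boldsymbol\beta_0)\in\Gamma_0$ with $|\boldsymbol\beta_0|=|\beta_{0,1}|+\cdots+|\beta_{0,k}|$ minimal, and evaluate the relation $p=0$ on the basis vectors $\chi_{\boldsymbol\beta_0}$ and $\chi_{\boldsymbol\alpha_0}$. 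By the dichotomy, a term $(\boldsymbol\alpha,\boldsymbol\beta)\in\Gamma_0$ can contribute to $\langle p\,\chi_{\boldsymbol\beta_0},\chi_{\boldsymbol\alpha_0}\rangle$ only if $\boldsymbol\beta$ is a coordinatewise prefix of $\boldsymbol\beta_0$; if $\boldsymbol\beta\neq\boldsymbol\beta_0$ this forces $|\boldsymbol\beta|<|\boldsymbol\beta_0|$, contradicting minimality, so $\boldsymbol\beta=\boldsymbol\beta_0$, whence each $\delta_i=g_0^i$ and then $\alpha_i=\alpha_i\delta_i=\alpha_{0,i}$, i.e. $\boldsymbol\alpha=\boldsymbol\alpha_0$. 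Thus the only surviving term is $(\boldsymbol\alpha_0,\boldsymbol\beta_0)$, giving $\langle p\,\chi_{\boldsymbol\beta_0},\chi_{\boldsymbol\alpha_0}\rangle=a_{(\boldsymbol\alpha_0,\boldsymbol\beta_0)}\,c$ with $c\in\TT$; since $p=0$ this yields $a_{(\boldsymbol\alpha_0,\boldsymbol\beta_0)}=0$, a contradiction. Hence $\Gamma_0=\emptyset$.

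The only genuine work is the bookkeeping in the first step: verifying from \eqref{shift} and \eqref{shift*} that $\mathbf{S}_{\boldsymbol\beta}^*$ really strips off the prefix coordinate by coordinate and that all the accumulated $\boldsymbol\lambda$-twists are unimodular, so the surviving inner product is indeed nonzero. This is routine because every $\lambda_{i,j}(s,t)\in\TT$ and distinct coordinates interact only through such unimodular factors; the structural content is therefore carried entirely by the prefix dichotomy, after which the minimality argument is immediate.
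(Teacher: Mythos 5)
Your proof is correct and follows essentially the same route as the paper: both arguments pick a pair whose $\boldsymbol\beta$-component has minimal total length $|\boldsymbol\beta|$, apply $p$ to the basis vector $\chi_{\boldsymbol\beta_0}$, and use the fact that $\mathbf{S}_{\boldsymbol\beta}^*\chi_{\boldsymbol\omega}$ survives only when each $\beta_i$ is a prefix of $\omega_i$ (forcing $\boldsymbol\beta=\boldsymbol\beta_0$ by minimality) to isolate the offending coefficient up to a unimodular constant. The only cosmetic difference is that the paper reads off all coefficients $a_{(\boldsymbol\alpha,\boldsymbol\beta_0)}$ at once from the expansion of $p\,\chi_{\boldsymbol\beta_0}$ in the orthonormal basis, whereas you extract the single matrix entry $\langle p\,\chi_{\boldsymbol\beta_0},\chi_{\boldsymbol\alpha_0}\rangle$.
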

\begin{proof}
Assume that there are non-zero coefficients $a_{(\boldsymbol {\alpha},\boldsymbol {\beta})}$ with $(\boldsymbol {\alpha},\boldsymbol {\beta})\in \Gamma  $  and let
$$
p:=\min\left\{|\boldsymbol{\beta}|:\
\text{ there is } \boldsymbol {\alpha}
\text{ with } (\boldsymbol {\alpha},\boldsymbol {\beta})\in \Gamma
\text{ and }  a_{(\boldsymbol {\alpha},\boldsymbol {\beta})}\neq 0\right\}.
$$
Fix  $\boldsymbol{\sigma}$ such that there is $\boldsymbol{\gamma}$ with the property that
$(\boldsymbol {\gamma},\boldsymbol {\sigma})\in \Gamma$,  $a_{(\boldsymbol {\gamma},\boldsymbol {\sigma})}\neq 0$, and
$|\boldsymbol{\sigma}|=p$.
Let $\boldsymbol {\sigma}':=(\sigma_1',\ldots, \sigma_k')\in \FF_{n_1}\times \cdots \times \FF_{n_k}$
be such that $|\boldsymbol{\sigma}|=|\boldsymbol{\sigma}'|=p$.
Note that, if there is  $i\in \{1,\ldots, k\}$ such that $|\sigma_i'|>|\sigma_i|$, then
$S_{\boldsymbol{\sigma}'}^*\chi_{\boldsymbol{\sigma}}=0$. Consequently, if
$S_{\boldsymbol{\sigma}'}^*\chi_{\boldsymbol{\sigma}}\neq 0$, then  $|\sigma_i'|\leq |\sigma_i|$
for any $i\in \{1,\ldots, k\}$.  Since $|\boldsymbol{\sigma}|=|\boldsymbol{\sigma}'|$, we conclude that
if $S_{\boldsymbol{\sigma}'}^*\chi_{\boldsymbol{\sigma}}\neq 0$, then  $|\sigma_i'|= |\sigma_i|$
for any $i\in \{1,\ldots, k\}$.
Taking now  into account that, for each $i\in \{1,\ldots, k\}$, the isometries $S_{i,1},\ldots, S_{i,n_i}$ have orthogonal ranges  and using the definition of the standad shift  $S=(S_1,\ldots, S_k)$, we deduce that
$$
S_{\boldsymbol{\sigma}'}^*\chi_{\boldsymbol{\sigma}}\neq 0\quad \text{ if and only if }\quad
\sigma_i'= \sigma_i \ \text{ for any }\  i\in \{1,\ldots, k\}.
$$
Using this result, we deduce that
\begin{equation*}
\begin{split}
p(\{S_{i,s}\}, \{S_{i,s}^*\})\chi_{\boldsymbol{\sigma}}
&=
    \sum_{\boldsymbol{\alpha}: \  (\boldsymbol{\alpha},\boldsymbol{\sigma})\in \Gamma}
    a_{ (\boldsymbol{\alpha},\boldsymbol{\sigma})}
    S_{\boldsymbol{\alpha} }S_{\boldsymbol{\sigma}}^* \chi_{\boldsymbol{\sigma}}\\
    &=c(\boldsymbol{\sigma})
    \sum_{\boldsymbol{\alpha}: \  (\boldsymbol{\alpha},\boldsymbol{\sigma})\in \Gamma}
    a_{ (\boldsymbol{\alpha},\boldsymbol{\sigma})}
    S_{\boldsymbol{\alpha} }\chi_{(g_0^1,\ldots, g_0^k)}\\
    &=c(\boldsymbol{\sigma})
    \sum_{\boldsymbol{\alpha}: \  (\boldsymbol{\alpha},\boldsymbol{\sigma})\in \Gamma}
    a_{ (\boldsymbol{\alpha},\boldsymbol{\sigma})} d(\boldsymbol{\alpha})
    \chi_{\boldsymbol{\alpha}},
\end{split}
\end{equation*}
where $c(\boldsymbol{\sigma}) $ and $d(\boldsymbol{\alpha})$ are some unimodular constants.
Since $p(\{S_{i,s}\}, \{S_{i,s}^*\})=0$ and $\{\chi_{\boldsymbol{\beta}}\}_{\boldsymbol{\beta}\in\FF_{n_1}^+\times \cdots \times \FF_{n_k}^+}$ is an orthonormal basis of $\ell^2(\FF_{n_1}^+\times \cdots \times \FF_{n_k}^+)$, we conclude that
$a_{ (\boldsymbol{\alpha},\boldsymbol{\sigma})} =0$  for any  $\boldsymbol{\alpha}$ with
 $(\boldsymbol{\alpha},\boldsymbol{\sigma})\in \Gamma$.  In particular, we have
 $a_{ (\boldsymbol{\gamma},\boldsymbol{\sigma})} =0$, which contradicts our assumption.
In conclusion, $a_{(\boldsymbol {\alpha},\boldsymbol {\beta})}=0$   for any  $(\boldsymbol {\alpha},\boldsymbol {\beta})\in \Gamma$.
The proof is complete.
\end{proof}

If $p(\{S_{i,s}\}, \{S_{i,s}^*\}$  is a polynomial of the form \eqref{polynomial} and $T\in{\bf B}_\Lambda(\cH)$, we define
$$
p(\{T_{i,s}\}, \{T_{i,s}^*\}):= \sum a_{(\alpha_1,\ldots, \alpha_k,\beta_1,\ldots, \beta_k)}T_{1,\alpha_1}\cdots T_{k,\alpha_k}T_{1,\beta_1}^*\cdots T_{k,\beta_k}^*
$$
and note that the definition is correct due to the following von Neumann inequality.

\begin{corollary}  \label{vN}
If  $T=(T_1,\ldots, T_k)$ is a  $k$-tuple in the {\it regular $\Lambda$-polyball}, then
$$
\|p(\{T_{i,s}\}, \{T_{i,s}^*\})\|\leq  \|p(\{S_{i,s}\}, \{S_{i,s}^*\})\|
$$
for any polynomial  $p(\{S_{i,s}\}, \{S_{i,s}^*\})$ of the form \eqref{polynomial}.
\end{corollary}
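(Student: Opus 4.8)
The plan is to first establish the inequality for \emph{pure} elements $T$ of ${\bf B}_\Lambda(\cH)$ using the universal model provided by Theorem \ref{model}, and then to reduce the general case to the pure case by a scaling-and-limit argument.

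For the pure case, Theorem \ref{model} supplies a Hilbert space $\cD$ realizing $\cH$ as a co-invariant subspace of $\ell^2(\FF_{n_1}^+\times\cdots\times\FF_{n_k}^+)\otimes\cD$ under the shifts $\tilde S_{i,s}:=S_{i,s}\otimes I_\cD$, with $T_{i,s}^*=\tilde S_{i,s}^*|_\cH$. The crux is the compression identity
$$
p(\{T_{i,s}\},\{T_{i,s}^*\})=P_\cH\, p(\{\tilde S_{i,s}\},\{\tilde S_{i,s}^*\})|_\cH.
$$
To prove it, I would exploit the normal-ordered form \eqref{polynomial}, in which all creation operators precede all adjoints. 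Since $\cH$ is invariant under each $\tilde S_{i,s}^*$, the product $\tilde S_{1,\beta_1}^*\cdots\tilde S_{k,\beta_k}^*$ maps $h\in\cH$ to $g:=T_{1,\beta_1}^*\cdots T_{k,\beta_k}^*h\in\cH$. Writing each $\tilde S_{i,s}$ in $2\times 2$ block form with respect to $\cH\oplus\cH^\perp$, co-invariance of $\cH$ makes these blocks lower triangular with $(1,1)$-entry $T_{i,s}$; a product of lower-triangular blocks is lower triangular with $(1,1)$-entry the product of the entries, so $P_\cH\tilde S_{1,\alpha_1}\cdots\tilde S_{k,\alpha_k}\,g=T_{1,\alpha_1}\cdots T_{k,\alpha_k}g$ for $g\in\cH$. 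Combining the two observations term by term and summing over the index set yields the compression identity, whence
$$
\|p(\{T_{i,s}\},\{T_{i,s}^*\})\|\le\|p(\{\tilde S_{i,s}\},\{\tilde S_{i,s}^*\})\|=\|p(\{S_{i,s}\},\{S_{i,s}^*\})\otimes I_\cD\|=\|p(\{S_{i,s}\},\{S_{i,s}^*\})\|.
$$

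To pass to an arbitrary $T\in{\bf B}_\Lambda(\cH)$, I would replace $T$ by $rT:=(rT_1,\ldots,rT_k)$ for $r\in[0,1)$. Scaling preserves the $\Lambda$-commutation relations, and since $\Phi_{rT_i}=r^2\Phi_{T_i}$ one has $\|\Phi_{rT_i}^m(I)\|\le r^{2m}\to 0$, so $rT$ is pure; moreover $\Delta_{s(rT)}(I)=\Delta_{(sr)T}(I)\ge 0$ for $s\in[0,1)$ because $sr\in[0,1)$, so $rT\in{\bf B}_\Lambda(\cH)$. The pure case then gives $\|p(\{rT_{i,s}\},\{rT_{i,s}^*\})\|\le\|p(\{S_{i,s}\},\{S_{i,s}^*\})\|$. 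Each monomial of $p(\{rT_{i,s}\},\{rT_{i,s}^*\})$ carries the factor $r^{|\alpha_1|+\cdots+|\alpha_k|+|\beta_1|+\cdots+|\beta_k|}$, so as $r\to 1$ this finite sum converges in norm to $p(\{T_{i,s}\},\{T_{i,s}^*\})$, and letting $r\to 1$ completes the argument.

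I expect the main obstacle to be the verification of the compression identity: one must use in an essential way that \eqref{polynomial} is normal-ordered (creations first, adjoints last) together with the co-invariance of $\cH$, since the block-triangular cancellation fails for products in which an $S^*$ precedes an $S$.
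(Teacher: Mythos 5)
Your proposal is correct and follows essentially the same route as the paper: the paper's proof also passes to $rT$ (which is pure and in ${\bf B}_\Lambda(\cH)$), establishes the identity $p(\{rT_{i,s}\},\{rT_{i,s}^*\})=K_{rT}^*\left(p(\{S_{i,s}\},\{S_{i,s}^*\})\otimes I\right)K_{rT}$ via Theorem \ref{Berezin} (which is exactly your compression identity, since the Berezin kernel is an isometry identifying $\cH$ with the co-invariant subspace $K_{rT}\cH$ of Theorem \ref{model}, and the normal-ordered form of \eqref{polynomial} is used in the same essential way), and then lets $r\to 1$.
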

\begin{proof}
Let $r\in [0,1)$ and note that $rT=(rT_1,\ldots, rT_k)$ is a pure $k$-tuple in $B_\Lambda(\cH)$.
Due to Theorem \ref{Berezin}, we deduce that
$$
p(\{rT_{i,s}\}, \{rT_{i,s}^*\})=K_{rT}^* \left(p(\{S_{i,s}\}, \{S_{i,s}^*\})\otimes I\right)K_{rT}
$$
and, consequently,
$$
\|p(\{rT_{i,s}\}, \{rT_{i,s}^*\})\|\leq  \|p(\{S_{i,s}\}, \{S_{i,s}^*\})\|
$$
for any $r\in [0,1)$. Taking $r\to 1$, we complete the proof.
\end{proof}

We introduce the $\Lambda$-polyball algebra $\cA({\bf B}_\Lambda)$ as the normed closed non-self-adjoint   algebra  generated by  the isometries $S_{i,s}$ and the identity. For general results on completely bonded maps and dilations we refer the reader to \cite{P-book}.

\begin{theorem} \label{Berezin-transf}
If $T\in {\bf B}_\Lambda (\cH)$, then the map
$$
\Psi_T(f):=\lim_{r\to 1}K_{rT}^*[f\otimes I] K_{rT}, \qquad f\in C^*(\{S_{i,s}\}),
$$
where the limit is in the operator norm topology, is a is completely contractive linear map. Moreover, its restriction to the  $\Lambda$-polyball algebra $\cA({\bf B}_\Lambda)$ is a completely contractive homomorphism.
If, in addition, $T$ is a pure $k$-tuple, then $\Psi_T(f)=K_{T}^*[f\otimes I] K_{T}$.

\end{theorem}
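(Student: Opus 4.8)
The plan is to realize each finite-$r$ map as the compression of a $*$-representation to a co-invariant subspace, and then to pass to the limit $r\to1$ by a density argument. First I would fix $r\in[0,1)$ and note that, since $rT=(rT_1,\ldots,rT_k)$ is a \emph{pure} $k$-tuple in ${\bf B}_\Lambda(\cH)$, Theorem \ref{Berezin} applies: the Berezin kernel $K_{rT}$ is an isometry, so $\cH$ may be identified with the subspace $K_{rT}\cH$ of $\ell^2(\FF_{n_1}^+\times\cdots\times\FF_{n_k}^+)\otimes\cH$ and, writing $P$ for the projection onto $K_{rT}\cH$,
$$
\Psi_{rT}(f):=K_{rT}^*[f\otimes I]K_{rT}=P\,(f\otimes I)|_{K_{rT}\cH},\qquad f\in C^*(\{S_{i,s}\}).
$$
Being the compression of the $*$-representation $f\mapsto f\otimes I$, each $\Psi_{rT}$ is completely positive and completely contractive; in particular the family $\{\Psi_{rT}\}_{r\in[0,1)}$ is uniformly completely bounded by $1$.

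Next I would establish convergence of $\Psi_{rT}(f)$ as $r\to1$. On the dense $*$-subalgebra of polynomials $p(\{S_{i,s}\},\{S_{i,s}^*\})$ of the form \eqref{polynomial}, the computation already carried out in the proof of Corollary \ref{vN} gives
$$
\Psi_{rT}\big(p(\{S_{i,s}\},\{S_{i,s}^*\})\big)=p(\{rT_{i,s}\},\{rT_{i,s}^*\}),
$$
whose right-hand side is a finite linear combination of monomials, each scaled by a power of $r$, and hence converges in operator norm to $p(\{T_{i,s}\},\{T_{i,s}^*\})$ as $r\to1$. Combining this with the uniform contractivity of the $\Psi_{rT}$ through a standard $\varepsilon/3$ estimate (approximate a given $f$ in norm by such a polynomial $p$), the Cauchy criterion is satisfied, so $\Psi_T(f):=\lim_{r\to1}\Psi_{rT}(f)$ exists in norm for every $f\in C^*(\{S_{i,s}\})$. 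Complete contractivity of $\Psi_T$ then follows by passing to the limit in $\|[\Psi_{rT}(f_{ij})]\|\le\|[f_{ij}]\|$ at each matrix level.

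For the homomorphism property on $\cA({\bf B}_\Lambda)$ I would exploit co-invariance. The intertwining relation of Theorem \ref{Berezin}(ii) says precisely that $K_{rT}\cH$ is invariant under each $S_{i,s}^*\otimes I$, i.e.\ co-invariant under $S_{i,s}\otimes I$; consequently $P\,(A\otimes I)=P\,(A\otimes I)P$ for every $A$ in the non-self-adjoint algebra generated by the $S_{i,s}$ and the identity. The usual compression argument then yields multiplicativity of $\Psi_{rT}$ on $\cA({\bf B}_\Lambda)$, namely $\Psi_{rT}(q_1q_2)=\Psi_{rT}(q_1)\Psi_{rT}(q_2)$ for analytic polynomials $q_1,q_2$. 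Letting $r\to1$ and using norm-continuity of multiplication, density of the analytic polynomials in $\cA({\bf B}_\Lambda)$, and the complete contractivity of $\Psi_T$ just proved, I conclude that $\Psi_T|_{\cA({\bf B}_\Lambda)}$ is a completely contractive homomorphism. For the pure case, both $\Psi_T$ and the map $f\mapsto K_T^*[f\otimes I]K_T$ are completely contractive (the latter because $K_T$ is isometric by Theorem \ref{Berezin}), and the identity of Corollary \ref{vN} applied to the pure tuple $T$ itself shows they agree on the dense set of polynomials \eqref{polynomial}; by continuity they coincide on all of $C^*(\{S_{i,s}\})$, giving $\Psi_T(f)=K_T^*[f\otimes I]K_T$. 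I expect the main obstacle to be the second paragraph—upgrading norm convergence from polynomials to arbitrary $f$—where the uniform complete contractivity of the family $\{\Psi_{rT}\}$ is exactly what makes the approximation argument go through.
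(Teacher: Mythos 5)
Your proposal is correct and follows essentially the same route as the paper: both arguments rest on the identity $K_{rT}^*[p\otimes I]K_{rT}=p(\{rT_{i,s}\},\{rT_{i,s}^*\})$ from Theorem \ref{Berezin}, the uniform (complete) contractivity coming from $K_{rT}$ being an isometry (equivalently, the matricial von Neumann inequality of Corollary \ref{vN}), and a three-epsilon density argument to pass from polynomials of the form \eqref{polynomial} to general $f$. The only differences are organizational — you show the net $\Psi_{rT}(f)$ is Cauchy directly rather than first defining the limit functional $F_T$ on polynomials and then identifying it with $\Psi_T$, and you spell out the co-invariance/compression argument for multiplicativity on $\cA({\bf B}_\Lambda)$ that the paper leaves as "easy to see."
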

\begin{proof}
Let $f\in C^*(\{S_{i,s})$ and let $\{p_m(\{S_{i,s}\}, \{S_{i,s}^*\})\}_{m\in \NN}$ be a sequence of polynomials of the form \eqref{polynomial} such that
$p_m(\{S_{i,s}\}, \{S_{i,s}^*\})\to f$ in the operator norm topology, as $m\to \infty$. Due to Corollary
\ref{vN}, the sequence  $\{p_m(\{T_{i,s}\}, \{T_{i,s}^*\})\}_{m\in \NN}$ is Cauchy and, consequently, convergent. Denote $F_T(f):=\lim_{m\to \infty}p_m(\{T_{i,s}\}, \{T_{i,s}^*\})$ and note that $F_T(f)$ is well-defined and
$\|F_T(f)\|\leq \|f\|$. We remark that  there is a matricial version of Corollary \ref{vN}, i.e.
$$
\|[p_{ab}(\{T_{i,s}\}, \{T_{i,s}^*\})]_{q\times q}\|\leq  \|[p_{ab}(\{S_{i,s}\}, \{S_{i,s}^*\})]_{q\times q}\|
$$
for any matrix $[p_{ab}(\{S_{i,s}\}, \{S_{i,s}^*\})]_{q\times q}$ of polynomials of the form \eqref{polynomial}. Using this result, it is easy to see that the map $F_T:C^*(\{S_{i,s}\})\to B(\cH)$ defined as above is a unital completely contractive linear map. Now, we prove that $F_T=\Psi_T$.
First, note that Theorem \ref{Berezin} implies
$$
p_m(\{rT_{i,s}\}, \{rT_{i,s}^*\})=K_{rT}^*[p_m(\{S_{i,s}\}, \{S_{i,s}^*\})]\otimes I]K_{rT}
$$
for any $m\in \NN$ and $r\in [0,1)$. Taking $m\to \infty$, we deduce that
\begin{equation}\label{FT}
F_{rT}(f)=K_{rT}^*(f\otimes I)K_{rT}
\end{equation}
for any $r\in [0,1)$ and  $f\in C^*(\{S_{i,s}\})$. For any $\epsilon>0$, let $n_\epsilon\in \NN$ be such that
\begin{equation}\label{pne}
\|p_{n_\epsilon}(\{S_{i,s}\}, \{S_{i,s}^*\}) -f\|<\epsilon.
\end{equation}
Due to the considerations above we have
\begin{equation}\label{FPF}
\|F_{rT}(f)-p_{n_\epsilon}(\{T_{i,s}\}, \{T_{i,s}^*\})\|\leq\|f-p_{n_\epsilon}(\{S_{i,s}\}, \{S_{i,s}^*\}) \|<\epsilon.
\end{equation}
On  the other hand, we can find $\delta\in (0,1)$ such that
\begin{equation}
\label{pp}
\|p_{n_\epsilon}(\{rT_{i,s}\}, \{rT_{i,s}^*\})-p_{n_\epsilon}(\{T_{i,s}\}, \{T_{i,s}^*\})\|<\epsilon
\end{equation}
for any $r\in (\delta,1)$. Using relation \eqref{FT}, \eqref{pne}, \eqref{FPF}, and \eqref{pp}, we obtain
\begin{equation*}
\begin{split}
\|F_T(f)-K_{rT}^*(f\otimes I)K_{rT}\| &= \|F_T(f)-F_{rT}(f)\|\\
&\leq \|F_T(f)-p_{n_\epsilon}(\{T_{i,s}\}, \{T_{i,s}^*\})\|-\|p_{n_\epsilon}(\{T_{i,s}\}, \{T_{i,s}^*\})-
p_{n_\epsilon}(\{rT_{i,s}\}, \{rT_{i,s}^*\})\|\\
&\qquad\qquad +\|p_{n_\epsilon}(\{rT_{i,s}\}, \{rT_{i,s}^*\})-F_{rT}(f)\|\leq 3\epsilon
\end{split}
\end{equation*}
for any $r\in (\delta, 1)$, Therefore $\lim_{r\to 1}K_{rT}^*[f\otimes I] K_{rT}=F_T(f)$, which proves that $F_T=\Psi_T$. Now, it is easy to see that  $\Psi_T|_{\cA({\bf B}_\Lambda)}$ is a completely contractive homomorphism. If, in addition, $T$ is a pure $k$-tuple, then Theorem \ref{Berezin} implies
$$
K_{T}^*(p_m(\{S_{i,s}\}, \{S_{i,s}^*\})\otimes I)K_T=p_m(\{T_{i,s}\}, \{T_{i,s}^*\}).
$$
Taking $m\to\infty$, we obtain  $K_T^*(f\otimes I)K_T=\Psi_T(f)$ for any $f\in C^*(\{S_{i,s}\})$.
The proof is complete.
\end{proof}

\begin{corollary} \label{cp}
Let $T=(T_1,\ldots, T_k)$ with $T_i=(T_{i,1},\ldots, T_{i,n_i})$ and $T_{i,s}\in B(\cH)$. Then
$T\in {\bf B}_\Lambda(\cH)$ if and only if
there is a completely positive linear map  $\Psi: C^*(\{S_{i,s}\})\to B(\cH)$ such that
$$
\Psi(p(\{S_{i,s}\}, \{S_{i,s}^*\}))=p(\{T_{i,s}\}, \{T_{i,s}^*\})
$$
for any polynomial $p(\{S_{i,s}\}, \{S_{i,s}^*\})$ of the form \eqref{polynomial}.
\end{corollary}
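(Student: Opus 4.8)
The plan is to read the forward implication off the Berezin machinery of Theorem~\ref{Berezin-transf} and to handle the converse by a Stinespring dilation together with the equality case of the operator Kadison--Schwarz inequality.

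For the direct implication, suppose $T\in{\bf B}_\Lambda(\cH)$ and take $\Psi:=\Psi_T$, the map produced in Theorem~\ref{Berezin-transf}. That theorem already gives $\Psi_T\big(p(\{S_{i,s}\},\{S_{i,s}^*\})\big)=p(\{T_{i,s}\},\{T_{i,s}^*\})$ for every polynomial of the form \eqref{polynomial} (this is the identity $F_T=\Psi_T$ in its proof), so the required intertwining of polynomials is automatic. It remains to note that $\Psi_T$ is completely positive. For each fixed $r\in[0,1)$ the map $f\mapsto K_{rT}^*(f\otimes I)K_{rT}$ is the composition of the unital $*$-representation $f\mapsto f\otimes I_\cH$ of $C^*(\{S_{i,s}\})$ with the compression $X\mapsto K_{rT}^*XK_{rT}$, hence completely positive; since $\Psi_T$ is the pointwise norm-limit of these maps as $r\to1$ and the cone of completely positive maps is closed under such limits, $\Psi_T$ is completely positive.

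For the converse, suppose a completely positive $\Psi$ with the stated polynomial property exists. Evaluating on the constant polynomial shows $\Psi(I)=I$, so $\Psi$ is unital, and Stinespring's theorem furnishes a Hilbert space $\cG\supset\cH$, a unital $*$-representation $\pi\colon C^*(\{S_{i,s}\})\to B(\cG)$ and (identifying $\cH$ with its image under the Stinespring isometry $W$) the formula $\Psi(f)=P_\cH\,\pi(f)|_\cH$. Put $V_{i,s}:=\pi(S_{i,s})$; since $\pi$ is a $*$-homomorphism, $(V_1,\dots,V_k)$ is a $k$-tuple of doubly $\Lambda$-commuting row isometries on $\cG$, and the polynomial property gives $\Psi(S_{i,s})=T_{i,s}$ and $\Psi(S_{i,s}^*)=T_{i,s}^*$. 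The crucial step is to prove that $\cH$ is co-invariant under every $V_{i,s}$. For this I compare $\Psi\big(\sum_{s}S_{i,s}S_{i,s}^*\big)$ computed in two ways: the polynomial property gives $\sum_s T_{i,s}T_{i,s}^*=\sum_s\Psi(S_{i,s})\Psi(S_{i,s})^*$, so the row Kadison--Schwarz inequality $\Psi(\sum_s S_{i,s}S_{i,s}^*)\ge\sum_s\Psi(S_{i,s})\Psi(S_{i,s})^*$ (valid by complete positivity) is in fact an equality. Writing the defect as $\sum_s\big[(I-WW^*)\pi(S_{i,s})^*W\big]^*\big[(I-WW^*)\pi(S_{i,s})^*W\big]=0$ forces $(I-WW^*)\pi(S_{i,s})^*W=0$ for each $s$, i.e. $V_{i,s}^*\cH\subseteq\cH$ and $T_{i,s}^*=V_{i,s}^*|_\cH$.

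Once co-invariance is established, the rest is routine. Co-invariance yields $P_\cH V_{i,s}(I-P_\cH)=0$, whence $T_{i,s}T_{j,t}=P_\cH V_{i,s}V_{j,t}|_\cH$ and, using the $\Lambda$-commutation $V_{i,s}V_{j,t}=\lambda_{ij}(s,t)V_{j,t}V_{i,s}$, the relation \eqref{commuting} for $T$. The same compression identity, applied inductively to the maps $\Phi_{rT_i}$, gives $\Delta_{rT}(I)=P_\cH\,\Delta_{rV}(I)|_\cH$ for all $r\in[0,1)$; since $(V_1,\dots,V_k)$ are doubly $\Lambda$-commuting row isometries we have $\Delta_{rV}(I)=\prod_{i}\big(I-\sum_s r^2V_{i,s}V_{i,s}^*\big)\ge0$ by Lemma~\ref{tech}, and compressing a positive operator keeps it positive, so $\Delta_{rT}(I)\ge0$ and $T\in{\bf B}_\Lambda(\cH)$. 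I expect the main obstacle to be the co-invariance step: recognizing that the polynomial identity forces equality in the operator Kadison--Schwarz inequality, and extracting co-invariance of $\cH$ from that equality, is where the real content lies; the $\Lambda$-commutation and the positivity of $\Delta_{rT}(I)$ then follow formally.
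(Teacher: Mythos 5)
Your proof is correct, but the converse takes a genuinely different route from the paper's. For the direct implication you do exactly what the paper does: take $\Psi=\Psi_T$ from Theorem \ref{Berezin-transf} and observe it is completely positive as a pointwise norm limit of the compressions $f\mapsto K_{rT}^*(f\otimes I)K_{rT}$. For the converse the paper gives a two-line argument that never leaves $\cH$: since $\Delta_{rS}(I)\geq 0$ and $S_{i,s}S_{j,t}-\lambda_{ij}(s,t)S_{j,t}S_{i,s}=0$ in $C^*(\{S_{i,s}\})$, applying the positive linear map $\Psi$ yields $\Delta_{rT}(I)=\Psi(\Delta_{rS}(I))\geq 0$ and the $\Lambda$-commutation for $T$ directly. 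You instead pass to the Stinespring dilation $\pi$ of $\Psi$, set $V_{i,s}=\pi(S_{i,s})$, extract co-invariance of $W\cH$ from the equality case of the Schwarz inequality for the rows $[S_{i,1}\cdots S_{i,n_i}]$, and then read off \eqref{commuting} and $\Delta_{rT}(I)=P_\cH\Delta_{rV}(I)|_\cH\geq 0$ from the corresponding facts for $V$. This is heavier machinery, but it is essentially the argument the paper itself deploys later in Theorem \ref{iso-dil} (there the co-invariance is obtained by the same "defect equals a sum of squares" computation, for the elements $S_{1,\alpha_1}\cdots S_{k,\alpha_k}$), so you are in effect proving the corollary and the existence half of the minimal isometric dilation simultaneously. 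A side benefit of your route is that it bypasses the normal-form bookkeeping implicit in the paper's identities $\Psi(\Delta_{rS}(I))=\Delta_{rT}(I)$ and $\Psi(S_{i,s}S_{j,t}-\lambda_{ij}(s,t)S_{j,t}S_{i,s})=T_{i,s}T_{j,t}-\lambda_{ij}(s,t)T_{j,t}T_{i,s}$: since $\Psi$ is only prescribed on polynomials written in the form \eqref{polynomial}, those identities require rewriting the adjoint factors into increasing index order, whereas all commutations you use are performed on the isometries $V_{i,s}$, for which they are already known.
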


\begin{proof}
The direct implication is a consequence of Theorem \ref{Berezin} and Theorem \ref{Berezin-transf}. To prove the converse, note that
$$
\Delta_{rT}(I):=(id-\Phi_{rT_k})\circ\cdots \circ (id-\Phi_{rT_1})(I)=\Psi_{T}(\Delta_{rS}(I))\geq 0
$$
for any $r\in [0,1)$. On the other hand, we have
\begin{equation*}
 T_{i,s} T_{j,t}-\lambda_{ij}(s,t)T_{j,t}T_{i,s}=\Psi_T\left(S_{i,s} S_{j,t}-\lambda_{ij}(s,t)S_{j,t}S_{i,s}\right)=0
\end{equation*}
for any $i,j\in \{1,\ldots, k\}$ with $i\neq j$ and any $s\in \{1,\ldots, n_i\}$, $t\in \{1,\ldots, n_j\}$.
Therefore, $T\in B_\Lambda(\cH)$. The proof is complete.
\end{proof}

The noncommutative  Berezin transform at  a pure element $T\in {\bf B}_{\Lambda}(\cH)$  is  the mapping
 $$
{\bf B}_T:B(\ell^2(\FF_{n_1}^+\times \cdots \times \FF_{n_k}^+)\to B(\cH),\qquad
{\bf B}_T[f]:=K_T^*[f\otimes I]K_T.
$$
 When $T\in {\bf B}_{\Lambda}(\cH)$, we set
 ${\bf B}_T[f]:= \lim_{r\to 1}K_{rT}^*[f\otimes I] K_{rT}, \qquad f\in C^*(\{S_{i,s}\})$.

In what follows, we prove that the noncommutative Berezin transform is  invariant under a class of automorphims of the $C^*$-algebra $C^*(\{S_{i,s}\})$.

If ${\bf z}=(z_1,\ldots, z_k)$, with $z_i:=(z_{i,1},\ldots, z_{i,n_i})\in \TT^{n_i}$, and
$T=(T_1,\ldots, T_k)\in B_\Lambda(\cH)$, then we define
${\bf z}T:=(z_1T_1,\ldots z_k T_k)$,  where $z_iT_i:=(z_{i,1}T_{i,1},\ldots, z_{i,n_i}T_{i,n_i})$ and set
$\rho_{\bf z}(T):={\bf z}T$. It is easy to  see that the map $\rho_{\bf z}:B_\Lambda(\cH)\to B_\Lambda(\cH)$ is is a well-defined  bijection and $\rho_{\bf z}\circ\rho_{\bar{\bf z}}=id$.
On the other hand,
$\rho_{\bf z}$
generates a  $*$-endomorphism $ \boldsymbol\rho_{\bf z}$  of $C^*(\{S_{i,s}\})$  such that
$$
\boldsymbol\rho_{\bf z}(S_{i,s})=z_{i,s}S_{i,s}.
$$
Set  $\boldsymbol\sigma_{{\bf z}}:=\boldsymbol\rho_{\bar{\bf z}}\boldsymbol\rho_{\bf z}$ and note that $\boldsymbol\sigma_{{\bf z}}(S_{i,s})=S_{i,s}$  for any $i\in \{1,\ldots, k\}$ and $s\in \{1,\ldots, n_i\}$.
Therefore, $ \boldsymbol\rho_{\bf z}$ is an automorphism of  $C^*(\{S_{i,s}\})$.

\begin{proposition}  Let $T\in B_\Lambda(\cH)$ and let ${\bf z}\in \TT^{n_1+\cdots +n_k}$. Then
$$
{\bf B}_T[\boldsymbol\rho_{\bf z}(f)]={\bf B}_{\rho_{\bf z}(T)} [f],\qquad f\in C^*(\{S_{i,s}\}).
$$
\end{proposition}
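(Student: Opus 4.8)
The plan is to show that the automorphism $\boldsymbol\rho_{\bf z}$ is \emph{spatially implemented} on $\ell^2(\FF_{n_1}^+\times\cdots\times\FF_{n_k}^+)$ by a diagonal unitary, and that this same unitary intertwines the Berezin kernels of $T$ and $\rho_{\bf z}(T)$; the identity then falls out by a conjugation. First I would introduce the diagonal unitary $D_{\bf z}$ defined on the standard basis by
$$
D_{\bf z}\,\chi_{(\beta_1,\ldots,\beta_k)}=\left(\prod_{i=1}^k z_{i,\beta_i}\right)\chi_{(\beta_1,\ldots,\beta_k)},
$$
where $z_{i,\beta_i}:=\prod_{b=1}^q z_{i,s_b}$ for $\beta_i=g^i_{s_1}\cdots g^i_{s_q}\in\FF_{n_i}^+$ and $z_{i,g_0^i}:=1$. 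A direct computation on basis vectors, using the definition \eqref{shift} of the standard shift, gives $D_{\bf z}\,S_{i,s}\,D_{\bf z}^*=z_{i,s}S_{i,s}$ for all $i,s$. Since conjugation by $D_{\bf z}$ and $\boldsymbol\rho_{\bf z}$ are both $*$-homomorphisms agreeing on the generators $\{S_{i,s}\}$, they agree on the $*$-algebra they generate and hence, by norm continuity, on all of $C^*(\{S_{i,s}\})$; thus $\boldsymbol\rho_{\bf z}(f)=D_{\bf z}\,f\,D_{\bf z}^*$ for every $f\in C^*(\{S_{i,s}\})$.

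Next I would relate the two Berezin kernels. For $r\in[0,1)$ both $rT$ and $\rho_{\bf z}(rT)=r\,{\bf z}T$ are pure elements of ${\bf B}_\Lambda(\cH)$. Because every $z_{i,s}$ is unimodular, $\Phi_{r z_i T_i}=\Phi_{rT_i}$, so the defect operators coincide, $\Delta_{r{\bf z}T}(I)=\Delta_{rT}(I)$. Feeding $(z_iT_i)^*_{i,\beta_i}=\bar z_{i,\beta_i}T_{i,\beta_i}^*$ into the definition of the Berezin kernel, the scalar phases collect exactly into a diagonal factor, giving the factorization
$$
K_{r{\bf z}T}=(D_{\bf z}^*\otimes I_\cH)\,K_{rT},\qquad r\in[0,1).
$$

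Finally I would combine these two facts. Using the definition ${\bf B}_{\rho_{\bf z}(T)}[f]=\lim_{r\to1}K_{r{\bf z}T}^*[f\otimes I]K_{r{\bf z}T}$ together with the factorization,
$$
K_{r{\bf z}T}^*[f\otimes I]K_{r{\bf z}T}=K_{rT}^*\bigl[(D_{\bf z}fD_{\bf z}^*)\otimes I\bigr]K_{rT}=K_{rT}^*[\boldsymbol\rho_{\bf z}(f)\otimes I]K_{rT},
$$
and letting $r\to1$ yields ${\bf B}_{\rho_{\bf z}(T)}[f]={\bf B}_T[\boldsymbol\rho_{\bf z}(f)]$, as desired. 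The main obstacle is the phase bookkeeping in the two computations $D_{\bf z}S_{i,s}D_{\bf z}^*=z_{i,s}S_{i,s}$ and the kernel factorization: one must track the commuting scalars $\boldsymbol\lambda_{i,j}$ appearing in \eqref{shift} and confirm that the diagonal $D_{\bf z}$ leaves them untouched so that only $z_{i,s}$ survives. Once the spatial implementation $\boldsymbol\rho_{\bf z}=\mathrm{Ad}(D_{\bf z})$ is established, the remainder is purely formal.

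As an alternative that bypasses $D_{\bf z}$ entirely, one may note that ${\bf B}_T$, ${\bf B}_{\rho_{\bf z}(T)}$ and ${\bf B}_T\circ\boldsymbol\rho_{\bf z}$ are all completely contractive (Theorem \ref{Berezin-transf}), so it suffices to check the identity on the dense set of polynomials \eqref{polynomial}. There it reduces to the elementary equality $\boldsymbol\rho_{\bf z}({\bf S}_{\boldsymbol\alpha}{\bf S}_{\boldsymbol\beta}^*)=c\,{\bf S}_{\boldsymbol\alpha}{\bf S}_{\boldsymbol\beta}^*$ with the same scalar $c=\prod_{i}z_{i,\alpha_i}\bar z_{i,\beta_i}$ produced by $(z_1T_1,\ldots,z_kT_k)_{\boldsymbol\alpha}(z_1T_1,\ldots,z_kT_k)_{\boldsymbol\beta}^*=c\,T_{1,\alpha_1}\cdots T_{k,\alpha_k}T_{1,\beta_1}^*\cdots T_{k,\beta_k}^*$, invoking ${\bf B}_A[p(\{S_{i,s}\},\{S_{i,s}^*\})]=p(\{A_{i,s}\},\{A_{i,s}^*\})$ for $A\in{\bf B}_\Lambda(\cH)$.
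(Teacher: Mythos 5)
Your proposal is correct, and your primary argument takes a genuinely different route from the paper's. The paper's proof is exactly your ``alternative'': by Theorem \ref{Berezin-transf} it suffices to check the identity on monomials $f=S_{1,\alpha_1}\cdots S_{k,\alpha_k}S_{1,\beta_1}^*\cdots S_{k,\beta_k}^*$, where both sides equal $\prod_i z_{i,\alpha_i}\bar z_{i,\beta_i}\, T_{1,\alpha_1}\cdots T_{k,\alpha_k}T_{1,\beta_1}^*\cdots T_{k,\beta_k}^*$ — a three-line computation. Your main route instead exhibits the diagonal unitary $D_{\bf z}$ with $D_{\bf z}S_{i,s}D_{\bf z}^*=z_{i,s}S_{i,s}$ (the verification is right: the scalars $\boldsymbol\lambda_{i,j}$ in \eqref{shift} commute past the diagonal and cancel, leaving only $\overline{z_{i,\alpha_i}}\,z_{i,g_s^i\alpha_i}=z_{i,s}$), so that $\boldsymbol\rho_{\bf z}=\mathrm{Ad}(D_{\bf z})$, and then factors the kernel as $K_{r{\bf z}T}=(D_{\bf z}^*\otimes I_\cH)K_{rT}$ using $\Delta_{r{\bf z}T}(I)=\Delta_{rT}(I)$. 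This costs more bookkeeping but buys more: it shows $\boldsymbol\rho_{\bf z}$ is spatially implemented and in particular an inner-by-unitary automorphism of $B(\ell^2(\FF_{n_1}^+\times\cdots\times\FF_{n_k}^+))$ restricting to $C^*(\{S_{i,s}\})$ — which, incidentally, supplies the existence of $\boldsymbol\rho_{\bf z}$ that the paper only asserts — and the kernel factorization gives the proposition for all $f\in B(\ell^2(\FF_{n_1}^+\times\cdots\times\FF_{n_k}^+))$ in the pure case, not just for $f$ in the $C^*$-algebra. Both arguments are complete; the only point worth making explicit in your main route is that $\rho_{\bf z}(T)\in{\bf B}_\Lambda(\cH)$ (so that ${\bf B}_{\rho_{\bf z}(T)}$ is defined), which follows since $\Phi_{rz_iT_i}=\Phi_{rT_i}$ and the $\Lambda$-commutation relations are preserved by unimodular rescaling — the paper records this just before the proposition.
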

\begin{proof} Due to Theorem \ref{Berezin-transf}, it is enough to prove the proposition when
$f=S_{1,\alpha_1}\cdots S_{k,\alpha_k}S_{1,\beta_1}^*\cdots S_{k,\beta_k}^*$ and $\alpha_i,\beta_i\in \FF_{n_i}^+$.
Note that
\begin{equation*}
\begin{split}
{\bf B}_{\rho_{\bf z}(T)} [f]&=z_{1,\alpha_1}\cdots z_{k,\alpha_k} \bar z_{1,\beta_1}\cdots \bar z_{k,\beta_k} T_{1,\alpha_1}\cdots T_{k,\alpha_k}T_{1,\beta_1}^*\cdots T_{k,\beta_k}^*\\
&=
{\bf B}_T[z_{1,\alpha_1}\cdots z_{k,\alpha_k} \bar z_{1,\beta_1}\cdots \bar z_{k,\beta_k} S_{1,\alpha_1}\cdots S_{k,\alpha_k}S_{1,\beta_1}^*\cdots S_{k,\beta_k}^*]\\
&=
{\bf B}_T[\boldsymbol\rho_{\bf z}(f)].
\end{split}
\end{equation*}
The proof is complete.
\end{proof}

In what follows, we deduce the following Wold decomposition which can be deduce from Theorem \ref{Wold2}.

\begin{theorem} \label{Wold-partic} Let  $V=(V_1,\ldots, V_k)$ be  a $k$-tuple of doubly $\Lambda$-commuting row isometries acting on a Hilbert space $\cK$. Then there is a unique
othogonal decomposition
$$
\cK=\cK^{(s)}\oplus \cK^{(c)}\oplus \cK^{(r)},
$$
where $\cK^{(s)}, \cK^{(c)},  \cK^{(r)}$ are reducing  subspaces under all isometries $V_{i,s}$,  for $i\in \{1,\ldots, k\}$, $s\in \{1,\ldots, n_i\}$, with the following properties.
\begin{enumerate}
\item[(i)] $V|_{\cK^{(s)}}$ is  a  $k$-tuple of doubly $\Lambda$-commuting pure row isometries, which is isomorphic to the standard $k$-tuple ${\bf S}=({\bf S}_1,\ldots, {\bf S}_k)$ with wandering subspace of dimension  equal to $\dim \Delta_V(I)\cK$.

\item[(ii)]   $V|_{\cK^{(c)}}$ is  a   $k$-tuple of doubly $\Lambda$-commuting  Cuntz row isometries.

\item[(iii)]  $V|_{\cK^{(r)}}$ is  a  $k$-tuple of doubly $\Lambda$-commuting row isometries  having no nontrivial jointly reducing subspace $\cM\subset \cK^{(r)}$ of  all $V_{i,s}$ such that
$V|_\cM$ is a  $k$-tuple of doubly $\Lambda$-commuting  pure  (or  Cuntz) row isometries.
\end{enumerate}
Moreover, we have
$$
 \cK^{(s)} =\bigoplus_{\alpha_{1}\in \FF_{n_{1}}^+,\ldots, \alpha_{k}\in \FF_{n_{k}}^+} V_{1,\alpha_{1}}\cdots V_{k,\alpha_{k}}\left(\Delta_V(I)\cK\right),
 $$
 $$
\cK^{(c)}=\bigcap_{m_{1},\ldots, m_{{k}}=0}^\infty
\left(\bigoplus_{{\alpha_{1}\in \FF^+_{n_{1}}}\atop{ |\alpha_{1}|=m_{1}}}\cdots
\bigoplus_{{\alpha_{{k}}\in \FF^+_{n_{{k}}}}\atop { |\alpha_{{k}}|=m_{{k}}}}
V_{1,\alpha_{1}}\cdots V_{{k}, \alpha_{{k}}}
\cK \right),
$$
and $\cK^{(r)}=(\cK^{(s)} \oplus \cK^{(c)})^\perp$.
\end{theorem}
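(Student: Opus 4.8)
The plan is to obtain everything by specializing the fine Wold decomposition of Theorem \ref{Wold2} and grouping its $2^k$ summands $\cK_A$ into three blocks. I would set $\cK^{(s)}:=\cK_{\{1,\ldots,k\}}$, $\cK^{(c)}:=\cK_\emptyset$, and $\cK^{(r)}:=\bigoplus_{A\neq\emptyset,\,\{1,\ldots,k\}}\cK_A$. By Theorem \ref{Wold2} each $\cK_A$ is reducing for all the $V_{i,s}$ and the sum is orthogonal, so the three blocks are reducing and $\cK=\cK^{(s)}\oplus\cK^{(c)}\oplus\cK^{(r)}$. Parts (i) and (ii) then read off directly: on $\cK_{\{1,\ldots,k\}}$ every $V_i$ is a pure row isometry and on $\cK_\emptyset$ every $V_i$ is a Cuntz row isometry, and Theorem \ref{standard2} (the case $p=k$) identifies $V|_{\cK^{(s)}}$ with the standard $k$-tuple ${\bf S}=({\bf S}_1,\ldots,{\bf S}_k)$ associated with the wandering data $\{\cL_{\{1,\ldots,k\}}\}$.

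For the explicit formulas I would invoke Remark \ref{partic}. The formula for $\cK^{(c)}=\cK_\emptyset$ is exactly Remark \ref{partic}(i), while Remark \ref{partic}(ii) gives $\cK^{(s)}=\cK_{\{1,\ldots,k\}}$ in terms of the wandering subspace $\cL_{\{1,\ldots,k\}}=\bigcap_{i,m}\ker V_{i,m}^*$. The one point to verify is that this coincides with $\Delta_V(I)\cK$. Using the commuting projections supplied by Proposition \ref{rel} and Proposition \ref{commutativity}, exactly as in the identity $\Delta_{rT}(I)=\prod_{i}\left(I-\sum_{s}r^2T_{i,s}T_{i,s}^*\right)$ established earlier, I would note that at $r=1$ the operator $\Delta_V(I)=\prod_{i}\left(I-\sum_{s}V_{i,s}V_{i,s}^*\right)$ is the orthogonal projection onto the intersection of the pairwise commuting range–complements, that is onto $\bigcap_{i,m}\ker V_{i,m}^*=\cL_{\{1,\ldots,k\}}$. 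Hence $\Delta_V(I)\cK=\cL_{\{1,\ldots,k\}}$, so the stated dimension of the wandering subspace and the isomorphism with ${\bf S}$ follow from Theorem \ref{standard}(iii).

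Property (iii) and uniqueness are where the argument needs care. For (iii), suppose $\cM\subset\cK^{(r)}$ is jointly reducing with $V|_\cM$ a $k$-tuple of pure row isometries; Proposition \ref{charact}(ii) forces $\cM\subset\cK_i^{(s)}$ for every $i$, hence $\cM\subset\cK_{\{1,\ldots,k\}}=\cK^{(s)}$, and since $\cK^{(s)}\perp\cK^{(r)}$ we get $\cM=\{0\}$; the all-Cuntz case is identical with $\cK^{(c)}$ in place of $\cK^{(s)}$. These same two inclusions show that \emph{every} reducing all-pure subspace lies in $\cK^{(s)}$ and every reducing all-Cuntz subspace lies in $\cK^{(c)}$. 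For uniqueness I would take a second decomposition $\tilde\cK^{(s)}\oplus\tilde\cK^{(c)}\oplus\tilde\cK^{(r)}$ with the stated properties; the previous remark gives $\tilde\cK^{(s)}\subset\cK^{(s)}$ and $\tilde\cK^{(c)}\subset\cK^{(c)}$, and because $\cK^{(s)}\perp\cK^{(c)}$ this yields $\cK^{(s)}\perp\tilde\cK^{(c)}$. The space $\cN:=\cK^{(s)}\ominus\tilde\cK^{(s)}$ is reducing and, being contained in $\cK^{(s)}\subset\bigcap_i\cK_i^{(s)}$, is again all-pure; it is orthogonal to $\tilde\cK^{(s)}$ by construction and to $\tilde\cK^{(c)}$ by the above, hence $\cN\subset\tilde\cK^{(r)}$, and property (iii) of $\tilde\cK^{(r)}$ forces $\cN=\{0\}$. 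Thus $\cK^{(s)}=\tilde\cK^{(s)}$; the symmetric argument gives $\cK^{(c)}=\tilde\cK^{(c)}$, and then $\cK^{(r)}=\tilde\cK^{(r)}$ by taking orthogonal complements.

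The main obstacle I anticipate is organizational rather than computational: pinning the residual block down uniquely, since property (iii) is a negative (no–subspace) condition rather than a maximality condition. The decisive device that closes the uniqueness loop is the orthogonality $\cK^{(s)}\perp\cK^{(c)}$ coming from Theorem \ref{Wold2}, which upgrades the mere inclusions $\tilde\cK^{(s)}\subset\cK^{(s)}$, $\tilde\cK^{(c)}\subset\cK^{(c)}$ into the orthogonality needed to trap $\cN$ inside $\tilde\cK^{(r)}$.
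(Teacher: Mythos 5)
Your proposal is correct and follows essentially the same route as the paper: specialize Theorem \ref{Wold2} by taking $\cK^{(s)}=\cK_{\{1,\ldots,k\}}$, $\cK^{(c)}=\cK_\emptyset$, and $\cK^{(r)}$ the sum of the remaining $\cK_A$, then use Proposition \ref{charact} to trap any all-pure (resp.\ all-Cuntz) reducing subspace inside $\cK^{(s)}$ (resp.\ $\cK^{(c)}$) and conclude (iii). Your explicit verification that $\Delta_V(I)\cK=\cL_{\{1,\ldots,k\}}$ and your uniqueness argument via $\cN=\cK^{(s)}\ominus\tilde\cK^{(s)}$ supply details the paper leaves implicit, and both are sound.
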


\begin{proof} According to Theorem \ref{Wold2}, the Hilbert space $\cK$ admits a unique orthogonal decomposition
$$
\cK=\cK_{\{1,\ldots, k\}}\oplus \cK_\emptyset \oplus \cK'
$$
with the following properties.
\begin{enumerate}
\item[(i)]   The subspaces $\cK_{\{1,\ldots, k\}}$,  $ \cK_\emptyset$,   and  $\cK'$ are reducing    for all the isometries
$V_{i,s}$, where $i\in \{1,\ldots, k\}$ and $s\in \{1,\ldots, n_i\}$;

\item[(ii)]    $V_i|_{\cK_{\{1,\ldots, k\}}}:=[V_{i,1}|_{\cK_{\{1,\ldots, k\}}}\cdots V_{i,n_i}|_{\cK_{\{1,\ldots, k\}}}]$ is a pure row isometry for any $i\in \{1,\ldots,k\}$.

\item[(iii)]   $V_i|_{\cK_\emptyset}:=[V_{i,1}|_{\cK_\emptyset}\cdots V_{i,n_i}|_{\cK_\emptyset}]$  is a Cuntz row isometry for any $i\in \{1,\ldots,k\}$.

\end{enumerate}
Using Theorem \ref{Wold3} and Remark \ref{partic}, one can see that
$$
\cK_{\{1,\ldots, k\}}=\cK^{(s)}=\cap_{i=1}^k \cK_i^{(s)}\quad \text{  and }\quad
\cK_\emptyset=\cK^{(c)}=\cap_{i=1}^k \cK_i^{(c)},
$$
where $\cK_i^{(s)}$ and $\cK_i^{(c)}$ are defined by relation \eqref{alt}.
Due to the results of Section 2, $V|_{\cK^{(s)}}$ is   isomorphic to the standard $k$-tuple ${\bf S}=({\bf S}_1,\ldots, {\bf S}_k)$ with wandering subspace of dimension  equal to $\dim \Delta_V(I)\cK$.
To prove part (iii) of the theorem,  we need to use Proposition \ref{charact}. Indeed,
if $\cM\subset \cK^{(r)}$  is a reducing subspace for   all  isometries $V_{i,s}$ such that
$V|\cM$ is a  $k$-tuple of doubly $\Lambda$-commuting  pure  (resp.  Cuntz) row isometries,
then $\cM\subset \cK^{(s)}$ (resp. $\cM\subset \cK^{(c)}$). Since $\cK^{(r)}:=(\cK^{(s)}\oplus \cK^{(c)})^\perp$, we conclude that $\cM=\{0\}$.
The proof is complete.
\end{proof}

\begin{corollary} \label{W-partic} Let  $V=(V_1,\ldots, V_k)$ be  a $k$-tuple of doubly $\Lambda$-commuting row isometries acting on a Hilbert space $\cK$. Then the following  statements hold:
\begin{enumerate}
\item[(i)] $V$ is a pure $k$-tuple if and only if  $\cK=\cK^{(s)}$;
\item[(ii)] $V$ is a  $k$-tuple of Cuntz row isometries if and only if $\cK=\cK^{(c)}$,
\end{enumerate}
where $\cK^{(s)}$ and $\cK^{(c)}$ are defined in Theorem \ref{Wold-partic}.
\end{corollary}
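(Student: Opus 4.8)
The plan is to reduce everything to the single-variable Wold decomposition of Theorem \ref{Wold1}, together with the identifications $\cK^{(s)}=\bigcap_{i=1}^k\cK_i^{(s)}$ and $\cK^{(c)}=\bigcap_{i=1}^k\cK_i^{(c)}$ recorded in the proof of Theorem \ref{Wold-partic}. First I would pin down the meaning of the two hypotheses in single-variable terms. Saying that $V$ is a pure $k$-tuple means $\Phi_{V_i}^p(I)\to 0$ strongly as $p\to\infty$ for each $i$; since $\langle\Phi_{V_i}^q(I)h,h\rangle=\sum_{\alpha\in\FF_{n_i}^+,\,|\alpha|=q}\|V_{i,\alpha}^*h\|^2$, the alternative description \eqref{alt} of $\cK_i^{(s)}$ shows that this limit vanishes for every $h\in\cK$ precisely when $\cK=\cK_i^{(s)}$, i.e. precisely when $V_i$ is a pure row isometry in the sense of Theorem \ref{Wold1}. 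Dually, $V$ being a $k$-tuple of Cuntz row isometries means each $V_i$ is a Cuntz row isometry, which by Theorem \ref{Wold1} is equivalent to $\cK=\cK_i^{(c)}$.

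For part (i) I would then argue as follows. If $V$ is pure, then $\cK=\cK_i^{(s)}$ for every $i\in\{1,\ldots,k\}$, whence $\cK=\bigcap_{i=1}^k\cK_i^{(s)}=\cK^{(s)}$. Conversely, since each $\cK_i^{(s)}$ is a subspace of $\cK$, the equality $\cK^{(s)}=\bigcap_{i=1}^k\cK_i^{(s)}=\cK$ forces $\cK_i^{(s)}=\cK$ for every $i$, so each $V_i$ is a pure row isometry and hence $V$ is a pure $k$-tuple. Part (ii) is entirely analogous, replacing $\cK_i^{(s)}$ by $\cK_i^{(c)}$ and $\cK^{(s)}$ by $\cK^{(c)}$, and invoking the Cuntz half of Theorem \ref{Wold1}.

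The argument is short precisely because the real work has already been carried out in Theorem \ref{Wold2} and Theorem \ref{Wold-partic}; the only point that needs genuine care is the first paragraph, where the polyball notion of purity ($\Phi_{V_i}^p(I)\to 0$) must be matched with the row-isometry notion ($\cK=\cK_i^{(s)}$), and where one must observe that an intersection of the reducing subspaces $\cK_i^{(s)}$ (resp. $\cK_i^{(c)}$) equals the whole space exactly when each factor does. Both facts follow at once from the identity $\Phi_{V_i}^q(I)=\sum_{\alpha\in\FF_{n_i}^+,\,|\alpha|=q}V_{i,\alpha}V_{i,\alpha}^*$ and the monotone description \eqref{alt}, so I do not anticipate any substantial obstacle.
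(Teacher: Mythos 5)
Your proof is correct and is exactly the argument the paper intends: the corollary is stated without proof as an immediate consequence of Theorem \ref{Wold-partic}, whose proof establishes the identifications $\cK^{(s)}=\bigcap_{i=1}^k\cK_i^{(s)}$ and $\cK^{(c)}=\bigcap_{i=1}^k\cK_i^{(c)}$ that you use. Your care in matching the polyball notion of purity ($\Phi_{V_i}^p(I)\to 0$ strongly) with the row-isometry notion ($\cK=\cK_i^{(s)}$) via \eqref{alt} is the right point to make explicit, and the rest follows since an intersection of subspaces of $\cK$ equals $\cK$ exactly when each factor does.
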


Now, we are ready to prove the following result concerning the $C^*$-algebra $C^*(\{S_{i,s}\})$ and its representations.

\begin{theorem} \label{C*} Let  $V=(V_1,\ldots, V_k)$ be  a $k$-tuple of doubly $\Lambda$-commuting row isometries acting on a Hilbert space $\cK$.  Then the following statements hold.
\begin{enumerate}
\item[(i)] There is a  $*$-representation $\pi:C^*(\{S_{i,s}\})\to B(\cK)$ such that $\pi(S_{i,s})=V_{i,s}$ for any $i\in  \{1,\ldots, k\}$ and  $s\in \{1,\ldots, n_i\}$. Moreover, any
 $*$-representation of $C^*(\{S_{i,s}\})$ is determined by a  a $k$-tuple of doubly $\Lambda$-commuting row isometries.
 \item[(ii)] $C^*(\{S_{i,s}\})$ is the universal  $C^*$-algebra generated by   a $k$-tuple of  doubly
$\Lambda$-commuting   row isometries.

 \item[(iii)] If
$$\bigcap_{{i\in \{1,\ldots,k\}}\atop {s\in \{1,\ldots, n_i\}}} \ker V_{i,s}^*\neq \{0\},
$$
then the $C^*$-algebra $C^*(\{V_{i,s}\})$ is isomorphic to $C^*(\{S_{i,s}\})$.
 \end{enumerate}
\end{theorem}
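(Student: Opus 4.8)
The plan is to manufacture the representations from the noncommutative von Neumann inequality of Corollary \ref{vN} together with the uniqueness of the canonical form in Proposition \ref{uniqueness}, and then to extract the faithfulness needed in (iii) from the Wold decomposition.

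For part (i), first observe that, being a $k$-tuple of doubly $\Lambda$-commuting row isometries, $V=(V_1,\ldots,V_k)$ is in particular a $k$-tuple of doubly $\Lambda$-commuting row contractions, hence $V\in{\bf B}_\Lambda(\cK)$ by the proposition preceding the Berezin kernel construction. On the $*$-subalgebra generated by $\{S_{i,s}\}$, which by the remark preceding Proposition \ref{uniqueness} consists exactly of the polynomials \eqref{polynomial} and in which, by Proposition \ref{uniqueness}, such an expansion is unique, I define
\begin{equation*}
\pi\bigl(p(\{S_{i,s}\},\{S_{i,s}^*\})\bigr):=p(\{V_{i,s}\},\{V_{i,s}^*\}).
\end{equation*}
Uniqueness of the canonical expansion makes $\pi$ well defined, and Corollary \ref{vN} makes it contractive, so it extends continuously to all of $C^*(\{S_{i,s}\})$. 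Since $V$ satisfies the same row-isometry relations, the $\Lambda$-commutation relations \eqref{lac}, and the doubly $\Lambda$-commutation relations \eqref{lac*} as $S$, any product or adjoint of canonical polynomials reduces to canonical form by the \emph{identical} rewriting rules for $S$ and for $V$; hence $\pi$ is multiplicative and $*$-preserving on the dense subalgebra, and by continuity it is a $*$-representation with $\pi(S_{i,s})=V_{i,s}$. For the moreover statement: if $\rho$ is any $*$-representation of $C^*(\{S_{i,s}\})$, then $\{\rho(S_{i,s})\}$ inherits all defining relations of $\{S_{i,s}\}$, so it is again a $k$-tuple of doubly $\Lambda$-commuting row isometries. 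Part (ii) is then immediate, since the standard $k$-tuple $S$ is itself a $k$-tuple of doubly $\Lambda$-commuting row isometries, and part (i) furnishes, for every such $V$, a $*$-representation sending $S_{i,s}$ to $V_{i,s}$; this is precisely the universal property.

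For part (iii), apply (i) to get a surjective $*$-homomorphism $\pi:C^*(\{S_{i,s}\})\to C^*(\{V_{i,s}\})$ with $\pi(S_{i,s})=V_{i,s}$, so only injectivity remains. Put $\cL:=\bigcap_{i,s}\ker V_{i,s}^*$, which is nonzero by hypothesis. By Remark \ref{partic}(ii) and Theorem \ref{Wold3}, $\cL$ is the $\{1,\ldots,k\}$-wandering subspace and
\begin{equation*}
\cK_{\{1,\ldots,k\}}=\bigoplus_{\alpha_1\in\FF_{n_1}^+,\ldots,\alpha_k\in\FF_{n_k}^+}V_{1,\alpha_1}\cdots V_{k,\alpha_k}(\cL)
\end{equation*}
is a nonzero reducing subspace on which $V$ restricts to a $k$-tuple of doubly $\Lambda$-commuting pure row isometries. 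By Theorem \ref{standard2}(iii) this restriction is unitarily equivalent to the standard $k$-tuple $(S_1\otimes I_\cL,\ldots,S_k\otimes I_\cL)$, so the subrepresentation obtained by compressing $\pi$ to $\cK_{\{1,\ldots,k\}}$ is unitarily equivalent to $f\mapsto f\otimes I_\cL$ on $\ell^2(\FF_{n_1}^+\times\cdots\times\FF_{n_k}^+)\otimes\cL$. Since $\|f\otimes I_\cL\|=\|f\|$ for every $f\in C^*(\{S_{i,s}\})$, this subrepresentation is isometric, whence $\ker\pi=\{0\}$. Thus $\pi$ is a bijective $*$-homomorphism, hence a $*$-isomorphism.

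The only routine verifications are the closure-under-products-and-adjoints checks in part (i). The conceptual heart, extending Coburn's theorem, is the injectivity argument in part (iii): the nontrivial-kernel hypothesis is exactly what forces the representation to contain, via the pure block of the Wold decomposition, a faithful amplification of the identity representation of $C^*(\{S_{i,s}\})$.
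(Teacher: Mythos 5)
Your proof is correct and follows essentially the same route as the paper: part (i) via the von Neumann inequality of Corollary \ref{vN} applied to polynomials and extension by continuity, and part (iii) via the Wold decomposition, which exhibits the pure summand of $V$ as a copy of $S\otimes I_{\cL}$ and thereby forces the representation to be isometric. The only cosmetic difference is that you phrase the injectivity in (iii) as the existence of a faithful subrepresentation on the reducing pure block, whereas the paper computes $\|p(\{V_{i,s}\},\{V_{i,s}^*\})\|=\max\{\|p(\{S_{i,s}\},\{S_{i,s}^*\})\|,\|p(\{V_{i,s}'\},\{V_{i,s}'^*\})\|\}$ directly; these are the same argument.
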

\begin{proof} Since  $V=(V_1,\ldots, V_k)$ is   a $k$-tuple of doubly $\Lambda$-commuting row isometries,  Corollary  \ref{vN} implies
$$
\|p(\{V_{i,s}\}, \{V_{i,s}^*\})\|\leq  \|p(\{S_{i,s}\}, \{S_{i,s}^*\})\|
$$
for any polynomial  $p(\{S_{i,s}\}, \{S_{i,s}^*\})$ in   $\{S_{i,s}\}$ and  $\{S_{i,s}^*\}$.
Set
 $$\pi(p(\{S_{i,s}\}, \{S_{i,s}^*\})):=p(\{V_{i,s}\}, \{V_{i,s}^*\})
 $$
 and note that $\pi$ is a well-defined $*$-homomorphisms on the algebra of all polynomials in   $\{S_{i,s}\}$ and  $\{S_{i,s}^*\}$. For each $f\in C^*(\{S_{i,s}\})$ let
 $\{p_m(\{S_{i,s}\}, \{S_{i,s}^*\})\}_{m\in \NN}$ be a sequence with the property that $p_m(\{S_{i,s}\}, \{S_{i,s}^*\})\to f$ in norm, as $m\to \infty$.
Since $\{p_m(\{S_{i,s}\}, \{S_{i,s}^*\})\}_{m\in \NN}$ is a Cauchy sequence, the inequality above shows that the sequence $\{p_m(\{V_{i,s}\}, \{V_{i,s}^*\})\}_{m\in \NN}$  is also a Cauchy sequence
and, consequently, $\pi(f):=\lim_{m\to\infty} p_m(\{V_{i,s}\}, \{V_{i,s}^*\})$ exists in $C^*(\{V_{i,s}\})$.
Note that $\pi$ is well-defined linear map and $\|\pi(f)\|\leq \|f\|$. Using standard arguments, one can easily see that $\pi$ is a $*$-representation of the $C^*$ algebra $C^*(\{S_{i,s}\})$.

Conversely, let $\pi:C^*(\{S_{i,s}\})\to B(\cK)$ be a $*$-representation of $C^*(\{S_{i,s}\})$ on the Hilbert space $\cK$ and set
$V_{i,s}:=\pi(S_{i,s})$ for any $i\in  \{1,\ldots, k\}$ and  $s\in \{1,\ldots, n_i\}$. Since
$S=(S_1,\ldots, S_k)$ is   a $k$-tuple of doubly $\Lambda$-commuting row isometries, so is  the $k$-tuple $V=(V_1,\ldots, V_k)$. This completes the proof of part (i).  Note that item (ii) follows from item (i).

To prove part (iii), we use the Wold decomposition of Theorem \ref{Wold-partic}.  As a consequence, we have the decomposition
$$
V_{i,s}=(S_{i,s}\otimes I_\cD)\bigoplus V_{i,s}'
$$
for any $i\in  \{1,\ldots, k\}$ and  $s\in \{1,\ldots, n_i\}$, where
$$ \cD:=\bigcap_{{i\in \{1,\ldots,k\}}\atop {s\in \{1,\ldots, n_i\}}} \ker V_{i,s}^*,$$
 $S=(S_1,\ldots, S_k)$ is the standard shift, and $V'=(V_1',\ldots, V_k')$ is a $k$-tuple of doubly
  $\Lambda$-commuting row isometries. Consequently,  we have
 $$
 p(\{V_{i,s}\}, \{V_{i,s}^*\})=\left(p(\{S_{i,s}\}, \{S_{i,s}^*\})\otimes I_\cD\right)
 \bigoplus p(\{V_{i,s}'\},
  \{V_{i,s}'^*\})
 $$
which, due to the fact that
$$
\|p(\{V_{i,s}'\}, \{V_{i,s}'^*\})\|\leq  \|p(\{S_{i,s}\}, \{S_{i,s}^*\})\|
$$
implies
$$
\| p(\{V_{i,s}\}, \{V_{i,s}^*\})\|=\max\left\{ \| p(\{S_{i,s}\}, \{S_{i,s}^*\})\|, \| p(\{V_{i,s}'\}, \{V_{i,s}'^*\})\|\right\}=\| p(\{S_{i,s}\}, \{S_{i,s}^*\})\|.
$$
We define  $\pi(p(\{S_{i,s}\}, \{S_{i,s}^*\}):=p(\{V_{i,s}\}, \{V_{i,s}^*\})$ and note that $\pi$ can be extended uniquely to an isometric $*$-representation
$\pi:C^*(\{S_{i,s}\})\to C^*(\{V_{i,s}\})$. We remark that   $\pi$ is surjective. Indeed, for each $g\in C^*(\{V_{i,s}\})$, there exists a sequence $q_m(\{V_{i,s}\}, \{V_{i,s}^*\})$ which converges to $g$ in norm. Since $\{q_m(\{V_{i,s}\}, \{V_{i,s}^*\})\}$ is Cauchy, so is  the sequence $\{q_m(\{S_{i,s}\}, \{S_{i,s}^*\})\}$ and, consequently, there exists $f\in C^*(\{S_{i,s}\})$ such that
$f=\lim_{m\to\infty} q_m(\{S_{i,s}\}, \{S_{i,s}^*\})$. Now it clear that $\pi(f)=g$ and
$\|\pi(f)\|=\|f\|$.
The proof is complete.
\end{proof}

\begin{theorem} \label{exact1} Let $S=(S_1,\ldots, S_k)$ be  the standard $k$-tuple of doubly $\Lambda$-commuting row  isometries  $S_i=[S_{i,1}\cdots S_{i,n_i}]$ and let $\cJ_\Lambda$ be the closed
two-sided ideal generated by the projections $I -\sum_{s=1}^{n_1} S_{1,s}S_{1,s}^*$, ..., $I-\sum_{s=1}^{n_k} S_{k,s} S_{k,s}^*$  in the $C^*$-algebra $C^*(\{S_{i,s}\})$. Then the sequence
of $C^*$-algebras
$$
0\to \cJ_\Lambda\to C^*(\{S_{i,s}\})\to \otimes_{i\in \{1,\ldots,k\}}^{\Lambda}\cO_{n_i}\to 0
$$
is exact.
\end{theorem}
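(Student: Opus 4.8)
The plan is to exhibit the surjection in the sequence as a $*$-homomorphism determined by a universal property and then to identify its kernel with $\cJ_\Lambda$ by producing an inverse on the quotient. First I would produce the map $\Phi\colon C^*(\{S_{i,s}\})\to \otimes_{i\in\{1,\ldots,k\}}^{\Lambda}\cO_{n_i}$. Let $\{W_{i,s}\}$ denote the universal generators of $\otimes_{i\in\{1,\ldots,k\}}^{\Lambda}\cO_{n_i}$; they are isometries satisfying the doubly $\Lambda$-commuting relations \eqref{lac*} together with the Cuntz condition $\sum_{s=1}^{n_i} W_{i,s}W_{i,s}^*=1$. In particular $(W_1,\ldots,W_k)$ is a $k$-tuple of doubly $\Lambda$-commuting row isometries, so by the universality established in Theorem \ref{C*}(ii) there is a (necessarily unique) $*$-representation $\Phi$ with $\Phi(S_{i,s})=W_{i,s}$. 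Since the $W_{i,s}$ generate the target, $\Phi$ is surjective, and the injectivity of $\cJ_\Lambda\hookrightarrow C^*(\{S_{i,s}\})$ is automatic. This settles exactness at the two outer terms, so everything reduces to the equality $\ker\Phi=\cJ_\Lambda$.

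One inclusion is immediate: applying $\Phi$ to each generator of $\cJ_\Lambda$ gives $\Phi\bigl(I-\sum_{s=1}^{n_i} S_{i,s}S_{i,s}^*\bigr)=I-\sum_{s=1}^{n_i} W_{i,s}W_{i,s}^*=0$ by the Cuntz relation, and since $\ker\Phi$ is a closed two-sided ideal containing these generators, $\cJ_\Lambda\subseteq\ker\Phi$. Consequently $\Phi$ factors through a $*$-homomorphism $\overline\Phi\colon C^*(\{S_{i,s}\})/\cJ_\Lambda\to \otimes_{i\in\{1,\ldots,k\}}^{\Lambda}\cO_{n_i}$ sending the class $\dot S_{i,s}$ of $S_{i,s}$ to $W_{i,s}$.

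The main point is the reverse inclusion $\ker\Phi\subseteq\cJ_\Lambda$, which I would obtain by constructing an inverse of $\overline\Phi$. Let $q\colon C^*(\{S_{i,s}\})\to C^*(\{S_{i,s}\})/\cJ_\Lambda$ be the quotient map and set $\dot S_{i,s}:=q(S_{i,s})$. Because $q$ is a $*$-homomorphism, each $\dot S_{i,s}$ is an isometry and the relations \eqref{lac*} descend to the quotient; moreover, by the very definition of $\cJ_\Lambda$ we have $q\bigl(I-\sum_{s=1}^{n_i}S_{i,s}S_{i,s}^*\bigr)=0$, so $\sum_{s=1}^{n_i}\dot S_{i,s}\dot S_{i,s}^*=I$. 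Thus $\{\dot S_{i,s}\}$ is a family of isometries in $C^*(\{S_{i,s}\})/\cJ_\Lambda$ satisfying exactly the defining relations of the universal algebra $\otimes_{i\in\{1,\ldots,k\}}^{\Lambda}\cO_{n_i}$. Its universal property then yields a $*$-homomorphism $\Psi\colon \otimes_{i\in\{1,\ldots,k\}}^{\Lambda}\cO_{n_i}\to C^*(\{S_{i,s}\})/\cJ_\Lambda$ with $\Psi(W_{i,s})=\dot S_{i,s}$.

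Finally I would check that $\Psi$ and $\overline\Phi$ are mutually inverse. On generators, $\overline\Phi\circ\Psi(W_{i,s})=\overline\Phi(\dot S_{i,s})=W_{i,s}$ and $\Psi\circ\overline\Phi(\dot S_{i,s})=\Psi(W_{i,s})=\dot S_{i,s}$; since both composites are $*$-homomorphisms that fix a generating set, they are the identity. Hence $\overline\Phi$ is a $*$-isomorphism, which forces $\ker\Phi=\cJ_\Lambda$ and completes the proof of exactness. The only real obstacle is this reverse inclusion, and the argument above dispatches it by the ``double universality'' of the two algebras rather than by any explicit computation; the remaining verifications (that $\Phi$ and $\Psi$ exist and that the relations pass to the quotient) are routine consequences of the universal properties already recorded in Theorem \ref{C*} and in the definition of $\otimes_{i\in\{1,\ldots,k\}}^{\Lambda}\cO_{n_i}$.
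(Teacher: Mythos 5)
Your argument is correct, but it follows a genuinely different route from the paper's. You play the two universal properties against each other: Theorem \ref{C*} gives the surjection $\Phi$ onto $\otimes_{i\in\{1,\ldots,k\}}^{\Lambda}\cO_{n_i}$, the definition of $\cJ_\Lambda$ shows the classes $\dot S_{i,s}$ in the quotient satisfy the defining relations of $\otimes_{i\in\{1,\ldots,k\}}^{\Lambda}\cO_{n_i}$, and the resulting pair of $*$-homomorphisms are mutually inverse because they are so on generators. This is complete, provided one grants (as the paper's definition in the Introduction does) that the universal $C^*$-algebra $\otimes_{i\in\{1,\ldots,k\}}^{\Lambda}\cO_{n_i}$ exists with the stated universal property --- which is legitimate here since the generators are isometries and hence the relations are norm-bounded. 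The paper instead takes a constructive path: it forms the direct sum $\widetilde V=\bigoplus_\omega V^\omega$ over all irreducible $k$-tuples of doubly $\Lambda$-commuting Cuntz row isometries, proves via a GNS argument that $C^*(\{\widetilde V_{i,s}\})$ dominates every such tuple in norm (hence is universal), and then identifies the quotient $C^*(\{S_{i,s}\})/\cJ_\Lambda$ with $C^*(\{\widetilde V_{i,s}\})$ by a two-sided norm inequality on polynomials. What the paper's approach buys is a concrete faithful model of $\otimes_{i\in\{1,\ldots,k\}}^{\Lambda}\cO_{n_i}$ and an explicit norm formula $\|p(\{S_{i,s}\},\{S_{i,s}^*\})+\cJ_\Lambda\|=\|p(\{\widetilde V_{i,s}\},\{\widetilde V_{i,s}^*\})\|$; what yours buys is brevity and a clean isolation of where each hypothesis (Cuntz relations, $\Lambda$-commutation, universality of $C^*(\{S_{i,s}\})$) is used. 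If you wanted your version to be fully self-contained relative to the paper, you would only need to add a sentence noting that the abstract universal algebra is realized on a Hilbert space (so that Theorem \ref{C*} applies to its generators), or alternatively derive the contractivity of $\Phi$ directly from the von Neumann inequality of Corollary \ref{vN}.
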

\begin{proof}

We consider the collection of all  $k$-tuples $V^\omega=(V^\omega_1,\ldots, V^\omega_k)$
of doubly $\Lambda$-commuting Cuntz row isometries $V^\omega_i=(V^\omega_{i,1}\cdots V^\omega_{i,n_i}]$ such that $C^*(\{V^\omega_{i,s}\})$ is irreducible. We define the $k$-tuple
$\widetilde V=(\widetilde V_1,\ldots, \widetilde V_k)$ with
 $\widetilde V_i=[\widetilde V_{i,1}\cdots \widetilde V_{i,n_i}]$ by setting

 $$
 \widetilde V_{i,s}:=\bigoplus_{\omega}  V^\omega_{i,s},\qquad i\in \{1,\ldots, k\}, s\in \{1,\ldots, n_i\}.
 $$
We prove that $C^*(\{\widetilde V_{i,s}\})$ is the universal $C^*$-algebra generated by a $k$-tuple of doubly $\Lambda$-commuting Cuntz row isometries.
Let $V=(V_1,\ldots, V_k)$  be another $k$-tuple of doubly $\Lambda$-commuting Cuntz row isometries.  It is enough to show that there is a surjective $*$-homomorphism
$\varphi:C^*(\{\widetilde V_{i,s}\})\to C^*(\{ V_{i,s}\})$ such that
$\varphi(\widetilde V_{i,s})= V_{i,s}$ for any $ i\in \{1,\ldots, k\}$ and $ s\in \{1,\ldots, n_i\}$.
For this, it suffices to show that
\begin{equation}
\label{ine-vN}
\|p(\{V_{i,s}\}, \{V_{i,s}^*\})\|\leq  \|p(\{\widetilde V_{i,s}\}, \{\widetilde V_{i,s}^*\})\|
\end{equation}
for any noncommutative polynomial  in $V_{i,s}$ and $V_{i,s}^*$. Due to the GNS construction, given a polynomial $p$, there is an irreducible representation $\rho$ of $C^*(\{ V_{i,s}\})$ such that $\|\rho(p(\{V_{i,s}\}, \{V_{i,s}^*\}))\|=\|p(\{V_{i,s}\}, \{V_{i,s}^*\})|$.
Define the $k$-tuple $V'=(V_1',\ldots, V_k')$, $V_i'=[V'_{i,1}\cdots V'_{i,n_i}]$, by setting
$$
V'_{i,s}:=\rho(V_{i,s}),\qquad  i\in \{1,\ldots, k\}, s\in \{1,\ldots, n_i\}.
 $$
Note that $V'$ is a doubly $\Lambda$-commuting $k$-tuple of Cuntz isometries and
$$
\|p(\{\widetilde V_{i,s}\}, \{\widetilde V_{i,s}^*\})\|\geq \|p(\{ V'_{i,s}\}, \{ V_{i,s}'^{*}\})\|=
\|\rho(p(\{V_{i,s}\}, \{V_{i,s}^*\}))\|=\|p(\{V_{i,s}\}, \{V_{i,s}^*\})\|.
$$
Consequently, setting
$$
\varphi\left(p(\{\widetilde V_{i,s}\}, \{\widetilde V_{i,s}^*\}\right):=
p(\{V_{i,s}\}, \{V_{i,s}^*\}
$$
for any polynomial $p$ as above, we have a well defined contractive $*$-homomorphism  on the $*$-algebra generated by $V_{i,s}$ and $V_{i,s}^*$. It is clear that $\varphi$ extends by continuity to a $*$-homomorphism of  $C^*(\{\widetilde V_{i,s}\})$ onto $ C^*(\{ V_{i,s}\})$.

Now, we prove that  $C^*(\{\widetilde V_{i,s}\})$ is isomorphic to the $C^*$-algebra
$C^*(\{ S_{i,s}\})/_{{\cJ}_\Lambda}$.
According to Theorem \ref{C*},  there is a $*$-representation
$\pi: C^*(\{ S_{i,s}\})\to C^*(\{ \widetilde V_{i,s}\})$ such that
$\pi(S_{i,s})=\widetilde V_{i,s}$  for any $ i\in \{1,\ldots, k\}$ and $ s\in \{1,\ldots, n_i\}$.
Since $\pi(\cJ_\Lambda)=0$, $\pi$ induces  a $*$-representation
$\psi:C^*(\{ S_{i,s}\})/_{{\cJ}_\Lambda} \to C^*(\{\widetilde V_{i,s}\}) $ such that
 $$
\psi\left(p(\{S_{i,s}\}, \{S_{i,s}^*\})+\cJ_\Lambda\right)=p(\{\widetilde V_{i,s}\}, \{\widetilde V_{i,s}^*\}).
$$
Consequently, $\psi$ is surjective and
\begin{equation}
\label{ine-vN2}
\|p(\{\widetilde V_{i,s}\}, \{\widetilde V_{i,s}^*\})\|\leq \|p(\{S_{i,s}\}, \{S_{i,s}^*\})+\cJ_\Lambda\|.
\end{equation}
On the other hand, let $q:C^*(\{ S_{i,s}\})\to C^*(\{ S_{i,s}\})/_{{\cJ}_\Lambda}$ be the canonical quotient map and note that
$\widehat S=(\widehat S_1,\ldots, \widehat S_k)$ with $\widehat S_{i,s}:=[q(S_{i,1}\cdots q(S_{i,n_i})]$ is a $k$-tuple of doubly $\Lambda$-commuting Cuntz row isometries in the $C^*$-algebra  $C^*(\{ S_{i,s}\})/_{{\cJ}_\Lambda}$. Due to the  inequality \eqref{ine-vN}, we have
$$
\|p(\{q(S_{i,s})\}, \{q(S_{i,s})^*\})\|\leq  \|p(\{\widetilde V_{i,s}\}, \{\widetilde V_{i,s}^*\})\|
$$
which together with  inequality \eqref{ine-vN2} implies
$$
\|p(\{\widetilde V_{i,s}\}, \{\widetilde V_{i,s}^*\})\|= \|p(\{S_{i,s}\}, \{S_{i,s}^*\})+\cJ_\Lambda\|.
$$
Consequently, $\psi$ is an isomorphism of $C^*$-algebras  and, therefore, the sequence
$$
0\to \cJ_\Lambda\to C^*(\{S_{i,s}\})\to C^*(\{\widetilde V_{i,s}\}) \to 0
$$
is exact. The proof is complete.
\end{proof}

\bigskip

\section{Invariant subspaces and classification of the pure elements in the regular  $\Lambda$-polyball}

The goal of this section is to classify the Beurling type  jointly invariant subspaces of   the universal $k$-tuple   $S=(S_1,\ldots, S_k)$ and  to classify the pure elements  $T=(T_1,\ldots, T_k)$ in the regular $\Lambda$-polyball, with   defect  of rank $1,2,\ldots, \infty$.

\begin{theorem} \label{co-inv} Let $S=(S_1,\ldots, S_k)$ be the universal model associated with  the $\Lambda$-polyball and
let $\cM\subset \ell^2(\FF_{n_1}^+\times\cdots \times \FF_{n_k}^+)\otimes\cH$ be a jointly co-invariant subspace under the  operator $S_{i,s}\otimes I_\cH$, where $i\in\{1,\ldots, k\}$ and  $s\in \{1,\ldots, n_i\}$. Then
$$
\overline{\text{\rm span}}\{(S_{1,\alpha_1}\dots S_{k,\alpha_k}\otimes I_\cH)\cM:\ \alpha_1\in \FF_{n_1}^+,\ldots, \alpha_{n_k}\in \FF_{n_k}^+\}
=\ell^2(\FF_{n_1}^+\times\cdots \times \FF_{n_k}^+)\otimes \cL,
$$
where $\cL:=(P_\CC\otimes I_\cH)\cM\subset \cH$.
In particular, any jointly reducing subspace $\cM\subset \ell^2(\FF_{n_1}^+\times\cdots \times \FF_{n_k}^+)\otimes\cH$ for  all isometries $S_{i,s}\otimes I_\cH$ has the form
$$\cM=\ell^2(\FF_{n_1}^+\times\cdots \times \FF_{n_k}^+)\otimes \cG
$$
for some Hilbert space $\cG$.
\end{theorem}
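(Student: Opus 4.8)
Write $\mathbf 0:=(g_0^1,\ldots,g_0^k)$ for the vacuum index, so that $P_\CC$ is the rank-one projection of $\ell^2(\FF_{n_1}^+\times\cdots\times\FF_{n_k}^+)$ onto $\CC\chi_{\mathbf 0}$ and $(P_\CC\otimes I_\cH)$ identifies $\cL$ with a (closed) subspace of $\cH$. Denote by $\cN$ the left-hand side, the smallest subspace containing $\cM$ and invariant under every $S_{i,s}\otimes I_\cH$, and abbreviate $S_{\boldsymbol\alpha}:=S_{1,\alpha_1}\cdots S_{k,\alpha_k}$. The first thing I would record is that, by Lemma \ref{tech} and the computation in the proof of Theorem \ref{standard}, $\Delta_S(I)=\prod_{i=1}^k\bigl(I-\sum_{s=1}^{n_i}S_{i,s}S_{i,s}^*\bigr)=P_\CC$; this identifies the vacuum projection with an explicit \emph{finite} polynomial in the $S_{i,s}$, which is the device driving both inclusions. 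The proof then splits into $\cN\subseteq\ell^2\otimes\cL$ and $\ell^2\otimes\cL\subseteq\cN$.

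For $\cN\subseteq\ell^2\otimes\cL$ I would show that every Fourier component of a vector of $\cM$ already lies in $\cL$. Fix $x=\sum_{\boldsymbol\beta}\chi_{\boldsymbol\beta}\otimes x_{\boldsymbol\beta}\in\cM$. Since $S_{\boldsymbol\beta}^*$ is a product of single-generator adjoints $S_{i,s}^*$ and $\cM$ is co-invariant, $(S_{\boldsymbol\beta}^*\otimes I)x\in\cM$; applying $P_\CC\otimes I$ and using that $S_{\boldsymbol\beta}^*\chi_{\boldsymbol\alpha}$ has a nonzero vacuum component only when $\boldsymbol\alpha=\boldsymbol\beta$, I obtain $(P_\CC\otimes I)(S_{\boldsymbol\beta}^*\otimes I)x=\overline{c_{\boldsymbol\beta}}\,\chi_{\mathbf 0}\otimes x_{\boldsymbol\beta}$ with $c_{\boldsymbol\beta}\in\TT$, which lies in $\cL$ by definition. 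Hence $x_{\boldsymbol\beta}\in\cL$ for every $\boldsymbol\beta$, so $\cM\subseteq\ell^2\otimes\cL$; as $\ell^2\otimes\cL$ is reducing for all $S_{i,s}\otimes I_\cH$ and contains $\cM$, it contains $\cN$.

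For the reverse inclusion it suffices to prove $\cL\subseteq\cN$, since $\cN$ is invariant under the $S_{\boldsymbol\alpha}\otimes I_\cH$ and $\ell^2\otimes\cL=\overline{\operatorname{span}}\{(S_{\boldsymbol\alpha}\otimes I_\cH)\cL\}$. This is the step I expect to be the main obstacle: the naive way to recover the vacuum component $(P_\CC\otimes I)x$ of $x\in\cM$ uses adjoints, which do not preserve the merely invariant subspace $\cN$. I would circumvent this by expanding $P_\CC=\Delta_S(I)=\sum_{B\subseteq\{1,\ldots,k\}}(-1)^{|B|}\prod_{i\in B}\sum_{s}S_{i,s}S_{i,s}^*$ and invoking the telescoping identity of Lemma \ref{tech}, which rewrites each block as $\prod_{i\in B}\sum_s S_{i,s}S_{i,s}^*=\sum_{(s_i)}S_{B,(s_i)}S_{B,(s_i)}^*$ with $S_{B,(s_i)}$ an ordered product of single generators. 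Thus $(P_\CC\otimes I)x$ is a finite sum of terms $(S_{B,(s_i)}\otimes I)(S_{B,(s_i)}^*\otimes I)x$; in each term $(S_{B,(s_i)}^*\otimes I)x\in\cM\subseteq\cN$ by co-invariance, and applying the isometry $S_{B,(s_i)}\otimes I$ keeps us in the invariant subspace $\cN$. Therefore $(P_\CC\otimes I)x\in\cN$, giving $\cL\subseteq\cN$ and completing the equality.

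Finally, for the ``in particular'' statement, if $\cM$ is jointly reducing then it is in particular invariant, so $\cN=\cM$, and the main assertion yields $\cM=\ell^2\otimes\cG$ with $\cG:=\cL$. The only genuinely delicate point throughout is the order-of-operations issue in the reverse inclusion, resolved by the explicit finite expansion of the vacuum projection together with the commutation and telescoping supplied by Lemma \ref{tech}; the remainder is bookkeeping with the action of $S$ on the standard basis.
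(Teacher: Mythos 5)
Your proposal is correct and follows essentially the same route as the paper: the forward inclusion by extracting Fourier components of $x\in\cM$ via $(P_\CC\otimes I)(S_{\boldsymbol\beta}^*\otimes I)x$ and co-invariance, and the reverse inclusion by writing $P_\CC=\Delta_S(I)$ as a finite sum of terms $(S_{B,(s_i)}\otimes I)(S_{B,(s_i)}^*\otimes I)$ so that co-invariance followed by invariance places the vacuum component in the span. Your version merely makes explicit the expansion of $\Delta_S(I)$ that the paper's proof uses implicitly when it invokes co-invariance together with the definition of the spanned subspace.
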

 \begin{proof} In what follows, we identify $\CC$ with  the subspace $\CC \chi_{(g_0^1,\ldots, g_0^k)}$ of
 $\ell^2(\FF_{n_1}^+\times\cdots \times \FF_{n_k}^+)$. Fix $x\in \cM$ with $x\neq 0$ and suppose that it has the representation
 \begin{equation}
 \label{x-rep}
 x=\sum_{\alpha_{1}\in \FF_{n_{1}}^+,\ldots, \alpha_{k}\in \FF_{n_{k}}^+}
 \chi_{(\alpha_1,\ldots, \alpha_k)}\otimes h_{(\alpha_1,\ldots, \alpha_k)},\qquad
 h_{(\alpha_1,\ldots, \alpha_k)}\in \cH.
 \end{equation}
 Since $x\neq 0$, there is $h_{(\sigma_1,\ldots, \sigma_k)}\neq 0$. Note that  using the definition of the standard $k$-tuple $S=(S_1,\ldots, S_k)$ and the $\Lambda$-commutation relations satisfied by $S$, we obtain
 $$
 (P_\CC\otimes I_\cH)(S_{1,\sigma_1}^*\cdots S_{k,\sigma_k}^*\otimes I_\cH)x=c_{(\sigma_1,\ldots, \sigma_k)} \chi_{(g_0^1,\ldots, g_0^k)}\otimes h_{(\sigma_1,\ldots, \sigma_k)}
 $$
for some constant $c_{(\sigma_1,\ldots, \sigma_k)}\in \TT$.
Since $\cM$ is a jointly co-invariant subspace under the  operator $S_{i,s}\otimes I_\cH$, where $i\in\{1,\ldots, k\}$ and  $s\in \{1,\ldots, n_i\}$,  we have $(S_{1,\sigma_1}^*\cdots S_{k,\sigma_k}^*\otimes I_\cH)x \in \cM$ and, consequently,  $\chi_{(g_0^1,\ldots, g_0^k)}\otimes h_{(\sigma_1,\ldots, \sigma_k)}\in \cL:=(P_\CC\otimes I_\cH)\cM$.
Since
$$
(S_{1,\sigma_1}\cdots S_{k,\sigma_k}\otimes I_\cH)\left(\chi_{(g_0^1,\ldots, g_0^k)}\otimes h_{(\sigma_1,\ldots, \sigma_k)}\right)=d_{(\sigma_1,\ldots, \sigma_k)} \chi_{(\sigma_1,\ldots, \sigma_k)}\otimes h_{(\sigma_1,\ldots, \sigma_k)}
$$
for some constant $d_{(\sigma_1,\ldots, \sigma_k)}\in \TT$, we deduce that
$\chi_{(\sigma_1,\ldots, \sigma_k)}\otimes h_{(\sigma_1,\ldots, \sigma_k)}\in \ell^2(\FF_{n_1}^+\times\cdots \times \FF_{n_k}^+)\otimes \cL$.
Using now relation \eqref{x-rep}, we  conclude that $x\in \ell^2(\FF_{n_1}^+\times\cdots \times \FF_{n_k}^+)\otimes \cL$, which shows that $\cM\subset \ell^2(\FF_{n_1}^+\times\cdots \times \FF_{n_k}^+)\otimes \cL$ and, therefore, the subspace
\begin{equation}
\label{K-def}
\cK:=\overline{\text{\rm span}}\{(S_{1,\alpha_1}\dots S_{k,\alpha_k}\otimes I_\cH)\cM:\ \alpha_1\in \FF_{n_1}^+,\ldots, \alpha_{n_k}\in \FF_{n_k}^+\}
\end{equation}
is included in $\ell^2(\FF_{n_1}^+\times\cdots \times \FF_{n_k}^+)\otimes \cL$.

To prove the reverse inclusion, we show first that
$\chi_{(g_0^1,\ldots, g_0^k)}\otimes \cL\subset \cK$.
To this end, let $y\in \cL$ with $y\neq 0$. Then there is $z\in \cM\subset \ell^2(\FF_{n_1}^+\times\cdots \times \FF_{n_k}^+)\otimes \cL$ of the form
\begin{equation*}
z=\chi_{(g_0^1,\ldots, g_0^k)}\otimes y+\sum_{{\alpha_{1}\in \FF_{n_{1}}^+,\ldots, \alpha_{k}\in \FF_{n_{k}}^+}\atop{|\alpha_1|+\cdots +|\alpha_k|\geq 1}}
 \chi_{(\alpha_1,\ldots, \alpha_k)}\otimes h_{(\alpha_1,\ldots, \alpha_k)},\qquad
 h_{(\alpha_1,\ldots, \alpha_k)}\in \cL,
\end{equation*}
such that $\chi_{(g_0^1,\ldots, g_0^k)}\otimes y=(P_\CC\otimes I_\cH)z$. On the other hand,  due to the proof of Theorem \ref{standard}, we have
$$
(id-\Phi_{S_1})\circ\cdots \circ (id-\Phi_{S_k})(I)=P_\CC,
$$
which implies
$$
\chi_{(g_0^1,\ldots, g_0^k)}\otimes y=(P_\CC\otimes I_\cH)z=(id-\Phi_{S_1\otimes I_\cH})\circ\cdots \circ (id-\Phi_{S_k\otimes I_\cH})(I)z.
$$
Since $\cM$ is co-invariant under all operators $S_{i,s}\otimes I_\cH$ and using relation \eqref{K-def},  we deduce that
$\chi_{(g_0^1,\ldots, g_0^k)}\otimes y\in \cK$ for any $y\in \cL$, which implies
$
(S_{1,\alpha_1}\cdots S_{k,\alpha_k}\otimes I_\cH)\left(\CC\chi_{(g_0^1,\ldots, g_0^k)}\otimes \cL\right)\subset \cK$ for any $\alpha_{i}\in \FF_{n_{i}}^+$.
Using the definition of the standard $k$-tuple $S=(S_1,\ldots, S_k)$, one can see that
$d_{(\alpha_1,\ldots, \alpha_k)} \chi_{(\alpha_1,\ldots, \alpha_k)}\otimes \cL\subset \cK$,
for some constant $d_{(\alpha_1,\ldots, \alpha_k)}\in \TT$, which  shows that
$ \chi_{(\alpha_1,\ldots, \alpha_k)}\otimes \cL\subset \cK$. Now it is clear that
$\ell^2(\FF_{n_1}^+\times\cdots \times \FF_{n_k}^+)\otimes \cL\subset \cK$, which proves the first part of the theorem. Note that part two of the theorem follows immediately from part one.
The proof is complete.
\end{proof}

An operator $A:\ell^2(\FF_{n_1}^+\times\cdots \times \FF_{n_k}^+)\otimes \cH\to \ell^2(\FF_{n_1}^+\times\cdots \times \FF_{n_k}^+)\otimes \cK$ is called {\it multi-analytic}
 with respect to the universal model $S=(S_1,\ldots, S_k)$
if
$$
A(S_{i,s}\otimes I_\cH)=(S_{i,s}\otimes I_\cK)A
$$
for any $i\in\{1,\ldots, k\}$ and  $s\in \{1,\ldots, n_i\}$. If, in addition,  $A$ is a partial isometry, we call it {\it inner multi-analytic} operator.

\begin{theorem} \label{Beurling-fac}
Let $S=(S_1,\ldots, S_k)$ be the universal model associated with  the $\Lambda$-polyball and let $Y$ be a selfadjoint operator on the Hilbert space $\ell^2(\FF_{n_1}^+\times\cdots \times \FF_{n_k}^+)\otimes \cK$. Then the following statements are equivalent.
\begin{enumerate}
\item[(i)] There is a multi-analytic operator $A: \ell^2(\FF_{n_1}^+\times\cdots \times \FF_{n_k}^+)\otimes \cL\to
\ell^2(\FF_{n_1}^+\times\cdots \times \FF_{n_k}^+)\otimes \cK$ such that
$$Y=AA^*.$$

\item[(ii)] $(id-\Phi_{S_1\otimes I_\cK})\circ\cdots \circ (id-\Phi_{S_k\otimes I_\cK})(Y)\geq 0$.
\end{enumerate}
\end{theorem}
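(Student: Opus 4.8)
The plan is to prove the two implications separately; the substance lies in the factorization (ii)$\Rightarrow$(i), while (i)$\Rightarrow$(ii) is a direct intertwining computation. For (i)$\Rightarrow$(ii), suppose $Y=AA^*$ with $A$ multi-analytic, so $(S_{i,s}\otimes I_\cK)A=A(S_{i,s}\otimes I_\cL)$ for all $i,s$. I would first observe that the map $\Theta(X):=AXA^*$ intertwines the completely positive maps on the two spaces, namely $\Phi_{S_i\otimes I_\cK}\circ\Theta=\Theta\circ\Phi_{S_i\otimes I_\cL}$, which follows at once by substituting the intertwining relations into $\sum_{s}(S_{i,s}\otimes I_\cK)AXA^*(S_{i,s}\otimes I_\cK)^*$. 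Hence $\Theta$ commutes with each factor $id-\Phi_{S_i}$, and therefore with the whole composition, giving
$$(id-\Phi_{S_1\otimes I_\cK})\circ\cdots\circ(id-\Phi_{S_k\otimes I_\cK})(Y)=A\Big[(id-\Phi_{S_1\otimes I_\cL})\circ\cdots\circ(id-\Phi_{S_k\otimes I_\cL})(I)\Big]A^*.$$
By the computation in the proof of Theorem \ref{standard}, the bracketed operator equals the orthogonal projection $P_\CC\otimes I_\cL$ onto the wandering subspace $\CC\chi_{(g_0^1,\ldots,g_0^k)}\otimes\cL$, so the right-hand side is $A(P_\CC\otimes I_\cL)A^*\geq 0$, which is exactly (ii).

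For the converse (ii)$\Rightarrow$(i), the idea is to run a weighted version of the noncommutative Berezin kernel of Theorem \ref{Berezin}, now carrying the weight $Y$ in place of $I$. I would set $\Delta:=(id-\Phi_{S_1\otimes I_\cK})\circ\cdots\circ(id-\Phi_{S_k\otimes I_\cK})(Y)\geq 0$, put $C:=\Delta^{1/2}$ and $\cL:=\overline{\ran\,\Delta^{1/2}}\subset\ell^2(\FF_{n_1}^+\times\cdots\times\FF_{n_k}^+)\otimes\cK$. Since $S\otimes I_\cK$ is a pure $k$-tuple of doubly $\Lambda$-commuting row isometries and the maps $\Phi_{S_i\otimes I_\cK}$ commute (Lemma \ref{commutative}), the telescoping/purity argument of the proof of Theorem \ref{Berezin} applies verbatim with $Y$ in place of $I$ and yields the reconstruction $\sum_{p_1,\ldots,p_k\geq 0}\Phi_{S_1\otimes I_\cK}^{p_1}\circ\cdots\circ\Phi_{S_k\otimes I_\cK}^{p_k}(\Delta)=Y$; as each term is positive, this already forces $Y\geq 0$. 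I then define $K:\ell^2(\FF_{n_1}^+\times\cdots\times\FF_{n_k}^+)\otimes\cK\to\ell^2(\FF_{n_1}^+\times\cdots\times\FF_{n_k}^+)\otimes\cL$ by $K\xi:=\sum_{\beta_1,\ldots,\beta_k}\chi_{(\beta_1,\ldots,\beta_k)}\otimes C(S_{k,\beta_k}^*\cdots S_{1,\beta_1}^*\otimes I_\cK)\xi$, i.e. the kernel $K_T$ of Theorem \ref{Berezin} attached to the pure tuple $S\otimes I_\cK$ and the weight $\Delta$.

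The two properties needed are then established as in Theorem \ref{Berezin}. The norm identity $\|K\xi\|^2=\sum_{\beta}\|C(S_{k,\beta_k}^*\cdots S_{1,\beta_1}^*\otimes I_\cK)\xi\|^2=\langle Y\xi,\xi\rangle$, where the last equality is the reconstruction formula above, gives $K^*K=Y$. The intertwining $K(S_{i,s}^*\otimes I_\cK)=(S_{i,s}^*\otimes I_\cL)K$ is proved by the same reindexing computation as in Theorem \ref{Berezin}(ii): the weight there only sits at the end of each term, so its replacement by $\Delta^{1/2}$ changes nothing, and the $\Lambda$-commutation constants $\boldsymbol{\lambda}_{i,j}(s,\cdot)$ cancel exactly as before. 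Setting $A:=K^*:\ell^2(\FF_{n_1}^+\times\cdots\times\FF_{n_k}^+)\otimes\cL\to\ell^2(\FF_{n_1}^+\times\cdots\times\FF_{n_k}^+)\otimes\cK$, the adjoint of this intertwining is precisely the multi-analyticity $A(S_{i,s}\otimes I_\cL)=(S_{i,s}\otimes I_\cK)A$, and $AA^*=K^*K=Y$, which is (i).

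The main obstacle is the verification underlying the converse: checking that the weighted Berezin kernel retains both structural properties of $K_T$ once $I$ is replaced by the general self-adjoint $Y$. The one genuinely new point is the convergence and value of the reconstruction series at an intermediate weight $W=(id-\Phi_{S_{k-1}\otimes I_\cK})\circ\cdots\circ(id-\Phi_{S_1\otimes I_\cK})(Y)$ that need not be positive (before $\Delta\geq 0$ is used to rearrange); this is handled by the estimate $-\|W\|\,\Phi_{S_k\otimes I_\cK}^{q+1}(I)\leq\Phi_{S_k\otimes I_\cK}^{q+1}(W)\leq\|W\|\,\Phi_{S_k\otimes I_\cK}^{q+1}(I)$, whose outer terms tend to $0$ strongly by purity, valid for any bounded self-adjoint $W$. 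Everything else transfers formally from Theorems \ref{standard} and \ref{Berezin}.
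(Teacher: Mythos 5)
Your proof is correct, and while (i)$\Rightarrow$(ii) coincides with the paper's argument, your (ii)$\Rightarrow$(i) takes a genuinely different route. The paper first extracts from (ii) the subinvariance relations $\Phi_{S_i\otimes I_\cK}(Y)\leq Y$ and the positivity of $Y$, then defines an auxiliary tuple $T$ on $\cG:=\overline{Y^{1/2}(\ell^2(\FF_{n_1}^+\times\cdots\times\FF_{n_k}^+)\otimes\cK)}$ via $C_{i,s}(Y^{1/2}y):=Y^{1/2}(S_{i,s}^*\otimes I_\cK)y$, $T_{i,s}:=C_{i,s}^*$, verifies that $T$ is a pure $\Lambda$-commuting element of the regular $\Lambda$-polyball, and finally invokes Theorem \ref{Berezin} as a black box to set $A:=Y^{1/2}K_T^*$. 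You instead bypass the auxiliary tuple entirely: you attach a Berezin-type kernel directly to $S\otimes I_\cK$ with the weight $\Delta^{1/2}$, $\Delta:=\Delta_{S\otimes I_\cK}(Y)$, in place of the canonical defect, and take $A:=K^*$. The price is that you must recheck the two structural properties of the kernel for a general weight; your observations that the reindexing computation of Theorem \ref{Berezin}(ii) never touches the weight sitting at the front of each word, and that the telescoping/reconstruction identity requires only self-adjointness of the intermediate operators together with purity (via the sandwich estimate by $\|W\|\,\Phi^{q+1}(I)$), are exactly the points that need saying, and you say them. The payoff is that $Y=K^*K\geq 0$ falls out as a byproduct rather than having to be established in advance, and no well-definedness or contractivity checks for an auxiliary tuple are needed. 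Both arguments ultimately rest on the same two ingredients (the reconstruction series and the $\Lambda$-commutation reindexing), so neither is more general, but yours is the more economical of the two.
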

\begin{proof}
The implication (i)$\implies$(ii) is due to the fact that
$$
\Delta_{S\otimes I_\cL}(I):=(id-\Phi_{S_1\otimes I_\cL})\circ\cdots \circ (id-\Phi_{S_k\otimes I_\cL})=P_\CC\otimes I_\cL\geq 0
$$
and $\Delta_{S\otimes I_\cK}(AA^*)=A \Delta_{S\otimes I_\cL} (I)A^*\geq 0$.
To prove the implication (ii)$\implies$(i), assume that $\Delta_{S\otimes I_\cK}(Y)\geq 0$ and denote
$$
\Delta_{S_2\otimes I_\cK,\ldots, S_k\otimes I_\cK}:=(id-\Phi_{S_2\otimes I_\cK})\circ\cdots \circ (id-\Phi_{S_k\otimes I_\cK}).
$$
Note that in this case we have
$\Phi_{S_1\otimes I_\cK}(\Delta_{S_2\otimes I_\cK,\ldots, S_k\otimes I_\cK}(Y))\leq \Delta_{S_2\otimes I_\cK,\ldots, S_k\otimes I_\cK}(Y)
$
which implies
$$
\Phi^m_{S_1\otimes I_\cK}(\Delta_{S_2\otimes I_\cK,\ldots, S_k\otimes I_\cK}(Y))\leq \Delta_{S_2\otimes I_\cK,\ldots, S_k\otimes I_\cK}(Y), \qquad m\in \NN.
$$
Since
$$
-\| \Delta_{S_2\otimes I_\cK,\ldots, S_k\otimes I_\cK}(I)\|\Phi_{S_1\otimes I_\cK}^m(I)\leq
\Phi^m_{S_1\otimes I_\cK}(\Delta_{S_2\otimes I_\cK,\ldots, S_k\otimes I_\cK}(Y))\leq
\| \Delta_{S_2\otimes I_\cK,\ldots, S_k\otimes I_\cK}(I)\|\Phi_{S_1\otimes I_\cK}^m(I)
$$
and  $\Phi_{S_1\otimes I_\cK}^m(I)\to 0$, as $m\to  \infty$, we deduce that
$\Delta_{S_2\otimes I_\cK,\ldots, S_k\otimes I_\cK}(Y)\geq 0$, which implies
$$
\Phi_{S_2\otimes I_\cK}(\Delta_{S_3\otimes I_\cK,\ldots, S_k\otimes I_\cK}(Y))\leq \Delta_{S_3\otimes I_\cK,\ldots, S_k\otimes I_\cK}(Y).
$$
As above, the latter inequality implies
$\Delta_{S_3\otimes I_\cK,\ldots S_k\otimes I_\cK}(Y)\geq 0$. Continuing this process, we conclude that $\Phi_{S_k\otimes I_\cK}(Y)\leq Y$ and $Y\geq 0$. Due to Lemma \ref{commutative}, one can show,  in a similar manner  as above, that $\Phi_{S_i\otimes I_\cK}(Y)\leq Y$ for any $i\in \{1,\ldots, k\}$.

Consider the subspace $\cG:=\overline{Y^{1/2}\left(\ell^2(\FF_{n_1}^+\times\cdots \times \FF_{n_k}^+)\otimes \cK\right)}$ and set
\begin{equation}
\label{def-C}
C_{i,s}(Y^{1/2} y):=Y^{1/2}(S_{i,s}^*\otimes I_\cK)y
\end{equation}
for any $y\in \ell^2(\FF_{n_1}^+\times\cdots \times \FF_{n_k}^+)\otimes \cK$, $i\in \{1,\ldots, k\}$ and $s\in \{1,\ldots, n_i\}$.
To see that $C_{i,s}$ is well-defined on the range of $Y^{1/2}$, note that, for each $i\in \{1,\ldots, k\}$,
\begin{equation*}
\begin{split}
\sum_{s=1}^{n_i} \|C_{i,s}Y^{1/2} y\|^2&=\left< \sum_{s=1}^{n_i} (S_{i,s}\otimes I_\cK)Y(S_{i,s}\otimes I_\cK)y,y\right>\\
&=\left<\Phi_{S_i\otimes I_\cK}(Y)y,y\right>\leq \|Y^{1/2}y\|^2
\end{split}
\end{equation*}
for any  $y\in \ell^2(\FF_{n_1}^+\times\cdots \times \FF_{n_k}^+)\otimes \cK$. Consequently, we can extend by continuity each operator $C_{i,s}$ to the space $\cG$. Setting $T_{i,s}:=C_{i,s}^*$, one can easily see that the inequality above  also implies  $\sum_{s=1}^{n_i} T_{i,s}T_{i,s}^*\leq I_\cG$. On the other hand, relation \eqref{def-C} implies
$$
Y^{1/2}\left[(id-\Phi_{T_1})\circ\cdots \circ (id-\Phi_{T_k})(I_\cG)\right]Y^{1/2} =(id-\Phi_{S_1\otimes I_\cK})\circ\cdots \circ (id-\Phi_{S_k\otimes I_\cK})(Y)\geq 0.
$$
Now, let us show that for each $i\in \{1,\ldots, k\}$, $T_i:=[T_{i,1}\cdots T_{i,n_i}]$ is a pure row contraction.
Indeed, note that
\begin{equation*}
\begin{split}
\left<\Phi_{T_i}^m(I_\cG)Y^{1/2}y, Y^{1/2}y\right>&=\left< \Phi_{S_i\otimes I_\cK}^m(Y)y,y\right>\\
&\leq \|Y\|\left< \Phi_{S_i\otimes I_\cK}^m(I)y,y\right>
\end{split}
\end{equation*}
for any $y\in  \ell^2(\FF_{n_1}^+\times\cdots \times \FF_{n_k}^+)\otimes \cK$. Since $S_i\otimes I_\cK$ is a pure row isometry, we have $\Phi_{S_i\otimes I_\cK}^m(I)y\to 0$ as $m\to\infty$. Consequently, $\Phi_{T_i}^m(I_\cG)\to 0$ as $m\to 0$, which proves that $T_i$ is a pure row contraction.

Now, we show that $T=(T_1,\ldots, T_k)$ is in the regular $\Lambda$-polyball. Indeed, due to relation \eqref{def-C} and using the fact that the standard $k$-tuple $S=(S_1,\ldots, S_k)$ is in the regular $\Lambda$-polyball, we deduce that
\begin{equation*}
\begin{split}
Y^{1/2} T_{i,s}T_{j,t}&=(S_{i,s}\otimes I_\cK)Y^{1/2}T_{j,t}= (S_{i,s}\otimes I_\cK)(S_{j,t}\otimes I_\cK)Y^{1/2}\\
&= \lambda_{i,j}(s,t)(S_{j,t}\otimes I_\cK)(S_{i,s}\otimes I_\cK)Y^{1/2}\\
&=\lambda_{i,j}(s,t) Y^{1/2} T_{j,t} T_{i,s}.
\end{split}
\end{equation*}
Hence, we have $Y^{1/2}\left(T_{i,s}T_{j,t}-\lambda_{i,j}(s,t) T_{j,t} T_{i,s}\right)=0$. Since $Y^{1/2}$ is injective on the Hilbert space $\cG$, we conclude that
$$T_{i,s}T_{j,t}=\lambda_{i,j}(s,t) T_{j,t} T_{i,s}
$$
for any $ i,j\in \{1,\ldots, k\}$ with $i\neq j$ and any $s\in \{1,\ldots, n_i\}$, $t\in \{1,\ldots, n_j\}$. This proves that  the $k$-tuple $T=(T_1,\ldots, T_k)$ is $\Lambda$-commuting.
Due to Theorem \ref{Berezin}, the associated noncommutative Berezin kernel $K_T:\cG\to \ell^2(\FF_{n_1}^+\times\cdots \times \FF_{n_k}^+)\otimes \cG$
is an isometry and
$$
K_T T_{i,s}^*=\left(S_{i,s}^*\otimes I_\cG\right) K_T.
$$
for any $i \in \{1,\ldots, k\}$ and  $s\in \{1,\ldots, n_i\}$.
Consequently, using relation \eqref{def-C}, one can see that
the operator
$$A:=Y^{1/2}K_T^*:\ell^2(\FF_{n_1}^+\times\cdots \times \FF_{n_k}^+)\otimes \cG\to\ell^2(\FF_{n_1}^+\times\cdots \times \FF_{n_k}^+)\otimes \cK
$$
satisfies the relation
\begin{equation*}
\begin{split}
A(S_{i,s}\otimes \cG)&=Y^{1/2} K_T^*(S_{i,s}\otimes I_\cG)=Y^{1/2} T_{i,s} K_T^*\\
&(S_{i,s}\otimes I_\cK)Y^{1/2} K_T^*=(S_{i,s}\otimes I_\cK)A
\end{split}
\end{equation*}
for any $i \in \{1,\ldots, k\}$ and  $s\in \{1,\ldots, n_i\}$. Hence, $A$ is a multi-analytic operator. The proof is complete.
\end{proof}

We say that $\cM\subset \ell^2(\FF_{n_1}^+\times\cdots \times \FF_{n_k}^+)\otimes \cK$   is a {\it Beurling type jointly invariant subspace} under the operators $S_{i,s}\otimes I_\cK$, where $i\in\{1,\ldots, k\}$ and  $s\in \{1,\ldots, n_i\}$,  if there is an inner multi-analytic operator
$\Psi: \ell^2(\FF_{n_1}^+\times\cdots \times \FF_{n_k}^+)\otimes \cL\to
\ell^2(\FF_{n_1}^+\times\cdots \times \FF_{n_k}^+)\otimes \cK$ such that
$$
\cM=\Psi \left(\ell^2(\FF_{n_1}^+\times\cdots \times \FF_{n_k}^+)\otimes \cL\right).
$$
In what follows, we use the notation $\left((S_1\otimes I_\cK)|_\cM,\ldots, (S_k\otimes I_\cK) |_\cM\right)$, where $$(S_i\otimes I_\cK)|_\cM:=(S_{i,1}\otimes I_\cK)|_\cM,\ldots, (S_{i,n_i}\otimes I_\cK)|_\cM).
$$

Now, we can prove the following characterization of the Beurling type jointly invariant subspaces under the universal model of the regular $\Lambda$-polyball.
\begin{theorem}  Let $\cM\subset \ell^2(\FF_{n_1}^+\times\cdots \times \FF_{n_k}^+)\otimes \cK$   be a jointly invariant subspace under $S_{i,s}\otimes I_\cK$, where $i\in\{1,\ldots, k\}$ and  $s\in \{1,\ldots, n_i\}$. Then the following statements are equivalent.
\begin{enumerate}
\item[(i)] $\cM$ is a Beurling type  jointly invariant subspace.

\item[(ii)]  $(id-\Phi_{S_1\otimes I_\cK})\circ\cdots \circ (id-\Phi_{S_k\otimes I_\cK})(P_\cM)\geq 0$.

\item[(iii)] The  $k$-tuple $\left((S_1\otimes I_\cK)|_\cM,\ldots, (S_k\otimes I_\cK) |_\cM\right)$ is doubly $\Lambda$-commuting.
\item[(iv)] There is an isometric multi-analytic operator
$\Psi: \ell^2(\FF_{n_1}^+\times\cdots \times \FF_{n_k}^+)\otimes \cL\to
\ell^2(\FF_{n_1}^+\times\cdots \times \FF_{n_k}^+)\otimes \cK$ such that
$$
\cM=\Psi \left(\ell^2(\FF_{n_1}^+\times\cdots \times \FF_{n_k}^+)\otimes \cL\right).
$$
\end{enumerate}
\end{theorem}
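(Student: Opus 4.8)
The plan is to establish the cycle $(i)\Rightarrow(ii)\Rightarrow(iii)\Rightarrow(iv)\Rightarrow(i)$, using the factorization result Theorem \ref{Beurling-fac}, the Berezin/model machinery of Theorem \ref{Berezin} and Theorem \ref{model}, and the structure theory for pure doubly $\Lambda$-commuting row isometries from Section 2 as the main engines. For $(i)\Rightarrow(ii)$, if $\cM=\Psi(\ell^2(\FF_{n_1}^+\times\cdots\times\FF_{n_k}^+)\otimes\cL)$ with $\Psi$ an inner multi-analytic operator, then $\Psi$ is a partial isometry, its range is closed, and $\Psi\Psi^*=P_\cM$. Feeding $Y=\Psi\Psi^*$ into the easy implication $(i)\Rightarrow(ii)$ of Theorem \ref{Beurling-fac} and using $(id-\Phi_{S_1\otimes I_\cL})\circ\cdots\circ(id-\Phi_{S_k\otimes I_\cL})(I)=P_\CC\otimes I_\cL\ge 0$, I get $(id-\Phi_{S_1\otimes I_\cK})\circ\cdots\circ(id-\Phi_{S_k\otimes I_\cK})(P_\cM)=\Psi(P_\CC\otimes I_\cL)\Psi^*\ge 0$, which is $(ii)$.

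The implication $(iv)\Rightarrow(i)$ is immediate, since an isometry is a partial isometry and hence an isometric multi-analytic operator is inner. For $(iii)\Rightarrow(iv)$, write $W_{i,s}:=(S_{i,s}\otimes I_\cK)|_\cM$; because $\cM$ is jointly invariant, each $W_i=[W_{i,1}\cdots W_{i,n_i}]$ is a row isometry, and purity of $S_i\otimes I_\cK$ is inherited by $W_i$. If $W=(W_1,\ldots,W_k)$ is doubly $\Lambda$-commuting, it is a pure $k$-tuple of doubly $\Lambda$-commuting row isometries, so the $p=k$ case of Theorem \ref{standard2} provides a unitary $\Phi\colon\cM\to\ell^2(\FF_{n_1}^+\times\cdots\times\FF_{n_k}^+)\otimes\cL$ with $\Phi W_{i,s}=(S_{i,s}\otimes I_\cL)\Phi$. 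Then $\Psi:=\iota_\cM\circ\Phi^{-1}$, where $\iota_\cM$ is the inclusion $\cM\hookrightarrow\ell^2(\FF_{n_1}^+\times\cdots\times\FF_{n_k}^+)\otimes\cK$, is isometric, has range exactly $\cM$, and satisfies $\Psi(S_{i,s}\otimes I_\cL)=(S_{i,s}\otimes I_\cK)\Psi$ (using invariance of $\cM$), so it is an isometric multi-analytic operator realizing $(iv)$.

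The substantive step is $(ii)\Rightarrow(iii)$, and it proceeds in two stages. First, I observe that because $\cM$ is jointly invariant under the operators $B_{i,s}:=S_{i,s}\otimes I_\cK$, every word $B_{i_1,\alpha_1}\cdots B_{i_r,\alpha_r}P_\cM$ has range in $\cM$, so each term $B_{i_1,\alpha_1}\cdots B_{i_r,\alpha_r}P_\cM B_{i_r,\alpha_r}^*\cdots B_{i_1,\alpha_1}^*$ equals $(B_{i_1,\alpha_1}\cdots B_{i_r,\alpha_r}P_\cM)(B_{i_1,\alpha_1}\cdots B_{i_r,\alpha_r}P_\cM)^*$, is supported on $\cM$, and agrees with the corresponding word formed from $W$. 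Expanding $\prod_i(id-\Phi_{B_i})(P_\cM)$ as a finite sum of such terms (the $\Phi_{B_i}$ commute by Lemma \ref{commutative}) shows that $(id-\Phi_{S_1\otimes I_\cK})\circ\cdots\circ(id-\Phi_{S_k\otimes I_\cK})(P_\cM)$ is supported on $\cM$ and its compression there is exactly $\Delta_W(I_\cM):=(id-\Phi_{W_1})\circ\cdots\circ(id-\Phi_{W_k})(I_\cM)$. Thus $(ii)$ says precisely $\Delta_W(I_\cM)\ge 0$; combined with purity and Proposition \ref{regular-equiv}, this means $W$ is a pure element of ${\bf B}_\Lambda(\cM)$ (it is $\Lambda$-commuting because $\cM$ is invariant). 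Second, I apply Theorem \ref{Berezin} and Theorem \ref{model}: the Berezin kernel identifies $\cM$ with a subspace $\cH\subseteq\ell^2(\FF_{n_1}^+\times\cdots\times\FF_{n_k}^+)\otimes\cD$ that is co-invariant under $S_{i,s}\otimes I_\cD$ and satisfies $W_{i,s}^*=(S_{i,s}^*\otimes I_\cD)|_\cH$. The decisive point is that each $W_{i,s}$ is an isometry, and $W_{i,s}^*W_{i,s}=I$ forces $\|P_\cH(S_{i,s}\otimes I_\cD)h\|=\|h\|$ for all $h\in\cH$, i.e. $\cH$ is invariant under $S_{i,s}\otimes I_\cD$. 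Hence $\cH$ is both invariant and co-invariant, that is reducing, and by the second part of Theorem \ref{co-inv} it has the full form $\cH=\ell^2(\FF_{n_1}^+\times\cdots\times\FF_{n_k}^+)\otimes\cG$. Consequently $W$ is unitarily equivalent to $S\otimes I_\cG$, which is doubly $\Lambda$-commuting, so $W$ is doubly $\Lambda$-commuting, proving $(iii)$.

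I expect the main obstacle to be exactly this passage $(ii)\Rightarrow(iii)$. The two insights that make it work are, first, recognizing that the global positivity of $\Delta_{S\otimes I_\cK}(P_\cM)$ collapses to the intrinsic defect inequality $\Delta_W(I_\cM)\ge 0$ for the restriction (the support computation above); and second, exploiting the rigidity that $W$ is a tuple of row \emph{isometries}, which is precisely what upgrades the merely co-invariant model space $\cH$ to a reducing one and thereby turns a pure, a priori only $\Lambda$-commuting, polyball element into a genuinely doubly $\Lambda$-commuting one. Everything else — checking that the intertwiners produced are multi-analytic, that ranges coincide, and handling the degenerate wandering-data cases — is routine bookkeeping.
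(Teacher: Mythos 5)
Your argument is correct, but it closes the cycle in a genuinely different way from the paper. The paper proves (i)$\Leftrightarrow$(ii) using both directions of Theorem \ref{Beurling-fac} (in particular, the hard factorization direction applied to $Y=P_\cM$ to get (ii)$\Rightarrow$(i)), then goes (i)$\Rightarrow$(iv) by analyzing the initial space of the inner operator $\Psi$ (showing $\mathrm{range}\,\Psi^*$ is reducing, hence of the form $\ell^2\otimes\cL$ by Theorem \ref{co-inv}), then (iv)$\Rightarrow$(iii) by direct computation with $P_\cM=\Psi\Psi^*$, and finally (iii)$\Rightarrow$(ii) by showing $\Delta_{S\otimes I_\cK}(P_\cM)=\prod_i\bigl(P_\cM-\Phi_{S_i\otimes I_\cK}(P_\cM)\bigr)$ is a product of commuting positive operators. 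You instead run (i)$\Rightarrow$(ii)$\Rightarrow$(iii)$\Rightarrow$(iv)$\Rightarrow$(i), and your two nontrivial legs are new: for (ii)$\Rightarrow$(iii) you correctly observe that each word $B_{\alpha}P_\cM B_{\alpha}^*$ is positive with range in $\cM$, hence equals $P_\cM(\cdot)P_\cM$, so the global positivity collapses to $\Delta_W(I_\cM)\ge 0$ for the restricted tuple $W$; Proposition \ref{regular-equiv} then puts the pure $\Lambda$-commuting tuple $W$ in ${\bf B}_\Lambda(\cM)$, the Berezin kernel of Theorem \ref{Berezin} realizes it on a co-invariant subspace $\cH$, and the isometry identity $\|P_\cH(S_{i,s}\otimes I_\cD)h\|=\|h\|=\|(S_{i,s}\otimes I_\cD)h\|$ forces $\cH$ to be invariant, hence reducing, hence $\ell^2\otimes\cG$ by Theorem \ref{co-inv}, so $W\cong S\otimes I_\cG$ is doubly $\Lambda$-commuting. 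For (iii)$\Rightarrow$(iv) you invoke the $p=k$ case of the Wold/standard-model Theorem \ref{standard2} on the pure doubly $\Lambda$-commuting tuple $W$ and compose the resulting unitary with the inclusion $\iota_\cM$, checking the intertwining via invariance of $\cM$. What your route buys is that you never need the hard direction of Theorem \ref{Beurling-fac}: the only input from it is the trivial computation $\Delta(\Psi\Psi^*)=\Psi(P_\CC\otimes I_\cL)\Psi^*\ge 0$, and in exchange you lean on the Wold decomposition and the Berezin dilation machinery of Sections 2 and 4. The paper's route is more self-contained within Section 5 and yields the factorization theorem for general positive $Y$ along the way, while yours gives a cleaner structural explanation of why positivity of $\Delta_{S\otimes I_\cK}(P_\cM)$ forces double $\Lambda$-commutativity of the restriction. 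Both are sound; the small bookkeeping points (each $W_i$ being a row isometry with inherited purity, $({\bf S}_1,\ldots,{\bf S}_k)$ on $\ell^2\otimes\cL$ coinciding with $S\otimes I_\cL$, and the compression identity matching term by term) all check out.
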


\begin{proof} The equivalence of (i) with (ii) is due to Theorem \ref{Beurling-fac}. Indeed, if $\Psi: \ell^2(\FF_{n_1}^+\times\cdots \times \FF_{n_k}^+)\otimes \cL\to
\ell^2(\FF_{n_1}^+\times\cdots \times \FF_{n_k}^+)\otimes \cK$ is an  inner multi-analytic operator and $\cM=\text{\rm range}\,\Psi$, then $P_\cM=\Psi\Psi^*$. Since
$
\Psi(S_{i,s}\otimes I_\cH)=(S_{i,s}\otimes I_\cK)\Psi
$
for any $i\in\{1,\ldots, k\}$ and  $s\in \{1,\ldots, n_i\}$, we have
\begin{equation*}
\begin{split}
(id-\Phi_{S_1\otimes I_\cK})\circ\cdots \circ (id-\Phi_{S_k\otimes I_\cK})(P_\cM)&=\Psi^*(id-\Phi_{S_1\otimes I_\cL})\circ\cdots \circ (id-\Phi_{S_k\otimes I_\cL})(I)\Psi^*\\
&=\Psi(P_\CC\otimes I_\cL)\Psi^*\geq 0,
\end{split}
\end{equation*}
and, consequently,
the direct implications follows. Conversely, applying    Theorem \ref{Beurling-fac} to $Y=P_\cM$, we find a  multi-analytic operator $A:\ell^2(\FF_{n_1}^+\times\cdots \times \FF_{n_k}^+)\otimes \cL\to
\ell^2(\FF_{n_1}^+\times\cdots \times \FF_{n_k}^+)\otimes \cK$ such that $P_\cM=AA^*$. Since $P_\cM$ is an orthogonal projection, $A$ must be a partial isometry.

Now, we prove the implication (i)$\implies$(iv). If (i) holds,
then  there is an inner multi-analytic operator
$\Psi: \ell^2(\FF_{n_1}^+\times\cdots \times \FF_{n_k}^+)\otimes \cH\to
\ell^2(\FF_{n_1}^+\times\cdots \times \FF_{n_k}^+)\otimes \cK$ such that $P_\cM=\Psi\Psi^*$.
Note that
$$
\text{\rm range}\,\Psi^*=\left\{x\in \ell^2(\FF_{n_1}^+\times\cdots \times \FF_{n_k}^+)\otimes \cH: \ \|\psi(x)\|=\|x\|\right\}
$$
is the initial space of $\Psi$ and, due to the fact that $
\Psi(S_{i,s}\otimes I_\cH)=(S_{i,s}\otimes I_\cK)\Psi
$,  it is invariant under all the isometries $S_{i,s}\otimes I_\cH$. Since  $\left(\text{\rm range}\,\Psi^*\right)^\perp=\ker \Psi$ and
$
\Psi(S_{i,s}\otimes I_\cH)=(S_{i,s}\otimes I_\cK)\Psi,
$
it is clear that  $\left(\text{\rm range}\,\Psi^*\right)^\perp$ is invariant under all isometries $S_{i,s}\otimes I_\cH$ and, therefore, it is jointly reducing for these operators. On the other hand, the support of $\Psi$ is the smallest reducing subspace $\text{\rm supp}(\Psi)\subset \ell^2(\FF_{n_1}^+\times\cdots \times \FF_{n_k}^+)\otimes \cH$ under the operators $S_{i,s}\otimes I_\cH$ containing the co-invariant subspace $\text{\rm range}\,\Psi^*$. Consequently, we have $\text{\rm supp}(\Psi)=\text{\rm range}\,\Psi^*$.
Note that $\Phi:=\Psi|_{\text{\rm supp}(\Psi)}$ is an isometric multi-analytic operator .
Using  Theorem \ref{co-inv}, we deduce that
$\text{\rm supp}(\Psi)=\ell^2(\FF_{n_1}^+\times\cdots \times \FF_{n_k}^+)\otimes \cL$, where $\cL:=(P_\CC\otimes I_\cH)\text{\rm range}\,\Psi^*$ and, using relation $P_\cM=\Psi\Psi^*$, we also have $\cM=\Phi(\ell^2(\FF_{n_1}^+\times\cdots \times \FF_{n_k}^+)\otimes \cL)$.

In what follows, we prove that (iv)$\implies$(iii).  Assume that item (iv) holds. Then $\Psi\Psi^*=P_\cM$ and $\Psi$ is an isometric multi-analytic operator.
It is easy to see that the  $k$-tuple $\left((S_1\otimes I_\cK)|_\cM,\ldots, (S_k\otimes I_\cK) |_\cM\right)$ is doubly $\Lambda$-commuting if and only if $k$-tuple $\left(P_\cM(S_1\otimes I_\cK)|_\cM,\ldots, P_\cM(S_k\otimes I_\cK) |_\cM\right)$ is doubly $\Lambda$-commuting, where
$P_\cM(S_i\otimes I_\cK)|_\cM:=(P_\cM(S_{i,1}\otimes I_\cK)|_\cM,\ldots, P_\cM(S_{i,n_i}\otimes I_\cK)|_\cM)$. In what follows, we prove that the latter statement holds. Using the fact that the standard $k$-tuple $S=(S_1,\ldots, S_k)$ is doubly $\Lambda$-commuting, we obtain
\begin{equation*}
\begin{split}
P_\cM(S_{i,s}\otimes I_\cK)P_\cM (S_{j,t}^*\otimes I_\cK)P_\cM
&=\Psi\Psi^*(S_{i,s}\otimes I_\cK)\Psi\Psi^* (S_{j,t}^*\otimes I_\cK)\Psi\Psi^*\\
&= \Psi\Psi^*\Psi(S_{i,s}\otimes I_\cK) (S_{j,t}^*\otimes I_\cK)\Psi^*\Psi\Psi^*\\
&= \Psi (S_{i,s}\otimes I_\cK) (S_{j,t}^*\otimes I_\cK) \Psi^*\\
&=\Psi \lambda_{j,i}(S_{j,t}^*\otimes I_\cK)(S_{i,s}\otimes I_\cK)\Psi^*
\end{split}
\end{equation*}
for any $i,j\in \{1,\ldots, k\}$ with $i\neq j$, and $s\in \{1,\ldots,n_i\}$, $t\in \{1,\ldots, n_j\}$.
Similarly, one can prove that
\begin{equation*}
\begin{split}
P_\cM(S_{j,t}\otimes I_\cK)P_\cM (S_{i,s}^*\otimes I_\cK)P_\cM
= \Psi (S_{j,t}\otimes I_\cK) (S_{i,s}^*\otimes I_\cK) \Psi^*.
\end{split}
\end{equation*}
Combining these relations, we deduce that
$$
P_\cM(S_{j,t}\otimes I_\cK)P_\cM (S_{i,s}^*\otimes I_\cK)P_\cM=
\overline{\lambda_{j,i}(t,s)} P_\cM(S_{i,s}\otimes I_\cK)P_\cM (S_{j,t}^*\otimes I_\cK)P_\cM,
$$
which proves that  $\left((S_1\otimes I_\cK)|_\cM,\ldots, (S_k\otimes I_\cK) |_\cM\right)$ is doubly $\Lambda$-commuting. Thus item (iii) holds.

It remains to prove that (iii)$\implies$(ii). To this end, assume that item (iii) holds. Using the fact that  $k$-tuple $\left(P_\cM(S_1\otimes I_\cK)|_\cM,\ldots, P_\cM(S_k\otimes I_\cK) |_\cM\right)$ is doubly $\Lambda$-commuting and the subspace  $\cM^\perp$ is invariant under all operators $S_{i,s}^*\otimes I_\cK$, we obtain

\begin{equation*}
\begin{split}
&(S_{1,\alpha_1}\otimes I_\cK)\cdots (S_{k,\alpha_k}\otimes I_\cK)
P_\cM (S_{k,\alpha_k}^*\otimes I_\cK)\cdots (S_{1,\alpha_1}^*\otimes I_\cK)\\
&\qquad\qquad=
\left[P_\cM(S_{1,\alpha_1}\otimes I_\cK)P_\cM\cdots P_\cM(S_{k,\alpha_k}\otimes I_\cK)
P_\cM\right] \left[P_\cM(S_{k,\alpha_k}^*\otimes I_\cK)P_\cM\cdots P_\cM(S_{1,\alpha_1}^*\otimes I_\cK)P_\cM\right]\\
&\qquad\qquad=
[P_\cM(S_{1,\alpha_1}\otimes I_\cK)P_\cM][P_\cM(S_{1,\alpha_1}^*\otimes I_\cK)P_\cM]\cdots
[P_\cM(S_{k,\alpha_k}\otimes I_\cK)
P_\cM][P_\cM(S_{k,\alpha_k}^*\otimes I_\cK)P_\cM]\\
&\qquad\qquad=
[(S_{1,\alpha_1}\otimes I_\cK)P_\cM (S_{1,\alpha_1}^*\otimes I_\cK)]
\cdots [(S_{k,\alpha_k}\otimes I_\cK)P_\cM (S_{k,\alpha_k}^*\otimes I_\cK)]
\end{split}
\end{equation*}
for any $\alpha_1\in \FF_{n_1}^+,\ldots, \alpha_k\in \FF_{n_k}^+$.
Using these relations, one can see that
\begin{equation}
\label{PM}
(id-\Phi_{S_1\otimes I_\cK})\circ\cdots \circ (id-\Phi_{S_k\otimes I_\cK})(P_\cM)
=(P_\cM-\Phi_{S_1\otimes I_\cK}(P_\cM)\cdots (P_\cM-\Phi_{S_k\otimes I_\cK}(P_\cM).
\end{equation}
On the other end, since $\cM$ is an invariant subspace under all isometries $S_{i,s}$, we have
$$P_\cM-\Phi_{S_i\otimes I_\cK}(P_\cM)\geq 0 \quad \text{ for any }\quad  i\in \{1,\ldots, k\}.
$$
Indeed, for any $x\in \cM$ and $y\in \cM^\perp$, we have
\begin{equation*}
\begin{split}
\left<\sum_{s=1}^{n_i} (S_{i,s}\otimes I_\cK)P_\cM (S_{i,s}^*\otimes I_\cK)(x+y), (x+y)\right>
&=\left<\sum_{s=1}^{n_i} (S_{i,s}\otimes I_\cK)P_\cM (S_{i,s}^*\otimes I_\cK)x, x\right>\\
&=\sum_{s=1}^{n_i} \|P_\cM(S_{i,s}\otimes I_\cK)x\|^2\leq \sum_{s=1}^{n_i} \|(S_{i,s}\otimes I_\cK)x\|^2\\
&\leq \|x\|^2=\|P_\cM x\|^2=\|P_\cM(x+y)\|^2.
\end{split}
\end{equation*}
Now, note that $P_\cM-\Phi_{S_i\otimes I_\cK}(P_\cM)$ commutes with $P_\cM-\Phi_{S_j\otimes I_\cK}(P_\cM)$  for any $i,j\in \{1,\ldots, k\}$. Indeed, according to our calculations preceding relation  \eqref{PM} and using the $\Lambda$-commutativity of the standard  $k$-tuple $S=(S_1,\ldots, S_k)$, we  have
\begin{equation*}
\begin{split}
&[(S_{i,s}\otimes I_\cK)P_\cM (S_{i,s}^*\otimes I_\cK)] [(S_{j,t}\otimes I_\cK)P_\cM (S_{j,t}^*\otimes I_\cK)]\\
&\qquad=(S_{i,s}S_{j,t}\otimes I_\cK)P_\cM
(S_{j,t}^*S_{i,s}^*\otimes I_\cK)\\
&\qquad=(S_{j,t}S_{i,s}\otimes I_\cK)P_\cM
(S_{i,s}^*S_{j,t}^*\otimes I_\cK)\\
&\qquad=[(S_{j,t}\otimes I_\cK)P_\cM (S_{j,t}^*\otimes I_\cK)][(S_{i,s}\otimes I_\cK)P_\cM (S_{i,s}^*\otimes I_\cK)]
\end{split}
\end{equation*}
for any $i,j\in \{1,\ldots, k\}$ with $i\neq j$, and $s\in \{1,\ldots, n_i\}$, $t\in \{1,\ldots, n_j\}$.
Hence, we deduce that $\Phi_{S_i\otimes I_\cK}(P_\cM)$ commutes with  $\Phi_{S_j\otimes I_\cK}(P_\cM)$ and, consequently, that
 the operators $\left\{P_\cM-\Phi_{S_i\otimes I_\cK}(P_\cM)\right\}_{i=1}^k$ are commuting.   Now, we can use  relation \eqref{PM}, to  conclude that
$$(id-\Phi_{S_1\otimes I_\cK})\circ\cdots \circ (id-\Phi_{S_k\otimes I_\cK})(P_\cM)\geq0,
$$
 which completes the proof.
\end{proof}

We remark that   an  extension of Theorem \ref{Beurling-fac}  to  the pure elements in the $\Lambda$-polyball holds. We omit the proof which is very similar.

\begin{theorem} \label{Beurl-pure}
Let $X=(X_1,\ldots, X_k)\in B_\Lambda(\cH)$ be  a pure $k$-tuple and let $Y\in B(\cH)$ be a selfadjoint operator.  Then the following statements are equivalent.
\begin{enumerate}
\item[(i)] There is a Hilbert space $\cL$ and an   operator
 $\Psi: \ell^2(\FF_{n_1}^+\times\cdots \times \FF_{n_k}^+)\otimes\cL\to \cH$
  such that
$Y=\Psi\Psi^*$ and  $\Psi(S_{i,s}\otimes I_\cL)=X_{i,s}\Psi$ for any $i\in \{1,\ldots, k\}$ and $s\in \{1,\ldots, n_i\}$.

\item[(ii)] $(id-\Phi_{X_1} )\circ\cdots \circ
(id-\Phi_{X_k })(Y)\geq 0$.
\end{enumerate}
\end{theorem}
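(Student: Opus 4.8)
The plan is to follow the scheme of Theorem~\ref{Beurling-fac} almost verbatim, with the ambient doubly $\Lambda$-commuting row isometry $S\otimes I_\cK$ there replaced by the pure $k$-tuple $X$, and the space $\ell^2(\FF_{n_1}^+\times\cdots\times\FF_{n_k}^+)\otimes\cK$ replaced by $\cH$. For the implication (i)$\Rightarrow$(ii) I would use the intertwining relation $X_{i,s}\Psi=\Psi(S_{i,s}\otimes I_\cL)$ and its adjoint to compute, for any operator $Z$,
$$
\Phi_{X_i}(\Psi Z\Psi^*)=\sum_{s=1}^{n_i}X_{i,s}\Psi Z\Psi^*X_{i,s}^*=\Psi\,\Phi_{S_i\otimes I_\cL}(Z)\,\Psi^*.
$$
Iterating this identity through the composition and recalling (from the proof of Theorem~\ref{standard}) that $(id-\Phi_{S_1\otimes I_\cL})\circ\cdots\circ(id-\Phi_{S_k\otimes I_\cL})(I)=P_\CC\otimes I_\cL\ge 0$, I obtain $(id-\Phi_{X_1})\circ\cdots\circ(id-\Phi_{X_k})(Y)=\Psi(P_\CC\otimes I_\cL)\Psi^*\ge 0$, which is (ii).

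For the converse (ii)$\Rightarrow$(i), I would first peel off the factors one at a time exactly as in Proposition~\ref{regular-equiv}: writing $\Delta_{(X_2,\ldots,X_k)}(Y):=(id-\Phi_{X_2})\circ\cdots\circ(id-\Phi_{X_k})(Y)$, the hypothesis gives $\Phi_{X_1}^m(\Delta_{(X_2,\ldots,X_k)}(Y))\le\Delta_{(X_2,\ldots,X_k)}(Y)$, and sandwiching by $\pm\|\Delta_{(X_2,\ldots,X_k)}(Y)\|\,\Phi_{X_1}^m(I)$ together with the pureness $\Phi_{X_1}^m(I)\to 0$ yields $\Delta_{(X_2,\ldots,X_k)}(Y)\ge 0$; continuing, and using Lemma~\ref{commutative} to rearrange the commuting maps, I get $Y\ge 0$ and $\Phi_{X_i}(Y)\le Y$ for each $i$. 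Next I set $\cG:=\overline{Y^{1/2}\cH}$ and define $C_{i,s}(Y^{1/2}h):=Y^{1/2}X_{i,s}^*h$; the estimate $\sum_s\|C_{i,s}Y^{1/2}h\|^2=\langle\Phi_{X_i}(Y)h,h\rangle\le\|Y^{1/2}h\|^2$ shows $C_{i,s}$ is well defined and extends by continuity to $\cG$, so that $T_{i,s}:=C_{i,s}^*$ gives row contractions on $\cG$ satisfying the fundamental intertwining relation $Y^{1/2}T_{i,s}=X_{i,s}Y^{1/2}$.

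I would then verify that $T=(T_1,\ldots,T_k)$ is a pure $k$-tuple in the regular $\Lambda$-polyball $B_\Lambda(\cG)$. The $\Lambda$-commutation follows from $Y^{1/2}T_{i,s}T_{j,t}=X_{i,s}X_{j,t}Y^{1/2}=\lambda_{i,j}(s,t)X_{j,t}X_{i,s}Y^{1/2}=\lambda_{i,j}(s,t)Y^{1/2}T_{j,t}T_{i,s}$ and the injectivity of $Y^{1/2}$ on $\cG$; pureness follows from $\langle\Phi_{T_i}^m(I_\cG)Y^{1/2}h,Y^{1/2}h\rangle=\langle\Phi_{X_i}^m(Y)h,h\rangle\le\|Y\|\langle\Phi_{X_i}^m(I)h,h\rangle\to 0$ together with the density of $\ran Y^{1/2}$ in $\cG$ and the uniform bound $\Phi_{T_i}^m(I_\cG)\le I$; and the regularity $\Delta_T(I_\cG)\ge 0$ follows from $Y^{1/2}\Delta_T(I_\cG)Y^{1/2}=\Delta_X(Y)\ge 0$ and density, via Proposition~\ref{regular-equiv} applied to the pure tuple $T$. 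With $T$ a pure element of $B_\Lambda(\cG)$, Theorem~\ref{Berezin} provides an isometric Berezin kernel $K_T:\cG\to\ell^2(\FF_{n_1}^+\times\cdots\times\FF_{n_k}^+)\otimes\cG$ with $K_TT_{i,s}^*=(S_{i,s}^*\otimes I_\cG)K_T$. Taking $\cL:=\cG$ and $\Psi:=Y^{1/2}K_T^*$, the relation $Y^{1/2}T_{i,s}=X_{i,s}Y^{1/2}$ gives $\Psi(S_{i,s}\otimes I_\cG)=Y^{1/2}T_{i,s}K_T^*=X_{i,s}\Psi$, while $K_T^*K_T=I_\cG$ gives $\Psi\Psi^*=Y^{1/2}K_T^*K_TY^{1/2}=Y$, establishing (i).

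The main obstacle is not any single computation but making the transfer from $X$ to the model tuple $T$ fully rigorous: one must check that the $C_{i,s}$ are genuinely well defined on $\ran Y^{1/2}$, and that all three defining properties of the regular $\Lambda$-polyball (commutation, regularity, and pureness) descend to $T$ through the single intertwiner $Y^{1/2}$, whose injectivity is available only on $\cG$ and must be used carefully in each density argument. Once $T$ is identified as a pure element of $B_\Lambda(\cG)$, the Berezin machinery of Section~4 finishes the argument exactly as in Theorem~\ref{Beurling-fac}.
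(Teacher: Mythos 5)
Your proposal is correct and follows essentially the route the paper intends: the paper explicitly omits the proof of Theorem \ref{Beurl-pure}, stating it is "very similar" to that of Theorem \ref{Beurling-fac}, and your argument is precisely that adaptation, with $X$ replacing $S\otimes I_\cK$, the intertwiner $Y^{1/2}$ transporting the structure to the auxiliary pure tuple $T$ on $\cG=\overline{Y^{1/2}\cH}$, and $\Psi:=Y^{1/2}K_T^*$ closing the argument. The points you flag as needing care (well-definedness of $C_{i,s}$, injectivity of $Y^{1/2}$ on $\cG$, and invoking Proposition \ref{regular-equiv} to place the pure tuple $T$ in $B_\Lambda(\cG)$ before applying Theorem \ref{Berezin}) are handled correctly.
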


As a consequence of Theorem \ref{Beurl-pure}, we obtain the following characterization of the  Beurling type invariant subspaces for the  pure elements in $B_\Lambda(\cH)$.

\begin{corollary} Let $X=(X_1,\ldots, X_k)\in B_\Lambda(\cH)$ be  a pure $k$-tuple and let $\cM\subset \cH$ be a jointly invariant subspace under all $X_{i,s}$. Then the following statements are equivalent.
\begin{enumerate}
\item[(i)] There is a Hilbert space $\cE$ and a partial isometry
$\Psi: \ell^2(\FF_{n_1}^+\times\cdots \times \FF_{n_k}^+)\otimes \cE\to \cH$ such that
$$
\cM=\Psi\left(\ell^2(\FF_{n_1}^+\times\cdots \times \FF_{n_k}^+)\otimes \cE\right)
$$
and  $\Psi(S_{i,s}\otimes I_\cE)=X_{i,s}\Psi$ for any $i\in \{1,\ldots, k\}$ and $s\in \{1,\ldots, n_i\}$.

\item[(ii)] $(id-\Phi_{X_1})\circ\cdots \circ (id-\Phi_{X_k})(P_\cM)\geq 0$.
\end{enumerate}

\end{corollary}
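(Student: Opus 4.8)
The plan is to deduce this directly from Theorem \ref{Beurl-pure} by taking $Y:=P_\cM$, the orthogonal projection of $\cH$ onto $\cM$. The only additional ingredient is the standard fact that a bounded operator $\Psi$ is a partial isometry if and only if $\Psi\Psi^*$ is an orthogonal projection, in which case $\Psi\Psi^*$ is precisely the projection onto $\text{\rm range}\,\Psi$.

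For the implication (i)$\implies$(ii), I would start from the partial isometry $\Psi:\ell^2(\FF_{n_1}^+\times\cdots\times\FF_{n_k}^+)\otimes\cE\to\cH$ with $\cM=\text{\rm range}\,\Psi$ and $\Psi(S_{i,s}\otimes I_\cE)=X_{i,s}\Psi$. Since $\Psi$ is a partial isometry, $\Psi\Psi^*$ is the orthogonal projection onto $\text{\rm range}\,\Psi=\cM$, so $P_\cM=\Psi\Psi^*$. Applying Theorem \ref{Beurl-pure} with $\cL:=\cE$ and $Y:=P_\cM$, the hypothesis of item (i) of that theorem is met, whence its item (ii) yields $(id-\Phi_{X_1})\circ\cdots\circ(id-\Phi_{X_k})(P_\cM)\geq 0$.

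For the converse (ii)$\implies$(i), I would invoke Theorem \ref{Beurl-pure} once more with $Y:=P_\cM$: the inequality in item (ii) of the corollary is exactly item (ii) of the theorem, so there exist a Hilbert space $\cL$ and an operator $\Psi:\ell^2(\FF_{n_1}^+\times\cdots\times\FF_{n_k}^+)\otimes\cL\to\cH$ with $P_\cM=\Psi\Psi^*$ and $\Psi(S_{i,s}\otimes I_\cL)=X_{i,s}\Psi$. Because $\Psi\Psi^*=P_\cM$ is an orthogonal projection, $\Psi$ must be a partial isometry, and its range equals $\text{\rm range}\,(\Psi\Psi^*)=\text{\rm range}\,P_\cM=\cM$. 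Setting $\cE:=\cL$ then gives item (i). Note that the joint invariance of $\cM$ under the $X_{i,s}$ is not an extra constraint to be checked: it follows automatically from the intertwining relation, since $X_{i,s}\cM=X_{i,s}\,\text{\rm range}\,\Psi=\text{\rm range}\,(\Psi(S_{i,s}\otimes I_\cL))\subset\text{\rm range}\,\Psi=\cM$.

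Since the substance of the argument is carried entirely by Theorem \ref{Beurl-pure}, there is no genuine obstacle here; the only point requiring a line of care is the passage between the partial-isometry formulation of item (i) and the projection identity $P_\cM=\Psi\Psi^*$ needed to match the self-adjoint operator $Y$ appearing in Theorem \ref{Beurl-pure}. Everything else is a direct specialization.
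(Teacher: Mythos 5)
Your proposal is correct and is exactly the route the paper intends: the corollary is stated as a direct consequence of Theorem \ref{Beurl-pure} with $Y:=P_\cM$, using the standard equivalence between $\Psi$ being a partial isometry and $\Psi\Psi^*$ being the orthogonal projection onto $\operatorname{range}\Psi$. Your added remark that the joint invariance of $\cM$ follows automatically from the intertwining relation is a correct and worthwhile observation.
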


In what follows, we prove that the dilation provided by Theorem \ref{Berezin} is minimal and unique up to an isomorphism.

 \begin{theorem}   \label{dil-Berezin} Let $T=(T_1,\ldots, T_k)$ be  a pure $k$-tuple in the regular $\Lambda$-polyball and let
  $$
K_{T}:\cH\to \ell^2(\FF_{n_1}^+\times\cdots \times \FF_{n_k}^+)\otimes \overline{\Delta_T(I)(\cH)},
$$
be
the noncommutative Berezin kernel . Then  the subspace $K_T\cH$ is jointly co-invariant under the operators  $S_{i,s}\otimes I_{\overline{\Delta_T(I)(\cH)}}$, where $i\in\{1,\ldots, k\}$ and  $s\in \{1,\ldots, n_i\}$, and the dilation provided by  Theorem \ref{Berezin} is minimal and unique up to an isomorphism.
  \end{theorem}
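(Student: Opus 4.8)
The plan is to establish the three assertions in turn: co-invariance of $K_T\cH$, minimality of the resulting dilation, and uniqueness up to isomorphism.

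For co-invariance I would read the claim off directly from Theorem \ref{Berezin}(ii). That relation gives $K_T T_{i,s}^*=(S_{i,s}^*\otimes I_\cD)K_T$, so for every $h\in\cH$ one has $(S_{i,s}^*\otimes I_\cD)K_T h=K_T(T_{i,s}^*h)\in K_T\cH$. Hence $K_T\cH$ is invariant under each $S_{i,s}^*\otimes I_\cD$, i.e. jointly co-invariant under the operators $S_{i,s}\otimes I_\cD$. Since $K_T$ is an isometry by Theorem \ref{Berezin}(i), I identify $\cH$ with $K_T\cH$, and then $T_{i,s}^*=(S_{i,s}^*\otimes I_\cD)|_{K_T\cH}$, exhibiting $S\otimes I_\cD$ as a dilation.

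For minimality I would apply Theorem \ref{co-inv}, taking the generic coefficient space there to be $\cD=\overline{\Delta_T(I)(\cH)}$ and the co-invariant subspace to be $\cM=K_T\cH$. That theorem yields
\[
\overline{\text{\rm span}}\{(S_{1,\alpha_1}\cdots S_{k,\alpha_k}\otimes I_\cD)K_T\cH\}=\ell^2(\FF_{n_1}^+\times\cdots\times\FF_{n_k}^+)\otimes\cL,\qquad \cL=(P_\CC\otimes I_\cD)K_T\cH,
\]
so the only thing left is to compute $\cL$. From the defining series of the Berezin kernel, the projection $P_\CC$ keeps only the neutral multi-index $(g_0^1,\ldots,g_0^k)$, for which $T_{i,g_0^i}=I$, giving $(P_\CC\otimes I_\cD)K_T h=\chi_{(g_0^1,\ldots,g_0^k)}\otimes\Delta_T(I)^{1/2}h$. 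Therefore $\cL=\overline{\Delta_T(I)^{1/2}\cH}=\overline{\Delta_T(I)\cH}=\cD$, using that a positive operator and its square root have the same range closure. Thus the span fills the whole dilation space, which is minimality.

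For uniqueness, suppose $\cH$ sits co-invariantly in two dilation spaces $\ell^2(\FF_{n_1}^+\times\cdots\times\FF_{n_k}^+)\otimes\cD$ and $\ell^2(\FF_{n_1}^+\times\cdots\times\FF_{n_k}^+)\otimes\cD'$, with $T_{i,s}^*=(S_{i,s}^*\otimes I_\cD)|_\cH=(S_{i,s}^*\otimes I_{\cD'})|_\cH$, both minimal. Writing $S_{\boldsymbol\alpha}:=S_{1,\alpha_1}\cdots S_{k,\alpha_k}$, I would define $U$ on the dense span of $\{(S_{\boldsymbol\alpha}\otimes I_\cD)h:h\in\cH\}$ by $U(S_{\boldsymbol\alpha}\otimes I_\cD)h:=(S_{\boldsymbol\alpha}\otimes I_{\cD'})h$. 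The crux is well-definedness and isometry: the inner product $\langle(S_{\boldsymbol\alpha}\otimes I)h,(S_{\boldsymbol\beta}\otimes I)h'\rangle=\langle h,(S_{\boldsymbol\alpha}^*S_{\boldsymbol\beta}\otimes I)h'\rangle$ reduces, using the doubly $\Lambda$-commuting relations (which move each $S_{i,s}^*$ past $S_{j,t}$, $i\neq j$, up to a unimodular scalar) together with $S_{i,s}^*S_{i,t}=\delta_{st}I$, to a unimodular scalar times a word in the $S_{i,s},S_{i,s}^*$ evaluated between $h$ and $h'$; compressing to the co-invariant subspace $\cH$ turns this into the same scalar times the corresponding word in $T_{i,s},T_{i,s}^*$. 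As both the scalar and the word depend only on $\Lambda$ and the multi-indices, never on the ambient dilation, the two Gram matrices coincide, so $U$ extends to a well-defined isometry, and minimality on both sides makes it onto. By construction $U|_\cH=I_\cH$ and $U(S_{i,s}\otimes I_\cD)=(S_{i,s}\otimes I_{\cD'})U$, giving the required isomorphism of dilations.

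The main obstacle will be the bookkeeping in the uniqueness step: carefully tracking the unimodular scalars generated when commuting annihilation factors $S_{i,s}^*$ past creation factors $S_{j,t}$ across distinct coordinates, and verifying that $S_{\boldsymbol\alpha}^*S_{\boldsymbol\beta}$ really collapses, coordinate by coordinate, to a single reduced word expressible through $T_{i,s}^*=(S_{i,s}^*\otimes I)|_\cH$ via co-invariance. Once that reduction is in place the computation is intrinsic to $T$, and the argument closes; the co-invariance and minimality steps are essentially immediate consequences of Theorem \ref{Berezin} and Theorem \ref{co-inv}.
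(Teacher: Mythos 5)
Your co-invariance and minimality steps coincide with the paper's: co-invariance is read off from Theorem \ref{Berezin}(ii), and minimality comes from Theorem \ref{co-inv} applied to $\cM=K_T\cH$ together with the computation $(P_\CC\otimes I)K_T h=\chi_{(g_0^1,\ldots,g_0^k)}\otimes\Delta_T(I)^{1/2}h$, which identifies $\cL$ with $\overline{\Delta_T(I)^{1/2}\cH}=\overline{\Delta_T(I)\cH}$; the paper states this identification without the (correct) range-closure remark you supply. Where you genuinely diverge is the uniqueness step. The paper invokes Corollary \ref{cp} to produce the completely positive map $\Psi(p(\{S_{i,s}\},\{S_{i,s}^*\}))=p(\{T_{i,s}\},\{T_{i,s}^*\})$, observes that the two ampliations $a\mapsto a\otimes I_{\cD_T}$ and $a\mapsto a\otimes I_{\cD}$ are minimal Stinespring dilations of $\Psi$, and gets the intertwining unitary from Stinespring's uniqueness theorem; irreducibility of $C^*(\{S_{i,s}\})$ (Corollary \ref{irred}) then forces $U=I\otimes\Omega$, which yields the extra conclusion $\dim\cD_T=\dim\cD$ that is used later in Theorem \ref{classif1}. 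You instead build $U$ by hand on the dense span $\{(S_{\boldsymbol\alpha}\otimes I)h\}$ and verify that the Gram matrix is intrinsic to $T$: the word $S_{\boldsymbol\alpha}^*S_{\boldsymbol\beta}$ Wick-orders, via the doubly $\Lambda$-commuting relations and $S_{i,s}^*S_{i,t}=\delta_{st}I$, into a unimodular scalar times $S_{\boldsymbol\gamma}S_{\boldsymbol\delta}^*$ (or $0$), and compressing to the co-invariant subspace $\cH$ turns this into the corresponding expression in $T$; the scalar and the reduced words are determined by $\Lambda$ and the multi-indices alone, so the two Gram matrices agree. This is sound, is more elementary (no Stinespring, no irreducibility), and is in fact the same device the paper itself deploys for the uniqueness of the minimal unitary dilation in Theorem \ref{Brehmer-extension}. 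The trade-off is that your route delivers the isomorphism of dilations but not, without further argument, the normal form $U=I\otimes\Omega$ and the multiplicity equality $\dim\cD_T=\dim\cD$ that the Stinespring-plus-irreducibility route hands over for free.
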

\begin{proof}
The fact that $K_T\cH$ is a jointly co-invariant  subspace under the operators  $S_{i,s}\otimes I_{\overline{\Delta_T(I)(\cH)}}$ is due to the relation
$$
K_T T_{i,s}^*=\left(S_{i,s}^*\otimes I_{\cD_T}\right) K_T.
$$
where $\cD_T:=\overline{\Delta_T(I)(\cH)}$, which was proved in Theorem \ref{Berezin}.
Note also that $K_T$ is an isometry and $(P_\CC\otimes I_{\cD_T})K_T\cH=\cD_T$.
Applying Theorem \ref{co-inv} to the subspace $K_T\cH$, we deduce that
\begin{equation}
\label{span1}
\overline{\text{\rm span}}\{(S_{1,\alpha_1}\dots S_{k,\alpha_k}\otimes I_\cH)K_T\cH:\ \alpha_1\in \FF_{n_1}^+,\ldots, \alpha_{n_k}\in \FF_{n_k}^+\}
=\ell^2(\FF_{n_1}^+\times\cdots \times \FF_{n_k}^+)\otimes \cD_T,
\end{equation}
which proves the minimality  of the dilation provided by the relation
\begin{equation}
\label{dil1}
T_{1,\alpha_1}\cdots T_{k,\alpha_k}=K_T^*(S_{1,\alpha_1}\dots S_{k,\alpha_k}\otimes I_{\cD_T})K_T.
\end{equation}
Now, we prove the uniqueness of the minimal dilation of $T$. Let $V:\cH\to \ell^2(\FF_{n_1}^+\times\cdots \times \FF_{n_k}^+)\otimes \cD$ be an isometry such that $V\cH$  is jointly co-invariant under the operators  $S_{i,s}\otimes I_{\overline{\Delta_T(I)(\cH)}}$, where $i\in\{1,\ldots, k\}$ and  $s\in \{1,\ldots, n_i\}$, and assume that
\begin{equation}
\label{dil2}
T_{1,\alpha_1}\cdots T_{k,\alpha_k}=V^*(S_{1,\alpha_1}\dots S_{k,\alpha_k}\otimes I_{\cD_T})V
\end{equation}
and
\begin{equation}
\label{span2}
\overline{\text{\rm span}}\{(S_{1,\alpha_1}\dots S_{k,\alpha_k}\otimes I_\cH)V\cH:\ \alpha_1\in \FF_{n_1}^+,\ldots, \alpha_{n_k}\in \FF_{n_k}^+\}
=\ell^2(\FF_{n_1}^+\times\cdots \times \FF_{n_k}^+)\otimes \cD.
\end{equation}
According to Corollary \ref{cp}, there is a completely positive linear map  $\Psi: C^*(\{S_{i,s}\})\to B(\cH)$ such that
\begin{equation}
\label{PST}
\Psi(p(\{S_{i,s}\}, \{S_{i,s}^*\}))=p(\{T_{i,s}\}, \{T_{i,s}^*\})
\end{equation}
for any polynomial  $p(\{S_{i,s}\}, \{S_{i,s}^*\})$ of the form \eqref{polynomial}.
Consider the $*$-representations
\begin{equation*}
\begin{split}
\pi_1&: C^*(\{S_{i,s}\})\to B(\ell^2(\FF_{n_1}^+\times\cdots \times \FF_{n_k}^+)\otimes \cD_T),\quad \pi_1(a)=a\otimes I_{\cD_T},\\
\pi_2&: C^*(\{S_{i,s}\})\to B(\ell^2(\FF_{n_1}^+\times\cdots \times \FF_{n_k}^+)\otimes \cD),\quad \pi_1(a)=a\otimes I_{\cD}
\end{split}
\end{equation*}
and note that relations \eqref{dil1}, \eqref{dil2}, and   \eqref{PST}, and the fact that the subspaces  $K_T\cH$  and $V\cH$ are jointly co-invariant under the operators  $S_{i,s}\otimes I_{\cD_T}$ and $S_{i,s}\otimes I_\cD$, repectively, imply
\begin{equation*}
\begin{split}
\Psi(p(\{S_{i,s}\}, \{S_{i,s}^*\}))&= K_T^*\pi_1(p(\{S_{i,s}\}, \{S_{i,s}^*\}))K_T\\
&=V^*\pi_2(p(\{S_{i,s}\}, \{S_{i,s}^*\})V
\end{split}
\end{equation*}
for any polynomial  $p(\{S_{i,s}\}, \{S_{i,s}^*\})$ of the form \eqref{polynomial}. Due to relations \eqref{span1} and \eqref{span2}, $\pi_1$ and $\pi_2$ are minimal Stinespring dilations of the completely positive linear map $\Psi$. Using now Stinespring result \cite{St}, we deduce that these representations are isomorphic. Therefore, there exists a unitary operator
$$
U:\ell^2(\FF_{n_1}^+\times\cdots \times \FF_{n_k}^+)\otimes \cD_T  \to \ell^2(\FF_{n_1}^+\times\cdots \times \FF_{n_k}^+)\otimes \cD
$$
such that
$
U(S_{i,s}\otimes I_{\cD_T})=(S_{i,s}\otimes I_\cD)U
$
 for any $i\in \{1,\dots, k\}$, $s\in \{1,\ldots, n_i\}$, and $UK_T=V$. Since $U$ is a unitary operator, we also deduce that
$
U(S_{i,s}^*\otimes I_{\cD_T})=(S_{i,s}^*\otimes I_\cD)U.
$
Now, using the fact that the $C^*$-algebra  $C^*(\{S_{i,s}\})$ is irreducible  (see Corollary  \ref{irred}), we conclude that $U=I\otimes \Omega$, where $\Omega:\cD_T\to \cD$ is a unitary operator. Consequently, we have $\dim \cD_T=\dim \cD$ and $UK_T\cH=V\cH$.
The proof is complete.
\end{proof}

In what follows, we prove a classification result for the pure $k$-tuples in the regular $\Lambda$-polyball.

\begin{theorem}  \label{classif1} Let $T=(T_1,\ldots, T_k)\in B(\cH)^{n_1}\times \cdots \times B(\cH)^{n_k}$.
Then $T$ is a pure $k$-tuple in the regular $\Lambda$-polyball, with
$\rank \Delta_T(I)=m$,  where $m\in \NN$ or $m=\infty$,  if and only if $T=(T_1,\ldots, T_k)$ is unitarily equivalent to the compression of $(S_1\otimes I_{\CC^m},\ldots, S_k\otimes I_{\CC^m})$ to a jointly co-invariant subspace $\cM\subset \ell^2(\FF_{n_1}^+\times\cdots \times \FF_{n_k}^+)\otimes \CC^n$ under $S_{i,s}\otimes I_{\CC^m}$,  where $i\in\{1,\ldots, k\}$ and  $s\in \{1,\ldots, n_i\}$, with the property that
$\dim [(P_\CC\otimes I_{\CC^m})\cM]=m$.
\end{theorem}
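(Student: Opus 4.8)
The plan is to run both implications through the noncommutative Berezin kernel of Theorem \ref{Berezin} together with the uniqueness of the minimal dilation in Theorem \ref{dil-Berezin}.

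For the forward implication I would assume $T$ is a pure $k$-tuple in ${\bf B}_\Lambda(\cH)$ with $\rank\Delta_T(I)=m$ and form the Berezin kernel $K_T:\cH\to\ell^2(\FF_{n_1}^+\times\cdots\times\FF_{n_k}^+)\otimes\cD_T$, where $\cD_T:=\overline{\Delta_T(I)(\cH)}$ has Hilbert-space dimension $m$, identifying $\cD_T$ with $\CC^m$. By Theorem \ref{Berezin}, $K_T$ is an isometry with $K_TT_{i,s}^*=(S_{i,s}^*\otimes I)K_T$, so $\cM:=K_T\cH$ is invariant under every $S_{i,s}^*\otimes I_{\CC^m}$, i.e. jointly co-invariant. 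Taking adjoints in the intertwining relation and using $K_TK_T^*=P_\cM$ yields $K_TT_{i,s}K_T^*=P_\cM(S_{i,s}\otimes I_{\CC^m})|_\cM$, so $K_T$ implements a unitary equivalence between $T$ and the compression of $(S_1\otimes I_{\CC^m},\ldots,S_k\otimes I_{\CC^m})$ to $\cM$. Finally, reading off the coefficient of $\chi_{(g_0^1,\ldots,g_0^k)}$ in the definition of $K_T$ (all $T_{i,g_0^i}=I$) gives $(P_\CC\otimes I_{\CC^m})K_Th=\chi_{(g_0^1,\ldots,g_0^k)}\otimes\Delta_T(I)^{1/2}h$, whose closed span as $h$ ranges over $\cH$ is $\chi_{(g_0^1,\ldots,g_0^k)}\otimes\cD_T$; hence $\dim[(P_\CC\otimes I_{\CC^m})\cM]=m$.

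For the converse I would suppose $T$ is unitarily equivalent to the compression $P_\cM S|_\cM$ for some jointly co-invariant $\cM\subset\ell^2(\FF_{n_1}^+\times\cdots\times\FF_{n_k}^+)\otimes\CC^m$ with $\dim[(P_\CC\otimes I_{\CC^m})\cM]=m$. Since $\cM$ is co-invariant, $T_{i,s}^*=(S_{i,s}^*\otimes I_{\CC^m})|_\cM$, so Theorem \ref{model} already shows $T$ is a pure element of ${\bf B}_\Lambda(\cH)$. The decisive remaining step is to observe that this realization is \emph{minimal}: Theorem \ref{co-inv} gives $\overline{\mathrm{span}}\{(S_{1,\alpha_1}\cdots S_{k,\alpha_k}\otimes I_{\CC^m})\cM\}=\ell^2(\FF_{n_1}^+\times\cdots\times\FF_{n_k}^+)\otimes\cL$ with $\cL=(P_\CC\otimes I_{\CC^m})\cM$, and since $\dim\cL=m=\dim\CC^m$ with $\cL\subseteq\CC^m$ we conclude $\cL=\CC^m$, so the span is the whole ambient space. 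Thus the given co-invariant realization is a minimal dilation of $T$, and by the uniqueness part of Theorem \ref{dil-Berezin} it is isomorphic to the Berezin realization, whose defect space $\cD_T$ has dimension $\rank\Delta_T(I)$; comparing dimensions forces $\rank\Delta_T(I)=m$.

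The main obstacle is precisely the converse, and within it the identification $\rank\Delta_T(I)=m$ rather than merely $\le m$. A direct computation of $\Delta_T(I)$ from the compression formula is awkward, because the $\chi_{(g_0^1,\ldots,g_0^k)}$-components of vectors in $\cM$ need not lie in $\cM$, so the operator $P_\cM(P_\CC\otimes I_{\CC^m})|_\cM$ is not transparently of rank $m$. Routing the argument through minimality (Theorem \ref{co-inv}) and the uniqueness of the minimal dilation (Theorem \ref{dil-Berezin}) bypasses this difficulty and is the cleanest path. The only additional care needed is the interpretation of $\CC^m$ and of $\dim$ when $m=\infty$, where $\CC^m$ denotes a Hilbert space of dimension $m$ and all identifications are understood up to an isomorphism of such spaces.
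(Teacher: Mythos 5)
Your proposal is correct in substance and, for the direct implication, coincides with the paper's (which simply invokes Theorem \ref{dil-Berezin} and its proof; your explicit computation of $(P_\CC\otimes I)K_T h=\chi_{(g_0^1,\ldots,g_0^k)}\otimes\Delta_T(I)^{1/2}h$ is exactly what that proof supplies). For the converse the paper splits into two cases: for finite $m$ it computes $\Delta_T(I)=P_\cH(P_\CC\otimes I_{\CC^m})|_\cH$ directly from the compression formula and runs a dimension count using $(P_\CC\otimes I_{\CC^m})\cM=\CC^m$, while only for $m=\infty$ does it pass through Theorem \ref{co-inv} and the uniqueness of the minimal dilation. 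You treat both cases uniformly by the minimality-plus-uniqueness route; this is cleaner and avoids the rank computation that you correctly identify as the awkward point, at the cost of leaning on the full strength of Theorem \ref{dil-Berezin} where the paper's finite-rank case is more elementary.

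One small inaccuracy: from $\dim\cL=m=\dim\CC^m$ and $\cL\subseteq\CC^m$ you conclude $\cL=\CC^m$; this fails when $m=\infty$, since a proper closed subspace of an infinite-dimensional Hilbert space can have the same Hilbert dimension. The conclusion you actually need is unaffected: $\ell^2(\FF_{n_1}^+\times\cdots\times\FF_{n_k}^+)\otimes\cL$ is jointly reducing for the operators $S_{i,s}\otimes I_{\CC^m}$ and contains $\cM$, so the compression of $S\otimes I_{\CC^m}$ to $\cM$ equals the compression of $S\otimes I_\cL$ to $\cM$, and the latter is a minimal dilation with defect space $\cL$ of dimension $m$; the uniqueness part of Theorem \ref{dil-Berezin} then gives $\rank\Delta_T(I)=\dim\cD_T=\dim\cL=m$ without ever needing $\cL=\CC^m$. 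This is precisely how the paper itself handles the $m=\infty$ case, so the repair is one sentence and fully within your framework.
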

\begin{proof}
Note that the direct implication is due to Theorem  \ref{dil-Berezin} and its proof.
To prove the converse, assume that
\begin{equation}\label{dil3}
T_{1,\alpha_1}\cdots T_{k,\alpha_k}=P_\cH(S_{1,\alpha_1}\dots S_{k,\alpha_k}\otimes I_{\cD_T})|_\cH, \qquad \alpha_i\in \FF_{n_i}^+,
\end{equation}
where
$\cH\subset \ell^2(\FF_{n_1}^+\times\cdots \times \FF_{n_k}^+)\otimes \CC^m$  is a co-invariant subspace under $S_{i,s}\otimes I_{\CC^m}$,  where
$i\in\{1,\ldots, k\}$ and  $s\in \{1,\ldots, n_i\}$, such that $\dim [(P_\CC\otimes I_{\CC^m})\cH]=m$.
First, we note that $T$ is a pure $k$-tuple in the regular $\Lambda$-polyball. Next, we consider that case when $m\in \NN$. Since $(P_\CC\otimes I_{\CC^m})\cH\subset \CC^m$ and
$\dim [(P_\CC\otimes I_{\CC^m})\cH]=m$, it clear that  $(P_\CC\otimes I_{\CC^m})\cH= \CC^m$.
Hence, $\cH^\perp \cap \CC^m=\{0\}$.
On the other hand, we have
$(id-\Phi_{S_1})\circ\cdots \circ (id-\Phi_{S_k})(I)=P_\CC$  where $\CC$ is identified with $\CC\chi_{(g_0^1,\ldots, g_0^k)}$.
Consequently, relation \eqref{dil3} implies
\begin{equation*}
\begin{split}
\Delta_T(I)&:=(id-\Phi_{T_1})\circ\cdots \circ (id-\Phi_{T_k})(I)=P_\cH(\Delta_S(I)\otimes I_{\CC^m})|_\cH\\
&=P_\cH(P_\CC \otimes I_{\CC^m})|_\cH=P_\CC\CC^m.
\end{split}
\end{equation*}
Hence, we deduce that  $\dim \cD_T=\dim P_\CC \CC^m\leq m$. If we assume that $\dim \cD_T<m$, then there is $h\in \CC^m$, $h\neq 0$ such that $P_\cH h\neq 0$. This shows that $h\in \cH^\perp$, which contradicts the relation $\cH^\perp \cap \CC^m=\{0\}$. In conclusion,
$\dim \cD_T=m$.

Now, we consider the case when $m=\infty$. Due to relation \eqref{dil3} and using the proof of Theorem \ref{co-inv} , we have
$$
\overline{\text{\rm span}}\{(S_{1,\alpha_1}\dots S_{k,\alpha_k}\otimes I_\cH)\cH: \ \alpha_1\in \FF_{n_1}^+,\ldots, \alpha_{n_k}\in \FF_{n_k}^+\}
=\ell^2(\FF_{n_1}^+\times\cdots \times \FF_{n_k}^+)\otimes \cL,
$$
where $\cL=(P_\CC\otimes I_{\CC^m})\cH$. Taking into account that
$\ell^2(\FF_{n_1}^+\times\cdots \times \FF_{n_k}^+)\otimes \cL$ is jointly reducing for  all the isometries $S_{i,s}\otimes I_{\CC^m}$, one can see that relation \eqref{dil3} implies
\begin{equation*}
T_{1,\alpha_1}\cdots T_{k,\alpha_k}=P_\cH(S_{1,\alpha_1}\dots S_{k,\alpha_k}\otimes I_{\cL})|_\cH, \qquad \alpha_i\in \FF_{n_i}^+,
\end{equation*}
Due to the uniqueness of the minimal dilation of $T=(T_1,\ldots, T_k)$ (see Theorem \ref{dil-Berezin}), we conclude that  $\dim \cD_T=\dim \cL=\infty$. The proof is complete.
\end{proof}

The next result provides a classification  of the pure elements of rank one in  the regular $\Lambda$-polybal .

\begin{corollary} Let $T=(T_1,\ldots, T_k)\in B(\cH)^{n_1}\times \cdots \times B(\cH)^{n_k}$.
 Then $T$ is a pure element in the regular $\Lambda$-polyball such that $\rank \Delta_T(I)=1$  if and only if  there is  a jointly co-invariant subspace
$\cM\subset \ell^2(\FF_{n_1}^+\times\cdots \times \FF_{n_k}^+)$   under  the isometries $S_{i,s}$,  where $i\in\{1,\ldots, k\}$ and  $s\in \{1,\ldots, n_i\}$, such that $T$ is  jointly unitarily equivalent to  the compression
$P_\cM S|_\cM:=(P_\cM S_{1}|_\cM, \ldots, P_\cM S_{k}|_\cM)$, where
$$
P_\cM S_{i}|_\cM:=[P_\cM S_{i,1}|_\cM \cdots  P_\cM S_{i,n_i}|_\cM].
$$
If $\cM'$ is another jointly co-invariant subspace under $S_{i,s}$, then
$
P_\cM S|_\cM$
  and
$P_{\cM'} S|_{\cM'} $
are unitarily equivalent if and only if $\cM=\cM'$.
\end{corollary}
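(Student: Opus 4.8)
The plan is to deduce the equivalence directly from Theorem \ref{classif1} specialized to $m=1$, and then to prove the rigidity statement by a uniqueness-of-dilation argument. First I would note that when $m=1$ the space $\ell^2(\FF_{n_1}^+\times\cdots\times\FF_{n_k}^+)\otimes\CC^m$ is canonically $\ell^2(\FF_{n_1}^+\times\cdots\times\FF_{n_k}^+)$ and $S_{i,s}\otimes I_{\CC^m}=S_{i,s}$, so the compression of $(S_1\otimes I_{\CC^m},\ldots,S_k\otimes I_{\CC^m})$ to a jointly co-invariant subspace is exactly $P_\cM S|_\cM$. The one point to check is that the auxiliary condition $\dim[(P_\CC\otimes I_{\CC^m})\cM]=m$ of Theorem \ref{classif1} is, for $m=1$, automatic for every nonzero jointly co-invariant $\cM$: by Theorem \ref{co-inv} we have $\cM\subset\ell^2(\FF_{n_1}^+\times\cdots\times\FF_{n_k}^+)\otimes\cL$ with $\cL=(P_\CC\otimes I)\cM\subset\CC$, so $\cM\neq\{0\}$ forces $\dim\cL=1$.

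With this identification both implications are immediate. For the forward direction, a pure $T$ in the regular $\Lambda$-polyball with $\rank\Delta_T(I)=1$ has one-dimensional defect space $\cD_T=\overline{\Delta_T(I)(\cH)}\cong\CC$, and Theorem \ref{Berezin} together with Theorem \ref{dil-Berezin} identifies $\cH$ with the jointly co-invariant subspace $\cM:=K_T\cH\subset\ell^2(\FF_{n_1}^+\times\cdots\times\FF_{n_k}^+)$, under which $T_{i,s}^*=S_{i,s}^*|_\cM$ and hence $T\cong P_\cM S|_\cM$. Conversely, given such an $\cM$, co-invariance yields $T_{i,s}^*=(S_{i,s}^*)|_\cM$, so Theorem \ref{model} shows $T$ is a pure element of ${\bf B}_\Lambda(\cM)$, and the computation $\Delta_T(I)=P_\cM P_\CC|_\cM$ from the proof of Theorem \ref{classif1}, combined with the previous paragraph, gives $\rank\Delta_T(I)=\dim\cD_T=\dim\cL=1$.

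The substantive part is the rigidity assertion, whose nontrivial direction is that unitary equivalence of $P_\cM S|_\cM$ and $P_{\cM'}S|_{\cM'}$ forces $\cM=\cM'$ (the reverse implication being the identity). I would first observe that each inclusion $\cM\subset\ell^2(\FF_{n_1}^+\times\cdots\times\FF_{n_k}^+)$ realizes a \emph{minimal} dilation of $T=P_\cM S|_\cM$: by Theorem \ref{co-inv} the joint orbit of $\cM$ under the $S_{1,\alpha_1}\cdots S_{k,\alpha_k}$ spans $\ell^2(\FF_{n_1}^+\times\cdots\times\FF_{n_k}^+)\otimes\cL=\ell^2(\FF_{n_1}^+\times\cdots\times\FF_{n_k}^+)$, which is exactly minimality. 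Assuming $P_\cM S|_\cM\cong P_{\cM'}S|_{\cM'}$, the inclusion $\cM'\subset\ell^2(\FF_{n_1}^+\times\cdots\times\FF_{n_k}^+)$ is then a second minimal dilation of (a copy of) $T$, so the uniqueness statement of Theorem \ref{dil-Berezin} produces a unitary $U$ on $\ell^2(\FF_{n_1}^+\times\cdots\times\FF_{n_k}^+)$ intertwining the two copies of the model, carrying $\cM$ onto $\cM'$, and—via the irreducibility of $C^*(\{S_{i,s}\})$ (Corollary \ref{irred})—of the form $U=I\otimes\Omega$ for a unitary $\Omega:\cD_T\to\cD_{T'}$.

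The crux, and the place where the rank-one hypothesis is essential, is the final step: since $\cD_T$ and $\cD_{T'}$ are one-dimensional, $\Omega$ is multiplication by a unimodular scalar, so $U$ is a scalar multiple of the identity of $\ell^2(\FF_{n_1}^+\times\cdots\times\FF_{n_k}^+)$. A scalar multiple of the identity fixes every subspace, whence $\cM'=U\cM=\cM$. I expect this scalar-rigidity step to be the main conceptual point: the passage from ``unitarily equivalent compressions'' to ``literally equal subspaces'' relies precisely on the defect being one-dimensional, and it is this phenomenon that produces the Douglas--Foias type uniqueness of the canonical model in the regular $\Lambda$-polyball setting.
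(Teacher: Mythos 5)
Your proposal is correct and follows essentially the same route as the paper: the equivalence is obtained by specializing Theorem \ref{classif1} (equivalently, by the computation $\Delta_T(I)=P_\cM P_\CC|_\cM$ together with purity), and the rigidity statement is proved exactly as in the paper by viewing each inclusion $\cM\hookrightarrow\ell^2(\FF_{n_1}^+\times\cdots\times\FF_{n_k}^+)$ as a minimal dilation, invoking the uniqueness argument of Theorem \ref{dil-Berezin}, and using irreducibility of $C^*(\{S_{i,s}\})$ (Corollary \ref{irred}) to force $U=cI$ and hence $\cM=\cM'$. Your explicit check that the condition $\dim[(P_\CC\otimes I)\cM]=m$ is automatic when $m=1$ is a useful small addition, but the substance of the argument coincides with the paper's.
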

\begin{proof}
The direct implication  is due to Theorem \ref{classif1}. To prove the converse, assume that
$T=(T_1,\ldots, T_k)$ where $T_{i,s}:=P_\cM S_{i,s}|_\cM$ and $\cM$ is  a jointly co-invariant subspace
$\cM\subset \ell^2(\FF_{n_1}^+\times\cdots \times \FF_{n_k}^+)$   under  the isometries $S_{i,s}$.
Note that
\begin{equation*}
\Delta_T(I):=(id-\Phi_{T_1})\circ\cdots \circ (id-\Phi_{T_k})(I)=P_\cM(\Delta_S(I)\otimes I_{\CC})|_\cM
=P_\cM P_\CC |_\cM
\end{equation*}
and, consequently, $\dim \cD_T\leq 1$. Since $S=(S_1,\ldots, S_k)$ is a pure $k$-tuple, so is $T$. Thus $\Delta_T(I)\neq 0$, which shows that $\dim \cD_T= 1$.

To prove the last part of the corollary, note that, as in the proof of  Theorem \ref{dil-Berezin}, we can show that  the two $k$-tuples are unitarily equivalent if and only if there is a unitary operator
$$
U:\ell^2(\FF_{n_1}^+\times\cdots \times \FF_{n_k}^+) \to \ell^2(\FF_{n_1}^+\times\cdots \times \FF_{n_k}^+)
$$
such that
$
US_{i,s}=S_{i,s}U
$
 for any $i\in \{1,\dots, k\}$, $s\in \{1,\ldots, n_i\}$, and $U\cM=\cM$. Since $U$ is a unitary operator, we also deduce that
$
U(S_{i,s}^*\otimes I_{\cD_T})=(S_{i,s}^*\otimes I_\cD)U.
$
Since  $C^*$-algebra  $C^*(\{S_{i,s}\})$ is irreducible, we must have  $U=cI$ for some constant $c\in \TT$. Consequently, $\cM=U\cM=\cM'$. The proof is complete.
\end{proof}

\bigskip

\section{Dilation theory on regular $\Lambda$-polyballs}

In this section, we show that any $k$-tuple in the regular $\Lambda$-polyball admits  a minimal  dilation which is a $k$-tuple of doubly $\Lambda$-commuting row isometries, uniquely determined up to an isomorphism.
We show that $T$ is a pure element in ${\bf B}_\Lambda(\cH)$ if and only if    its minimal isometric dilation  is a pure element in
 ${\bf B}_\Lambda(\cK)$. The  universal algebra  generated by a $k$-tuple $V=(V_1,\ldots, V_k)$ of doubly $\Lambda$-commuting row isometries such
$\Delta_V(I)=0$ is identified.
In  the particular case when $n_1=\cdots =n_k=1$, we obtain  an  extension of  Brehmer's  result, showing that any $k$-tuple in the $\Lambda$-polyball  admits a unique minimal
  doubly $\Lambda$-commuting unitary
    dilation.

\begin{theorem}   \label{iso-dil} Let   $T=(T_1,\ldots, T_k)$ be   a  $k$-tuple in the regular $\Lambda$-polyball ${\bf B}_\Lambda(\cH)$. Then there is   a Hilbert $\cK\supset \cH$ and a $k$-tuple $V=(V_1,\ldots, V_k)$ of doubly $\Lambda$-commuting row isometries  on $\cK$ such that
$$
T_{i,s}^*=V_{i,s}^*|_\cH,\qquad i\in \{1,\ldots, \}, s\in \{1,\ldots, n_i\},
$$
and such that the dilation is minimal, i.e.
$$
\cK=\overline{\text{\rm span}}
\left\{  V_{1,\alpha_1}\cdots V_{k,\alpha_k}\cH
 :\ \alpha_1\in \FF_{n_1}^+,\ldots, \alpha_k\in \FF_{n_k}^+\right\}.
 $$
 Moreover, the minimal dilation is unique up to  an isomorphism.

\end{theorem}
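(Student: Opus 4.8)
The plan is to realize the dilation as a minimal Stinespring dilation of the completely positive map attached to $T$, and then read off the isometries from the induced representation of $C^*(\{S_{i,s}\})$. Since $T\in {\bf B}_\Lambda(\cH)$, Corollary \ref{cp} provides a unital completely positive map $\Psi_T:C^*(\{S_{i,s}\})\to B(\cH)$ with $\Psi_T(p(\{S_{i,s}\},\{S_{i,s}^*\}))=p(\{T_{i,s}\},\{T_{i,s}^*\})$ for every polynomial of the form \eqref{polynomial}. I would take a minimal Stinespring dilation $(\pi,\cK,\cH)$, so that $\cH\subset\cK$, the map $\pi:C^*(\{S_{i,s}\})\to B(\cK)$ is a $*$-representation, $\Psi_T(f)=P_\cH\pi(f)|_\cH$, and $\cK=\overline{\text{\rm span}}\{\pi(f)\cH:\ f\in C^*(\{S_{i,s}\})\}$. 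Setting $V_{i,s}:=\pi(S_{i,s})$, Theorem \ref{C*}(i) guarantees that $V=(V_1,\ldots,V_k)$ is a $k$-tuple of doubly $\Lambda$-commuting row isometries, since $\pi$ is a $*$-representation and $S=(S_1,\ldots,S_k)$ satisfies the relations \eqref{lac*}.

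The crucial step is to show that $\cH$ is co-invariant under each $V_{i,s}$ and that $T_{i,s}^*=V_{i,s}^*|_\cH$. For $h\in\cH$ the Stinespring relation gives $T_{i,s}^*h=P_\cH V_{i,s}^*h$, whence $\langle V_{i,s}^*h,T_{i,s}^*h\rangle=\|T_{i,s}^*h\|^2$ and therefore $\|V_{i,s}^*h-T_{i,s}^*h\|^2=\|V_{i,s}^*h\|^2-\|T_{i,s}^*h\|^2$. Summing over $s$ and using $\Phi_{V_i}(I)=\pi(\Phi_{S_i}(I))$ together with $\Psi_T(\Phi_{S_i}(I))=\Phi_{T_i}(I)$ (because $\Phi_{S_i}(I)=\sum_s S_{i,s}S_{i,s}^*$ has the form \eqref{polynomial}), one obtains $\sum_{s}\|V_{i,s}^*h\|^2=\langle\Phi_{T_i}(I)h,h\rangle=\sum_s\|T_{i,s}^*h\|^2$, so that $\sum_s\|V_{i,s}^*h-T_{i,s}^*h\|^2=0$. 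Hence $V_{i,s}^*h=T_{i,s}^*h\in\cH$ for all $i,s$, which is precisely co-invariance of $\cH$ and the intertwining $T_{i,s}^*=V_{i,s}^*|_\cH$. In particular the co-analytic factor $\pi(S_{1,\beta_1}^*\cdots S_{k,\beta_k}^*)$ maps $\cH$ into $\cH$, so writing $V_{\boldsymbol\alpha}:=V_{1,\alpha_1}\cdots V_{k,\alpha_k}$ we get $\pi(p)\cH\subset\overline{\text{\rm span}}\{V_{\boldsymbol\alpha}\cH\}$ for every $p$ of the form \eqref{polynomial}; since such polynomials are dense in $C^*(\{S_{i,s}\})$ (Proposition \ref{uniqueness}) and $\pi$ is continuous, the minimality of the Stinespring dilation yields $\cK=\overline{\text{\rm span}}\{V_{1,\alpha_1}\cdots V_{k,\alpha_k}\cH:\ \alpha_i\in\FF_{n_i}^+\}$, which is the required minimality.

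For uniqueness, suppose $(\cK,V)$ and $(\cK',V')$ are two minimal doubly $\Lambda$-commuting isometric dilations of $T$. By Theorem \ref{C*}(i) the assignments $S_{i,s}\mapsto V_{i,s}$ and $S_{i,s}\mapsto V_{i,s}'$ extend to $*$-representations $\pi,\pi'$ of $C^*(\{S_{i,s}\})$. The same defect computation as above shows $\cH$ is co-invariant in both dilations with $V_{i,s}^*|_\cH=T_{i,s}^*=V_{i,s}'^*|_\cH$, from which one checks $P_\cH\pi(p)|_\cH=p(\{T_{i,s}\},\{T_{i,s}^*\})=P_\cH\pi'(p)|_\cH$ on polynomials, so that both $\pi$ and $\pi'$ are Stinespring dilations of $\Psi_T$; moreover the dilation-minimality hypothesis translates, exactly as in the existence part, into minimality of the corresponding Stinespring dilations. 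Stinespring's uniqueness theorem \cite{St} then produces a unitary $U:\cK\to\cK'$ with $U|_\cH=I_\cH$ and $U\pi(f)=\pi'(f)U$; in particular $UV_{i,s}=V_{i,s}'U$, which is the desired isomorphism of dilations.

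The main obstacle is the co-invariance step: one must verify that the compression of $V_{i,s}^*$ to $\cH$ is an honest restriction, i.e. that the defect $\sum_s\|V_{i,s}^*h-T_{i,s}^*h\|^2$ vanishes. This is exactly where the structure of $\Psi_T$ enters, through the identity $\Psi_T(\Phi_{S_i}(I))=\Phi_{T_i}(I)$, and it is what forces the dilation to sit co-isometrically above $T$ rather than merely compressing to it. Once this is established, identifying the dilation-minimality with Stinespring-minimality is routine, and uniqueness follows formally from \cite{St}.
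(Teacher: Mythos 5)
Your proposal is correct and follows essentially the same route as the paper: pass from $T$ to the completely positive map of Corollary \ref{cp}, take its minimal Stinespring dilation, set $V_{i,s}:=\pi(S_{i,s})$, verify co-invariance of $\cH$, and deduce uniqueness from Stinespring's theorem. The only cosmetic difference is in the co-invariance step, where you show $\sum_s\|V_{i,s}^*h-T_{i,s}^*h\|^2=0$ via the identity $\Psi_T(\Phi_{S_i}(I))=\Phi_{T_i}(I)$, whereas the paper shows the cross term $P_\cH\pi(S_{1,\alpha_1}\cdots S_{k,\alpha_k})P_{\cH^\perp}$ vanishes; these are equivalent manifestations of the same multiplicativity argument.
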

\begin{proof}
Let $T=(T_1,\ldots, T_k)$ with $T_i=[T_{i,1}\cdots T_{i,n_i}]$ and $T_{i,s}\in B(\cH)$. According to Corollary \ref{cp},
$T\in {\bf B}_\Lambda(\cH)$ if and only if
there is a completely positive linear map  $\Psi: C^*(\{S_{i,s}\})\to B(\cH)$ such that
$$
\Psi(p(\{S_{i,s}\}, \{S_{i,s}^*\}))=p(\{T_{i,s}\}, \{T_{i,s}^*\})
$$
for any polynomial $p(\{S_{i,s}\}, \{S_{i,s}^*\})$ of the form \eqref{polynomial}.
Let $\pi:C^*(\{S_{i,s}\})\to B(\cK)$ be the minimal Stinespring dilation \cite{St} of $\Psi$. Then we have
$\Psi\left(p(\{S_{i,s}\}, \{S_{i,s}^*\})\right)=P_\cH \pi\left(p(\{S_{i,s}\}, \{S_{i,s}^*\})\right)|_\cH
$
  and
\begin{equation}
\label{minimal}
\cK=\overline{\text{\rm span}}\left\{ \pi\left(p(\{S_{i,s}\}, \{S_{i,s}^*\})\right)\cH: \ p(\{S_{i,s}\}, \{S_{i,s}^*\})\in C^*(\{S_{i,s}\})\right\}.
\end{equation}
In what follows, we prove that $P_\cH \pi(S_{1,\alpha_1}\cdots S_{k,\alpha_k})|_{\cH^\perp}=0$ for any $\alpha_i\in \FF_{n_i}^+$.
Indeed, we have
\begin{equation*}
\begin{split}
&T_{1,\alpha_1}\cdots T_{k,\alpha_k}T_{k,\alpha_k}^*\cdots T_{1,\alpha_1}^*\\
&\qquad=P_\cH\pi(S_{1,\alpha_1}\cdots S_{k,\alpha_k})\pi(S_{k,\alpha_k}^*\cdots S_{1,\alpha_1}^*)|_\cH\\
&\qquad=P_\cH\pi(S_{1,\alpha_1}\cdots S_{k,\alpha_k})(P_\cH+P_{\cH^\perp})\pi(S_{k,\alpha_k}^*\cdots S_{1,\alpha_1}^*)|_\cH\\
&\qquad= P_\cH\pi(S_{1,\alpha_1}\cdots S_{k,\alpha_k})P_\cH \pi(S_{k,\alpha_k}^*\cdots S_{1,\alpha_1}^*)|_\cH
+P_\cH\pi(S_{1,\alpha_1}\cdots S_{k,\alpha_k})P_{\cH^\perp}\pi(S_{k,\alpha_k}^*\cdots S_{1,\alpha_1}^*)|_\cH\\
&\qquad =T_{1,\alpha_1}\cdots T_{k,\alpha_k}T_{k,\alpha_k}^*\cdots T_{1,\alpha_1}^*
+\left(P_\cH\pi(S_{1,\alpha_1}\cdots S_{k,\alpha_k})P_{\cH^\perp}\right)\left(P_\cH\pi(S_{1,\alpha_1}\cdots S_{k,\alpha_k})P_{\cH^\perp}\right)^*.
\end{split}
\end{equation*}
Hence, we deduce that  $P_\cH \pi(S_{1,\alpha_1}\cdots S_{k,\alpha_k})|_{\cH^\perp}=0$, which shows that $ \pi(S_{1,\alpha_1}\cdots S_{k,\alpha_k})({\cH^\perp})\subset \cH^\perp$ and, consequently,  $ \pi(S_{1,\alpha_1}\cdots S_{k,\alpha_k})^*({\cH})\subset \cH$. Note that the later relation implies
$$
T_{k,\alpha_k}^*\cdots T_{1,\alpha_1}^*=P_\cH \pi(S_{k,\alpha_k})^*\cdots \pi(S_{1,\alpha_1})^*|_\cH=\pi(S_{k,\alpha_k})^*\cdots \pi(S_{1,\alpha_1})^*|_\cH
$$
for any $\alpha_i\in \FF_{n_i}^+$. Moreover, relation \eqref{minimal} implies
\begin{equation*}
\cK=\overline{\text{\rm span}}\left\{ \pi(S_{1,\alpha_1})\cdots \pi(S_{k,\alpha})\cH :  \ \alpha_i\in \FF_{n_i} \right\}.
\end{equation*}
Setting $V_{i,s}:=\pi(S_{i,s})$, we complete the proof of the existence of the minimal dilation of  $T$.

To prove the uniqueness, let $V'=(V_1',\ldots, V_k')$ be another minimal dilation of $T$  on a Hilbert space $\cK'\supset\cH$ such that
$$
T_{i,s}^*={V'}_{i,s}^*|_\cH,\qquad i\in \{1,\ldots, \}, s\in \{1,\ldots, n_i\},
$$
and  $$
\cK'=\overline{\text{\rm span}}
\left\{  V'_{1,\alpha_1}\cdots V'_{k,\alpha_k}\cH
 :\ \alpha_1\in \FF_{n_1}^+,\ldots, \alpha_k\in \FF_{n_k}^+\right\}.
 $$
Consider the representation $\rho:C^*(\{S_{i,s}\})\to B(\cK')$ defined by
$\rho(p(\{S_{i,s}\}, \{S_{i,s}^*\}):=p(\{V'_{i,s}\}, \{{V'}_{i,s}^*\})$ for any
$p(\{S_{i,s}\}, \{S_{i,s}^*\})\in C^*(\{S_{i,s}\})$ and note that
$$
\cK'=\overline{\text{\rm span}}\left\{ \rho\left(p(\{S_{i,s}\}, \{S_{i,s}^*\})\right)\cH: \ p(\{S_{i,s}\}, \{S_{i,s}^*\})\in C^*(\{S_{i,s}\})\right\}.
$$
On the other hand, we have
\begin{equation*}
\begin{split}
\Psi(p(\{S_{i,s}\}, \{S_{i,s}^*\}))&=p(\{T_{i,s}\}, \{T_{i,s}^*\}\\
&=P_\cH \pi(p(\{S_{i,s}\}, \{S_{i,s}^*\}))|_\cH\\
&=P_\cH \rho(p(\{S_{i,s}\}, \{S_{i,s}^*\}))|_\cH
\end{split}
\end{equation*}
for any polynomial  $p(\{S_{i,s}\}, \{S_{i,s}^*\})\in C^*(\{S_{i,s}\})$  of the form
\eqref{polynomial}. Consequently, $\pi$ and $\rho$ are minimal Stinespring dilations of the completely positive linear mao $\Psi$. Due to the uniqueness of the minimal Stinespring dilations, there is a unitary operator $U:\cK\to \cK'$ such that $U V_{i,s}=V'_{i,s}U$  for any
$i\in \{1,\ldots, \}$, $s\in \{1,\ldots, n_i\}$, and $U\cH=\cH$. The proof is complete.
\end{proof}

\begin{theorem} \label{Wold-partic2} Let  $V=(V_1,\ldots, V_k)$ be  a $k$-tuple of doubly $\Lambda$-commuting row isometries acting on a Hilbert space $\cK$. Then there is a unique
othogonal decomposition
$$
\cK=\cK^{(s)}\oplus \cK^{(\Delta)},
$$
where $\cK^{(s)}, \cK^{(\Delta)}$ are reducing  subspaces under all isometries $V_{i,s}$,  for $i\in \{1,\ldots, k\}$, $s\in \{1,\ldots, n_i\}$, with the following properties.
\begin{enumerate}
\item[(i)] $V|_{\cK^{(s)}}$ is  a  $k$-tuple of doubly $\Lambda$-commuting pure row isometries, which is isomorphic to the standard $k$-tuple ${\bf S}=({\bf S}_1,\ldots, {\bf S}_k)$ with wandering subspace of dimension  equal to $\dim \Delta_V(I)\cK$.

\item[(ii)]   $V|_{\cK^{(\Delta)}}$ is  a   $k$-tuple of doubly $\Lambda$-commuting   row isometries such that $\Delta_V(I_{\cK^{(\Delta)}})=0$.

\end{enumerate}
Moreover, we have
$$
 \cK^{(s)} =\bigoplus_{\alpha_{1}\in \FF_{n_{1}}^+,\ldots, \alpha_{k}\in \FF_{n_{k}}^+} V_{1,\alpha_{1}}\cdots V_{k,\alpha_{k}}\left(\Delta_V(I)\cK\right) \quad \text{and}\quad \cK^{(\Delta)}={\cK^{(s)}}^\perp.
 $$
\end{theorem}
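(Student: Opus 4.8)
The plan is to recognize Theorem \ref{Wold-partic2} as the two-block coarsening of the finer Wold decomposition of Theorem \ref{Wold2}, obtained by merging every summand $\cK_A$ with $A\neq\{1,\ldots,k\}$ into a single subspace. Accordingly, I would set $\cK^{(s)}:=\cK_{\{1,\ldots,k\}}=\bigcap_{i=1}^k\cK_i^{(s)}$ and $\cK^{(\Delta)}:=\left(\cK^{(s)}\right)^\perp$; both are reducing for all $V_{i,s}$, the first by Theorem \ref{Wold2}(i) and the second because the orthogonal complement of a reducing subspace is reducing. The decisive preliminary computation is to identify the defect operator. Since $V$ is doubly $\Lambda$-commuting, Lemma \ref{tech} gives $\Delta_V(I)=\prod_{i=1}^k\left(I-\sum_{s=1}^{n_i}V_{i,s}V_{i,s}^*\right)$, a product of the commuting projections $E_i:=I-\sum_s V_{i,s}V_{i,s}^*$ onto $\bigcap_s\ker V_{i,s}^*$; hence $\Delta_V(I)$ is the orthogonal projection onto $\bigcap_{i,s}\ker V_{i,s}^*=\cL_{\{1,\ldots,k\}}$, the $\{1,\ldots,k\}$-wandering subspace of Remark \ref{partic}(ii). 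In particular $\Delta_V(I)\cK=\cL_{\{1,\ldots,k\}}\subset\cK^{(s)}$, and Remark \ref{partic}(ii) then yields exactly the asserted formula $\cK^{(s)}=\bigoplus_{\alpha_1,\ldots,\alpha_k}V_{1,\alpha_1}\cdots V_{k,\alpha_k}\left(\Delta_V(I)\cK\right)$.

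Granting this, properties (i) and (ii) are quick. For (i), each $V_i|_{\cK^{(s)}}$ is pure because $\cK^{(s)}\subset\cK_i^{(s)}$ (Proposition \ref{charact}(ii)), and the isomorphism of $V|_{\cK^{(s)}}$ with the standard $k$-tuple ${\bf S}$ having wandering subspace of dimension $\dim\Delta_V(I)\cK$ is precisely Theorem \ref{Wold-partic}(i) together with the results of Section 2. For (ii), I would observe that $\cK^{(\Delta)}=\left(\cK^{(s)}\right)^\perp$ is orthogonal to $\cL_{\{1,\ldots,k\}}$, onto which $\Delta_V(I)$ projects; thus $\Delta_V(I)|_{\cK^{(\Delta)}}=0$. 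Since $\cK^{(\Delta)}$ reduces every $V_{i,s}$, the defect operator of the restriction equals the restriction of the defect operator, so $\Delta_{V|_{\cK^{(\Delta)}}}(I_{\cK^{(\Delta)}})=\Delta_V(I)|_{\cK^{(\Delta)}}=0$, as required.

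The main work, and the only genuinely delicate point, is uniqueness. Suppose $\cK=\cM^{(s)}\oplus\cM^{(\Delta)}$ is a second decomposition into reducing subspaces with $V|_{\cM^{(s)}}$ a pure $k$-tuple and $\Delta_{V|_{\cM^{(\Delta)}}}(I)=0$. Purity of $V|_{\cM^{(s)}}$ forces $\cM^{(s)}\subset\bigcap_i\cK_i^{(s)}=\cK^{(s)}$ by Proposition \ref{charact}(ii). To obtain the reverse inclusion I would consider the reducing subspace $\cN:=\cK^{(s)}\cap\cM^{(\Delta)}$: being contained in $\cK^{(s)}$, the restriction $V|_\cN$ is a pure $k$-tuple, while being contained in the reducing subspace $\cM^{(\Delta)}$ it inherits $\Delta_{V|_\cN}(I_\cN)=0$. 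By Corollary \ref{W-partic}(i), purity of $V|_\cN$ gives $\cN=\bigoplus_{\alpha}V_{1,\alpha_1}\cdots V_{k,\alpha_k}\left(\Delta_{V|_\cN}(I)\cN\right)$, which collapses to $\{0\}$ since $\Delta_{V|_\cN}(I)=0$. Hence $\cK^{(s)}=\cM^{(s)}\oplus\cN=\cM^{(s)}$, and consequently $\cK^{(\Delta)}=\cM^{(\Delta)}$, proving uniqueness. I expect the verification that purity together with vanishing defect forces triviality (encapsulated in Corollary \ref{W-partic} and the wandering-subspace formula) to be the crux, the remaining steps being bookkeeping around the already-established Wold machinery.
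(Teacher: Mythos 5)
Your proof is correct and follows essentially the same route as the paper: both identify $\cK^{(s)}$ with the summand $\cK_{\{1,\ldots,k\}}=\bigcap_i\cK_i^{(s)}$ of the Wold decomposition of Theorem \ref{Wold2}, use that $\Delta_V(I)$ is the commuting product of the defect projections (hence the projection onto $\cL_{\{1,\ldots,k\}}$) to get the displayed formula and the vanishing of the defect on $\cK^{(\Delta)}$, and reduce uniqueness to the fact that a reducing subspace on which $V$ is both pure and defect-free must be trivial. Your uniqueness argument via $\cN:=\cK^{(s)}\cap\cM^{(\Delta)}$ is valid and in fact more explicit than the paper's, which only refers back to the argument of Theorem \ref{Wold-partic}.
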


\begin{proof} According to Theorem \ref{Wold2}, the Hilbert space $\cK$ admits a unique orthogonal decomposition
$$
\cK=\cK_{\{1,\ldots, k\}}\oplus \cK^{(\Delta)}
$$
with the following properties.
\begin{enumerate}
\item[(i)]   The subspaces $\cK_{\{1,\ldots, k\}}$  and  $\cK^{(\Delta)}:=\bigoplus_{{A\subset \{1,\ldots, k\}}\atop{A\neq \{1,\ldots, k\}}} \cK_A$ are reducing    for all the isometries $V_{i,s}$, where $i\in \{1,\ldots, k\}$ and $s\in \{1,\ldots, n_i\}$;

\item[(ii)]    $V_i|_{\cK_{\{1,\ldots, k\}}}:=[V_{i,1}|_{\cK_{\{1,\ldots, k\}}}\cdots V_{i,n_i}|_{\cK_{\{1,\ldots, k\}}}]$ is a pure row isometry for any $i\in \{1,\ldots,k\}$.

\item[(iii)]   $V_i|_{\cK^{(\Delta)}}:=[V_{i,1}|_{\cK^{(\Delta)}}\cdots V_{i,n_i}|_{\cK^{(\Delta)}}]$  is a $k$-tuple of doubly $\Lambda$-commuting   row isometries for any $i\in \{1,\ldots,k\}$.
\end{enumerate}
Due to  Theorem \ref{Wold3} and Remark \ref{partic}, we have
$
\cK_{\{1,\ldots, k\}}=\cK^{(s)}=\cap_{i=1}^k \cK_i^{(s)}$,
where $\cK_i^{(s)}$ is  defined by relation \eqref{alt}.
According to the results of Section 2, $V|_{\cK^{(s)}}$ is   isomorphic to the standard $k$-tuple ${\bf S}=({\bf S}_1,\ldots, {\bf S}_k)$ with wandering subspace of dimension  equal to $\dim \Delta_V(I)\cK$.
On the other hand, we saw in the proof of Theorem \ref{Wold2} that  $\cK_A:=P_A \cK$, where
$P_A:=\left(\prod_{i\in A}P_i^{(s)}\right)\left(\prod_{i\in A^c}P_i^{(c)}\right)$    and $P_i^{(s)}$, $P_i^{(c)}$  are    the orthogonal projections of $\cK$ onto  the subspaces
 $\cK_i^{(s)}$ and $\cK_i^{(c)}$, respectively. Note that, if  $A\subset \{1,\ldots, k\}$ and $A\neq \{1,\ldots, k\}$ then there is $i\in A$ such that
  $V_i|_{\cK_A}:=[V_{i,1}|_{\cK_A}\cdots V_{i,n_i}|_{\cK_A}]$  is a Cuntz row isometry. Consequently, we have $(I-\sum_{s=1}^{n_i} V_{i,s} V_{i,s}^*)|_{\cK_A}=0$. Now, taking into account that $\Delta_V(I_{\cK})=\prod_{i=1}^k (I-\sum_{s=1}^{n_i} V_{i,s} V_{i,s}^*)$ and the factors of this product are commuting, we deduce that
   $\Delta_V(I_{\cK^{(\Delta)}})=0$. The uniqueness of the decomposition  can be proved as in the proof of Theorem \ref{Wold-partic}.
The proof is complete.
\end{proof}

\begin{corollary} Let $T=(T_1,\ldots, T_k)$ be a  $k$-tuple in the regular $\Lambda$-polyball
 ${\bf B}_\Lambda(\cH)$ and let $V=(V_1,\ldots, V_k)$ be its minimal dilation on a Hilbert space $\cK$.
 Then the following  statements hold.
 \begin{enumerate}
 \item[(i)]  $V$ is a pure element in ${\bf B}_\Lambda(\cK)$ if and only if $T$ is a pure element in ${\bf B}_\Lambda(\cH)$.
 \item[(ii)]
 $\Delta_V(I_\cK)=0$ if and only if $\Delta_T (I_\cH)=0$.
 \end{enumerate}

\end{corollary}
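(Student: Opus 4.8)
The plan is to reduce both equivalences to a single compression identity relating the completely positive maps $\Phi_{T_i}$ to the maps $\Phi_{V_i}$ of the dilation, and then to propagate information from the (small) subspace $\cH$ to all of $\cK$ using minimality of the dilation together with the Wold decomposition of Theorem \ref{Wold-partic2}. First I would record the elementary consequence of Theorem \ref{iso-dil}: since $T_{i,s}^{*}=V_{i,s}^{*}|_{\cH}$ and $\cH$ is co-invariant under every $V_{i,s}$, iterating gives $T_{i_l,s_l}^{*}\cdots T_{i_1,s_1}^{*}h=V_{i_l,s_l}^{*}\cdots V_{i_1,s_1}^{*}h$ for every word and every $h\in\cH$. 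Comparing quadratic forms at vectors of $\cH$ (both operators being positive and self-adjoint), this yields, for every finite composition of the maps in the two tuples,
$$(\Phi_{T_{i_1}}\circ\cdots\circ\Phi_{T_{i_l}})(I_\cH)=P_\cH\,(\Phi_{V_{i_1}}\circ\cdots\circ\Phi_{V_{i_l}})(I_\cK)\big|_\cH .$$
Taking $i_1=\cdots=i_l=i$ gives $\Phi_{T_i}^{m}(I_\cH)=P_\cH\,\Phi_{V_i}^{m}(I_\cK)|_\cH$, while summing over subsets $A\subseteq\{1,\ldots,k\}$ with the signs $(-1)^{|A|}$ and expanding the defect maps (the compositions being order-independent by Lemma \ref{commutative}) gives $\Delta_T(I_\cH)=P_\cH\,\Delta_V(I_\cK)|_\cH$.

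From these two identities the "forward" implications are immediate. If $\Delta_V(I_\cK)=0$ then $\Delta_T(I_\cH)=0$; and if $V$ is pure, i.e. $\Phi_{V_i}^{m}(I_\cK)\to 0$ strongly, then $\langle\Phi_{T_i}^{m}(I_\cH)h,h\rangle=\langle\Phi_{V_i}^{m}(I_\cK)h,h\rangle\to 0$ for $h\in\cH$, so the positive operators $\Phi_{T_i}^{m}(I_\cH)$ tend strongly to $0$ and $T$ is pure.

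For the converse of (ii), suppose $\Delta_T(I_\cH)=0$. The identity above forces $\langle\Delta_V(I_\cK)h,h\rangle=0$ for all $h\in\cH$, hence $\cH\subseteq\ker\Delta_V(I_\cK)=\big(\Delta_V(I_\cK)\cK\big)^{\perp}$. Using co-invariance once more, for any word and any $h\in\cH$, $g\in\Delta_V(I_\cK)\cK$,
$$\langle h, V_{1,\alpha_1}\cdots V_{k,\alpha_k}g\rangle=\langle V_{k,\alpha_k}^{*}\cdots V_{1,\alpha_1}^{*}h,\, g\rangle=0,$$
because the first factor again lies in $\cH$, which is orthogonal to $g$. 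Since Theorem \ref{Wold-partic2} identifies $\cK^{(s)}$ as the closed span of all $V_{1,\alpha_1}\cdots V_{k,\alpha_k}(\Delta_V(I)\cK)$, this shows $\cH\perp\cK^{(s)}$, i.e. $\cH\subseteq\cK^{(\Delta)}$. As $\cK^{(\Delta)}$ is reducing for all $V_{i,s}$, it contains every $V_{1,\alpha_1}\cdots V_{k,\alpha_k}\cH$, and the minimality in Theorem \ref{iso-dil} gives $\cK=\cK^{(\Delta)}$; since $\Delta_V(I_{\cK^{(\Delta)}})=0$ we conclude $\Delta_V(I_\cK)=0$.

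For the remaining implication of (i), that $T$ pure forces $V$ pure, the cleanest route is to invoke the Berezin model: by Theorem \ref{dil-Berezin} a pure $T$ admits the minimal isometric dilation $S_{i,s}\otimes I_{\cD_T}$ on $\ell^{2}(\FF_{n_1}^{+}\times\cdots\times\FF_{n_k}^{+})\otimes\cD_T$, which is a pure $k$-tuple because $\Phi_{S_i\otimes I}^{m}(I)=\Phi_{S_i}^{m}(I)\otimes I\to 0$ strongly. By the uniqueness of the minimal dilation in Theorem \ref{iso-dil}, $V$ is isomorphic to it and hence pure (equivalently, $\cK=\cK^{(s)}$ in Corollary \ref{W-partic}). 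One can also avoid uniqueness and argue directly: by Proposition \ref{rel} the projection $\Phi_{V_i}^{m}(I)$ commutes with $V_{j,t}$ for $j\neq i$ and satisfies $\Phi_{V_i}^{m}(I)V_{i,\beta}=V_{i,\beta}\,\Phi_{V_i}^{m-|\beta|}(I)$ for $m>|\beta|$, so on a generating vector $V_{1,\beta_1}\cdots V_{k,\beta_k}h$ with $h\in\cH$ one gets $\|\Phi_{V_i}^{m}(I)V_{1,\beta_1}\cdots V_{k,\beta_k}h\|=\|\Phi_{V_i}^{m-|\beta_i|}(I)h\|\to 0$, whence purity of $V$ follows from minimality since the $\Phi_{V_i}^{m}(I)$ are projections. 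The main obstacle is precisely these two converse statements: passing from a property checked only on $\cH$ to a property of the operators on all of $\cK$ genuinely needs minimality, and it is the compression identity that makes the transfer possible while the reducing pieces $\cK^{(s)},\cK^{(\Delta)}$ of Theorem \ref{Wold-partic2} let the conclusion spread along the orbit $V_{1,\alpha_1}\cdots V_{k,\alpha_k}\cH$.
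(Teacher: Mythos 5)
Your proof is correct, and while it shares the paper's overall strategy (use the Wold decomposition $\cK=\cK^{(s)}\oplus\cK^{(\Delta)}$ of Theorem \ref{Wold-partic2} and propagate from $\cH$ to $\cK$ via minimality), the technical execution is genuinely different. The paper first proves an auxiliary purity criterion, namely that $T$ is pure iff $\text{\rm SOT-}\lim \Delta_T^{(p_1,\ldots,p_k)}=I$, which requires the small lemma that commuting positive contractions $I-A_i$ satisfy $(I-A_1)\cdots(I-A_k)=I$ only when all $A_i=0$; it then compresses the multi-parameter defects blockwise against the decomposition $\cK^{(s)}\oplus\cK^{(\Delta)}$ and reads off both statements from the resulting matrix identities. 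You instead work one index at a time: your compression identity $(\Phi_{T_{i_1}}\circ\cdots\circ\Phi_{T_{i_l}})(I_\cH)=P_\cH(\Phi_{V_{i_1}}\circ\cdots\circ\Phi_{V_{i_l}})(I_\cK)|_\cH$ (valid because $\cH$ is jointly co-invariant) gives the forward directions immediately; for (ii) you exploit that $\Delta_V(I_\cK)$ is a projection to get $\cH\perp\Delta_V(I_\cK)\cK$ and hence, via the explicit span description of $\cK^{(s)}$, that $\cH\subseteq\cK^{(\Delta)}$; and for (i) you either invoke the uniqueness of the minimal dilation (Theorem \ref{iso-dil}) to identify $V$ with the manifestly pure Berezin model $S\otimes I_{\cD_T}$, or push the projections $\Phi_{V_i}^m(I)$ through the generating vectors $V_{1,\beta_1}\cdots V_{k,\beta_k}h$ using Proposition \ref{rel} and the identity $\Phi_{V_i}^m(I)V_{i,\beta}=V_{i,\beta}\Phi_{V_i}^{m-|\beta|}(I)$. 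What your route buys is the avoidance of both the multi-parameter limit lemma and the explicit block computation; the uniqueness-of-dilation argument for (i) is arguably the cleanest of all. What the paper's route buys is a single computation that delivers (i) and (ii) simultaneously and makes the dimension count $\dim\cD_T$ visible. Both are complete; I see no gap in yours.
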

\begin{proof}
For each $(p_1,\ldots, p_k)\in \NN^k$, denote
$$
\Delta_T^{(p_1,\ldots, p_k)}:=(id -\Phi_{T_1}^{p_1})\circ\cdots \circ (id-\Phi_{T_k}^{p_k})(I).
$$
Due to Lemma \ref{commutative} and Lemma \ref{tech}, we have
$$
\Delta_T^{(p_1,\ldots, p_k)}=\prod_{i=1}^k(I-\Phi_{T_i}^{p_i}(I))
$$
and the order of the factors in the product above is irrelevant.
Note that, since $T_i$ is a row contraction,  $\{(id-\Phi_{T_i}^{p_i}(I)\}_{p_i=1}^\infty$ is an increasing sequence of positive operators and  $\Delta_T^{(p_1,\ldots, p_k)}$ is an increasing sequence of positive operators with respect to $p_1,p_2\ldots, p_k$.
In what follows, we show that $T$ is a pure element in ${\bf B}_\Lambda(\cH)$ if and only if
$$
\text{\rm SOT-}\lim_{(p_1,\ldots, p_k)\in \NN^k} \Delta_T^{(p_1,\ldots, p_k)}=I.
$$
Let $A_i:=\text{\rm SOT-}\lim_{p_i\to \infty}\Phi_{T_i}^{p_i}(I)$ and note that
$$
\text{\rm SOT-}\lim_{(p_1,\ldots, p_k)\in \NN^k} \Delta_T^{(p_1,\ldots, p_k)}=(I-A_1)\cdots (I-A_k).
$$
To prove our assertion, it is enough to show that $A_1=\cdots =A_k=0$ if and only if
$(I-A_1)\cdots (I-A_k)=I$. The direct implication is clear. To prove the converse, assume that
$(I-A_1)\cdots (I-A_k)=I$ and that there is $i\in \{1,\ldots, k\}$ such that $A_i\neq 0$. Without loss of generality, we may assume that $A_1\neq 0$. Note that  $0\leq I-A_i\leq I$ and the operators $A_1,\ldots, A_k$ are pairwise commuting.
 Since $A_1\neq 0$, there exists  $x\neq 0$  such that
$\left<I-A_1)x,x\right> <1$.  Consequently,   $\|(I-A_1)^{1/2}x\|<\|x\|^2$ and
\begin{equation*}
\begin{split}
\left<\prod_{i=2}^k(I-A_i) (I-A_1)x, x\right>&\leq
\left< (I-A_1)x, \prod_{i=2}^k(I-A_i)x\right>\\
&\leq \|(I-A_1)x\|\left\|\prod_{i=2}^k(I-A_i)\right\|\\
&\leq \|(I-A_1)^{1/2}\|\|(I-A_1)^{1/2}x\|\|x\|<\|x\|
\end{split}
\end{equation*}
which contradicts that $(I-A_1)\cdots (I-A_k)=I$, and proves our assertion.

Now, to prove  item (i), we use  Theorem \ref{iso-dil} and the Wold decomposition of Theorem \ref{Wold-partic2} to obtain
$$
\Delta_T^{(p_1,\ldots, p_k)}=P_\cH^\cK\left[\begin{matrix} \Delta_S^{(p_1,\ldots, p_k)}\otimes I_{\cD_T}&0\\0&0
\end{matrix} \right]|_\cH
$$
with respect to the orthogonal decomposition $\cK=\cK^{(s)}\oplus \cK^{(\Delta)}$.
Using the results above, one can see that  $T$ is a pure $k$-tuple in ${\bf B}_\Lambda(\cH)$ if and only if  $$I_\cH=P_\cH^\cK\left[\begin{matrix} I_{\ell^2(\FF_{n_1}\times \cdots \times \FF_{n_k}^+)}\otimes I_{\cD_T}&0\\0&0
\end{matrix} \right]|_\cH.
$$
If $h\in \cH$ and $h=h_0+h_1$, where $h_0\in \cK^{(s)}$ and $h_1\in \cK^{(\Delta)}$, then
the relation above is equivalent to $h_0+h_1=P^\cK_\cH h_0$, which is equivalent to $\cH\subset \cK^{(s)}$. Indeed, we have
$$
\|h_0\|^2\geq \|P^\cK_\cH h_0\|^2=\|h_0\|^2+\|h_1\|^2\geq |h_0\|^2,
$$
which is equivalent to $h_1=0$ and, therefore,
$\cH\subset \cK^{(s)}=\ell^2(\FF_{n_1}\times \cdots \times \FF_{n_k}^+)\otimes I_{\cD_T}$.
Since the latter subspace is reducing for all $V_{i,s}$ and $\cK$ is the smallest reducing subspace for $V_{i,s}$ which contains $\cH$, we must have
$\cK=\ell^2(\FF_{n_1}\times \cdots \times \FF_{n_k}^+)\otimes I_{\cD_T}$, which proves part (i).

To prove part (ii), note that
$$
\Delta_V(I_\cK)=\left[\begin{matrix} \Delta_S (I)\otimes I_{\cD_T}&0\\0&0
\end{matrix} \right]
$$
which implies that  $\Delta_V(I_\cK)=0$ if and only if  $\Delta_S (I)\otimes I_{\cD_T}$. Since
$\Delta_S (I)=P_\CC$, it is clear that  $\Delta_V(I_\cK)=0$ if and only if $\cD_T=\{0\}$, i.e. $\Delta_T(I)=0$.
The proof is complete.
\end{proof}

We can apply Theorem 1.3.1 from \cite{Arv} to our setting to obtain the following commutant lifting theorem for the $C^*$-algebra $C^*(\{T_{i,s}\})$. If $\cS\subset B(\cH)$, we denote by $\cS'$ the commutant of $\cS$.

\begin{corollary} Let $T=(T_1,\ldots, T_k)\in B_\Lambda(\cH)$ and let $V=(V_1,\ldots, V_k)\in B_\Lambda(\cK)$ be its minimal isometric dilation on a Hilbert $\cK\supset \cH$. If $X\in C^*(\{T_{i,s}\})^\prime$, then there is a unique $\widetilde X\in C^*(\{V_{i,s}\})^\prime$ such that
$$
\widetilde X P_\cH=P_\cH X \quad \text{ and } \quad  X=P_\cH \widetilde X |_\cH,
$$
where $P_\cH$ is the orthogonal projection of $\cK$ onto $\cH$. Moreover the map $X\mapsto \widetilde X$ is a $*$-isomorphism.
\end{corollary}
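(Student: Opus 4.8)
The plan is to realize the minimal isometric dilation as a minimal Stinespring dilation and then invoke Arveson's commutant theorem. First I would recall, from the proof of Theorem \ref{iso-dil} together with Corollary \ref{cp}, that $T$ determines a unital completely positive map $\Psi:C^*(\{S_{i,s}\})\to B(\cH)$ with $\Psi(p(\{S_{i,s}\},\{S_{i,s}^*\}))=p(\{T_{i,s}\},\{T_{i,s}^*\})$, and that the minimal dilation is exactly the one produced by the minimal Stinespring representation $\pi:C^*(\{S_{i,s}\})\to B(\cK)$ of $\Psi$: namely $V_{i,s}=\pi(S_{i,s})$, $\Psi(\cdot)=P_\cH\pi(\cdot)|_\cH$, with $\cH$ co-invariant for the $V_{i,s}$ and $\cK=\ol{\text{\rm span}}\{\pi(a)\cH: a\in C^*(\{S_{i,s}\})\}$. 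In particular $C^*(\{V_{i,s}\})=\pi(C^*(\{S_{i,s}\}))$, so the commutant appearing on the dilation side is precisely $\pi(C^*(\{S_{i,s}\}))'$.

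The key preliminary step is to identify the commutant $C^*(\{T_{i,s}\})'$ with the commutant $\Psi(\cA)'$ of the image of the completely positive map, where I abbreviate $\cA:=C^*(\{S_{i,s}\})$. On one hand $T_{i,s}=\Psi(S_{i,s})$ and $T_{i,s}^*=\Psi(S_{i,s}^*)$ lie in $\Psi(\cA)$, so $\Psi(\cA)'\subseteq\{T_{i,s},T_{i,s}^*\}'=C^*(\{T_{i,s}\})'$. For the reverse inclusion I would prove $\Psi(\cA)\subseteq C^*(\{T_{i,s}\})$. Using the normal form established before Proposition \ref{uniqueness}, every element of $\cA$ is a norm-limit of linear combinations of monomials $S_{1,\alpha_1}\cdots S_{k,\alpha_k}S_{1,\beta_1}^*\cdots S_{k,\beta_k}^*$; on such a monomial the co-invariance of $\cH$ (which gives $(V_{1,\beta_1}\cdots V_{k,\beta_k})^*\cH\subseteq\cH$, after reordering the adjoint string up to a unimodular constant via the $\Lambda$-relations) lets me split the compression and obtain $\Psi(S_{1,\alpha_1}\cdots S_{k,\alpha_k}S_{1,\beta_1}^*\cdots S_{k,\beta_k}^*)=T_{1,\alpha_1}\cdots T_{k,\alpha_k}T_{1,\beta_1}^*\cdots T_{k,\beta_k}^*\in C^*(\{T_{i,s}\})$, the same unimodular constant cancelling because $T$ obeys the identical $\Lambda$-commutation relations. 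Linearity and continuity then yield $\Psi(\cA)\subseteq C^*(\{T_{i,s}\})$, whence $C^*(\{T_{i,s}\})'\subseteq\Psi(\cA)'$ and the two commutants coincide.

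With these identifications in place, I would apply Theorem 1.3.1 of \cite{Arv} to the minimal Stinespring dilation $(\pi,\cK)$ of $\Psi$, which asserts that the compression $\widetilde X\mapsto P_\cH\widetilde X|_\cH$ is a $*$-isomorphism of $\pi(\cA)'$ onto $\Psi(\cA)'$. Rewriting the source as $C^*(\{V_{i,s}\})'$ and the target as $C^*(\{T_{i,s}\})'$ gives the asserted $*$-isomorphism $\widetilde X\mapsto X:=P_\cH\widetilde X|_\cH$; its inverse sends $X$ to the operator determined on the dense set $\pi(\cA)\cH$ by $\widetilde X\pi(a)h=\pi(a)Xh$, which lies in $\pi(\cA)'$, leaves $\cH$ invariant, and therefore satisfies both $\widetilde X P_\cH=P_\cH X$ and $X=P_\cH\widetilde X|_\cH$. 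Uniqueness of $\widetilde X$ is immediate from the minimality $\cK=\ol{\text{\rm span}}\{\pi(a)\cH\}$.

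The main obstacle is the second paragraph: matching the commutant $C^*(\{T_{i,s}\})'$ of the generated $C^*$-algebra with the commutant $\Psi(\cA)'$ of the (non-multiplicative) image of the completely positive map. This is exactly where the explicit normal form for the elements of $C^*(\{S_{i,s}\})$ and the co-invariance of $\cH$ are indispensable, since they are what force $\Psi$ to act multiplicatively on monomials and thereby place $\Psi(\cA)$ inside $C^*(\{T_{i,s}\})$; without this step Arveson's theorem would only deliver an isomorphism onto $\Psi(\cA)'$, which a priori could be strictly smaller than $C^*(\{T_{i,s}\})'$.
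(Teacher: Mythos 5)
Your proposal is correct and follows exactly the route the paper intends: the paper gives no argument beyond the citation of Theorem 1.3.1 of \cite{Arv}, and you supply the standard details (realizing the minimal isometric dilation as the minimal Stinespring dilation of $\Psi$, identifying $C^*(\{T_{i,s}\})'$ with $\Psi(C^*(\{S_{i,s}\}))'$ via the normal form \eqref{polynomial}, constructing the lift $\widetilde X\pi(a)h=\pi(a)Xh$, and deriving uniqueness from minimality). One caveat: the form in which you quote Arveson's theorem --- that the compression $\widetilde X\mapsto P_\cH\widetilde X|_\cH$ is a $*$-isomorphism of $\pi(\cA)'$ \emph{onto} $\Psi(\cA)'$ --- is stronger than what holds for a general completely positive map (for a faithful state on $M_2$ the compression of $\pi(\cA)'$ is not even injective); what is automatic is that your explicit lift is an injective $*$-homomorphism of $C^*(\{T_{i,s}\})'$ \emph{into} $C^*(\{V_{i,s}\})'$, and if one wants surjectivity onto $C^*(\{V_{i,s}\})'$ one should add the observation that the compression is injective on $C^*(\{V_{i,s}\})'$, which follows here from the co-invariance of $\cH$ under the $V_{i,s}$ together with the fact that elements of $C^*(\{V_{i,s}\})$ are limits of sums of monomials $V_{1,\alpha_1}\cdots V_{k,\alpha_k}V_{1,\beta_1}^*\cdots V_{k,\beta_k}^*$, so that $\langle \widetilde Y V_{1,\alpha_1}\cdots V_{k,\alpha_k}h, V_{1,\beta_1}\cdots V_{k,\beta_k}g\rangle$ reduces, up to unimodular constants, to matrix entries of $P_\cH\widetilde Y|_\cH$.
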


In what follows, we need the following result.

 \begin{theorem}  \label{compact} Let $S=(S_1,\ldots, S_k)$ be the universal model associated with  the regular $\Lambda$-polyball.
    Then the closed
two-sided ideal generated by the projection
$\Delta_S(I):=\prod_{i=1}^k\left(I -\sum_{s=1}^{n_i} S_{i,s}S_{i,s}^*\right)$ in the $C^*$-algebra $C^*(\{S_{i,s}\})$ coincides with the ideal of all  compact operators in $B(\ell^2(\FF_{n_1}^+\times\cdots \times \FF_{n_k}^+))$.
\end{theorem}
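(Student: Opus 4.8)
The plan is to identify the closed two-sided ideal $\cJ$ generated by $\Delta_S(I):=\prod_{i=1}^k\left(I -\sum_{s=1}^{n_i} S_{i,s}S_{i,s}^*\right)$ with the ideal $\boldsymbol\cK$ of compact operators on $\ell^2(\FF_{n_1}^+\times\cdots \times \FF_{n_k}^+)$ by a double inclusion. The key structural fact is that $\Delta_S(I)$ is a rank-one projection. Indeed, by the computation in the proof of Theorem \ref{standard} (with $\cL=\CC$), we have
$$
\Delta_S(I)=(id-\Phi_{S_1})\circ\cdots \circ (id-\Phi_{S_k})(I)=P_\CC,
$$
the orthogonal projection of $\ell^2(\FF_{n_1}^+\times\cdots \times \FF_{n_k}^+)$ onto $\CC\chi_{(g_0^1,\ldots,g_0^k)}$. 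Here I use that the $\{1,\ldots,k\}$-wandering subspace is precisely $\CC\chi_{(g_0^1,\ldots,g_0^k)}$ and that the factors commute by Proposition \ref{commutativity} and Lemma \ref{tech}.

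First I would prove the inclusion $\cJ\subset \boldsymbol\cK$. Since $\Delta_S(I)=P_\CC$ is a rank-one (hence compact) projection and $\boldsymbol\cK$ is a two-sided ideal in $B(\ell^2(\FF_{n_1}^+\times\cdots \times \FF_{n_k}^+))$ that is norm-closed, the entire two-sided ideal generated by $P_\CC$ inside $C^*(\{S_{i,s}\})$ lies in $\boldsymbol\cK$. This direction is immediate once the rank-one identification is in hand.

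Next I would prove the reverse inclusion $\boldsymbol\cK\subset \cJ$. The strategy is to exhibit every rank-one operator $\chi_{\boldsymbol\beta}\otimes\chi_{\boldsymbol\alpha}^*$ (in the standard basis, where $\boldsymbol\alpha,\boldsymbol\beta\in\FF_{n_1}^+\times\cdots\times\FF_{n_k}^+$) as an element of $\cJ$, since the compact operators are the norm-closure of the span of these matrix units. Using the definition of the standard shift, there are unimodular constants $c,d\in\TT$ such that
$$
{\bf S}_{\boldsymbol\beta}\,\Delta_S(I)\,{\bf S}_{\boldsymbol\alpha}^*=c\,d\,\bigl(\chi_{\boldsymbol\beta}\otimes\chi_{\boldsymbol\alpha}^*\bigr),
$$
where ${\bf S}_{\boldsymbol\beta}=S_{1,\beta_1}\cdots S_{k,\beta_k}$ and ${\bf S}_{\boldsymbol\alpha}^*=S_{k,\alpha_k}^*\cdots S_{1,\alpha_1}^*$; this follows because $\Delta_S(I)=P_\CC$ sends $\chi_{\boldsymbol\alpha}$ to $0$ unless $\boldsymbol\alpha=(g_0^1,\ldots,g_0^k)$, while ${\bf S}_{\boldsymbol\alpha}^*\chi_{\boldsymbol\alpha}$ and ${\bf S}_{\boldsymbol\beta}\chi_{(g_0^1,\ldots,g_0^k)}$ produce $\chi_{(g_0^1,\ldots,g_0^k)}$ and $\chi_{\boldsymbol\beta}$ up to unimodular scalars. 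Since the left-hand side is a product of an element of the generating set with $\Delta_S(I)$, it belongs to the two-sided ideal $\cJ$; hence every matrix unit lies in $\cJ$, and by norm-closure $\boldsymbol\cK\subset\cJ$.

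The main obstacle I anticipate is the careful bookkeeping of the unimodular $\boldsymbol\lambda$-constants produced by the $\Lambda$-commutation relations when computing ${\bf S}_{\boldsymbol\alpha}^*\chi_{\boldsymbol\alpha}$ and ${\bf S}_{\boldsymbol\beta}\chi_{(g_0^1,\ldots,g_0^k)}$, but since these scalars are all of modulus one they never obstruct the argument: they only rescale the matrix unit by a phase, which is absorbed when we close the span. The conceptual heart is simply the observation that $\Delta_S(I)$ is the rank-one projection onto the vacuum vector, after which both inclusions are routine, and combining them yields $\cJ=\boldsymbol\cK$, completing the proof.
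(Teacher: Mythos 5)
Your proof is correct and rests on the same key observation as the paper's: $\Delta_S(I)=P_\CC$ is the rank-one projection onto the vacuum vector $\chi_{(g_0^1,\ldots,g_0^k)}$, so the inclusion of the ideal into the compacts is immediate, and conjugating $P_\CC$ by words in the generators produces rank-one operators inside the ideal. The only divergence is in the reverse inclusion, where the paper approximates an arbitrary rank-one operator $\left<\cdot,p\right>q$ in operator norm by $q_m(\{S_{i,s}\})P_\CC\, p_m(\{S_{i,s}\})^*$ via explicit Cauchy estimates, whereas you obtain the matrix units $\chi_{\boldsymbol\beta}\otimes\chi_{\boldsymbol\alpha}^*={\bf S}_{\boldsymbol\beta}\,\Delta_S(I)\,{\bf S}_{\boldsymbol\alpha}^*$ exactly up to a unimodular phase and then invoke the norm-density of their span in the compacts --- a slightly more economical finish that reaches the same conclusion.
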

\begin{proof}
Let  $q=\sum_{\alpha_i\in \FF_{n_i}^+} a_{(\alpha_1,\ldots, \alpha_k)} \chi_{(\alpha_1,\ldots, \alpha_k)}$  be  a vector in
 $ \ell^2(\FF_{n_1}^+\times\cdots \times \FF_{n_k}^+)$ and consider the polynomial
$q_m(\{S_{i,s}\}):= \sum_{\alpha_i\in \FF_{n_i}^+, |\alpha_1|+\cdots +|\alpha_k|\leq m} a_{(\alpha_1,\ldots, \alpha_k)} \chi_{(\alpha_1,\ldots, \alpha_k)}$. It is easy to see that
\begin{equation}
\label{qm}
q_m(\{S_{i,s}\})\chi_{(g_0^1,\ldots, g_0^k)}\to q, \quad \text{ as } m\to \infty.
\end{equation}
Similarly, let $p\in  \ell^2(\FF_{n_1}^+\times\cdots \times \FF_{n_k}^+)$ and, as above, let
$p_m(\{S_{i,s}\})$ be the associated operator such that
\begin{equation}
\label{pm}
p_m(\{S_{i,s}\})\chi_{(g_0^1,\ldots, g_0^k)}\to p, \quad \text{ as } m\to \infty.
\end{equation}
In what follows, we show that  the operator
$q_m(\{S_{i,s}\}) P_\CC p_m(\{S_{i,s}\})^*$  has rank one and it  is in the
 $C^*$-algebra  $C^*(\{S_{i,s}\})$.
Indeed, note that, if $f\in  \ell^2(\FF_{n_1}^+\times\cdots \times \FF_{n_k}^+)$, then
$$
P_\CC p_m(\{S_{i,s}\})^*f=\left<f, p_m(\{S_{i,s}\}) \chi_{(g_0^1,\ldots, g_0^k)}\right>,
$$
which implies
\begin{equation}
\label{qPp}
q_m(\{S_{i,s}\})P_\CC p_m(\{S_{i,s}\})^*f=\left<f, p_m(\{S_{i,s}\}) \chi_{(g_0^1,\ldots, g_0^k)}\right>q_m(\{S_{i,s}\})\chi_{(g_0^1,\ldots, g_0^k)}.
\end{equation}
Using the fact that
$
P_\CC=(id-\Phi_{S_1})\circ \cdots \circ (id-\Phi_{S_k})(I),
$
one can see that  $q_m(\{S_{i,s}\}) P_\CC p_m(\{S_{i,s}\})^*$  is a  rank one operator  in    $C^*(\{S_{i,s}\})$.

Now, we prove that
$$
q_m(\{S_{i,s}\}) P_\CC p_m(\{S_{i,s}\})^*\to A,\quad \text{ as } m\to\infty,
$$
in the operator norm topology, where $A$ is the rank one operator $f\mapsto \left<f,p\right>q$.
Indeed,  using relation \eqref{qPp} and setting
$$
\Omega_{p_n,q_n}(f):=\left<f, p_m(\{S_{i,s}\}) \chi_{(g_0^1,\ldots, g_0^k)}\right>q_m(\{S_{i,s}\})\chi_{(g_0^1,\ldots, g_0^k)},
$$
we have
\begin{equation*}
\begin{split}
&\left\|q_n(\{S_{i,s}\})P_\CC p_n(\{S_{i,s}\})^*f-q_m(\{S_{i,s}\})P_\CC p_m(\{S_{i,s}\})^*f\right\|\\
&\qquad= \|\Omega_{p_n,q_n}(f)-\Omega_{p_m,q_m}(f)\|\\
&\qquad=\|\Omega_{p_n,q_n}(f)-\Omega_{p_m,q_n}(f)+\Omega_{p_m,q_n}(f)-\Omega_{p_m,q_m}(f)\|
\\
&\qquad\leq \|\Omega_{p_n-p_m,q_n}(f)\|+ \|\Omega_{p_m,q_n-q_m}(f)\|\\
&\qquad\leq  \|f\|\|(p_n(\{S_{i,s}\})-p_m(\{S_{i,s}\})) \chi_{(g_0^1,\ldots, g_0^k)}\|
\| q_m(\{S_{i,s}\}) \chi_{(g_0^1,\ldots, g_0^k)}\|\\
&\qquad\qquad +
\|f\|\|(q_n(\{S_{i,s}\})-q_m(\{S_{i,s}\})) \chi_{(g_0^1,\ldots, g_0^k)}\|
\| p_m(\{S_{i,s}\}) \chi_{(g_0^1,\ldots, g_0^k)}\|\\
&\qquad \leq
\|f\|\|p\|\|(p_n(\{S_{i,s}\})-p_m(\{S_{i,s}\})) \chi_{(g_0^1,\ldots, g_0^k)}\|
+ \|f\|\|q\|\|(q_n(\{S_{i,s}\})-q_m(\{S_{i,s}\})) \chi_{(g_0^1,\ldots, g_0^k)}\|.
\end{split}
\end{equation*}
Consequently, due to  relations \eqref{qm} and \eqref{pm}, we deduce that the sequence
$\{q_n(\{S_{i,s}\})P_\CC p_n(\{S_{i,s}\})^*\}_{n=1}^\infty$ is  Cauchy in the operator norm and therefore convergent in norm. Moreover,  relation \eqref{qPp} implies
$$
q_m(\{S_{i,s}\})P_\CC p_m(\{S_{i,s}\})^*f\to \left<f,p\right>q
$$
for any $f\in  \ell^2(\FF_{n_1}^+\times\cdots \times \FF_{n_k}^+)$. Combining hese results, we conclude  that $q_m(\{S_{i,s}\})P_\CC p_m(\{S_{i,s}\})^*$ converges in the operator norm to the rank one operator $\left<\cdot,  p\right>q$, as $m\to \infty$.
Therefore, all rank one operators on the Hilbert space $\ell^2(\FF_{n_1}^+\times\cdots \times \FF_{n_k}^+)$ are in    $C^*(\{S_{i,s}\})$ and, consequently,  all compact operators in
$B( \ell^2(\FF_{n_1}^+\times\cdots \times \FF_{n_k}^+))$ are in  $C^*(\{S_{i,s}\})$. The proof is complete.
\end{proof}

Let $C_\Delta^*(\{V_{i,s}\})$ be the universal algebra  generated by a $k$-tuple $V=(V_1,\ldots, V_k)$ of doubly $\Lambda$-commuting row isometries such
$\Delta_V(I)=0$.

\begin{theorem}  Let $S=(S_1,\ldots, S_k)$ be  the standard $k$-tuple of doubly $\Lambda$-commuting row  isometries  $S_i=[S_{i,1}\cdots S_{i,n_i}]$ and let  $\boldsymbol\cK$ be the ideal of all  compact operators in $B(\ell^2(\FF_{n_1}^+\times\cdots \times \FF_{n_k}^+))$.  Then the sequence
of $C^*$-algebras
$$
0\to \boldsymbol\cK\to C^*(\{S_{i,s}\})\to C_\Delta^*(\{V_{i,s}\})\to 0
$$
is exact.
\end{theorem}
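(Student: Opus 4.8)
The plan is to follow the strategy of the proof of Theorem \ref{exact1}, replacing the Cuntz condition there by the weaker condition $\Delta_V(I)=0$, the ideal $\cJ_\Lambda$ by the ideal $\boldsymbol\cK$ of compact operators, and invoking Theorem \ref{compact} at the decisive step. First I would construct a universal $k$-tuple $\widetilde V=(\widetilde V_1,\ldots,\widetilde V_k)$ realizing $C_\Delta^*(\{V_{i,s}\})$: letting $\{V^\omega\}$ range over (a set of representatives of) the irreducible $k$-tuples of doubly $\Lambda$-commuting row isometries with $\Delta_{V^\omega}(I)=0$, set $\widetilde V_{i,s}:=\bigoplus_\omega V^\omega_{i,s}$. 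Exactly as in Theorem \ref{exact1}, a GNS argument shows that for every $k$-tuple $V$ of doubly $\Lambda$-commuting row isometries with $\Delta_V(I)=0$ one has the von Neumann type inequality $\|p(\{V_{i,s}\},\{V_{i,s}^*\})\|\le\|p(\{\widetilde V_{i,s}\},\{\widetilde V_{i,s}^*\})\|$; hence $C^*(\{\widetilde V_{i,s}\})$ is $*$-isomorphic to the universal algebra $C_\Delta^*(\{V_{i,s}\})$.

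Next I would produce the surjection onto this algebra and control its kernel. Since $\widetilde V$ is a $k$-tuple of doubly $\Lambda$-commuting row isometries, Theorem \ref{C*} furnishes a $*$-representation $\pi:C^*(\{S_{i,s}\})\to C^*(\{\widetilde V_{i,s}\})$ with $\pi(S_{i,s})=\widetilde V_{i,s}$. As $\pi$ is a $*$-homomorphism,
$$\pi(\Delta_S(I))=\prod_{i=1}^k\Bigl(I-\sum_{s=1}^{n_i}\widetilde V_{i,s}\widetilde V_{i,s}^*\Bigr)=\Delta_{\widetilde V}(I)=0,$$
so $\ker\pi$ is a closed two-sided ideal containing $\Delta_S(I)$. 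By Theorem \ref{compact} the ideal generated by $\Delta_S(I)$ is precisely $\boldsymbol\cK$, whence $\boldsymbol\cK\subseteq\ker\pi$ and $\pi$ factors through a surjective $*$-homomorphism $\psi:C^*(\{S_{i,s}\})/\boldsymbol\cK\to C^*(\{\widetilde V_{i,s}\})$ with $\psi\bigl(p(\{S_{i,s}\},\{S_{i,s}^*\})+\boldsymbol\cK\bigr)=p(\{\widetilde V_{i,s}\},\{\widetilde V_{i,s}^*\})$; in particular $\|p(\{\widetilde V_{i,s}\},\{\widetilde V_{i,s}^*\})\|\le\|p(\{S_{i,s}\},\{S_{i,s}^*\})+\boldsymbol\cK\|$.

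For the reverse estimate I would use the quotient tuple itself. Let $q:C^*(\{S_{i,s}\})\to C^*(\{S_{i,s}\})/\boldsymbol\cK$ be the quotient map and put $\widehat S_{i,s}:=q(S_{i,s})$. Then $\widehat S=(\widehat S_1,\ldots,\widehat S_k)$ is a $k$-tuple of doubly $\Lambda$-commuting row isometries in the quotient, and, since $\Delta_S(I)=P_\CC$ is a rank-one projection and therefore lies in $\boldsymbol\cK$, one has $\Delta_{\widehat S}(I)=q(\Delta_S(I))=0$. Applying the universality inequality for $\widetilde V$ to $V=\widehat S$ gives $\|p(\{S_{i,s}\},\{S_{i,s}^*\})+\boldsymbol\cK\|=\|p(\{\widehat S_{i,s}\},\{\widehat S_{i,s}^*\})\|\le\|p(\{\widetilde V_{i,s}\},\{\widetilde V_{i,s}^*\})\|$. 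Combined with the inequality of the previous paragraph this forces equality of the two norms, so $\psi$ is isometric on polynomials and extends to a $*$-isomorphism $C^*(\{S_{i,s}\})/\boldsymbol\cK\cong C_\Delta^*(\{V_{i,s}\})$, yielding exactness of the stated sequence. The main obstacle I anticipate is the first paragraph, namely establishing rigorously that $\widetilde V$ is universal among doubly $\Lambda$-commuting tuples with $\Delta_V(I)=0$: this requires the GNS/irreducible-decomposition argument together with checking that the condition $\Delta_V(I)=0$ passes appropriately to subrepresentations and direct sums. Once the universal property and Theorem \ref{compact} are in hand, the factorization of $\pi$ and the two-sided norm comparison are routine.
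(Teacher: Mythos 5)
Your proposal is correct and follows essentially the same route as the paper's proof: the same universal tuple $\widetilde V$ built from irreducible doubly $\Lambda$-commuting tuples with $\Delta_{V^\omega}(I)=0$, the same GNS-based von Neumann inequality to establish universality, the same use of Theorem \ref{compact} to identify the ideal generated by $\Delta_S(I)$ with $\boldsymbol\cK$, and the same two-sided norm comparison via the quotient tuple $\widehat S$ to show $\psi$ is isometric. Your observation that $\Delta_S(I)=P_\CC$ is rank one, hence compact, so that $\Delta_{\widehat S}(I)=0$, is exactly the point the paper relies on as well.
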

\begin{proof}  The proof is similar to that of Theorem \ref{exact1}. However, we outline  it for completeness.
Consider the collection of all  $k$-tuples $V^\omega=(V^\omega_1,\ldots, V^\omega_k)$
of doubly $\Lambda$-commuting row isometries $V^\omega_i=(V^\omega_{i,1}\cdots V^\omega_{i,n_i}]$ such that $\Delta_{V^\omega}(I)=0$ and  $C^*(\{V^\omega_{i,s}\})$ is irreducible. We define the $k$-tuple
$\widetilde V=(\widetilde V_1,\ldots, \widetilde V_k)$ with
 $\widetilde V_i=[\widetilde V_{i,1}\cdots \widetilde V_{i,n_i}]$ by setting

 $$
 \widetilde V_{i,s}:=\bigoplus_{\omega}  V^\omega_{i,s},\qquad i\in \{1,\ldots, k\}, s\in \{1,\ldots, n_i\}.
 $$
  One can prove that  $C^*(\{\widetilde V_{i,s}\})$ is isomorphic to $C_\Delta^*(\{V_{i,s}\})$,    the universal algebra  generated by a $k$-tuple $V=(V_1,\ldots, V_k)$ of doubly $\Lambda$-commuting row isometries such
$\Delta_V(I)=0$.   Let $V=(V_1,\ldots, V_k)$  be another $k$-tuple of doubly $\Lambda$-commuting  row isometries such that $\Delta_V(I)=0$. It is enough to show that there is a surjective $*$-homomorphism
$\varphi:C^*(\{\widetilde V_{i,s}\})\to C^*(\{ V_{i,s}\})$ such that
$\varphi(\widetilde V_{i,s})= V_{i,s}$ for any $ i\in \{1,\ldots, k\}$ and $ s\in \{1,\ldots, n_i\}$.
For this, it suffices to show that
\begin{equation}
\label{ine-vN3}
\|p(\{V_{i,s}\}, \{V_{i,s}^*\})\|\leq  \|p(\{\widetilde V_{i,s}\}, \{\widetilde V_{i,s}^*\})\|
\end{equation}
for any noncommutative polynomial  in $V_{i,s}$ and $V_{i,s}^*$. The proof is similar to that of inequality \eqref{ine-vN}.

According to Theorem \ref{compact}, the closed
two-sided ideal  $\cJ_\Delta$ generated by the projection
$\Delta_S(I):=\prod_{i=1}^k\left(I -\sum_{s=1}^{n_i} S_{i,s}S_{i,s}^*\right)$ in the $C^*$-algebra $C^*(\{S_{i,s}\})$ coincides with the ideal of all  compact operators in $B(\ell^2(\FF_{n_1}^+\times\cdots \times \FF_{n_k}^+))$.
The next step is to prove that  $C^*(\{\widetilde V_{i,s}\})$ is isomorphic to the $C^*$-algebra
$C^*(\{ S_{i,s}\})/_{{\cJ}_\Delta}$. According to Theorem \ref{C*},  there is a $*$-representation
$\pi: C^*(\{ S_{i,s}\})\to C^*(\{ \widetilde V_{i,s}\})$ such that
$\pi(S_{i,s})=\widetilde V_{i,s}$  for any $ i\in \{1,\ldots, k\}$ and $ s\in \{1,\ldots, n_i\}$.
Since $\pi(\cJ_\Delta)=0$, $\pi$ induces  a $*$-representation
$\psi:C^*(\{ S_{i,s}\})/_{{\cJ}_\Delta} \to C^*(\{\widetilde V_{i,s}\}) $ such that
 $$
\psi\left(p(\{S_{i,s}\}, \{S_{i,s}^*\})+\cJ_\Delta\right)=p(\{\widetilde V_{i,s}\}, \{\widetilde V_{i,s}^*\}).
$$
Therefore, $\psi$ is surjective and
\begin{equation}
\label{ine-vN4}
\|p(\{\widetilde V_{i,s}\}, \{\widetilde V_{i,s}^*\})\|\leq \|p(\{S_{i,s}\}, \{S_{i,s}^*\})+\cJ_\Delta\|.
\end{equation}
On the other hand, let $q:C^*(\{ S_{i,s}\})\to C^*(\{ S_{i,s}\})/_{{\cJ}_\Delta}$ be the canonical quotient map and note that
$\widehat S=(\widehat S_1,\ldots, \widehat S_k)$ with $\widehat S_{i,s}:=[q(S_{i,1}\cdots q(S_{i,n_i})]$ is a $k$-tuple of doubly $\Lambda$-commuting  row isometries in the $C^*$-algebra  $C^*(\{ S_{i,s}\})/_{{\cJ}_\Lambda}$ such that $\Delta_{\widehat S}(I)=0$. Due to the  inequality \eqref{ine-vN3}, we have
$$
\|p(\{q(S_{i,s})\}, \{q(S_{i,s})^*\})\|\leq  \|p(\{\widetilde V_{i,s}\}, \{\widetilde V_{i,s}^*\})\|
$$
which together with  inequality \eqref{ine-vN4} implies
$$
\|p(\{\widetilde V_{i,s}\}, \{\widetilde V_{i,s}^*\})\|= \|p(\{S_{i,s}\}, \{S_{i,s}^*\})+\cJ_\Delta\|.
$$
Consequently, $\psi$ is an isomorphism of $C^*$-algebras.  The proof is complete.
\end{proof}

Now, we consider the particular case when $n_1=\cdots =n_k=1$ and $\Lambda_{ij}=\lambda_{ij}\in \TT$ with
 $\lambda_{ji}=\bar \lambda_{ij}$. If $m\in \ZZ$, we set $m^+:=\max\{m,0\}$ and $m^-:=\max\{-m,0\}$.

\begin{theorem} \label{Brehmer-extension} If $T=(T_1,\ldots, T_k)$, $T_i\in B(\cH)$,  is a $k$-tuple in the regular $\Lambda$-polyball, then there is a
Hilbert space $\widetilde\cK\supset \cH$ and a $k$-tuple  $U=(U_1,\ldots, U_k)$, $U_i\in B(\cK)$, of doubly  $\Lambda$-commuting  unitary operators such that the following statements hold.
\begin{enumerate}
\item[(i)] The $k$-tuple $U=(U_1,\ldots, U_k)$ is a unitary dilation of $T=(T_1,\ldots, T_k)$, i.e.
$$
P_\cH U_{i_1}^{p_1}\cdots U_{i_m}^{p_m}|_\cH=
T_{i_1}^{p_1}\cdots T_{i_m}^{p_m}
$$
for any $i_1,\ldots, i_m\in \{1,\ldots, k\}$ and $p_1,\ldots p_m\in \NN$, $m\in \NN$.

\item[(ii)] For any $(m_1,\ldots, m_k)\in \ZZ^k$
\begin{equation*}
P_\cH\left(  U_{1}^{m_1^-}\cdots U_{k}^{m_k^-} {U_{1}^*}^{m_1^+}\cdots {U_{k}^*}^{m_k^+}\right)|_\cH\\
=  T_{1}^{m_1^-}\cdots T_{k}^{m_k^-} {T_{1}^*}^{m_1^+}\cdots {T_{k}^*}^{m_k^+}.
\end{equation*}

\item[(iii)] The dilation $U=(U_1,\ldots, U_k)$ is minimal, i.e.
$$\widetilde\cK=\overline{\rm span}\{U_{i_1}^{p_1}\cdots U_{i_m}^{p_m} \cH: \  p_1,\ldots p_m\in \ZZ, i_1,\ldots, i_m\in \{1,\ldots, k\}, m\in \NN\}.
$$

\item[(iv)]
The minimal  unitary dilation $U=(U_1,\ldots, U_k)$ is unique up to an isomorphism.

\item[(v)]  The $k$-tuple $V=(V_1,\ldots, V_k)$, where $V_i:=U_i|_\cK$ and
$$
\cK=\overline{\rm span}\{U_{i_1}^{p_1}\cdots U_{i_m}^{p_m} \cH: \  p_1,\ldots p_m\in \NN, i_1,\ldots, i_m\in \{1,\ldots, k\}, m\in \NN\}.
$$
is a minimal doubly $\Lambda$-commuting  isometric dilation of  $T=(T_1,\ldots, T_k)$ satisfying the relation
$$T_i^*=V_i^*|_\cH,\qquad i\in \{1,\ldots, k\}.
$$
\end{enumerate}
\end{theorem}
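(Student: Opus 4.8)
The plan is to first produce a minimal doubly $\Lambda$-commuting isometric dilation and then to extend it, all at once, to a $k$-tuple of $\Lambda$-commuting unitaries. Since $n_1=\cdots=n_k=1$, a row isometry is a single isometry, a Cuntz row isometry is a unitary, and a pure row isometry is a shift; moreover for unitaries the relation $U_iU_j=\lambda_{ij}U_jU_i$ already forces the doubly $\Lambda$-commuting relation $U_i^*U_j=\overline{\lambda_{ij}}U_jU_i^*$, so it suffices to produce $\lambda$-commuting unitaries. First I would apply Theorem \ref{iso-dil} to $T\in{\bf B}_\Lambda(\cH)$ to obtain a Hilbert space $\cK\supset\cH$ and a $k$-tuple $V=(V_1,\ldots,V_k)$ of doubly $\Lambda$-commuting isometries with $T_i^*=V_i^*|_\cH$ and $\cK=\overline{\text{span}}\{V_1^{p_1}\cdots V_k^{p_k}\cH:p_i\in\NN\}$. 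This already yields the candidate for part (v).

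Second --- the heart of the argument --- I would extend $V$ to unitaries. Using the Wold decomposition of Theorem \ref{Wold2} for $V$, write $\cK=\bigoplus_A\cK_A$, where on $\cK_A$ each $V_i$ ($i\in A$) is a shift and each $V_i$ ($i\in A^c$) is a unitary. By Theorem \ref{standard3} each $V|_{\cK_A}$ is unitarily equivalent to the standard $k$-tuple on $\ell^2(\prod_{i\in A}\FF_1^+)\otimes\cL_A\cong\ell^2(\NN^{|A|})\otimes\cL_A$, on which the $V_i$ ($i\in A$) act as weighted unilateral shifts in the $i$-th coordinate, the weights being the unimodular products of the $\lambda_{ij}$'s, while the $V_j$ ($j\in A^c$) act unitarily without changing the $\NN^{|A|}$-support. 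I would then replace $\ell^2(\NN^{|A|})$ by $\ell^2(\ZZ^{|A|})$, extend each weighted unilateral shift to the corresponding weighted bilateral shift, and keep the unitary factors, obtaining unitaries ${\bf U}_i^A$ on $\widetilde\cK_A:=\ell^2(\ZZ^{|A|})\otimes\cL_A\supset\cK_A$. Since the defining weights are exactly the same unimodular scalars, the relations ${\bf U}_i^A{\bf U}_j^A=\lambda_{ij}{\bf U}_j^A{\bf U}_i^A$ persist. Setting $\widetilde\cK:=\bigoplus_A\widetilde\cK_A$ and $U_i:=\bigoplus_A{\bf U}_i^A$ gives $\lambda$-commuting unitaries for which $\cK$ is invariant and $U_i|_\cK=V_i$.

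Third, I would read off the compression formulas. Because $\cK$ is $U$-invariant, any positive word satisfies $U_{i_1}^{p_1}\cdots U_{i_m}^{p_m}|_\cK=V_{i_1}^{p_1}\cdots V_{i_m}^{p_m}$, and then the co-invariance $V_i^*\cH\subseteq\cH$ (equivalently $P_\cH V_i=P_\cH V_iP_\cH$) collapses the compression to the $T$-word, giving (i). For (ii) the key observation is that in $(m_1,\ldots,m_k)\in\ZZ^k$ the factors $U_i^{m_i^-}$ (positive powers, indices with $m_i<0$) and $U_i^{*m_i^+}$ (indices with $m_i>0$) involve disjoint index sets. Writing the monomial as $AC$ with $A$ a positive word and $C=\prod_{m_j>0}U_j^{*m_j}$, I would show $Ch=(\prod T_j^{*m_j})h+w_h$ with $w_h\in\cK^\perp$ and $P_\cK Ch\in\cH$ (by an induction using that $\cK^\perp$ is $U^*$-invariant), so that $P_\cH ACh$ splits as the desired $T$-monomial plus $P_\cH Aw_h$. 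The remaining term vanishes: in the concrete model $w_h$ is supported on configurations having a negative $S_+$-coordinate, while the analogous error arising from $A^*$ has all $S_+$-coordinates nonnegative, whence the two are orthogonal, i.e. $\langle Aw_h,h'\rangle=0$ for all $h'\in\cH$. The matching of the scalar factors on both sides is automatic, since $U$ and $T$ obey the same $\lambda$-commutation relations.

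Finally, minimality (iii) follows because $\cK=\bigvee_{p\ge0}U^p\cH$ and bilateral shifting fills each $\widetilde\cK_A$, so $\widetilde\cK=\bigvee_{p\in\ZZ}U^p\cH$; uniqueness (iv) is the standard argument, the inner products $\langle U^{\mathbf p}h,U^{\mathbf q}h'\rangle$ being determined by $T$ through (ii), which lets one define the intertwining unitary fixing $\cH$ between two minimal unitary dilations (alternatively, one invokes uniqueness of the minimal isometric dilation in Theorem \ref{iso-dil} together with uniqueness of the minimal unitary extension). Part (v) is then immediate: $\cK=\bigvee_{p\ge0}U^p\cH$ is $U$-invariant and $V=U|_\cK$ is precisely the minimal doubly $\Lambda$-commuting isometric dilation from the first step. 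I expect the main obstacle to be the simultaneous unitary extension of the second step --- arranging that the bilateral extension preserve the $\lambda$-commutation across all blocks $\cK_A$ at once --- together with the disjoint-support orthogonality needed to clear the error term in the proof of (ii).
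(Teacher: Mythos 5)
Your proposal is correct and follows the same skeleton as the paper's proof: minimal isometric dilation of $T$ via Theorem \ref{iso-dil}, extension of the resulting doubly $\Lambda$-commuting isometries to doubly $\Lambda$-commuting unitaries, the compression identities (i)--(ii), and uniqueness by polarization (the inner products $\left<U_1^{p_1}\cdots U_k^{p_k}x,U_1^{q_1}\cdots U_k^{q_k}y\right>$, $x,y\in\cH$, being determined by $T$ through (ii)). The one substantive divergence is the extension step: the paper simply invokes Theorem 6.2 of De Jeu--Pinto \cite{DJP} to extend $V$ to unitaries and then cuts down to the reducing subspace generated by $\cK$, whereas you build the extension by hand from the Wold decomposition (Theorems \ref{Wold2} and \ref{standard3}), replacing each weighted unilateral shift on $\ell^2(\NN^{|A|})\otimes\cL_A$ by the weighted bilateral shift on $\ell^2(\ZZ^{|A|})\otimes\cL_A$ with the same weight formula $\lambda_{ij}^{n_j}$ now read for $n_j\in\ZZ$; this is a legitimate self-contained substitute for the citation and costs only the (routine) verification that the $\Lambda$-relations persist blockwise. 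The second, minor, difference is in (ii): the paper compresses in two stages, first to $\cK$ --- using $U_{1}^{m_1^-}\cdots U_{k}^{m_k^-}|_\cK=V_{1}^{m_1^-}\cdots V_{k}^{m_k^-}$ and $P_\cK {U_{1}^*}^{m_1^+}\cdots {U_{k}^*}^{m_k^+}|_\cK={V_{1}^*}^{m_1^+}\cdots {V_{k}^*}^{m_k^+}$, with the $\Lambda$-commutation scalars matching on the $U$- and $V$-sides --- and only then to $\cH$ via $V_i^*|_\cH=T_i^*$. This makes your error term $w_h\in\cK^\perp$ and the disjoint-support orthogonality argument unnecessary; your version does close, but only because you can verify the orthogonality in the concrete bilateral model, while the paper's two-stage compression is model-free and shorter.
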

\begin{proof}

According to Theorem \ref{iso-dil}, there is a $k$-tuple   $V=(V_1,\ldots, V_k)$ of doubly
$\Lambda$-commuting isometries on a Hilbert space $\cK\supset \cH$
 such that
\begin{equation}
\label{T*}
T_{i}^*=V_{i}^*|_\cH
\end{equation}
for any $ i\in \{1,\ldots, k\}$. Moreover, the dilation $V$ is minimal, i.e.
 $$
  \cK=\overline{\rm span}\{V_{i_1}^{p_1}\cdots V_{i_m}^{p_m} \cH: \  p_1,\ldots p_m\in \NN, i_1,\ldots, i_m\in \{1,\ldots, k\}\}.
$$
On the other hand, according to Theorem 6.2 from \cite{DJP},
there is  a $k$-tuple   $U=(U_1,\ldots, U_k)$ of doubly
$\Lambda$-commuting unitaries on a Hilbert space $\widetilde\cK\supset  \cK$
 such that
$
V_{i,s}=U_{i,s}|_{ \cK}
$
for any $ i\in \{1,\ldots, k\}$ and $ s\in \{1,\ldots, n_i\}$. Moreover,   since
 $$\cK_0:=\overline{\rm span}\{U_{i_1}^{p_1}\cdots U_{i_m}^{p_m}  \cK: \  p_1,\ldots p_m\in \ZZ, i_1,\ldots, i_m\in \{1,\ldots, k\}, m\in \NN\}
$$
is a jointly reducing subspace under the unitary operators $U_{i,s}$, we can assume  that
$\widetilde\cK=\cK_0$.
Let $(m_1,\ldots, m_1)\in \ZZ^k$ . Since
$$
 U_{1}^{m_1^-}\cdots U_{k}^{m_k^-}|_{\cK}= V_{1}^{m_1^-}\cdots V_{k}^{m_k^-}\quad \text{and}\quad
U_{k}^{m_k^+}\cdots U_{1}^{m_1^+}|_{\cK}=V_{k}^{m_k^+}\cdots V_{1}^{m_1^+},
$$
 we deduce that
\begin{equation*}
P_{\cK}\left(  {U_{1}^*}^{m_1^+}\cdots {U_{k}^*}^{m_k^+}U_{1}^{m_1^-}\cdots U_{k}^{m_k^-}\right)|_{\cK}\\
=   {V_{1}^*}^{m_1^+}\cdots {V_{k}^*}^{m_k^+}V_{1}^{m_1^-}\cdots V_{k}^{m_k^-}.
\end{equation*}
Using the $\Lambda$-commutativity  of $U=(U_1,\ldots, U_k)$ and $V=(V_1,\ldots, V_k)$, we  also obtain
\begin{equation*}
P_{\cK}\left(  U_{1}^{m_1^-}\cdots U_{k}^{m_k^-} {U_{1}^*}^{m_1^+}\cdots {U_{k}^*}^{m_k^+}\right)|_{\cK}\\
=  V_{1}^{m_1^-}\cdots V_{k}^{m_k^-} {V_{1}^*}^{m_1^+}\cdots {V_{k}^*}^{m_k^+}.
\end{equation*}
Taking the compression to the Hilbert space $\cH$ and using relation \eqref{T*}, we obtain item (ii).

Now, we prove the uniqueness of the minimal unitary dilation $U=(U_1,\ldots, U_k)$.
Note that
$$
  \widetilde\cK=\overline{\rm span}\{ U_1^{m_1^-}\cdots U_k^{m_k^-} {U_1^*}^{m_1^+}\cdots {U_k^*}^{m_k^+}:\ (m_1,\ldots, m_k)\in \ZZ_k \}.
$$
Let $p=(p_1,\ldots, p_k)$ and $q=(q_1,\ldots, q_k)$ be in $\ZZ^k$ and set $m_i:=q_i-p_i$ for any $i\in \{1,\ldots, k\}$.
If $x,y\in \cH$,  we can use the $\Lambda$-commutativity  of $(U_1,\ldots, U_k)$ and item (ii) to deduce that
\begin{equation*}
\begin{split}
\left< U_1^{p_1}\cdots U_k^{p_k}x, U_1^{q_1}\cdots U_k^{q_k}y\right>
&=c_{p,q}\left< U_1^{p_1-q_1}\cdots U_k^{p_k-q_k}x,y\right>\\
&=c_{p,q}\left<U_{1}^{m_1^-}\cdots U_{k}^{m_k^-} {U_{1}^*}^{m_1^+}\cdots {U_{k}^*}^{m_k^+}x,y\right> \\
&=
c_{p,q}\left<T_{1}^{m_1^-}\cdots T_{k}^{m_k^-} {T_{1}^*}^{m_1^+}\cdots {T_{k}^*}^{m_k^+}x,y\right>
\end{split}
\end{equation*}
for some constant $c_{p,q}\in \TT$. Therefore, the inner product  $\left< U_1^{p_1}\cdots U_k^{p_k}x, U_1^{q_1}\cdots U_k^{q_k}y\right>$  does not depend on the particular choice of the dilation $U$. Consequently, the inner product of two finite sums
$$
\sum_{{(p_1,\ldots, p_k)\in \ZZ^k}\atop{|p_1|+\cdots +|p_k|\leq N}} U_1^{p_1}\cdots U_k^{p_k} h_{(p_1,\ldots,p_k)} \quad \text{ and }\quad  \sum_{{(p_1,\ldots, p_k)\in \ZZ^k}\atop{|p_1|+\cdots +|p_k|\leq N}} U_1^{p_1}\cdots U_k^{p_k} h'_{(p_1,\ldots,p_k)}
$$
depend only on $h_{(p_1,\ldots,p_k)}, h'_{(p_1,\ldots,p_k)}\in \cH$ and $(T_1,\ldots, T_k)$, and not on the particular choice of the dilation.
Let $U=(U_1,\ldots, U_k)$ and $U'=(U'_1,\ldots, U'_k)$ be two minimal  unitary dilations  of $T=(T_1,\ldots, T_k)$ on the spaces $\widetilde \cK$ and $\widetilde \cK'$, respectively.
Define  $\Gamma: \widetilde \cK\to \widetilde \cK'$ by setting
$$
\Gamma \left( \sum_{{(p_1,\ldots, p_k)\in \ZZ^k}\atop{|p_1|+\cdots +|p_k|\leq N}} U_1^{p_1}\cdots U_k^{p_k} h_{(p_1,\ldots,p_k)} \right)
:=\sum_{{(p_1,\ldots, p_k)\in \ZZ^k}\atop{|p_1|+\cdots +|p_k|\leq N}} {U'_1}^{p_1}\cdots {U'_k}^{p_k} h_{(p_1,\ldots,p_k)}
$$
for any $N\in \{0,1,\ldots\}$ and  $h_{(p_1,\ldots,p_k)}\in \cH$.  Note  that  $\Gamma$ is an isometry. Due to the minimality of $U$ and $U'$, $\Gamma$ can be extended by continuity to a unitary operator from $\widetilde \cK$ to $\widetilde \cK'$. Note that $\Gamma h=h$ for any $h\in \cH$ and
$$
\Gamma \left(U_i \sum_{{(p_1,\ldots, p_k)\in \ZZ^k}\atop{|p_1|+\cdots +|p_k|\leq N}} U_1^{p_1}\cdots U_k^{p_k} h_{(p_1,\ldots,p_k)} \right)
:= U'_i \Gamma \left(\sum_{{(p_1,\ldots, p_k)\in \ZZ^k}\atop{|p_1|+\cdots +|p_k|\leq N}} U_1^{p_1}\cdots U_k^{p_k} h_{(p_1,\ldots,p_k)} \right).
$$
Consequently, $\Gamma U_i=U_i'\Gamma$ for any $i\in \{1,\ldots, k\}$.
The proof is complete.
\end{proof}

We remark that if $\lambda_{ij}=1$ for any $i,j\in \{1,\ldots, k\}$ in Theorem \ref{Brehmer-extension},  we recover Brehmer's result \cite{Br} (see also \cite{SzFBK-book}).

      \bigskip

       %

      \end{document}